\pdfoutput=1
\documentclass[hidelinks, reqno]{amsart}
\usepackage{graphicx}
\usepackage{subcaption}
\usepackage{tabularx}
\usepackage{booktabs}
\usepackage{array}
\usepackage{amsmath}
\usepackage{amsfonts}
\usepackage{amssymb}
\usepackage{amsthm}
\usepackage{thmtools}
\usepackage[scr]{rsfso}
\usepackage{enumitem}
\usepackage{permute}
\usepackage{slashed}
\usepackage[usenames,dvipsnames]{xcolor}
\usepackage[pagebackref = true, colorlinks, linkcolor = Red, citecolor = Green, bookmarksdepth=2, linktocpage=true, hypertexnames=false]{hyperref}
\usepackage[capitalize]{cleveref}
\usepackage{comment}

\usepackage[T1]{fontenc}
\usepackage{makecell}


\usepackage{tikz}
\usetikzlibrary{calc}

\allowdisplaybreaks


\DeclareMathOperator{\interior}{int}
\DeclareMathOperator{\exterior}{ext}

\DeclareMathOperator{\Id}{Id}
\DeclareMathOperator{\tr}{tr}
\DeclareMathOperator{\Stab}{Stab}

\DeclareMathOperator{\supp}{supp}

\DeclareMathOperator{\SL}{SL}
\DeclareMathOperator{\GL}{GL}
\DeclareMathOperator{\SO}{SO}

\DeclareMathOperator{\PSL}{PSL}

\DeclareMathOperator{\Lip}{Lip}

\DeclareMathOperator{\proj}{proj}
\DeclareMathOperator{\Cay}{Cay}

\DeclareMathOperator{\T}{T}

\DeclareMathOperator{\diam}{diam}

\DeclareMathOperator{\len}{len}
\let\Pr\relax
\DeclareMathOperator{\Pr}{Pr}

\DeclareMathOperator{\Span}{span}

\DeclareMathOperator{\Hull}{Hull}


\DeclareFontFamily{U}{mathb}{\hyphenchar\font45}
\DeclareFontShape{U}{mathb}{m}{n}{
      <5> <6> <7> <8> <9> <10> gen * mathb
      <10.95> mathb10 <12> <14.4> <17.28> <20.74> <24.88> mathb12
      }{}
\DeclareSymbolFont{mathb}{U}{mathb}{m}{n}
\DeclareMathSymbol{\bigast}{2}{mathb}{"06}


\def\XXint#1#2#3{{\setbox0=\hbox{$#1{#2#3}{\int}$}
     \vcenter{\hbox{$#2#3$}}\kern-.5\wd0}}

\theoremstyle{plain}
\newtheorem{theorem}{Theorem}[section]
\newtheorem{proposition}[theorem]{Proposition}
\newtheorem{lemma}[theorem]{Lemma}
\newtheorem{corollary}[theorem]{Corollary}
\newtheorem{conjecture}[theorem]{Conjecture}

\theoremstyle{definition}
\newtheorem{definition}[theorem]{Definition}

\newtheoremstyle{remark}
{}   
{}   
{\normalfont}  
{}       
{\itshape} 
{.}         
{5pt plus 1pt minus 1pt} 
{}          

\theoremstyle{remark}
\newtheorem*{remark}{Remark}

\setlist[enumerate,1]{ref=(\arabic*)}
\setlist[enumerate,2]{ref=(\theenumi)(\alph*)}
\setlist[enumerate,3]{ref=(\theenumi)(\theenumii)(\roman*)}
\setlist[enumerate,4]{ref=(\theenumi)(\theenumii)(\theenumiii)(\Alph*)}

\newlist{alternative}{enumerate}{4}     
\setlist[alternative,1]{label=(\arabic*), ref=(\arabic*)}
\setlist[alternative,2]{label=(\alph*), ref=(\thealternativei)(\alph*)}
\setlist[alternative,3]{label=(\roman*), ref=(\thealternativei)(\thealternativeii)(\roman*)}
\setlist[alternative,4]{label=(\Alph*), ref=(\thealternativei)(\thealternativeii)(\thealternativeiii)(\Alph*)}


\Crefname{enumi}{Property}{Properties}
\Crefname{alternativei}{Alternative}{Alternatives}
\Crefname{subsection}{Subsection}{Subsections}


\begin{document}


\title[Congruence Counting in Schottky Semigroups of $\SO(n, 1)$]{Congruence Counting in Schottky and Continued Fractions Semigroups of $\SO(n, 1)$}
\author{Pratyush Sarkar}
\address{Department of Mathematics, UC San Diego, La Jolla, CA 92093, USA}
\email{psarkar@ucsd.edu}
\date{\today}

\begin{abstract}
In this paper, the two settings we are concerned with are $\Gamma < \SO(n, 1)$ a Zariski dense Schottky semigroup and $\Gamma < \SL_2(\mathbb C)$ a Zariski dense continued fractions semigroup. In both settings, we prove a uniform asymptotic counting formula for the associated congruence subsemigroups, generalizing the work of Magee--Oh--Winter \cite{MOW19} in $\SL_2(\mathbb R)$ to higher dimensions. Superficially, the proof requires two separate strategies: the expander machinery of Golsefidy--Varj\'{u}, based on the work of Bourgain--Gamburd--Sarnak, and Dolgopyat's method. However, there are several challenges in higher dimensions. Firstly, using the expander machinery requires a key input: the Zariski density and full trace field property of the return trajectory subgroups, newly introduced in \cite{Sar22}. Secondly, we need to adapt Stoyanov's version of Dolgopyat's method to circumvent some technical issues while the main difficulty is to prove the key inputs: the local non-integrability condition (LNIC) and the non-concentration property (NCP).
\end{abstract}

\maketitle

\setcounter{tocdepth}{1}
\tableofcontents

\section{Introduction}
This paper is concerned with counting results uniformly over congruence subsemigroups. The main theorem of this paper are for two different settings called the Schottky semigroup setting and the continued fractions semigroup setting. We briefly introduce the necessary background below.

We first introduce the Schottky semigroup setting. Let $\mathbf{G} = \SO_Q < \GL_{n + 1}$ be the algebraic group defined over $\mathbb Q$ consisting of elements which preserve the quadratic form $Q = x_1^2 + x_2^2 + \dotsb + x_n^2 - x_{n + 1}^2$ for $n \geq 2$. Let $G = \mathbf{G}(\mathbb R)^\circ$ which we recognize as the group of orientation preserving isometries of $\mathbb H^n$. Let $N_0 \in \mathbb N$, $0 \leq N_1 \leq N_0$ be an integer, and $D_1, D_2, \dotsc, D_{2N_0} \subset \mathbb R^{n - 1} \cup \{\infty\}$ be mutually disjoint closed Euclidean balls. Let $\Gamma < G$ be a Zariski dense semigroup generated by a finite subset of hyperbolic elements $\{g_1, g_2, \dotsc, g_{N_0 + N_1}\} \subset G$ such that
\begin{enumerate}
\item $g_j(\exterior(D_{j + N_0})) = \interior(D_j)$ for all $1 \leq j \leq N_0$;
\item $g_{N_0 + j} = {g_j}^{-1}$ for all $1 \leq j \leq N_1$.
\end{enumerate}
Such a semigroup is called a \emph{Schottky semigroup}. For the strong approximation theorem of Weisfeiler \cite{Wei84} to hold, we use the simply connected cover $\tilde{\pi}: \tilde{\mathbf G} \to \mathbf G$ defined over $\mathbb Q$. Since $\tilde{\mathbf{G}}$ is also defined over $\mathbb Q$, we can consider the set of integral points $\tilde{\mathbf G}(\mathbb Z)$. We assume $\Gamma$ was chosen such that there is a corresponding Zariski dense semigroup $\tilde{\Gamma} < \tilde{\mathbf G}(\mathbb Z)$ generated by $\{\tilde{g}_1, \tilde{g}_2, \dotsc, \tilde{g}_{N_0 + N_1}\} \subset \tilde{\mathbf G}(\mathbb Z)$ where $\tilde{g}_j \in \tilde{\mathbf G}(\mathbb Z)$ is a unique choice of a lift of $g_j$ for all $1 \leq j \leq N_0 + N_1$. For all $\gamma = g_{\alpha_1}g_{\alpha_2} \dotsb g_{\alpha_j} \in \Gamma$ for some sequence $(\alpha_1, \alpha_2, \dotsc, \alpha_j)$ and $j \in \mathbb N$, we denote $\tilde{\gamma} = \tilde{g}_{\alpha_1}\tilde{g}_{\alpha_2} \dotsb \tilde{g}_{\alpha_j} \in \tilde{\Gamma}$. For uniform notations, we denote $\mathcal{O} = \mathbb Z$ so that $\tilde{\mathbf G}(\mathcal{O}) = \tilde{\mathbf G}(\mathbb Z)$.

We now introduce the continued fractions semigroup setting. Let $\mathbf{G} = \SL_2$ as an algebraic group defined over $\mathbb Q$. Let $n = 2$ (resp. $n = 3$) and $G = \mathbf{G}(\mathbb R)$ (resp. $G = \mathbf{G}(\mathbb C)$) which we recognize as the double cover of the group of orientation preserving isometries of $\mathbb H^n$. Let $\Gamma < G$ be a Zariski dense semigroup generated by
\begin{align*}
\left\{
\begin{pmatrix}
0 & 1 \\
1 & a
\end{pmatrix}
\begin{pmatrix}
0 & 1 \\
1 & a'
\end{pmatrix}
: a, a' \in \mathscr{A}\right\} \subset \SL_2(\mathbb Z[i])
\end{align*}
for some finite subset $\mathscr{A} \subset \mathbb N$ (resp. $\mathscr{A} \subset \mathbb N + i\mathbb Z$). Such a semigroup is called a \emph{continued fractions semigroup}. For uniform notations, we also denote $\tilde{\mathbf{G}} = \mathbf{G}$, $\mathcal{O} = \mathbb Z$ (resp. $\mathcal{O} = \mathbb Z[i]$), $\tilde{\Gamma} = \Gamma$, and $\tilde{\gamma} = \gamma$ for all $\gamma \in \Gamma$.

Let $q \in \mathcal{O}$. Let $\pi_q: \tilde{\mathbf{G}}(\mathcal{O}) \to \tilde{\mathbf{G}}_q$ be the reduction map where $\tilde{\mathbf{G}}_q = \tilde{\mathbf{G}}(\mathcal{O}/q\mathcal{O})$. Let $\Gamma_q$ be the congruence subsemigroup of level $q$, i.e., $\Gamma_q = \tilde{\pi}(\tilde{\Gamma}_q)$ where $\tilde{\Gamma}_q = \ker(\pi_q) \cap \tilde{\Gamma} < \tilde{\Gamma}$. Denote by $L(\mathbb H^n \cup \mathbb R^{n - 1}, \mathbb R)$ the space of real-valued Lipschitz functions with respect to the Euclidean metric on $\mathbb H^n \cup \mathbb R^{n - 1} \subset \mathbb R^n$ where we view $\mathbb H^n$ in the upper half space model. The limit set $\Lambda(\Gamma) \subset \mathbb R^{n - 1} \cup \{\infty\}$ of $\Gamma$ is the set of limit points of any orbit $\Gamma o \subset \overline{\mathbb H^n}$. In fact, we can assume that $\Lambda(\Gamma) \subset \mathbb R^{n - 1}$ (see \cref{sec:Preliminaries}). Let $\delta_\Gamma > 0$ be the Hausdorff dimension of $\Lambda(\Gamma)$. Define $L^\star(\mathbb H^n \cup \mathbb R^{n - 1}, \mathbb R) \subset L(\mathbb H^n \cup \mathbb R^{n - 1}, \mathbb R)$ to be the subspace of bounded functions which are locally constant on some neighborhood of $\Lambda(\Gamma)$. Let $\|\cdot\|$ denote the Frobenius norm on the space of $(n + 1) \times (n + 1)$ matrices, i.e., the Euclidean norm after viewing the space as $\mathbb R^{(n + 1)^2}$. We also define the norm of $q \in \mathcal{O}$ by
\begin{align*}
	N(q) =
	\begin{cases}
		|q|, & \mathcal{O} = \mathbb Z, \\
		|q|^2, & \mathcal{O} = \mathbb Z[i].
	\end{cases}
\end{align*}
The following is the main theorem of this paper.

\begin{theorem}
\label{thm:MainTheorem}
There exist $\epsilon \in (0, \delta_\Gamma)$, $C > 0$, and a nonzero $q_0 \in \mathcal{O}$ such that for all $F \in L^\star(\mathbb H^n \cup \mathbb R^{n - 1},  \mathbb R)$ and $\gamma_0 \in \Gamma$, there exists $C_0 > 0$ such that for all $x \in \tilde{\Gamma}$, square-free $q \in \mathcal{O}$ coprime to $q_0$, we have as $R \to +\infty$
\begin{align*}
\sum_{\frac{\|\gamma \gamma_0\|}{\|\gamma_0\|} \leq R, \pi_q(\tilde{\gamma}) = \pi_q(x)} F(\gamma \gamma_0 o) = C_0 \frac{R^{2\delta_\Gamma}}{\#\tilde{\mathbf{G}}_q} + O\bigl(N(q)^C R^{2(\delta_\Gamma - \epsilon)} (\|F\|_\infty + \Lip(F|_{\mathbb R^{n - 1}}))\bigr).
\end{align*}
\end{theorem}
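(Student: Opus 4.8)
The plan is to combine two bodies of technique that, superficially, pull in different directions: the spectral/expander machinery (Bourgain–Gamburd–Sarnak, Golsefidy–Varjú) to control the congruence obstruction uniformly in $q$, and a thermodynamic/transfer-operator approach (Dolgopyat's method, in Stoyanov's formulation, adapted to the Schottky and continued fractions codings) to extract the main term $C_0 R^{2\delta_\Gamma}/\#\tilde{\mathbf G}_q$ with a power-saving error. The first step is to set up the symbolic dynamics: the semigroup $\Gamma$ acts on its limit set $\Lambda(\Gamma)$ via a subshift of finite type $(\Sigma_A, \sigma)$ coming from the Schottky (resp. continued fractions) generators, and the counting function $\sum_{\|\gamma\gamma_0\|/\|\gamma_0\| \le R} F(\gamma\gamma_0 o)$ is converted, through a Tauberian theorem applied to the appropriate Poincaré series, into the study of iterates of a one-parameter family of twisted transfer operators $\mathcal L_{s,\rho_q}$ acting on a space of (vector-valued, because of the congruence cover) Lipschitz functions. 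Here $\rho_q$ is the representation of the (semigroup generated by the) coding on $\ell^2(\tilde{\mathbf G}_q)$ induced by $\pi_q$, and the topological pressure normalization pins down $\mathrm{Re}(s) = \delta_\Gamma$ at the leading eigenvalue of the untwisted operator.

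Next I would establish the two key spectral estimates. (i) The $\textbf{uniform spectral gap}$ in the congruence aspect: there are $c > 0$, $q_0 \in \mathcal O$, and $C$ such that for all square-free $q$ coprime to $q_0$, the transfer operator $\mathcal L_{\delta_\Gamma, \rho_q}$ has no eigenvalue within $c/N(q)^{C}$ of $1$ on the subspace orthogonal to the trivial $\tilde{\mathbf G}_q$-isotypic component — equivalently, the associated congruence combinatorial random walk is an expander family. This is exactly where the input from \cite{Sar20} enters: the Golsefidy–Varjú expansion theorem requires perfectness and Zariski density of the relevant groups, and the delicate point in this semigroup (as opposed to group) setting is that one must apply it not to $\Gamma$ itself but to the $\textbf{return trajectory subgroups}$, whose Zariski density and full trace field property are the promised inputs; translating ``expander for the return trajectory subgroup'' into ``spectral gap for $\mathcal L_{\delta_\Gamma,\rho_q}$'' is a matching/combinatorial-identity argument on long words in the generators. (ii) The $\textbf{Dolgopyat-type bound}$: for $s = \delta_\Gamma + a + ib$ with $|a|$ small and $|b|$ large (and, crucially, also uniformly over the congruence twist $\rho_q$), there are $C' > 0$ and $0 < \eta < 1$ with $\|\mathcal L_{s,\rho_q}^{m}\|_{\mathrm{Lip}, b} \le C' \eta^{m}$ for $m \gtrsim \log|b|$, in a norm with weight $|b|$. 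Here the genuinely hard part — and what the abstract flags as the greatest difficulty — is verifying the $\textbf{local non-integrability condition (LNIC)}$ for the Schottky/continued fractions coding in dimension $n \ge 3$: in rank one higher dimensions the temporal distance function (the relevant cocycle controlling cancellation) is vector-valued and one must show it is genuinely non-degenerate along the unstable foliation, which forces one to redo Stoyanov's non-concentration estimates with the correct geometry of $\SO(n,1)$ and $\SL_2(\mathbb C)$ and to handle the non-invertibility/one-sidedness of the semigroup coding.

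With both estimates in hand, I would run the standard contour-shift argument uniformly in $q$: write the Laplace transform of the counting function as a resolvent $\sum_k \mathcal L_{s,\rho_q}^k F$, decompose $\ell^2(\tilde{\mathbf G}_q) = \mathbb C \oplus (\text{rest})$, use the Ruelle–Perron–Frobenius leading eigenvalue on the trivial component to produce the main term $C_0 R^{2\delta_\Gamma}/\#\tilde{\mathbf G}_q$ (with $C_0$ given by the Patterson–Sullivan/equilibrium measure of $F$, independent of $q$ by the normalization), and on the orthogonal complement combine the uniform gap (i) for bounded $|b|$ with the Dolgopyat bound (ii) for large $|b|$ to shift the contour to $\mathrm{Re}(s) = \delta_\Gamma - \epsilon$, the loss $N(q)^C$ coming precisely from the $q$-dependence of the gap in (i) and the size $\#\tilde{\mathbf G}_q \ll N(q)^{O(1)}$. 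A Tauberian theorem (Ikehara–Wiener, in the effective form handling the remainder) then converts the resolvent bound into the stated asymptotic with error $O(N(q)^C R^{2(\delta_\Gamma - \epsilon)}(\|F\|_\infty + \mathrm{Lip}(F|_{\mathbb R^{n-1}})))$; the dependence of $C_0$ on $F$ and $\gamma_0$ but not on $q$, and the allowance of an extra $C_0$-producing constant depending on $x$ only through which coset it lies in, are bookkeeping once the uniform operator bounds are established. I expect essentially all the novel work to be concentrated in establishing the LNIC and, secondarily, in the careful passage between the return-trajectory-subgroup expansion of \cite{Sar20} and the transfer-operator spectral gap (i) in the semigroup (as opposed to group) setting.
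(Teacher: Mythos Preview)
Your proposal is correct and matches the paper's architecture almost exactly: congruence transfer operators, a two-regime spectral bound (expander machinery via Golsefidy--Varj\'u on the return trajectory subgroups for small $|b|$, Dolgopyat--Stoyanov for large $|b|$), and then the Laplace-transform/renewal-equation conversion of \cite{BGS11,MOW19} to extract the main term and power saving. One small correction: the temporal distance function in the LNIC is \emph{not} vector-valued in higher dimensions---it remains the scalar $\varphi_{\alpha,\beta}$ built from the real distortion $\tau$---and the actual difficulty the paper confronts is proving that $\tau$ is not cohomologous to a locally constant function (\cref{pro:tauNotCohomologousToLocallyConstantFunction}), which requires a delicate geometric argument with complex translation lengths in an auxiliary $\mathbb H^3$ slice rather than any non-concentration estimate for a vector-valued cocycle.
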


Taking $F = \chi_{\mathbb H^n \cup \mathbb R^{n - 1}} \in L^\star(\mathbb H^n \cup \mathbb R^{n - 1},  \mathbb R)$, $\gamma_0 = e \in \Gamma$, and $x = e \in \tilde{\Gamma}$ in the above theorem, we obtain the following main corollary. We denote by $B_R(e) \subset G$ the ball of radius $R > 0$ centered at $e \in G$ with respect to the Frobenius norm.

\begin{corollary}
\label{cor:MainCorollary}
There exist $\epsilon \in (0, \delta_\Gamma)$, $C_0 > 0$, $C > 0$, and $q_0 \in \mathcal{O}$ such that for all square-free $q \in \mathcal{O}$ coprime to $q_0$, we have
\begin{align*}
\#(\Gamma_q \cap B_R(e)) = C_0 \frac{R^{2\delta_\Gamma}}{\#\tilde{\mathbf{G}}_q} + O\bigl(N(q)^C R^{2(\delta_\Gamma - \epsilon)}\bigr) \qquad \text{as $R \to +\infty$}.
\end{align*}
\end{corollary}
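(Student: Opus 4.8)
The plan is to derive \Cref{cor:MainCorollary} directly from \Cref{thm:MainTheorem} by the specialization indicated in the text, so the ``proof'' is essentially a verification that the chosen data lie in the hypotheses and that the resulting sum counts the desired quantity.

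First I would record the admissibility of the choices. Taking $F = \chi_{\mathbb H^n \cup \mathbb R^{n-1}}$, this is the constant function $1$, which is bounded and locally constant on any neighborhood of $\Lambda(\Gamma)$, hence $F \in L^\star(\mathbb H^n \cup \mathbb R^{n-1}, \mathbb R)$; moreover $\|F\|_\infty = 1$ and $\Lip(F|_{\mathbb R^{n-1}}) = 0$, so the error term's $F$-dependent factor is the absolute constant $1$ and can be absorbed into the implied constant. Taking $\gamma_0 = e$ gives $\|\gamma\gamma_0\|/\|\gamma_0\| = \|\gamma\|$, so the summation condition becomes $\|\gamma\| \leq R$. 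Taking $x = e \in \tilde\Gamma$ makes the congruence condition $\pi_q(\tilde\gamma) = \pi_q(e) = e$, which by definition of $\tilde\Gamma_q = \ker(\pi_q) \cap \tilde\Gamma$ and $\Gamma_q = \tilde\pi(\tilde\Gamma_q)$ selects exactly the elements $\gamma$ with $\tilde\gamma \in \tilde\Gamma_q$, i.e.\ $\gamma \in \Gamma_q$.

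Next I would identify the left-hand side with $\#(\Gamma_q \cap B_R(e))$. With the above substitutions the sum in \Cref{thm:MainTheorem} becomes $\sum_{\|\gamma\| \leq R,\ \gamma \in \Gamma_q} 1$. Since $B_R(e) = \{g \in G : \|g\| \leq R\}$ by the stated definition of the ball in the Frobenius norm, and $\Gamma_q \subset \Gamma < G$, this count is precisely $\#(\Gamma_q \cap B_R(e))$. One minor point to address: the map $\gamma \mapsto \tilde\gamma$ must be well defined on $\Gamma$ (a priori $\tilde\gamma$ depends on a choice of word representing $\gamma$), but this is part of the standing setup for the Schottky setting — the lifts $\tilde g_j$ are fixed and $\tilde\Gamma$ is the semigroup they generate — and in the continued fractions setting $\tilde\gamma = \gamma$ by convention, so there is nothing to check beyond invoking the already-established framework. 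There is no real obstacle here; the only thing requiring a word of care is confirming that $x = e$ indeed lies in $\tilde\Gamma$ as a semigroup element (the identity is obtained, e.g., as $g_j g_j^{-1}$ using the generators indexed by $0 \leq j \leq N_1$ in the Schottky case, and is likewise present in the continued fractions semigroup), so that \Cref{thm:MainTheorem} genuinely applies with this choice.

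Finally I would read off the conclusion: the main term $C_0 R^{2\delta_\Gamma}/\#\tilde{\mathbf G}_q$ carries over verbatim (the constant $C_0$ now depends only on the fixed data $F$, $\gamma_0$, $x$, hence is absolute), and the error term $O\bigl(N(q)^C R^{2(\delta_\Gamma - \epsilon)}(\|F\|_\infty + \Lip(F|_{\mathbb R^{n-1}}))\bigr)$ becomes $O\bigl(N(q)^C R^{2(\delta_\Gamma-\epsilon)}\bigr)$, yielding exactly the asserted asymptotic as $R \to +\infty$. Since this is a pure specialization, the genuine mathematical content — the expander machinery, Dolgopyat's method, and the LNIC — all resides in the proof of \Cref{thm:MainTheorem}, and \Cref{cor:MainCorollary} follows with no further work.
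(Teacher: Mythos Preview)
Your proposal is correct and follows exactly the approach the paper takes: the paper's entire ``proof'' is the single sentence preceding the corollary, namely the specialization $F = \chi_{\mathbb H^n \cup \mathbb R^{n-1}}$, $\gamma_0 = e$, $x = e$. Your write-up is in fact more careful than the paper's, spelling out why $F \in L^\star$, why the error factor becomes an absolute constant, and why the summation condition reduces to $\gamma \in \Gamma_q \cap B_R(e)$.
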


\begin{remark}
We mention some special cases and related works.
\begin{enumerate}
\item \label{itm:MOW19} When $n = 2$, \cref{thm:MainTheorem} was proved by Magee--Oh--Winter \cite{MOW19} both for the Schottky semigroup and the continued fractions semigroup settings (when $F$ is of class $C^1$).
\item In the Schottky \emph{subgroup} setting for \emph{arbitrary} $n \geq 2$, \cref{cor:MainCorollary} follows from uniform exponential mixing of the frame flow established by Sarkar \cite{Sar22} via the work of Mohammadi--Oh \cite{MO15}. \Cref{cor:MainCorollary} also follows from the work of Edwards--Oh \cite{EO21} when $\delta_\Gamma > \frac{n - 1}{2}$ and of Magee \cite{Mag15} when $\delta_\Gamma > s_n^0 = (n - 1) - \frac{2(n - 2)}{n(n + 1)}$.
\item Similarly, in the Schottky \emph{subgroup} setting for $n = 2$, \cref{cor:MainCorollary} follows from uniform exponential mixing of the geodesic flow established by Oh--Winter \cite{OW16}. \Cref{cor:MainCorollary} also follows from the work of Gamburd \cite{Gam02} when $\delta_\Gamma > \frac{5}{6}$ and of Bourgain--Gamburd--Sarnak \cite{BGS11} when $\delta_\Gamma > \frac{1}{2}$.
\item In the Ph.D. thesis of McKeon \cite{McK18}, similar techniques of the combination of transfer operators with the expander machinery and Lalley's techniques of the renewal equation are used (but not Dolgopyat's method) to obtain a different counting result regarding low-lying geodesics in $\SL_2(\mathbb Z[i]) \backslash \mathbb H^3$.
\end{enumerate}
\end{remark}

\begin{remark}
In Case~\ref{itm:MOW19} above, Magee--Oh--Winter do not require the square-free hypothesis since they can use the expander machinery of Bourgain--Varj\'{u} \cite{BV12}. For similar reasons, the square-free hypothesis in \cref{thm:MainTheorem} comes from \cref{lem:GV_Expander} which uses the expander machinery of Golsefidy--Varj\'{u} \cite[Theorem 1]{GV12}. Recently however, He--de Saxc\'{e} \cite[Theorem 6.1]{HdS22} extended the ideas in \cite{BV12} to remove the square-free hypothesis for absolutely simple algebraic groups. Thus, the square-free hypothesis in \cref{thm:MainTheorem} can be \emph{removed} for $n \neq 3$.
\end{remark}

We will now give some context for the continued fractions semigroup setting which arose from the study of Zaremba's conjecture. We denote finite and infinite continued fractions by
\begin{align*}
[a_1, a_2, \dotsc, a_k] &= \cfrac{1}{a_1 + \cfrac{1}{a_2 + \cfrac{1}{\ddots + \cfrac{1}{a_k}}}}; &
[a_1, a_2, \dotsc] &= \cfrac{1}{a_1 + \cfrac{1}{a_2 + \cfrac{1}{\ddots}}}
\end{align*}
respectively for any sequence $\{a_j\}_{j \in \mathbb N} \subset \mathbb N + i\mathbb Z$. Let $\mathscr{A} \subset \mathbb N + i\mathbb Z$ be some finite subset and $\mathcal{G}_\mathscr{A} \subset \GL_2(\mathbb Z[i])$ be the semigroup generated by
\begin{align*}
\left\{
\begin{pmatrix}
0 & 1 \\
1 & a
\end{pmatrix}
: a \in \mathscr{A}\right\}
\subset \GL_2(\mathbb Z[i]).
\end{align*}
Define
\begin{align*}
\mathfrak{R}_\mathscr{A} &= \{[a_1, a_2, \dotsc, a_k]: \{a_j\}_{j = 1}^k \subset \mathscr{A}, k \in \mathbb N\}; \\
\mathfrak{D}_\mathscr{A} &= \left\{d: \frac{b}{d} \in \mathfrak{R}_\mathscr{A} \text{ where $b$ and $d$ are coprime}\right\}.
\end{align*}
Note that $\frac{b}{d} = [a_1, a_2, \dotsc, a_k]$ if and only if
\begin{align*}
\begin{pmatrix}
0 & 1 \\
1 & a_1
\end{pmatrix}
\begin{pmatrix}
0 & 1 \\
1 & a_2
\end{pmatrix}
\cdots
\begin{pmatrix}
0 & 1 \\
1 & a_k
\end{pmatrix}
=
\begin{pmatrix}
* & b \\
* & d
\end{pmatrix}.
\end{align*}
Hence, $\mathfrak{D}_\mathscr{A} = \{\langle \gamma \cdot e_2, e_2\rangle: \gamma \in \mathcal{G}_\mathscr{A}\}$ where $e_2 = (0, 1)$ and $\langle \cdot, \cdot \rangle$ is the standard inner product on $\mathbb{C}^2$. The following is Zaremba's conjecture \cite{Zar72}.

\begin{conjecture}
There exists $A \in \mathbb N$ such that $\mathfrak{D}_{\{1, 2, \dotsc, A\}} = \mathbb N$.
\end{conjecture}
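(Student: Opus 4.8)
The final statement is \emph{Zaremba's conjecture}, a well-known open problem; accordingly, what follows is a proposal for the kind of attack one would mount, not a complete argument, and it reflects the present state of the art rather than a new idea. The standard strategy is the Hardy--Littlewood circle method fused with the expansion of $\mathcal{G}_\mathscr{A}$ modulo integers, in the manner pioneered by Bourgain--Kontorovich. One takes a large alphabet $\mathscr{A} = \{1, 2, \dotsc, A\}$, so that the Hausdorff dimension $\delta_\mathscr{A}$ of the Cantor set $\overline{\mathfrak{R}_\mathscr{A}} \subset [0, 1]$ is close to $1$, fixes a smooth cutoff $\psi$ localizing to denominators of size $N$, and studies the exponential sum
\[
\mathfrak{S}(\theta) = \sum_{\gamma \in \mathcal{G}_\mathscr{A}} \psi\bigl(\langle \gamma \cdot e_2, e_2\rangle / N\bigr)\, e\bigl(\theta \langle \gamma \cdot e_2, e_2 \rangle\bigr),
\]
whose value at $\theta = 0$ is $\gg N^{2\delta_\mathscr{A}}$; Zaremba's conjecture would follow by showing that the associated representation function is positive at every integer $\leq N$ once $A$, and hence $\delta_\mathscr{A}$, is large enough.

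First I would set up the Fourier-analytic framework, using the matrix identity $\frac{b}{d} = [a_1, \dotsc, a_k] \iff \prod_{j}\left(\begin{smallmatrix} 0 & 1 \\ 1 & a_j \end{smallmatrix}\right) = \left(\begin{smallmatrix} * & b \\ * & d \end{smallmatrix}\right)$ recorded in the excerpt to rewrite the count over admissible denominators as a sum over words in the generators, and splitting $[0, 1)$ into major arcs around rationals of small denominator and their complementary minor arcs. Second, the major-arc contribution: here one needs the level-$q$ count of words to have main term proportional to $1/\#\tilde{\mathbf{G}}_q$ with a power-saving error uniform in $q$ --- precisely the output of congruence counting results of the type established in this paper, \cref{thm:MainTheorem} --- and then one must assemble the resulting local densities into a singular series and show it is bounded below. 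Third, the minor arcs: one must extract $\ell^2$ cancellation in $\mathfrak{S}(\theta)$ for $\theta$ far from small-denominator rationals, which is where the spectral gap for the congruence quotients $\Gamma_q$, supplied by Bourgain--Gamburd--Sarnak and Golsefidy--Varj\'{u} (cf.\ \cite{BGS11, GV12}), combines with an $\ell^2$-flattening argument and a bilinear (initial/terminal segment) decomposition of the word sum to compensate for the non-multiplicativity of the denominators.

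The genuine obstacle, and the reason the conjecture remains open, is the minor-arc estimate: the available expansion and $\ell^2$-flattening bounds deliver a power saving over the trivial bound only once $\delta_\mathscr{A}$ exceeds a threshold strictly less than $1$. This already yields that $\mathfrak{D}_\mathscr{A}$ has \emph{full density} in $\mathbb N$ for large $A$ (Bourgain--Kontorovich), subsequently refined for small alphabets (notably $A = 5$, by Huang) and extended to various positive-density statements (Kan, Frolenkov--Kan, and others), but upgrading ``density one'' to ``every integer'' would require either an essentially optimal minor-arc bound uniform in the modulus --- strong enough to detect all residue classes simultaneously --- or an entirely different input, such as a thin-orbit equidistribution statement with an error term beating every local obstruction at once. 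I do not see how to produce either with current technology, so the honest proposal is to reduce Zaremba's conjecture, via the circle method as above, to exactly such a quantitative minor-arc/expansion estimate, and to flag that estimate as the true open problem.
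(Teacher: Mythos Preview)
Your assessment is correct: the statement is Zaremba's conjecture, presented in the paper explicitly as an open \emph{conjecture}, not as a theorem with a proof. The paper does not attempt to prove it; it only records the state of the art (Bourgain--Kontorovich's density-one result, Huang's refinement to $A=5$, and the power-saving improvement via \cite{MOW19}) and notes that \cref{thm:MainTheorem} may find applications toward a proposed Gaussian-integer analogue. Your proposal accurately identifies the conjecture as open, outlines the circle-method/expansion strategy that underlies the known partial results, and correctly isolates the minor-arc estimate as the genuine obstruction. There is nothing to compare against, and nothing to correct.
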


There has been progress towards this conjecture in recent years. Notably, Bourgain--Kontorovich \cite{BK14} proved the following theorem.

\begin{theorem}
\label{thm:BourgainKontorovichZarembaConjecture}
There exists $A \in \mathbb N$ such that
\begin{align*}
\lim_{N \to \infty} \frac{1}{N} \#(\mathfrak{D}_{\{1, 2, \dotsc, A\}} \cap [1, N]) = 1.
\end{align*}
Moreover, $A = 50$ suffices.
\end{theorem}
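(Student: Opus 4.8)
The plan is to prove this by the Hardy--Littlewood circle method in the ``thin group'' form pioneered by Bourgain--Gamburd--Sarnak. Fix $A = 50$, write $\mathcal{G} = \mathcal{G}_{\{1, \dots, A\}}$ and $\mathfrak{D} = \mathfrak{D}_{\{1, \dots, A\}}$, and let $\delta_A \in (0,1)$ denote the Hausdorff dimension of the limit set of continued fractions with all partial quotients in $\{1, \dots, A\}$; by Hensley's transfer-operator estimates $\delta_A \to 1$ as $A \to \infty$, and one must check at the end that $\delta_{50}$ exceeds the explicit threshold the argument demands. For a large dyadic parameter $N$ I would introduce a representation function $r_N(n)$ counting (with a smooth weight for regularization) matrices $\gamma \in \mathcal{G}$ of some fixed word length $t \asymp \log N$ whose denominator $d(\gamma) = \langle \gamma e_2, e_2 \rangle$ equals $n$ and whose norm has size $\asymp N$; then $\sum_n r_N(n) \asymp N^{2\delta_A}$ while $\supp r_N \subset [1, \asymp N]$, and it suffices to show $r_N(n) > 0$ for all but $o(N)$ of the integers $n \asymp N$, since letting $N$ run over powers of two then exhibits $\mathfrak{D}$ with density $1$.

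Setting $f_N(\theta) = \sum_n r_N(n) e(n\theta)$ with $e(x) = e^{2\pi i x}$, one has $r_N(n) = \int_0^1 f_N(\theta) e(-n\theta)\, d\theta$; I would split $[0,1)$ into major arcs $\mathfrak{M}$ around rationals $a/q$ with $q$ up to a small power of $N$, and minor arcs $\mathfrak{m}$. On $\mathfrak{M}$ the standard heuristic factors the contribution as an archimedean density times a singular series $\mathfrak{S}(n) = \prod_p \mathfrak{S}_p(n)$; the archimedean factor is $\gg N^{2\delta_A - 1}$ because the relevant Cantor (conformal) measure pushes forward regularly, while $\mathfrak{S}(n) > 0$ for every $n$ once $A$ is large --- by strong approximation $\mathcal{G}$ surjects onto $\SL_2(\mathbb{Z}/q\mathbb{Z})$ for $q$ coprime to a fixed modulus, and every residue occurs there as a bottom-right entry, so there is no local obstruction. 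This yields the major-arc main term $M(n) := \int_{\mathfrak{M}} f_N(\theta) e(-n\theta)\, d\theta \gg N^{2\delta_A - 1}$ uniformly for $n \asymp N$. Since $\#\{n \asymp N : r_N(n) = 0\} \le \#\{n : |r_N(n) - M(n)| \ge M(n)\} \ll N^{-(4\delta_A - 2)} \int_{\mathfrak{m}} |f_N(\theta)|^2\, d\theta$ by Chebyshev and Parseval, everything reduces to the minor-arc bound $\int_{\mathfrak{m}} |f_N|^2 = o(N^{4\delta_A - 1})$.

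The hard part will be exactly this minor-arc estimate, and this is where the arithmetic of the semigroup enters. Factoring a word as $\gamma = \gamma_1 \gamma_2$ with $\gamma_1, \gamma_2$ of comparable length and reading off the $(2,2)$-entry of a matrix product gives $d(\gamma_1\gamma_2) = \langle v(\gamma_1), w(\gamma_2)\rangle$, a \emph{bilinear} form in the bottom row $v(\gamma_1)$ of $\gamma_1$ and the right column $w(\gamma_2)$ of $\gamma_2$, so $f_N$ is a bilinear exponential sum. Applying Cauchy--Schwarz in the $\gamma_1$ variable reduces the task, for $\theta$ in a minor arc near $a/q$, to extracting square-root cancellation from sums $\sum_{\gamma_1} e(\langle v(\gamma_1), u\rangle \theta)$ with $u$ running over column differences $w(\gamma_2) - w(\gamma_2')$; that cancellation is supplied by equidistribution of $\gamma_1 \bmod q$, i.e.\ by the expander/spectral-gap estimates for the Cayley graphs of $\SL_2(\mathbb{Z}/q\mathbb{Z})$ generated by the relevant matrices (Bourgain--Gamburd; Golsefidy--Varj\'u in the square-free range), packaged with a large-sieve device to recombine the $q$-sum with the $\gamma_2$-sum. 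I would also need an $\ell^2$-flattening / multiplicity bound $\|r_N\|_\infty \ll N^\epsilon$ --- distinct words seldom share a denominator --- via a divisor estimate for the entries of products of the generating matrices. Finally, all error exponents produced this way have the shape $4\delta_A - 1 - c$ with $c > 0$ explicit in terms of the spectral gap and $\delta_A$; the crux is that making $c$ beat the loss forces $\delta_A$ very close to $1$, and one closes the argument by invoking Hensley's numerical lower bound $\delta_{50} > \delta_{\mathrm{threshold}}$. (A cleaner version of the same scheme, with weaker requirements on the error exponents, already gives positive lower density for much smaller $A$; obtaining density exactly $1$ with $A = 50$ is what makes the quantitative bookkeeping delicate.)
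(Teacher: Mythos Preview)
The paper does not prove this theorem at all: it is stated as background, attributed to Bourgain--Kontorovich \cite{BK14}, and no proof is given. So there is nothing in the paper to compare your proposal against.

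That said, your outline is a fair high-level summary of the actual Bourgain--Kontorovich argument: circle method with a smoothed count over the semigroup, major arcs handled by strong approximation plus conformal-measure regularity, minor arcs attacked via the bilinear structure $d(\gamma_1\gamma_2) = \langle v(\gamma_1), w(\gamma_2)\rangle$ together with the expander/spectral-gap input for equidistribution mod $q$, and the final numerical check that $\delta_{50}$ exceeds the threshold. A few caveats if you intend to flesh this out: the actual paper does not use a single Cauchy--Schwarz on the bilinear form but a more elaborate iteration, the ``large sieve'' step is genuinely delicate and accounts for most of the length of \cite{BK14}, and the threshold condition they derive is $\delta_A > 307/312$ (checked numerically for $A = 50$ via Jenkinson--Pollicott bounds), which you should state rather than leave implicit. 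Your $\|r_N\|_\infty \ll N^\epsilon$ multiplicity bound is also not quite how they control the diagonal; the $\ell^\infty$ bound on the representation function is more subtle because distinct continued-fraction expansions can share a convergent. But as a roadmap your sketch is on target.
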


\Cref{thm:BourgainKontorovichZarembaConjecture} can be restated as
\begin{align*}
\#(\mathfrak{D}_{\{1, 2, \dotsc, A\}} \cap [1, N]) = N + o(N) \qquad \text{for some $A \in \mathbb N$}
\end{align*}
where $A = 50$ suffices. In fact, Bourgain--Kontorovich showed that the error term $o(N)$ can be improved to
\begin{align}
\label{eqn:BK-ImprovedError}
O\bigl(Ne^{-c\sqrt{\log(N)}}\bigr) \qquad \text{for some $c > 0$}.
\end{align}
Huang \cite{Hua15} improved \cref{thm:BourgainKontorovichZarembaConjecture} and its refinements so that $A = 5$ suffices.

Counting results for continued fractions semigroups are related to Zaremba's conjecture because it is required in the techniques of Bourgain--Kontorovich. In particular, it was noticed by Magee--Oh--Winter that their counting result for continued fractions semigroup \cite[Theorem 11]{MOW19} can be used in place of \cite[Theorem 8.1]{BK14} to further improve the error term $o(N)$ to $O(N^{1 - \epsilon})$ for some $\epsilon \in (0, 1)$ (cf. \cite{Bou12}). In light of Huang's improvement, the state of the art towards Zaremba's conjecture is then
\begin{align*}
\#(\mathfrak{D}_{\{1, 2, 3, 4, 5\}} \cap [1, N]) = N + O(N^{1 - \epsilon}) \qquad \text{for some $\epsilon \in (0, 1)$}.
\end{align*}

We thus propose the following generalization of Zaremba's conjecture.

\begin{conjecture}
There exists a bounded subset $\mathscr{A} \subset \mathbb N + i\mathbb Z$ such that $\mathfrak{D}_{\mathscr{A}} = \mathbb N + i\mathbb Z$.
\end{conjecture}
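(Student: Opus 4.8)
\emph{This is a conjecture, and --- like Zaremba's conjecture itself --- we expect the literal equality to lie beyond current technology; what follows is the program one would run, modeled on Bourgain--Kontorovich \cite{BK14}, together with the point at which it stalls.} The natural route is the ensemble circle method over $\mathbb{Z}[i]$, with the arithmetic input \cite[Theorem 8.1]{BK14} replaced by the uniform congruence counting of \cref{thm:MainTheorem} (equivalently \cref{cor:MainCorollary}). First fix a bounded alphabet $\mathscr{A} \subset \mathbb{N} + i\mathbb{Z}$ large enough that the continued fractions semigroup $\Gamma$ it generates is Zariski dense with limit set of Hausdorff dimension $\delta_\Gamma > 1$; one expects this to be achievable by taking $\mathscr{A}$ to exhaust larger and larger boxes of Gaussian integers, since then $\delta_\Gamma \to 2$ (this requires a Hensley-type dimension estimate in the Gaussian setting, but the threshold $\delta_\Gamma = 1$ is exactly where the number $\asymp N^{\delta_\Gamma}$ of admissible denominators $d$ with $N(d) \le N$ begins to exceed the number $\asymp N$ of elements of $\mathbb{N} + i\mathbb{Z}$ with $N(d) \le N$). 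For $d \in \mathbb{N} + i\mathbb{Z}$ with $N(d) \asymp N$ one forms the smoothly weighted representation count $\mathcal{N}(d) = \sum_{\gamma} \psi(\|\gamma\|^2 / N)\, \mathbf{1}[\langle \gamma e_2, e_2\rangle = d]$ over $\gamma$ in the generating semigroup, detects the condition $\langle \gamma e_2, e_2\rangle = d$ via additive characters of $\mathbb{Z}[i]$ arranged in a Farey-type dissection, and splits into major and minor arcs.

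On the major arcs --- small moduli $q$ --- the inner sum over $\gamma$ becomes a congruence sum of the shape counted by \cref{thm:MainTheorem}, which supplies its main term $C_0 N^{\delta_\Gamma}/\#\tilde{\mathbf{G}}_q$ with a power-saving error $O(N(q)^C N^{\delta_\Gamma - \epsilon})$ that is \emph{uniform} in $q$; summing these main terms over the major arcs yields, via a second-moment (Cauchy--Schwarz) argument of Bourgain--Kontorovich type, a contribution of the true order provided the resulting singular series is positive. On the minor arcs one exploits the multiplicativity of the generating semigroup, writing $\gamma = \gamma_1 \gamma_2$ to produce a bilinear (Type II) exponential sum $\sum_{\gamma_1} \sum_{\gamma_2} e_{\mathbb{Z}[i]}(\cdots)$; Cauchy--Schwarz in $\gamma_1$, opening the square, and then invoking \cref{cor:MainCorollary} with a \emph{generic} modulus --- this is exactly where a power saving in $q$ rather than merely in $N$ is indispensable --- yields the cancellation needed to keep the minor arcs below the main term. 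Assembling the two contributions gives $\#\{d \in \mathfrak{D}_\mathscr{A} : N(d) \le N\} \gg N$; that is, $\mathfrak{D}_\mathscr{A}$ has positive lower density in $\mathbb{N} + i\mathbb{Z}$, the exact Gaussian analogue of \cref{thm:BourgainKontorovichZarembaConjecture}.

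Sharpening the minor-arc estimate and combining it with a local-to-global statement --- the surjectivity $\Gamma \twoheadrightarrow \tilde{\mathbf{G}}_q$ coming from the strong approximation set-up, which shows that the only possible obstruction to $d \in \mathfrak{D}_\mathscr{A}$ is a congruence one and that for $\mathscr{A}$ large there is none --- should then upgrade positive density to density one with a power-saving exceptional set, $\#\{d \in \mathbb{N} + i\mathbb{Z} : N(d) \le N,\ d \notin \mathfrak{D}_\mathscr{A}\} = O(N^{1 - \epsilon})$.

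\textbf{The essential obstacle is the last step: passing from density one to the literal equality $\mathfrak{D}_\mathscr{A} = \mathbb{N} + i\mathbb{Z}$.} No circle-method input --- including the strongest congruence counting --- removes the final exceptional set, and this is precisely the barrier that keeps Bourgain--Kontorovich and Huang from proving Zaremba's conjecture itself; closing it for $\mathbb{N} + i\mathbb{Z}$ would in particular close it for $\mathbb{N}$, and one expects it to require a genuinely new mechanism --- for instance an effective reciprocity or descent that produces a prescribed $d$ from denominators of strictly smaller norm. This is why the statement is recorded here as a conjecture rather than proved.
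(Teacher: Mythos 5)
This statement is recorded in the paper as an open conjecture --- the Gaussian-integer analogue of Zaremba's conjecture --- and the paper offers no proof of it; the only surrounding commentary is that \cref{thm:MainTheorem} in the continued fractions semigroup setting may serve as an input, via the Bourgain--Kontorovich circle method \cite{BK14} and its refinement by Magee--Oh--Winter \cite{MOW19}, toward density-one results in this direction. Your write-up correctly identifies the statement as a conjecture, does not claim to prove it, and sketches exactly the program the paper envisions (major/minor arcs with \cref{thm:MainTheorem} supplying the uniform congruence count) together with the genuine barrier --- passing from density one to literal equality --- so there is nothing to fault and nothing further to compare.
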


Analogous to Magee--Oh--Winter's improvement for the theorem of Bourgain--Kontorovich, \cref{thm:MainTheorem} for the continued fractions semigroup case may find applications to prove theorems towards the above conjecture.

\subsection{Outline of the proof of \texorpdfstring{\cref{thm:MainTheorem}}{\autoref{thm:MainTheorem}}}
The proof is based on the techniques of transfer operators. We mainly follow \cite{Nau05,Sto11,MOW19,Sar22}.

As in \cite{MOW19}, we use \emph{congruence} transfer operators $\mathcal{M}_{\xi, q}: C(\Lambda, L^2(\tilde{\mathbf{G}}_q)) \to C(\Lambda, L^2(\tilde{\mathbf{G}}_q))$ for $\xi = a + ib \in \mathbb C$ defined by
\begin{align*}
\mathcal{M}_{\xi, q}(H)(u) &= \sum_{u' \in T^{-1}(u)} e^{(\delta_\Gamma + a + ib)\tau(u')} \mathtt{c}_q(u') H(u').
\end{align*}
The uniform counting result \cref{thm:MainTheorem} follows from spectral bounds of the congruence transfer operators in \cref{thm:CongruenceOperatorSpectralBounds} exactly as in \cite[Section 3]{MOW19} using arguments by Bourgain--Gamburd--Sarnak \cite{BGS11}. Proving \cref{thm:CongruenceOperatorSpectralBounds} requires two separate strategies: the expander machinery and Dolgopyat's method. The former technique is required because for small frequencies $|b| \ll 1$, the Ruelle--Perron--Frobenius theorem and perturbation theory are insufficient to obtain spectral bounds \emph{uniformly} in $q \in \mathcal{O}$ as there are countably infinitely many of them. This idea is now well developed in \cite{OW16,MOW19,Sar22}.

We follow \cite{Sar22} to use the expander machinery of Golsefidy--Varj\'{u} \cite{GV12} which is a generalization of the breakthrough work of Bourgain--Gamburd--Sarnak \cite{BGS10}. The argument in \cite{Sar22} generalizes that of \cite[Appendix]{MOW19} by Bourgain--Kontorovich--Magee and requires the newly introduced concept of \emph{return trajectory subgroups}. The crucial step is to prove that they are Zariski dense. In both the Schottky semigroup and the continued fractions semigroup settings, this amounts to proving that the limit set of the corresponding return trajectory subgroups are not contained in any strictly lower dimensional sphere. For the continued fractions semigroup setting, we also need to prove that the return trajectory subgroups have full trace field $\mathbb Q(i)$. We do this by computing traces of various types of elements and using the Zariski density of $\Gamma$.

For Dolgopyat's method, the greatest difficulty in this paper is to prove the required \emph{local non-integrability condition (LNIC)}. This was proved by Naud \cite[Subsection 4.2]{Nau05} for the 2-dimensional case. We follow the general strategy of Naud to prove LNIC in the higher dimensional case; however, the proof requires many new techniques to resolve difficulties in higher dimensions. Moreover, for LNIC to be useful, we also require the non-concentration property (NCP). Another problem lies in the framework of Dolgopyat's method itself. In \cite{Nau05,MOW19}, the techniques involving the triadic partition proposition (see \cite[Proposition 5.6]{Nau05} and \cite[Proposition 26]{MOW19}) does not lend itself to generalizations to the higher dimensional setting. Thus, we take a different approach. A key observation in this paper is that adapting Stoyanov's version \cite{Sto11} of Dolgopyat's method \cite{Dol98} provides a cleaner proof without the triadic partition proposition. This approach also has the advantage that there is no need to prove a Federer/doubling property for the equilibrium state obtained from thermodynamic formalism since we can directly use the property of Gibbs measures instead. After overcoming these difficulties, the proofs for Dolgopyat's method go through \emph{uniformly} in the congruence parameter $q \in \mathcal{O}$ because of unitarity and local constancy of the cocycle which was first observed in \cite{OW16}.

\subsection{Organization of the paper}
We formulate the dynamical system of interest in \cref{sec:Preliminaries} and recall the necessary background from symbolic dynamics and thermodynamic formalism in \cref{sec:CodingTheDynamicalSystem}. In \cref{sec:CocyclesAndCongruenceTransferOperators}, we define cocycles and the congruence transfer operators and state their spectral bounds which is the main technical theorem of the paper. In \cref{sec:ReductionToNewInvariantFunctionsAtLevel_q,sec:ApproximatingTheTransferOperator,sec:ZariskiDensityOfTheReturnTrajectorySubgroups,sec:L2FlatteningLemma} we go through the expander machinery part of the argument. Here we follow \cite{Sar22} with the main difference being \cref{sec:ZariskiDensityOfTheReturnTrajectorySubgroups} where we prove Zariski density of the return trajectory subgroups and also the full trace field property in the continued fractions semigroup setting. In \cref{sec:Dolgopyat'sMethod,sec:LocalNon-IntegrabilityCondition,sec:ConstructionOfDolgopyatOperators,sec:ProofOfDolgopyat'sMethod}, we go through the adaptation of Stoyanov's version of Dolgopyat's method part of the argument where \cref{sec:LocalNon-IntegrabilityCondition} contains the LNIC for the Schottky semigroup and the continued fractions semigroup settings in arbitrary dimensions. Finally, in \cref{sec:ConvertingUniformSpectralBoundsToUniformCounting} we summarize how to convert the uniform spectral bounds to a uniform counting result.

\subsection*{Acknowledgements}
I thank my advisor Hee Oh for suggesting this problem and encouraging me to work on it. I also thank Alex Kontorovich for pointing out the Ph.D. thesis work of his student Katie McKeon and for correcting a previous error when quoting the result of Bourgain--Kontorovich in \cref{eqn:BK-ImprovedError}.

\section{Preliminaries}
\label{sec:Preliminaries}
We will first fix some notations and then introduce the two settings of interest which are treated simultaneously in the rest of the paper.

Let $\mathbb H^n$ be the $n$-dimensional hyperbolic space for $n \geq 2$. We denote by $\langle \cdot, \cdot\rangle$ and $\|\cdot\|$ the inner product and norm respectively on any tangent space of $\mathbb H^n$ induced by the hyperbolic metric. We also denote by $d$ the distance function on $\mathbb H^n$ induced by the hyperbolic metric. Let $\partial_\infty\mathbb H^n$ denote the boundary at infinity and the compactification of $\mathbb H^n$ by $\overline{\mathbb H^n} = \mathbb H^n \cup \partial_\infty\mathbb H^n$. Recall that the action of isometries of $\mathbb{H}^n$ extends to an action on $\overline{\mathbb{H}^n}$. For any hyperbolic isometry $g$ of $\mathbb{H}^n$, we denote by $g^+ \in \partial_\infty \mathbb{H}^n$ (resp. $g^- \in \partial_\infty \mathbb{H}^n$) its attracting (resp. repelling) fixed point. Let $o \in \mathbb H^n$ be a reference point. Let $(e_1, e_2, \dotsc, e_n)$ be the standard basis of $\mathbb R^n$. Recall the upper half space model $\mathbb H^n \cong \{(x_1, x_2, \dotsc, x_n) \in \mathbb R^n: x_n > 0\}$ with boundary at infinity $\partial_\infty\mathbb H^n \cong \{(x_1, x_2, \dotsc, x_n) \in \mathbb R^n: x_n = 0\} \cup \{\infty\} \cong \mathbb R^{n - 1} \cup \{\infty\}$ such that the reference point is $o = e_n$.

\subsection{Schottky semigroup setting}
Let $\mathbf{G} = \SO_Q < \GL_{n + 1}$ be the algebraic group defined over $\mathbb Q$ consisting of elements which preserve the quadratic form $Q = x_1^2 + x_2^2 + \dotsb + x_n^2 - x_{n + 1}^2$ for $n \geq 2$. Let $G = \mathbf{G}(\mathbb R)^\circ$ which we recognize as the group of orientation preserving isometries of $\mathbb H^n$.

\begin{definition}[Schottky semigroup]
\label{def:SchottkySemigroup}
Let $N_0 \in \mathbb N$, $0 \leq N_1 \leq N_0$ be an integer, and $D_1, D_2, \dotsc, D_{2N_0} \subset \mathbb R^{n - 1} \cup \{\infty\}$ be mutually disjoint closed Euclidean balls. We call the finite subset $\{g_1, g_2, \dotsc, g_{N_0 + N_1}\} \subset G$ (resp. $\subset \tilde{G}$) a \emph{Schottky generating set} if it consists of hyperbolic elements such that
\begin{enumerate}
\item $g_j(\exterior(D_{j + N_0})) = \interior(D_j)$ for all $1 \leq j \leq N_0$;
\item $g_{N_0 + j} = {g_j}^{-1}$ for all $1 \leq j \leq N_1$.
\end{enumerate}
The semigroup in $G$ (resp. $\tilde{G}$) generated by the Schottky generating set is called a \emph{Schottky semigroup}.
\end{definition}

\begin{remark}
By the ping-pong lemma, a Schottky semigroup is a free semigroup when $N_1 = 0$ but not otherwise due to inverse relations. Similarly, when $N_1 = N_0$, we obtain the usual Schottky \emph{group} which is a free group.
\end{remark}

\begin{remark}
Precomposing the standard representation of $G$ by a conjugation by an element of $\Stab_G(e_n) = \SO(n)$ if necessary, we can assume that the closed Euclidean balls are contained in $\mathbb R^{n - 1}$. This does not affect the formulas in \cref{sec:ConvertingUniformSpectralBoundsToUniformCounting}.
\end{remark}

For the Schottky semigroup setting, let $\Gamma < G$ be a Zariski dense Schottky semigroup. We fix $N_0, N_1 \in \mathbb N$, $N = N_0 + N_1$, and $D_1, D_2, \dotsc, D_{2N_0} \subset \mathbb R^{n - 1}$, and a Schottky generating set $\{g_1, g_2, \dotsc, g_N\} \subset G$ corresponding to $\Gamma$ as in \cref{def:SchottkySemigroup} henceforth. Note that the Zariski dense condition implies $N_0 \geq 2$. Define $D = \bigcup_{j = 1}^N D_j$ and the related constants $\hat{D} = \max_{j \in \{1, 2, \dotsc, N\}} \diam(D_j)$ and $\check{D} = \min_{j \in \{1, 2, \dotsc, N\}} \diam(D_j)$.

To make use of the strong approximation theorem of Weisfeiler \cite{Wei84} later on, we need to work on the simply connected cover $\tilde{\mathbf G}$ endowed with the covering map $\tilde{\pi}: \tilde{\mathbf G} \to \mathbf G$ defined over $\mathbb Q$. Let $\tilde{G} = \tilde{\mathbf G}(\mathbb R)$ which is connected and projects down to $\tilde{\pi}(\tilde{G}) = G$. Choosing unique lifts $\tilde{g}_j \in \tilde{G}$ of $g_j$ for all $1 \leq j \leq N$, we have a corresponding Schottky generating set $\{\tilde{g}_1, \tilde{g}_2, \dotsc, \tilde{g}_N\} \subset \tilde{G}$. Let $\tilde{\Gamma} < \tilde{G}$ be the corresponding Zariski dense Schottky semigroup and note that $\Gamma = \tilde{\pi}(\tilde{\Gamma})$. For all $\gamma \in \Gamma$, writing $\gamma = g_{\alpha_1}g_{\alpha_2} \dotsb g_{\alpha_j}$ for some sequence $(\alpha_1, \alpha_2, \dotsc, \alpha_j)$ and $j \in \mathbb N$ as a word consisting of the Schottky generators, we denote $\tilde{\gamma} = \tilde{g}_{\alpha_1}\tilde{g}_{\alpha_2} \dotsb \tilde{g}_{\alpha_j} \in \tilde{\Gamma}$ to be the corresponding lift. To be able to discuss the notion of congruence subgroups, let us suppose that $\tilde{\Gamma} < \tilde{\mathbf G}(\mathbb Z)$. Then we take $\Gamma$ introduced previously to be $\Gamma = \tilde{\pi}(\tilde{\Gamma})$. For uniform notations throughout the paper, we denote $\mathcal{O} = \mathbb Z$ so that $\tilde{\mathbf G}(\mathcal{O}) = \tilde{\mathbf G}(\mathbb Z)$ for the Schottky semigroup setting. Fix $q_0 \in \mathcal{O}$ to be the nonzero integer such that both the strong approximation theorem of Weisfeiler \cite{Wei84} and the expander machinery of Golsefidy--Varj\'{u} \cite[Theorem 1]{GV12} hold for the return trajectory subgroups introduced in \cref{sec:ZariskiDensityOfTheReturnTrajectorySubgroups}.

\subsection{Continued fractions semigroup setting}
\label{subsec:ContinuedFractionsSemigroupSetting}
We generalize the setting formulated in \cite{MOW19}. Let $\mathbf{G} = \SL_2$ as an algebraic group defined over $\mathbb Q$. Let $n = 3$ and $G = \mathbf{G}(\mathbb C)$ which we recognize as the double cover of the group of orientation preserving isometries of $\mathbb H^3$. Define
\begin{align*}
g_a &=
\begin{pmatrix}
0 & 1 \\
1 & a
\end{pmatrix}
\qquad
\text{for all $a \in \mathbb C$};
&
g_{a, a'} &= g_ag_{a'}
\qquad
\text{for all $a, a' \in \mathbb C$}.
\end{align*}

\begin{definition}[Continued fractions semigroup]
Let $\mathscr{A} \subset \mathbb N + i\mathbb Z$ be a finite subset. The semigroup generated by $\{g_{a, a'}: a, a' \in \mathscr{A}\} \subset \SL_2(\mathbb Z[i])$ is called a \emph{continued fractions semigroup}.
\end{definition}

For the continued fractions semigroup setting, let $\Gamma < \SL_2(\mathbb Z[i]) < G$ be a continued fractions semigroup corresponding to some finite subset $\mathscr{A} \subset \mathbb N + i\mathbb Z$. Since the case $\mathscr{A} \subset \mathbb N$ was already treated in \cite{MOW19}, we focus on the case $\mathscr{A} \not\subset \mathbb N$ in this paper. The strong approximation theorem of Weisfeiler \cite{Wei84}, which we use later on, requires two hypotheses to be fulfilled. It is natural to impose the first hypothesis that $\Gamma < G$ be Zariski dense. Note that this implies $\#\mathscr{A} \geq 2$. The second trace field hypothesis $\mathbb Q(\tr(\Gamma)) = \mathbb Q(i)$ follows from $\mathscr{A} \not\subset \mathbb N$.

We introduce the right regions with which to do dynamics. For all $\epsilon \in [0, \sqrt{3} - 1)$, define
\begin{align*}
D^\epsilon &= \{\xi \in \mathbb C: \Re(\xi) \geq \epsilon, |\xi - (2 + \epsilon)^{-1}| \leq (2 + \epsilon)^{-1}\}; \\
D_a^\epsilon &= g_a D^\epsilon \qquad \text{for all $a \in \mathbb C$}.
\end{align*}
We call any of these \emph{trimmed disks} if $\epsilon \in (0, \sqrt{3} - 1)$ and \emph{untrimmed disks} if $\epsilon = 0$. We refer to \cref{fig:TrimmedDisks} for a visualization of the trimmed disks and the lemma below. We need to use trimmed disks rather than untrimmed disks because the $\epsilon > 0$ provided by the lemma below guarantees the hyperbolicity property in \cref{lem:Hyperbolicity} and at the same time ensures that the Markov property in \cref{eqn:MarkovProperty} holds.

\begin{lemma}
\label{lem:GammaPingPongActionOnTrimmedDisk}
There exists $\epsilon \in (0, \sqrt{3} - 1)$ such that $\{D_a^\epsilon: a \in \mathscr{A}\}$ is a set of mutually disjoint trimmed disks which are contained in $\interior(D^\epsilon)$.
\end{lemma}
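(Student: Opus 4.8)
The plan is to analyze the map $g_a$ acting on the standard closed disk $D^0 = \{\xi \in \mathbb C : |\xi - 1/2| \le 1/2\}$ and then quantify the "trimming" needed. Since $g_a = \begin{pmatrix} 0 & 1 \\ 1 & a \end{pmatrix}$ acts on $\mathbb C \cup \{\infty\}$ by the Möbius transformation $\xi \mapsto \frac{1}{a + \xi}$, I would first record that $g_a(D^0) = \frac{1}{a + D^0}$ is again a Euclidean disk (the image of a disk under a Möbius map avoiding the pole), and that for $a \in \mathbb N + i\mathbb Z$ with $\Re(a) \ge 1$ the pole $-a$ is at distance $> 1/2$ from the center $1/2$ of $D^0$, so $g_a(D^0) \subset D^0$: indeed $|a + \xi| \ge \Re(a+\xi) \ge \Re(a) \ge 1$ for $\xi \in D^0$ (using $\Re(\xi) \ge 0$ on $D^0$), hence $|g_a(\xi)| \le 1$, and a short computation with $\Re(1/(a+\xi))$ shows the image lands in the disk $|\zeta - 1/2| \le 1/2$. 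This is the classical continued-fractions ping-pong, essentially as in \cite{MOW19}, and gives the $\epsilon = 0$ version of the containment, though not yet disjointness of the $g_a(D^0)$.

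Next I would extract mutual disjointness of the full disks $\{g_a(D^0) : a \in \mathscr A\}$ for distinct $a$. If $g_a(D^0) \cap g_{a'}(D^0) \ne \varnothing$ with $a \ne a'$, then there are $\xi, \xi' \in D^0$ with $\frac{1}{a+\xi} = \frac{1}{a'+\xi'}$, i.e. $a - a' = \xi' - \xi$; but $|\xi' - \xi| < 1$ for $\xi, \xi' \in D^0$ (diameter $1$), while $|a - a'| \ge 1$ since $a, a' \in \mathbb N + i\mathbb Z$ are distinct Gaussian-integer-type points with real parts in $\mathbb N$ — wait, this only gives $|a-a'|\ge 1$, so one must be slightly more careful on the boundary. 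This is exactly where the trimming enters: by shrinking $D^0$ to $D^\epsilon$ (which is $D^0$ with the strips $\Re(\xi) < \epsilon$ and $\Im(\xi) > \tfrac12 - \tfrac\epsilon4$ removed), the diameter of $D^\epsilon$ drops strictly below $1$, and moreover $g_a(D^\epsilon)$ retreats into the interior of $D^\epsilon$. Concretely I would show $\diam(D^\epsilon) \le 1 - c\epsilon$ for some absolute $c > 0$, so that $|\xi' - \xi| \le 1 - c\epsilon < 1 \le |a - a'|$ forces the intersection to be empty for all distinct $a, a' \in \mathscr A$; and separately, a direct estimate on $\Re(1/(a+\xi))$ and $\Im(1/(a+\xi))$ for $\xi \in D^\epsilon$, $\Re(a) \ge 1$, shows $g_a(D^\epsilon) \subset D^\epsilon$ once $\epsilon$ is small. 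Since $\mathscr A$ is finite, finitely many such smallness constraints on $\epsilon$ can be met simultaneously, and we pick $\epsilon \in (0,1)$ satisfying all of them.

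The main obstacle I anticipate is the asymmetric shape of $D^\epsilon$: it is not the concentric shrinking of $D^0$ but a disk with two chords removed, and the specific cutoff $\Im(\xi) \le \tfrac12 - \tfrac\epsilon4$ (rather than a symmetric $|\Im(\xi)| \le \tfrac12 - \epsilon$) is clearly tuned to the geometry of the images $g_a(D^0)$ — presumably because the images cluster near a boundary arc of $D^0$ and one needs the trimmed region to be both forward-invariant and to have the images of distinct generators fall into disjoint sub-arcs. So the real work is the explicit Möbius-image computation: writing $\xi = \tfrac12 + \tfrac12 e^{i\theta} \cdot r$ (or parametrizing the boundary of $D^\epsilon$ piecewise by the two circular arcs and two chords) and tracking how $\frac{1}{a+\xi}$ moves the $\Re \ge \epsilon$ and $\Im \le \tfrac12 - \tfrac\epsilon4$ constraints, uniformly over $a \in \mathscr A$. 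Once those inequalities are verified on the (finite) boundary data, convexity/the maximum principle for Möbius images of disks upgrades them to the whole region, and disjointness follows from the diameter bound as above. I would also double-check the edge case where some $a \in \mathscr A$ has $\Re(a) = 1$ and small positive imaginary part, since that is where the containment $g_a(D^\epsilon) \subset D^\epsilon$ is tightest.
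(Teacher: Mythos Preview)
Your overall strategy---reduce to the M\"obius action $\xi\mapsto 1/(a+\xi)$, check the $\epsilon=0$ containment, then argue disjointness and forward-invariance of the trimmed region---is sound and close in spirit to the paper's proof. But there is a genuine gap in your disjointness step.

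You propose to get disjointness from the bound $\diam(D^\epsilon)\le 1-c\epsilon$. This is false: for small $\epsilon>0$ one has $\diam(D^\epsilon)=1$ exactly. Indeed, take any $\theta$ with $|\cos\theta|\le 1-2\epsilon$ and $|\sin\theta|\le 1-\epsilon/2$ (e.g.\ $\theta=\pi/4$ for $\epsilon$ small); then both antipodal boundary points $\tfrac12\pm\tfrac12 e^{i\theta}$ of $D^0$ satisfy the constraints $\Re\ge\epsilon$ and $\Im\le\tfrac12-\tfrac\epsilon4$, hence lie in $D^\epsilon$ and are at distance $1$. So the inequality $|\xi'-\xi|\le\diam(D^\epsilon)<1\le|a-a'|$ does not go through. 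What actually makes disjointness work is direction-specific: if $a-a'=\pm1$ one uses $\Re(\xi'-\xi)\in[-(1-\epsilon),1-\epsilon]$ from the trim $\Re\ge\epsilon$, and if $a-a'=\pm i$ one uses $\Im(\xi'-\xi)\in[-(1-\epsilon/4),1-\epsilon/4]$ from the trim $\Im\le\tfrac12-\tfrac\epsilon4$; the remaining cases have $|a-a'|\ge\sqrt2>1$.

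The paper packages exactly this observation, and also the containment, via the conjugation $\overline{e}\,g_a=n_a^-$ where $\overline{e}=\left(\begin{smallmatrix}0&1\\1&0\end{smallmatrix}\right)$ and $n_a^-$ is translation by $a$. Applying $\overline{e}$ sends $D_a^\epsilon$ to $a+D^\epsilon$, making disjointness of the family immediate from the direction-specific trimming above, for every $\epsilon>0$. For containment the same trick gives $\overline{e}D_a^\epsilon=a+D^\epsilon\subset\{\Re\ge 1+\epsilon\}$, whose image under $\overline{e}$ is the disk $\bigl|\xi-\tfrac1{2(1+\epsilon)}\bigr|\le\tfrac1{2(1+\epsilon)}$; since $\tfrac1{2(1+\epsilon)}<\tfrac12-\tfrac\epsilon4$ for $\epsilon\in(0,1)$, this disk sits inside $D^\epsilon$, and separately one uses finiteness of $\mathscr A$ to pick $\epsilon$ small enough that each $a+D^0$ lies inside the large disk $\overline{e}\{\Re\ge\epsilon\}$. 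This is cleaner than parametrizing the boundary of $D^\epsilon$ and tracking images piecewise: once you linearize by $\overline{e}$, both assertions reduce to half-plane containments that read off directly from $\Re(a)\ge 1$ and the definition of $D^\epsilon$.
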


\begin{proof}
Denote
\begin{align*}
\overline{e} &=
\begin{pmatrix}
0 & 1 \\
1 & 0
\end{pmatrix};
&
n_a^- &=
\begin{pmatrix}
1 & a \\
0 & 1
\end{pmatrix}
\quad
\text{for all $a \in \mathbb C$}.
\end{align*}
From the calculation $\overline{e}g_a = n_a^-$ for all $a \in \mathbb C$, which act by translations, it follows that $\{\overline{e}D_a^\epsilon: a \in \mathscr{A}\}$ and hence also $\{D_a^\epsilon: a \in \mathscr{A}\}$ is a set of mutually disjoint trimmed disks for all $\epsilon \in (0, \sqrt{3} - 1)$.

Since $\mathscr{A} \subset \mathbb N + i\mathbb Z$ is a finite subset, we can fix $\epsilon \in (0, \sqrt{3} - 1)$ such that
\begin{align*}
\overline{e}D_a^0 \subset \{\xi \in \mathbb C: |\xi - (2\epsilon)^{-1}| < (2\epsilon)^{-1}\} = \overline{e}\{\xi \in \mathbb C: \Re(\xi) > \epsilon\}
\end{align*}
for all $a \in \mathscr{A}$. In particular, $\overline{e}D_a^\epsilon \subset \overline{e}\{\xi \in \mathbb C: \Re(\xi) > \epsilon\}$ and we also have
\begin{align*}
\overline{e}D_a^\epsilon &\subset \{\xi \in \mathbb C: \Re(\xi) > 1 + \epsilon/2\} \subset \overline{e}\{\xi \in \mathbb C: |\xi - (2 + \epsilon)^{-1}| < (2 + \epsilon)^{-1}\}
\end{align*}
for all $a \in \mathscr{A}$. The two containments imply $D_a^\epsilon \subset \interior(D^\epsilon)$ for all $a \in \mathscr{A}$.
\end{proof}

\begin{figure}[h]
\begin{center}
\includegraphics[width=0.5\textwidth]{"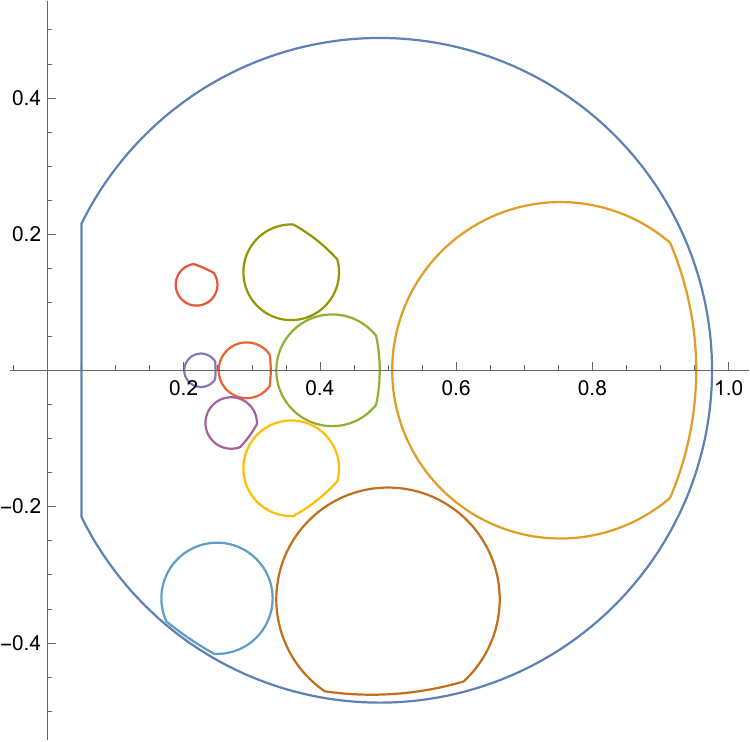"}
\end{center}
\caption{An illustration of an example of a set of mutually disjoint trimmed disks $\{D_a^\epsilon: a \in \mathscr{A}\}$ contained in the interior of the trimmed disk $D^\epsilon$ with $\epsilon = 0.05$. The disks are indicated by their boundaries.}
\label{fig:TrimmedDisks}
\end{figure}

Fix $\epsilon \in (0, \sqrt{3} - 1)$ provided by \cref{lem:GammaPingPongActionOnTrimmedDisk}. Define the trimmed disks $D_{a, a'} = g_aD_{a'}^\epsilon = g_{a, a'}D^\epsilon$ corresponding to the generator $g_{a, a'}$ for all $a, a' \in \mathscr{A}$. Henceforth, we set $N = \# \mathscr{A}^2$ and relabel the trimmed disks and the generators as $D_j$ and $g_j$ for all $1 \leq j \leq N$. Define $D = \bigcup_{j = 1}^N D_j$ and the related constants $\hat{D} = \max_{j \in \{1, 2, \dotsc, N\}} \diam(D_j)$ and $\check{D} = \min_{j \in \{1, 2, \dotsc, N\}} \diam(D_j)$.

\begin{remark}
By \cref{lem:GammaPingPongActionOnTrimmedDisk} and the ping-pong lemma, $\Gamma$ is a free semigroup.
\end{remark}

For uniform notations throughout the paper, we also denote $\tilde{\mathbf{G}} = \mathbf{G}$, $\mathcal{O} = \mathbb Z[i]$ so that $\tilde{\mathbf{G}}(\mathcal{O}) = \SL_2(\mathbb Z[i])$, $\tilde{\Gamma} = \Gamma$, and $\tilde{\gamma} = \gamma$ for all $\gamma \in \Gamma$, for the continued fractions semigroup setting. Fix $q_0 \in \mathcal{O}$ to be the nonzero Gaussian integer such that both the strong approximation theorem of Weisfeiler \cite{Wei84} and the expander machinery of Golsefidy--Varj\'{u} \cite[Corollary 6]{GV12} hold for the return trajectory subgroups introduced in \cref{sec:ZariskiDensityOfTheReturnTrajectorySubgroups}.

\subsection{Limit set}
\begin{definition}[Limit set]
The \emph{limit set} of $\Gamma$ is the set $\Lambda(\Gamma) \subset \partial_\infty\mathbb H^n$ consisting of limit points of the $\Gamma$-orbit $\Gamma o \subset \overline{\mathbb H^n}$, i.e., $\Lambda(\Gamma) = \lim(\Gamma o)$.
\end{definition}

We often denote $\Lambda = \Lambda(\Gamma)$. Note that in both the Schottky semigroup and the continued fractions semigroup settings, we have $\Lambda \subset \mathbb R^{n - 1} \subset \partial_\infty \mathbb H^n$.

\begin{definition}[Critical exponent]
The \emph{critical exponent} $\delta_\Gamma$ of $\Gamma$ is the abscissa of convergence of the Poincar\'{e} series $\mathscr{P}_\Gamma(s) = \sum_{\gamma \in \Gamma} e^{-s d(o, \gamma o)}$.
\end{definition}

\begin{remark}
It is well-known that the above definitions are independent of $o \in \mathbb H^n$. Also, in our case, $\delta_\Gamma \in (0, n - 1)$ and coincides with the Hausdorff dimension of $\Lambda$ (see \cite[Theorem 4]{Bow79} \footnote{Although the proof of \cite[Theorem 4]{Bow79} is for Schottky groups, it works just as well for our free semigroup settings with superficial modifications.}).
\end{remark}

\section{Coding the dynamical system}
\label{sec:CodingTheDynamicalSystem}
In this section, we will introduce the dynamical system and its coding. Then we will review symbolic dynamics and thermodynamic formalism.

\subsection{The dynamical system}
We define $T: D \to \mathbb R^{n - 1}$ by
\begin{align*}
T(u) = g_j^{-1} \cdot u = \tilde{g}_j^{-1} \cdot u \qquad \text{for all $u \in D_j$ and $1 \leq j \leq N$}.
\end{align*}

It easily follows from \cref{def:SchottkySemigroup} and \cref{lem:GammaPingPongActionOnTrimmedDisk} that $T$ satisfies the \emph{Markov property} for both the Schottky semigroup and the continued fractions semigroup settings, i.e.,
\begin{align}
\label{eqn:MarkovProperty}
\text{for all $1 \leq j, k \leq N$}, \qquad \interior(D_j) \cap T(\interior(D_k)) \neq \varnothing \implies D_j \subset T(D_k).
\end{align}

The following \cref{lem:Hyperbolicity} is a fundamental \emph{hyperbolicity property} of our dynamical system which will be used throughout the paper. It can be proved exactly as in \cite[Lemma 3.1]{GLZ04} for the Schottky semigroup setting.

\begin{remark}
In the proof of \cite[Lemma 3.1]{GLZ04}, keeping the same notations from that paper, we can see that $\gamma^{-1}$ is contracting on the compact set $\bigcup_{\gamma_j \neq \gamma(1)^{-1}} \overline{\mathcal{D}_j}$ and hence $\gamma$ is expanding on the compact set $\gamma^{-1}\big(\bigcup_{\gamma_j \neq \gamma(1)^{-1}} \overline{\mathcal{D}_j}\big)$ which is bounded away from $\gamma^{-1}\infty$. So, we also have the upper bound in \cref{lem:Hyperbolicity} by compactness.
\end{remark}

\begin{lemma}
\label{lem:Hyperbolicity}
There exist $c_0 \in (0, 1)$ and $\kappa_1 > \kappa_2 > 1$ such that
\begin{align*}
c_0 \kappa_2^k \leq \|(dT^k)_u\|_{\mathrm{op}} \leq c_0^{-1}\kappa_1^k \qquad \text{for all $u \in T^{-k}(D)$ and $k \in \mathbb N$}.
\end{align*}
\end{lemma}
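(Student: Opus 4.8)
The plan is to reduce the statement for $T^k$ to a statement about a single branch $\gamma^{-1}$ where $\gamma = g_{\alpha_1}g_{\alpha_2}\cdots g_{\alpha_k} \in \Gamma$, and then to exploit the ping-pong geometry. First I would observe that for $u \in T^{-k}(D)$ there is a unique admissible word $(\alpha_1,\dotsc,\alpha_k)$ with $T^j(u) \in D_{\alpha_{j+1}}$ for $0 \le j \le k-1$, so that locally $T^k = \gamma^{-1}$ with $\gamma = g_{\alpha_1}\cdots g_{\alpha_k}$, and $(dT^k)_u = (d\gamma^{-1})_u$. By the chain rule it suffices to control the conformal derivative $\|(d\gamma^{-1})_u\|_{\mathrm{op}}$, which for a Möbius transformation in the upper half-space model is a genuine scalar (conformal) factor; so the operator norm equals $|(\gamma^{-1})'(u)|$ in the appropriate conformal sense.

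The key geometric input is the ping-pong nesting: $u = T^k(u) \cdot$-preimage lies in $D_{\alpha_1}$, and more precisely $\gamma(\exterior(D_{\alpha_1 + N_0}))$ or the iterated image $g_{\alpha_1}\cdots g_{\alpha_{k-1}}(D_{\alpha_k})$ is a ball nested deep inside $D_{\alpha_1}$, at bounded ``depth'' from $\partial_\infty$ point $\gamma^{-1}\infty$ (equivalently, the point $u$ at which we differentiate stays a definite distance away from the pole $\gamma^{-1}\infty$ of the Möbius map $\gamma^{-1}$, since $u$ lies in one of the $D_j$'s and the disks are mutually disjoint). Following \cite[Lemma 3.1]{GLZ04}, one shows each generator $g_j^{-1}$ is uniformly contracting on the compact set $\overline{D \setminus D_{j}}$ — more precisely on $\bigcup_{k \ne \text{the appropriate index}} \overline{D_k}$ — with contraction ratio bounded by some $\kappa_2^{-1} < 1$, uniformly in $j$; and conversely each $g_j$ is uniformly expanding by at least $\kappa_2$ on the corresponding compact set. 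Composing $k$ such uniformly contracting maps along the orbit of $u$ (which, by the Markov property and disjointness of the $D_j$, always lands in the relevant compact contraction region) gives $\|(dT^k)_u\|_{\mathrm{op}} \ge c_0 \kappa_2^{k}$ for a suitable constant $c_0 \in (0,1)$ absorbing the finitely many boundary effects.

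For the upper bound, as noted in the remark preceding the statement, one runs the same argument in the other direction: $\gamma$ is uniformly expanding (by at most $\kappa_1$, for some $\kappa_1 > \kappa_2$) on the compact set $\gamma^{-1}\bigl(\bigcup \overline{D_k}\bigr)$, which is bounded away from the pole $\gamma^{-1}\infty$, so $\gamma^{-1}$ is uniformly contracting by at least $\kappa_1^{-1}$ there, giving $\|(dT^k)_u\|_{\mathrm{op}} \le c_0^{-1}\kappa_1^{k}$ after again absorbing boundary terms into $c_0^{-1}$. For the continued fractions setting, \cref{lem:GammaPingPongActionOnTrimmedDisk} supplies exactly the analogous disjointness and nesting of the trimmed disks $D_j$, so the identical argument applies verbatim; the only extra care is that the generators are now products $g_{a,a'}$, but since $\mathscr{A}$ is finite this changes only the finitely many constants. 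I expect the main obstacle to be purely bookkeeping: making the constants $c_0, \kappa_1, \kappa_2$ genuinely uniform over all admissible words and all base points, which comes down to noting that there are finitely many generators, finitely many disks, the disks are compactly contained and mutually disjoint (so the poles $\gamma^{-1}\infty$ never approach the region where we differentiate), and the conformal distortion of a Möbius map on a compact set avoiding its pole is bounded above and below in terms of the distance to that pole — all of which is standard once set up carefully.
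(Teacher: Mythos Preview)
Your outline for the Schottky case matches the paper: both defer to \cite[Lemma 3.1]{GLZ04} for the lower bound and to the compactness observation in the preceding remark for the upper bound.

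For the continued fractions case the paper does not adapt the abstract ping-pong argument but computes directly. Since $g_a(\xi) = (\xi + a)^{-1}$ one has $|g_a'(\xi)| = |\xi + a|^{-2}$; for $\xi \in D^\epsilon$ and $a \in \mathscr{A}$ the bounds $\Re(\xi + a) \ge 1 + \epsilon$ and $|\xi + a| \le 1 + C$ (with $C > \max_{a \in \mathscr{A}}|a|$) give $(1+C)^{-2} \le |g_a'(\xi)| \le (1+\epsilon)^{-2}$. Two such factors and an inversion yield $(1+\epsilon)^4 \le |T'(\xi)| \le (1+C)^4$ on $T^{-1}(D)$, and the chain rule (with $c_0 = 1$) finishes. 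Your geometric approach would also succeed once you note that the pole of each $g_a^{-1}$ is at $0 \notin D^\epsilon$, but the explicit calculation is shorter and makes the constants explicit.

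One bookkeeping issue: in your own convention $T^k = \gamma^{-1}$ with $\gamma = g_{\alpha_1}\cdots g_{\alpha_k}$, so $g_j^{-1} = T|_{D_j}$ is the \emph{expanding} map and $g_j$ the \emph{contracting} inverse branch. Your sentences ``$g_j^{-1}$ is uniformly contracting'' and ``$g_j$ is uniformly expanding'' therefore have the roles reversed; the conclusion you draw is correct, but the intermediate wording should be flipped.
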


\begin{proof}
By the above remark, it suffices to prove the lemma only for the continued fractions semigroup setting. We use the notation introduced in the beginning of \cref{subsec:ContinuedFractionsSemigroupSetting}. By chain rule, it suffices to show $\kappa_2 \leq |T'(\xi)| \leq \kappa_1$ for some constants $\kappa_1 > \kappa_2 > 1$, for all $\xi \in T^{-1}(D)$. Fix $\epsilon > 0$ to be the one provided by \cref{lem:GammaPingPongActionOnTrimmedDisk} and $C > \max_{a \in \mathscr{A}} |a|$. Since $|g_a'(\xi)| = |\xi + a|^{-2}$ for all $\xi \in \mathbb C$ and $a \in \mathscr{A}$, we then calculate that
\begin{align*}
(1 + C)^{-2} \leq (|\xi| + |a|)^{-2} \leq |g_a'(\xi)| \leq \Re(\xi + a)^{-2} \leq (1 + \epsilon)^{-2}
\end{align*}
for all $\xi \in D_a^\epsilon$ since $|a| < C$, $\Re(a) \geq 1$, and $D_a^\epsilon \subset D^\epsilon$ by \cref{lem:GammaPingPongActionOnTrimmedDisk} for all $a \in \mathscr{A}$. Recalling definitions, this bound gives
\begin{align*}
1 < (1 + \epsilon)^4 \leq |T'(\xi)| \leq (1 + C)^4 \qquad \text{for all $\xi \in T^{-1}(D)$}
\end{align*}
as desired.
\end{proof}

We fix $c_0$, $\kappa_1$, and $\kappa_2$ provided by \cref{lem:Hyperbolicity} henceforth and use the lemma extensively throughout the paper without further comments.

\subsection{Symbolic dynamics}
\label{subsec:SymbolicDynamics}
Denote by $\mathcal A = \{1, 2, \dotsc, N\}$ the alphabet for the symbolic coding. Define the space of infinite \emph{admissible sequences} by
\begin{align*}
\Sigma &= \{(\alpha_0, \alpha_1, \dotsc) \in \mathcal A^{\mathbb Z_{\geq 0}}: |\alpha_{j + 1} - \alpha_j| \neq N_0 \text{ for all } j \in \mathbb Z_{\geq 0}\}; & \Sigma &= \mathcal A^{\mathbb Z_{\geq 0}}
\end{align*}
for the Schottky semigroup and the continued fractions semigroup settings respectively. We also use the term \emph{admissible sequences} for finite sequences.

\begin{definition}[Cylinder]
For all $k \in \mathbb Z_{\geq 0}$, for all admissible sequences $\alpha = (\alpha_0, \alpha_1, \dotsc, \alpha_k)$, we define the corresponding \emph{cylinders}
\begin{align*}
\mathtt{C}_D[\alpha] &= \{x \in D: T^j(x) \in D_{\alpha_j} \text{ for all } 0 \leq j \leq k\}; & \mathtt{C}[\alpha] &= \Lambda \cap \mathtt{C}_D[\alpha]
\end{align*}
with \emph{length} $\len(\mathtt{C}_D[\alpha]) = \len(\mathtt{C}[\alpha]) = \len(\alpha) = k$. We also denote by $\len(\gamma)$ the word length of $\gamma \in \Gamma$ when it is expressed as a reduced word. We denote cylinders simply by $\mathtt{C}$ (or other typewriter style letters) when we do not specify the sequence.
\end{definition}

We have $\Lambda = \bigcap_{j = 1}^\infty T^{-j}(D) = \bigcup_{\alpha \in \Sigma} \bigcap_{j = 1}^\infty \mathtt{C}_D[\alpha_0, \alpha_1, \dotsc, \alpha_j]$. In fact, we have the homeomorphism $\Lambda \cong \Sigma$.

For all $k \in \mathbb{Z}_{\geq 0}$ and admissible sequences $\alpha = (\alpha_0, \alpha_1, \dotsc, \alpha_k)$, we often use the notation
\begin{align*}
g(\alpha) = \prod_{j = 0}^k g_{\alpha_j} = g_{\alpha_0} \cdot g_{\alpha_1} \dotsb g_{\alpha_k}
\end{align*}
where the product is taken in increasing order.

\subsection{Thermodynamics}
\label{subsec:Thermodynamics}
Define $L(\Lambda, \mathbb R)$ and $C(\Lambda, \mathbb R)$ to be the space of real-valued Lipschitz functions and continuous functions on $\Lambda$ respectively. For complex-valued functions, we simply write $L(\Lambda)$ and $C(\Lambda)$.
\begin{definition}[Pressure]
\label{def:Pressure}
For all \emph{potential} $f \in L(\Lambda, \mathbb R)$, its \emph{pressure} is
\begin{align*}
\Pr_T(f) = \sup_{\nu \in \mathcal{M}^1_T(\Lambda)}\left\{\int_\Lambda f \, d\nu + h_\nu(T)\right\}
\end{align*}
where $\mathcal{M}^1_T(\Lambda)$ is the set of $T$-invariant Borel probability measures on $\Lambda$ and $h_\nu(T)$ is the measure theoretic entropy of $T$ with respect to $\nu$.
\end{definition}

For all $f \in L(\Lambda, \mathbb R)$, there is in fact a unique $T$-invariant Borel probability measure $\nu_f$ on $\Lambda$ which attains the supremum in \cref{def:Pressure} called the \emph{$f$-equilibrium state} \cite[Theorems 2.17 and 2.20]{Bow08}.

\begin{definition}
We define the \emph{distortion function} $\tau: T^{-1}(D) \to \mathbb R$ by
\begin{align*}
\tau(u) &= \log\|d(J \circ T \circ J^{-1})_{J(u)}\|_{\mathrm{op}} \qquad \text{for all $u \in T^{-1}(D)$}
\end{align*}
where $J$ is a conformal map from the upper half space model to the ball model of $\overline{\mathbb H^n}$.
\end{definition}

Corresponding to the distortion function, we denote the $-\delta_\Gamma \tau$-equilibrium state by $\nu_\Lambda = \nu_{-\delta_\Gamma \tau}$. It is known that its pressure is $\Pr_T(-\delta_\Gamma \tau) = 0$ by the Bowen formula \cite{Bow79}. Moreover, $\nu_\Lambda$ is a Gibbs measure \cite{Bow08,PP90}, i.e., there exist $c_1^\Lambda, c_2^\Lambda > 0$ such that
\begin{align}
\label{eqn:PropertyOfGibbsMeasures}
c_1^\Lambda e^{-\delta_\Gamma \tau_k(x)} \leq \nu_\Lambda(\mathtt{C}) \leq c_2^\Lambda e^{-\delta_\Gamma \tau_k(x)}
\end{align}
for all $x \in \mathtt{C}$ and cylinders $\mathtt{C}$ with $\len(\mathtt{C}) = k \in \mathbb Z_{\geq 0}$, where we use the notation introduced in \cref{eqn:BirkhoffSums}.

\section{Congruence transfer operators and their uniform spectral bounds}
\label{sec:CocyclesAndCongruenceTransferOperators}
In this section, we first introduce the congruence setting and define cocycles. Then we define congruence transfer operators and present the main technical theorem about their uniform spectral bounds.

Let $q \in \mathcal{O}$. Denote $\tilde{\mathbf{G}}_q = \tilde{\mathbf{G}}(\mathcal{O}/q\mathcal{O})$. we have the reduction map $\pi_q: \tilde{\mathbf{G}}(\mathcal{O}) \to \tilde{\mathbf{G}}_q$ and we define the principal congruence subgroup of level $q$ to be $\ker(\pi_q)$. We define the congruence subsemigroup of $\tilde{\Gamma}$ of level $q$ to be the subsemigroup $\tilde{\Gamma}_q = \ker(\pi_q) \cap \tilde{\Gamma} < \tilde{\Gamma}$. We define $\Gamma_q = \tilde{\pi}(\tilde{\Gamma}_q)$.

\begin{definition}[Cocycle]
The \emph{cocycle} is a map $\mathtt{c}: D \to \tilde{\Gamma}$ defined by $\mathtt{c}|_{D_j} = \tilde{g}_j$ for all $1 \leq j \leq N$.
\end{definition}

\begin{definition}[Congruence cocycle]
For all $q \in \mathcal{O}$, define the \emph{congruence cocycle} $\mathtt{c}_q: D \to \tilde{\mathbf{G}}_q$ by $\mathtt{c}_q = \pi_q \circ \mathtt{c}$.
\end{definition}

For all $k \in \mathbb N$, we use the notation
\begin{align*}
\mathtt{c}^k(u) = \prod_{j = 1}^k\mathtt{c}(T^{k - j}(u)) = \mathtt{c}(T^{k - 1}(u)) \cdot \mathtt{c}(T^{k - 2}(u)) \dotsb \mathtt{c}(u)
\end{align*}
and $\mathtt{c}^0(u) = e \in \Gamma$, for all $u \in U$. Note that the order in the product is important in the definition above. We use similar notations for the congruence version.

\subsection{Transfer operators}
\label{subsec:TransferOperators}
We will use the notation $\xi = a + ib \in \mathbb C$ for the complex parameter for the transfer operators throughout the paper. Let $\Gamma$ and $\Gamma_q$ act on $L^2(\tilde{\mathbf{G}}_q)$ from the left by the \emph{right} regular representation. We also assume that sums over sequences are always over \emph{admissible sequences}.

\begin{definition}[Congruence transfer operator]
For all $f \in C(\Lambda)$ and nonzero $q \in \mathcal{O}$, the \emph{congruence transfer operator} $\mathcal{M}_{f, q}: C(\Lambda, L^2(\tilde{\mathbf{G}}_q)) \to C(\Lambda, L^2(\tilde{\mathbf{G}}_q))$ is defined by
\begin{align*}
\mathcal{M}_{f, q}(H)(u) = \sum_{u' \in T^{-1}(u)} e^{f(u')} \mathtt{c}_q(u') H(u')
\end{align*}
for all $u \in \Lambda$ and $H \in C(\Lambda, L^2(\tilde{\mathbf{G}}_q))$.
\end{definition}

From \cref{sec:Dolgopyat'sMethod} onward, we prefer to drop the subscript of the cocycle in the above definition since it is not significant there that it takes values in a finite group. For all $f \in C(\Lambda)$, denote $\mathcal{L}_f = \mathcal{M}_{f, 1}$ and simply call it the \emph{transfer operator}. Note that if $f \in L(\Lambda)$, then $\mathcal{L}_f$ preserves the subspace $L(\Lambda) \subset C(\Lambda)$.

The following is the Ruelle--Perron--Frobenius (RPF) theorem with the theory of Gibbs measures \cite{Bow08,PP90}.

\begin{theorem}
\label{thm:RPF}
For all $f \in L(\Lambda, \mathbb R)$, the operator $\mathcal{L}_f: C(\Lambda) \to C(\Lambda)$ and its dual $\mathcal{L}_f^*: C(\Lambda)^* \to C(\Lambda)^*$ has eigenvectors with the following properties. There exist a unique positive function $h \in L(\Lambda, \mathbb R)$ and a unique Borel probability measure $\nu$ on $\Lambda$ such that
\begin{enumerate}
\item	$\mathcal{L}_f(h) = e^{\Pr_T(f)}h$;
\item	$\mathcal{L}_f^*(\nu) = e^{\Pr_T(f)}\nu$;
\item	the eigenvalue $e^{\Pr_T(f)}$ is maximal simple and the rest of the spectrum of $\mathcal{L}_f|_{L(\Lambda)}$ is contained in a disk of radius strictly less than $e^{\Pr_T(f)}$;
\item	$\nu(h) = 1$ and the Borel probability measure $\mu$ defined by $d\mu = h \, d\nu$ is $T$-invariant and coincides with the $f$-equilibrium state $\nu_f$.
\end{enumerate}
\end{theorem}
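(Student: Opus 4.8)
The plan is to obtain the theorem from the standard thermodynamic formalism for topologically mixing subshifts of finite type with a Lipschitz (more generally, Hölder) potential, after recording that our symbolic system $(\Sigma, \sigma)$ coding $(\Lambda, T)$ enjoys the needed structural hypotheses. First I would verify topological mixing: in the continued fractions setting $\Sigma = \mathcal{A}^{\mathbb{Z}_{\geq 0}}$ is the full shift, hence mixing trivially; in the Schottky setting the only forbidden transitions are $\alpha_{j+1} - \alpha_j = N_0$, and since $N_0 \geq 2$ the transition matrix is aperiodic and irreducible (any symbol can be followed by at least two symbols and the graph is strongly connected), so $\sigma$ is topologically mixing. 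Under the homeomorphism $\Lambda \cong \Sigma$ noted in \cref{subsec:SymbolicDynamics}, the map $T$ is conjugate to the shift $\sigma$, and a Lipschitz function on $\Lambda$ pulls back to a function on $\Sigma$ that is Lipschitz with respect to a metric $d_\theta(\alpha, \beta) = \theta^{\,\min\{j : \alpha_j \neq \beta_j\}}$ for a suitable $\theta \in (0,1)$; this uses the hyperbolicity estimate \cref{lem:Hyperbolicity}, which gives uniform exponential contraction of cylinders and hence the required exponential decay of variations. Thus $\mathcal{L}_f$ on $C(\Lambda)$ corresponds to the classical Ruelle operator on $F_\theta(\Sigma)$.

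Next I would invoke the Ruelle--Perron--Frobenius machinery in the form of \cite[Theorems 2.16, 2.17, 2.20]{Bow08} (or \cite[Chapter 2]{PP90}): for a Hölder potential on a topologically mixing SFT, the transfer operator $\mathcal{L}_f$ acting on the space of Hölder functions has a simple positive leading eigenvalue $\lambda$ with a positive Hölder eigenfunction $h$, the dual $\mathcal{L}_f^*$ fixes (up to the same scalar $\lambda$) a Borel probability measure $\nu$, the remainder of the spectrum on $F_\theta$ lies in a disk of strictly smaller radius (spectral gap), and the normalization $\nu(h) = 1$ can be imposed, after which $d\mu = h\,d\nu$ is $\sigma$-invariant. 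Translating back to $T$ via the conjugacy gives properties (1)--(4) except for the identification $\lambda = e^{\Pr_T(f)}$ and $\mu = \nu_f$. The former follows from the variational characterization of pressure together with the fact that $\log\lambda$ equals the Gurevich/topological pressure of $f$ (see the discussion around \cite[Theorem 1.22]{Bow08} or \cref{def:Pressure}); the latter, the identification of $\mu$ with the equilibrium state, is the content of \cite[Theorems 2.17 and 2.20]{Bow08}, using uniqueness of the equilibrium state already quoted in \cref{subsec:Thermodynamics}.

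For property (3) as stated — that $\mathcal{L}_f$ preserves $L(\Lambda)$ and has a spectral gap there — I would note that $\mathcal{L}_f$ maps $L(\Lambda)$ into itself because each inverse branch of $T$ is a uniform contraction on its cylinder (again \cref{lem:Hyperbolicity}) and $f$ is Lipschitz, so the Lipschitz seminorm of $\mathcal{L}_f^k H$ is controlled by a Lasota--Yorke-type inequality $\Lip(\mathcal{L}_f^k H) \leq A\kappa_2^{-k}\Lip(H) + B\|H\|_\infty$; combined with compactness of the inclusion $L(\Lambda) \hookrightarrow C(\Lambda)$ and the strict inequality in that inequality, Hennion's theorem (or the Ionescu-Tulcea--Marinescu theorem) yields quasi-compactness of $\mathcal{L}_f|_{L(\Lambda)}$, and topological mixing upgrades this to a genuine spectral gap with the leading eigenvalue simple.

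The step I expect to be the main (though still routine) obstacle is the careful verification that our geometric coding satisfies the hypotheses of the abstract theory with uniform constants — specifically, that pullbacks of Lipschitz functions on $\Lambda \subset \mathbb{R}^{n-1}$ lie in a fixed space $F_\theta(\Sigma)$ and that the distortion/Markov estimates (\cref{lem:Hyperbolicity} and the Markov property) give the bounded-distortion and finite-range-structure inputs needed for \cite{Bow08,PP90}. Everything else is a direct citation, so I would keep this proof brief, stating the conjugacy and the structural facts and then quoting the RPF theorem together with the Gibbs-measure identification.
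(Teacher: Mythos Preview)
Your proposal is correct and matches the paper's treatment: the paper does not prove \cref{thm:RPF} at all but simply states it as ``the Ruelle--Perron--Frobenius (RPF) theorem with the theory of Gibbs measures \cite{Bow08,PP90}'', so your plan to verify the SFT/Lipschitz hypotheses and cite those same references is exactly in line. One small slip: the forbidden transitions in the Schottky case are $|\alpha_{j+1}-\alpha_j|=N_0$, not $\alpha_{j+1}-\alpha_j=N_0$; your mixing argument still goes through since every symbol has a self-loop and the transition graph is easily seen to be strongly connected.
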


We simply denote $\mathcal{M}_{\xi, q} = \mathcal{M}_{-(\delta_\Gamma + \xi)\tau, q}$ and $\mathcal{L}_\xi = \mathcal{L}_{-(\delta_\Gamma + \xi)\tau}$ for all $\xi \in \mathbb C$ and nonzero $q \in \mathcal{O}$ henceforth. Now we normalize the transfer operators for convenience. Let $a \in \mathbb R$. Define $\lambda_a = e^{\Pr_\sigma(-(\delta_\Gamma + a)\tau)}$ which is the maximal eigenvalue of $\mathcal{L}_a$ and recall that $\lambda_0 = 1$. Define the eigenvectors, the unique positive function $h_a \in L(\Lambda, \mathbb R)$ and the unique probability measure $\nu_a$ on $\Lambda$ with $\nu_a(h_a) = 1$ such that
\begin{align*}
\mathcal{L}_a(h_a) &= \lambda_a h_a; & \mathcal{L}_a^*(\nu_a) &= \lambda_a \nu_a
\end{align*}
provided by \cref{thm:RPF}. Note that $d\nu_\Lambda = h_0 \, d\nu_0$. Moreover, by perturbation theory for operators as in \cite[Chapter 7]{Kat95} and \cite[Proposition 4.6]{PP90}, we can fix $a_0' > 0$ such that the map $[-a_0', a_0'] \to \mathbb R$ defined by $a \mapsto \lambda_a$ and the map $[-a_0', a_0'] \to C(\Lambda, \mathbb R)$ defined by $a \mapsto h_a$ are Lipschitz. For all $a \in \mathbb R$, we define
\begin{align*}
f^{(a)} = -(\delta_\Gamma + a)\tau + \log(h_0) - \log(h_0 \circ T) - \log(\lambda_a).
\end{align*}
We can fix $A_f > 0$ such that $\bigl|f^{(a)} - f^{(0)}\bigr| \leq A_f|a|$ for all $|a| \leq a_0'$. Due to \cite[Lemma 4.1]{PS16}, we can also fix $T_0 > \max\left(\|\tau\|_{\Lip}, \sup_{|a| \leq a_0'} \bigl\|f^{(a)}\bigr\|_{\Lip}\right)$ (see \cref{sec:UniformSpectralBoundsWithHolonomy} for notations). For all $k \in \mathbb N$, we use the notation
\begin{align}
\label{eqn:BirkhoffSums}
f_k^{(a)}(u) &= \sum_{j = 0}^{k - 1} f^{(a)}(T^j(u)); & \tau_k(u) &= \sum_{j = 0}^{k - 1} \tau(T^j(u))
\end{align}
and $f_0^{(a)}(u) = \tau_0(u) = 0$, for all $u \in \Lambda$ and $a \in \mathbb R$. Let $\xi \in \mathbb C$ and $q \in \mathcal{O}$ be nonzero. We define $M_{\xi, q}: C(\Lambda, L^2(\tilde{\mathbf{G}}_q)) \to C(\Lambda, L^2(\tilde{\mathbf{G}}_q))$ by
\begin{align*}
M_{\xi, q}(H)(u) &= \sum_{u' \in T^{-1}(u)} e^{(f^{(a)} - ib\tau)(u')} \mathtt{c}_q(u') H(u')
\end{align*}
for all $u \in \Lambda$ and $H \in C(\Lambda, L^2(\tilde{\mathbf{G}}_q))$. For all $k \in \mathbb N$, its $k$\textsuperscript{th} iteration is
\begin{align*}
M_{\xi, q}^k(H)(u)  = \sum_{u' \in T^{-k}(u)} e^{(f_k^{(a)} - ib\tau_k)(u')} \mathtt{c}_q^k(u') H(u')
\end{align*}
for all $u \in \Lambda$ and $H \in C(\Lambda, L^2(\tilde{\mathbf{G}}_q))$. Denote $L_\xi = M_{\xi, 1}$. Then the transfer operators are normalized such that the maximal eigenvalue of $L_a$ is $1$ with normalized eigenvector $\frac{h_a}{h_0}$ for all $a \in \mathbb R$. Moreover, $L_0^*(\nu_\Lambda) = \nu_\Lambda$.

\subsection{Uniform spectral bounds with holonomy}
\label{sec:UniformSpectralBoundsWithHolonomy}
We first introduce some norms and seminorms. Let $q \in \mathcal{O}$ be nonzero, and $H \in C(\Lambda, L^2(\tilde{\mathbf{G}}_q)) \subset L^2(\Lambda, L^2(\tilde{\mathbf{G}}_q))$. Denote $\|H\| \in C(\Lambda, \mathbb R)$ to be the function defined by $\|H\|(u) = \|H(u)\|_2$ for all $u \in \Lambda$ and $|H| \in C(\Lambda, L^2(\tilde{\mathbf{G}}_q, \mathbb R))$ to be the function defined by $|H|(u) = |H(u)| \in L^2(\tilde{\mathbf{G}}_q, \mathbb R)$ for all $u \in \Lambda$. We use the notations
\begin{align*}
\Lip(H) &= \sup_{\substack{u, u' \in \Lambda\\ \text{such that } u \neq u'}} \frac{\|H(u) - H(u')\|_2}{\|u - u'\|}; & \|H\|_{\Lip} &= \|H\|_\infty + \Lip(H).
\end{align*}
We generalize this to another useful norm denoted by
\begin{align*}
\|H\|_{1, b} = \|H\|_\infty + \frac{1}{\max(1, |b|)} \Lip(H).
\end{align*}
We denote any operator norms simply by $\|\cdot\|_{\mathrm{op}}$.

For all nonzero $q \in \mathcal{O}$, we define the Banach space $\mathcal{V}_q(\Lambda) = L(\Lambda, L^2(\tilde{\mathbf{G}}_q))$ and when $q$ is coprime to $q_0$, we can similarly define the Banach space $\mathcal{W}_q(\Lambda) = L(\Lambda, L_0^2(\tilde{\mathbf{G}}_q)) \subset \mathcal{V}_q(\Lambda)$ where $L_0^2(\tilde{\mathbf{G}}_q) = \left\{\phi \in L^2(\tilde{\mathbf{G}}_q): \sum_{g \in \tilde{\mathbf{G}}_q} \phi(g) = 0\right\}$.

Let $q \in \mathcal{O}$ be nonzero. Define its norm by
\begin{align*}
N(q) =
\begin{cases}
|q|, & \mathcal{O} = \mathbb Z, \\
|q|^2, & \mathcal{O} = \mathbb Z[i].
\end{cases}
\end{align*}
It is well-known that $\#(\mathcal{O}/q\mathcal{O}) = N(q)$. We say that $q$ is \emph{square-free} if it has no square factors. Now we can state \cref{thm:CongruenceOperatorSpectralBounds} which is the main technical theorem regarding uniform spectral bounds of congruence transfer operators.

\begin{theorem}
\label{thm:CongruenceOperatorSpectralBounds}
There exist $\eta > 0$, $C \geq 1$, $a_0 > 0$, $b_0 > 0$, a nonzero $q_0' \in \mathcal{O}$, and for all $\sigma > 0$ there exist $\eta_\sigma > 0$ and $C_\sigma > 0$, such that for all $\xi \in \mathbb C$ with $|a| < a_0$ and $k \in \mathbb N$, we have
\begin{enumerate}
\item\label{itm:CongruenceOperatorSpectralBoundsSmall|b|} if $|b| \leq b_0$, then for all square-free $q \in \mathcal{O}$ coprime to $q_0q_0'$ and $H \in \mathcal{W}_q(\Lambda)$, we have
\begin{align*}
\big\|\mathcal{M}_{\xi, q}^k(H)\big\|_{\Lip} \leq CN(q)^C e^{-\eta k} \|H\|_{\Lip};
\end{align*}
\item\label{itm:CongruenceOperatorSpectralBoundsLarge|b|} if $|b| > b_0$, then for all nonzero $q \in \mathcal{O}$ and $H \in \mathcal{V}_q(\Lambda)$, we have
\begin{align*}
\big\|\mathcal{M}_{\xi, q}^k(H)\big\|_{\Lip} \leq C_\sigma|b|^{1 + \sigma} e^{-\eta_\sigma k} \|H\|_{\Lip}.
\end{align*}
\end{enumerate}
\end{theorem}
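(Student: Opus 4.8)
The plan is to prove the two estimates in \cref{thm:CongruenceOperatorSpectralBounds} by the two complementary strategies indicated in the introduction, using the normalized operators $M_{\xi,q}$ (which differ from $\mathcal{M}_{\xi,q}$ only by the explicit conjugation and scaling factors $h_0$, $h_0\circ T$, $\lambda_a$), so that it suffices to prove the same bounds with $M_{\xi,q}^k$ in place of $\mathcal{M}_{\xi,q}^k$ and then transfer back, at the cost of a factor depending only on $\|h_0\|_{\Lip}$, $\|1/h_0\|_\infty$ and $|a_0|$. For the large $|b|$ regime \ref{itm:CongruenceOperatorSpectralBoundsLarge|b|}, the congruence structure is irrelevant: one works with $H \in \mathcal{V}_q(\Lambda) = L(\Lambda, L^2(\tilde{\mathbf G}_q))$, bounds $\|M_{\xi,q}^k(H)\|(u) \le (L_a^k \|H\|)(u)$ pointwise (since $\mathtt c_q$ acts by unitaries and the right regular representation is isometric on $L^2(\tilde{\mathbf G}_q)$), and then invokes the scalar oscillatory-cancellation mechanism of Dolgopyat's method in Stoyanov's formulation, which is exactly what \cref{sec:Dolgopyat'sMethod} through \cref{sec:ProofOfDolgopyat'sMethod} are set up to deliver, with the key analytic input being the LNIC of \cref{sec:LocalNon-IntegrabilityCondition}. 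This yields $\|L_\xi^k(h)\|_{1,b} \le C_\sigma |b|^\sigma e^{-\eta_\sigma k}\|h\|_{1,b}$ for scalar $h$; feeding $\|H\|$ into the Lipschitz bookkeeping and converting $\|\cdot\|_{1,b}$ to $\|\cdot\|_{\Lip}$ produces the extra $|b|$, giving the stated $|b|^{1+\sigma}$.

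For the small $|b|$ regime \ref{itm:CongruenceOperatorSpectralBoundsSmall|b|}, the main work is the expander input. Here one restricts to $H \in \mathcal{W}_q(\Lambda) = L(\Lambda, L_0^2(\tilde{\mathbf G}_q))$, i.e.\ mean-zero fibers, and the goal is an $L^2$-contraction of $M_{\xi,q}^k$ on this subspace that is polynomial in $N(q)$. The strategy, following \cite{Sar20} (which itself generalizes the appendix of \cite{MOW19}), is: (i) an $\ell^2$-flattening / spectral gap statement for the random walk driven by $\mathtt c^k$ on $\tilde{\mathbf G}_q$, which is precisely where \cite[Theorem 1]{GV12} (resp.\ \cite[Corollary 6]{GV12} in the continued-fractions case) enters and where the square-free hypothesis and coprimality to $q_0 q_0'$ are needed; (ii) the crucial geometric input that makes the expander machinery applicable, namely Zariski density (and, in the $\SL_2(\mathbb Z[i])$ case, the full trace field property $\mathbb Q(i)$) of the return trajectory subgroups, established in \cref{sec:ZariskiDensityOfTheReturnTrajectorySubgroups}; and (iii) combining the resulting spectral gap with the hyperbolicity of $T$ (\cref{lem:Hyperbolicity}) and the Gibbs property \eqref{eqn:PropertyOfGibbsMeasures} of $\nu_\Lambda$ to upgrade the $L^2$-decay to a decay in $\|\cdot\|_{\Lip}$, using the standard Lasota--Yorke type inequality for $M_{\xi,q}^k$ together with the a priori $L^2$ contraction. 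Taking $k$ a large enough multiple of a fixed window length turns the $L^2$-contraction into the claimed $CN(q)^C e^{-\eta k}\|H\|_{\Lip}$, with $\eta$ independent of $q$.

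The main obstacle is step (ii) above: verifying that the return trajectory subgroups are Zariski dense in $\tilde{\mathbf G}$ in arbitrary dimension $n$, and have full trace field in the continued-fractions setting. As noted in the introduction, in both settings this reduces to showing the limit set of the relevant return trajectory subgroup is not contained in any proper subsphere of $\partial_\infty \mathbb H^n$, which in the $\SL_2(\mathbb Z[i])$ case additionally requires producing elements whose traces generate $\mathbb Q(i)$; the argument there proceeds by explicit trace computations for several families of group elements combined with the Zariski density of $\Gamma$ itself. A secondary, more technical obstacle is the LNIC in \cref{sec:LocalNon-IntegrabilityCondition} underlying regime \ref{itm:CongruenceOperatorSpectralBoundsLarge|b|}: Naud's two-dimensional proof does not generalize directly, and one must run the higher-dimensional version of his argument while replacing the triadic-partition step of \cite{Nau05,MOW19} by the cleaner apparatus of Stoyanov's method, which is why the Gibbs property of $\nu_\Lambda$ is used in place of a Federer/doubling property. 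Once both inputs are in hand, the assembly into \cref{thm:CongruenceOperatorSpectralBounds} is the routine combination of Dolgopyat-type operator contraction (for large $|b|$) and $L^2$-flattening with Lasota--Yorke bootstrapping (for small $|b|$) described above.
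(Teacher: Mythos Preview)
Your proposal is correct and follows essentially the same approach as the paper: the small-$|b|$ regime via the expander machinery of Golsefidy--Varj\'{u} applied to the return trajectory subgroups (whose Zariski density and full trace field are the key inputs), combined with $L^2$-flattening and Lasota--Yorke bootstrapping; and the large-$|b|$ regime via Stoyanov's version of Dolgopyat's method with LNIC as the crucial analytic input. One minor imprecision: in the large-$|b|$ case, the paper does not first reduce to a scalar bound on $L_\xi$ and then feed in $\|H\|$; rather, \cref{thm:Dolgopyat} runs the Dolgopyat iteration directly on the vector-valued $M_{\xi,q}^m(H)$ paired with a dominating scalar $h \in \mathcal{C}_{E|b|}(\Lambda)$, with the cocycle's unitarity used inside the cancellation argument (\cref{lem:chiLessThan1}) rather than being discarded at the outset---the pointwise bound $\|M_{\xi,q}^k(H)\| \le L_a^k\|H\|$ alone loses the oscillation and would not yield decay.
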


Note that in \cref{thm:CongruenceOperatorSpectralBounds}, \cref{itm:CongruenceOperatorSpectralBoundsSmall|b|} has conditions on $q \in \mathcal{O}$ whereas \cref{itm:CongruenceOperatorSpectralBoundsLarge|b|} does not. This stems from the fact that the expander machinery, where the same condition appears, is required for small frequencies $|b| \leq b_0$ and Dolgopyat's method, which is independent of nonzero $q \in \mathcal{O}$, is used for large frequencies $|b| > b_0$.

We will first prove \cref{itm:CongruenceOperatorSpectralBoundsSmall|b|} in \cref{thm:CongruenceOperatorSpectralBounds}. We fix $b_0 > 0$ to be the one from \cref{thm:TheoremLarge|b|} where if we examine the proof of \cref{thm:Dolgopyat}, it is clear from \cref{eqn:Constantb0} that we can assume $b_0 = 1$.

\section{Reduction of \texorpdfstring{\cref{itm:CongruenceOperatorSpectralBoundsSmall|b|}}{Property \ref{itm:CongruenceOperatorSpectralBoundsSmall|b|}} of \texorpdfstring{\cref{thm:CongruenceOperatorSpectralBounds}}{\autoref{thm:CongruenceOperatorSpectralBounds}}}
\label{sec:ReductionToNewInvariantFunctionsAtLevel_q}
In this section, we reduce \cref{itm:CongruenceOperatorSpectralBoundsSmall|b|} of \cref{thm:CongruenceOperatorSpectralBounds} to \cref{thm:ReducedTheoremSmall|b|} as in \cite{OW16}. This is done by introducing the concept of new invariant functions.

Let $q, q' \in \mathcal{O}$ such that $q' \mid q$. We have the reduction map $\pi_{q, q'}: \tilde{\mathbf{G}}_q \to \tilde{\mathbf{G}}_{q'}$ which induces the pull back $\pi_{q, q'}^*: L^2(\tilde{\mathbf{G}}_{q'}) \to L^2(\tilde{\mathbf{G}}_q)$. Define $\hat{E}_{q'}^q = \pi_{q, q'}^*(L^2(\tilde{\mathbf{G}}_{q'}))$ and $E_{q'}^q = \hat{E}_{q'}^q \cap \left(\bigoplus_{q' \neq q'' \mid q'} \hat{E}_{q''}^q\right)^\perp$ which can be viewed as the space of \emph{new} functions which are invariant under $\tilde{\mathbf{G}}_{q'}$ but not invariant under $\tilde{\mathbf{G}}_{q''}$ for any $q' \neq q'' \mid q'$. Then, we have the orthogonal decomposition
\begin{align*}
L_0^2(\tilde{\mathbf{G}}_q) = \bigoplus_{1 \neq q' \mid q} E_{q'}^q \qquad \text{for all $q \in \mathcal{O}$}.
\end{align*}
Again let $q, q' \in \mathcal{O}$ such that $q' \mid q$. We define $\mathcal{W}_{q'}^q(\Lambda) = \{H \in \mathcal{W}_q(\Lambda): H(u) \in E_{q'}^q \text{ for all } u \in \Lambda\}$, so that we have the orthogonal decomposition
\begin{align*}
\mathcal{W}_q(\Lambda) = \bigoplus_{1 \neq q' \mid q} \mathcal{W}_{q'}^q(\Lambda) \qquad \text{for all $q \in \mathcal{O}$}.
\end{align*}
We have the orthogonal projection operator $e_{q, q'}: \mathcal{W}_q(\Lambda) \to \mathcal{W}_{q'}^q(\Lambda)$ and the canonical projection map $\proj_{q, q'} = (\pi_{q, q'}^*)^{-1}\big|_{E_{q'}^q}: E_{q'}^q \to E_{q'}^{q'}$ since $\pi_{q, q'}^*$ is injective and $E_{q'}^q \subset \pi_{q, q'}^*(L^2(\tilde{\mathbf{G}}_{q'}))$. Then $\proj_{q, q'}(\phi)(g) = \phi(\tilde{g})$ where $\tilde{g}$ is any lift of $g$ under $\pi_{q, q'}: \tilde{\mathbf{G}}_q \to \tilde{\mathbf{G}}_{q'}$, for all $g \in \tilde{\mathbf{G}}_{q'}$ and $\phi \in E_{q'}^q$. We use the same notation $\proj_{q, q'}: \mathcal{W}_{q'}^q(\Lambda) \to \mathcal{W}_{q'}^{q'}(\Lambda)$ for the induced projection map defined pointwise. For all $\xi \in \mathbb C$, the space $\mathcal{W}_{q'}^q(\Lambda)$ is preserved by $M_{\xi, q}$ and we have
\begin{align*}
e_{q, q'} \circ M_{\xi, q} &= M_{\xi, q} \circ e_{q, q'} \\
\proj_{q, q'} \circ M_{\xi, q} &= M_{\xi, q'} \circ \proj_{q, q'}.
\end{align*}
By surjectivity of $\pi_{q, q'}$, we can denote $\spadesuit_{q, q'} = \#\ker(\pi_{q, q'}) = \frac{\#\tilde{\mathbf{G}}_q}{\#\tilde{\mathbf{G}}_{q'}}$ and by direct calculation it can be checked that $\|H\|_2 = \sqrt{\spadesuit_{q, q'}} \|\proj_{q, q'}(H)\|_2$ and $\|H\|_{\Lip(d)} = \sqrt{\spadesuit_{q, q'}} \|\proj_{q, q'}(H)\|_{\Lip(d)}$ for all $H \in \mathcal{W}_{q'}^q(\Lambda)$.

By a standard reduction process (see \cite[Section 6]{Sar22} and \cite[Subsection 4.1]{OW16}), it suffices to prove the following instead.

\begin{theorem}
\label{thm:ReducedTheoremSmall|b|}
There exist $C_s > 0$, $a_0 > 0$, $\theta \in (0, 1)$, and $q_1 \in \mathbb N$ such that for all $\xi \in \mathbb C$ with $|a| < a_0$ and $|b| \leq b_0$, square-free $q \in \mathcal{O}$ coprime to $q_0$ with $N(q) > q_1$, there exists an integer $s \in (0, C_s\log(N(q)))$ such that for all $j \in \mathbb Z_{\geq 0}$ and $H \in \mathcal{W}_q^q(\Lambda)$, we have
\begin{align*}
\big\|M_{\xi, q}^{js}(H)\big\|_{\Lip} \leq N(q)^{-j\theta} \|H\|_{\Lip}.
\end{align*}
\end{theorem}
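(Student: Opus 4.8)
The plan is to realize $M_{\xi,q}^{s}$, restricted to $\mathcal{W}_q^q(\Lambda)$, as essentially a long convolution operator on $L_0^2(\tilde{\mathbf{G}}_q)$ and to deploy the $L^2$-flattening lemma together with the expander machinery of Golsefidy--Varj\'{u} to gain an $N(q)$-power decay from a single block of length $s \sim C_s\log N(q)$. First I would pass, via the reduction recalled just above and the identities $\proj_{q,q'}\circ M_{\xi,q} = M_{\xi,q'}\circ\proj_{q,q'}$, to the ``new vector'' situation, so that it suffices to bound $M_{\xi,q}^{js}$ on functions valued in the new subspace; by the $\proj$-isometry normalization the $\mathcal{W}_q^q$ norm is the relevant one. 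Next I would split the estimate into an $L^2$ part and a Lipschitz/regularization part. For the Lipschitz part, the hyperbolicity of $T$ (\cref{lem:Hyperbolicity}) and the uniform Lipschitz control on $\tau$ and on the normalized potentials $f^{(a)}$ give the standard Lasota--Yorke / Doeblin--Fortet inequality
\[
\Lip\bigl(M_{\xi,q}^{k}(H)\bigr) \;\le\; C\bigl(\kappa_2^{-k}\Lip(H) + \max(1,|b|)\,\|H\|_\infty\bigr),
\]
so that after $O(\log N(q))$ iterates the Lipschitz norm is controlled by the sup norm up to a factor $N(q)^{O(1)}$; this is where the room $s \in (0, C_s\log N(q))$ and the extra $N(q)^{-j\theta}$ (rather than $1$) come from. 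The substance is therefore to prove an $L^2$-contraction $\|M_{\xi,q}^{s}(H)\|_{L^2(\Lambda,L^2)} \le N(q)^{-\theta'}\|H\|_{L^2(\Lambda,L^2)}$ for a single well-chosen $s\sim C_s\log N(q)$, uniformly over square-free $q$ coprime to $q_0$ and over the compact parameter range $|a|<a_0$, $|b|\le b_0 = 1$.

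For that $L^2$ bound I would follow the scheme of \cite{Sar20} (generalizing \cite[Appendix]{MOW19}): expand $\|M_{\xi,q}^{s}H\|_2^2$ over pairs of preimages, use the Gibbs property \cref{eqn:PropertyOfGibbsMeasures} of $\nu_\Lambda$ to replace the weighted sum over length-$s$ cylinders by an average, and recognize the resulting operator on $L_0^2(\tilde{\mathbf{G}}_q)$ as (a perturbation of) convolution by the measure $\mu_s = \frac{1}{\#\{\text{words of length }s\}}\sum_{\len(\gamma)=s}\delta_{\pi_q(\tilde\gamma)}$ — more precisely convolution along the cocycle $\mathtt{c}_q^{s}$. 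Because $|a|$ and $|b|$ lie in a fixed compact set and $b_0=1$, the oscillatory factor $e^{(f_s^{(a)}-ib\tau_s)}$ only distorts the weights by a bounded multiplicative amount on each cylinder, so the operator norm on $L_0^2$ is comparable to $\|\mu_s * \cdot\|_{L_0^2\to L_0^2}$ up to constants; the $b$-dependence is harmless here and is dealt with separately by Dolgopyat's method in part \ref{itm:CongruenceOperatorSpectralBoundsLarge|b|}. Now invoke the $L^2$-flattening lemma (the section ``L2 Flattening Lemma'' in the paper) to show $\|\mu_s^{(2)}\|_2$ drops below $N(q)^{-1+\varepsilon}$ once $s \ge C_s\log N(q)$, which feeds into \cite[Theorem 1]{GV12} via \cref{lem:GV_Expander} to give the spectral gap $\|\mu_s * \cdot\|_{L_0^2\to L_0^2}\le N(q)^{-\theta'}$; this is exactly where the square-free hypothesis and coprimality to $q_0$ enter, and where one needs the Zariski density (and, in the continued fractions setting, the full trace field $\mathbb{Q}(i)$) of the return trajectory subgroups established in \cref{sec:ZariskiDensityOfTheReturnTrajectorySubgroups} as the input to \cite{GV12}. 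Combining the $L^2$ gap with the Lasota--Yorke smoothing, choosing $\theta$ slightly smaller than $\theta'$ to absorb the polynomial factors $N(q)^{O(1)}$ into $N(q)^{-j\theta}$ for all $j$, and iterating the one-block estimate $j$ times (using sub-multiplicativity of $\|M_{\xi,q}^{js}\|$ in blocks of length $s$) yields the claimed bound.

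The main obstacle I expect is not the transfer-operator bookkeeping but securing the input to the expander machinery uniformly: showing that the relevant return trajectory subgroups are Zariski dense (equivalently, that their limit sets are not contained in any proper subsphere) and, in the $\SL_2(\mathbb{Z}[i])$ case, have trace field all of $\mathbb{Q}(i)$, so that \cite[Theorem 1]{GV12} (resp.\ \cite[Corollary 6]{GV12}) applies with a $q_0$ independent of $s$ and of the cylinder configuration. A secondary technical point is making the comparison between the true operator $M_{\xi,q}^{s}$ and the clean convolution operator quantitatively uniform in $s$ — the Gibbs constants $c_1^\Lambda, c_2^\Lambda$ and the bounds $A_f$, $T_0$ must not degrade — but this is handled by the uniform hyperbolicity of \cref{lem:Hyperbolicity} and the fixed compact parameter window, exactly as in \cite{Sar20,OW16}.
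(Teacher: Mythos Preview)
Your proposal correctly identifies the essential ingredients---the $L^2$-flattening lemma, the expander machinery of Golsefidy--Varj\'{u}, the Zariski density (and, in the $\SL_2(\mathbb Z[i])$ case, full trace field) of the return trajectory subgroups, and a Lasota--Yorke smoothing step---and the overall scheme of ``one-block contraction then iterate'' is exactly right. The paper's actual proof of \cref{thm:ReducedTheoremSmall|b|} is indeed the one-line induction you describe, the content being packaged into the single-block estimate \cref{lem:ReducedTheoremEstimate}.

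Where your route differs is in the organization of that one-block estimate. You propose to first prove an $L^2(\Lambda,L^2(\tilde{\mathbf G}_q))$ contraction and then upgrade to the Lipschitz norm via Lasota--Yorke, treating $M_{\xi,q}^s$ as ``a perturbation of convolution by $\mu_s$'' on $L_0^2(\tilde{\mathbf G}_q)$. The paper instead works \emph{pointwise} in the sup norm via a two-scale decomposition $s=(s-r)+r$: \cref{lem:TransferOperatorConvolutionApproximation} approximates $M_{\xi,q}^s(H)(x)$ by a finite sum $\sum_{(\alpha_s,\dots,\alpha_{r+1})} \mu_{(\alpha_s,\dots,\alpha_{r+1})}^{\xi,q,x} * \phi_{(\alpha_s,\dots,\alpha_{r+1})}^{q,H}$ of genuine convolutions on $\tilde{\mathbf G}_q$, with error $O(\Lip(H)\kappa_2^{-(s-r)})$; then the $L^2$-flattening \cref{lem:L2FlatteningLemma} (whose proof is where the expander input \cref{lem:GV_Expander} enters) supplies the factor $N(q)^{-1/3}$ on each convolution, and the Lipschitz variation in $x$ is handled directly by \cref{lem:Small|b|Bound}. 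This gives $\|M_{\xi,q}^{s_q}(H)\|_{\Lip}\le \tfrac12 N(q)^{-\theta}\|H\|_{\Lip}$ straight away. Your $L^2$-first route can be made to work, but two points are glossed over: first, the operator is not a single convolution---the measures depend on the base point $x$ and on the leading word $(\alpha_s,\dots,\alpha_{r+1})$, which is precisely what the two-scale approximation handles; second, the passage from $\|\cdot\|_{L^2(\Lambda)}$ back to $\|\cdot\|_\infty$ needs an additional smoothing step of the type in \cref{lem:EstimatesToConvertL2BoundToLipschitzBound}, which you leave implicit. The paper's pointwise approach avoids that detour.
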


\Cref{sec:ApproximatingTheTransferOperator,sec:ZariskiDensityOfTheReturnTrajectorySubgroups,sec:L2FlatteningLemma} are devoted to obtaining strong bounds which are crucial for the proof of \cref{thm:ReducedTheoremSmall|b|} in \cref{sec:SupremumAndLipschitzBounds}.

\section{Approximating the congruence transfer operators}
\label{sec:ApproximatingTheTransferOperator}
The aim of this section is to approximate the congruence transfer operators by convolution with measures to mimic a random walk.

We introduce some notations. Let $j \in \mathbb N$ and $(\alpha_j, \alpha_{j - 1}, \dotsc, \alpha_1)$ be an admissible sequence. We define $\alpha^j = (\alpha_j, \alpha_{j - 1}, \dotsc, \alpha_1)$. We adopt the convention that a sequence of sequences are to be concatenated. For all $y \in \mathcal A$, denote $\omega(y) \in \Lambda$ to be any point such that $(y, \omega(y))$ is admissible, i.e., $\omega(y) \notin D_z$ with $|y - z| = N_0$. We extend the notation to admissible sequences so that $\omega(\alpha^j) = \omega(\alpha_1) \in \Lambda$. Note that for all $q \in \mathcal{O}$ we have the formula
\begin{align*}
\mathtt{c}_q^j(x) = \mathtt{c}_q(T^{j - 1}(x)) \cdot \mathtt{c}_q(T^{j - 2}(x)) \dotsb \mathtt{c}_q(x) = \pi_q(\tilde{g}_{\alpha_1} \cdot \tilde{g}_{\alpha_2} \dotsb \tilde{g}_{\alpha_j})
\end{align*}
for all $x \in \mathtt{C}[\alpha^j] \subset \Lambda$.

Let $q \in \mathcal{O}$ be coprime to $q_0$. For any complex measure $\mu$ on $\tilde{\mathbf{G}}_q$ and $\phi \in L^2(\tilde{\mathbf{G}}_q)$ the convolution $\mu * \phi \in L^2(\tilde{\mathbf{G}}_q)$ is defined by
\begin{align*}
(\mu * \phi)(g) = \sum_{h \in \tilde{\mathbf{G}}_q} \mu(h) \phi(gh^{-1}) \qquad \text{for all $g \in \tilde{\mathbf{G}}_q$}.
\end{align*}

For all $\xi \in \mathbb C$, $q \in \mathcal{O}$ coprime to $q_0$, $x \in \Lambda$, integers $0 < r < s$, and admissible sequences $(\alpha_s, \alpha_{s - 1}, \dotsc, \alpha_{r + 1})$, we define the complex measures
\begin{align*}
\mu_{(\alpha_s, \alpha_{s - 1}, \dotsc, \alpha_{r + 1})}^{\xi, q, x} &= \sum_{\alpha^r} e^{(f_s^{(a)} + ib\tau_s)(g(\alpha^s)x)} \delta_{\mathtt{c}_q^r(g(\alpha^r)x)} \\
\nu_0^{a, q, x, r} &= \sum_{\alpha^r} e^{f_r^{(a)}(g(\alpha^r)x)} \delta_{\mathtt{c}_q^r(g(\alpha^r)x)} \\
\hat{\mu}_{(\alpha_s, \alpha_{s - 1}, \dotsc, \alpha_{r + 1})}^{a, q, x} &= \sum_{\alpha^r} e^{f_s^{(a)}(g(\alpha^s)x)} \delta_{\mathtt{c}_q^r(g(\alpha^r)x)} = e^{f_{s - r}^{(a)}(g(\alpha^s)x)} \nu_0^{a, q, x, r} \\
\nu_{(\alpha_s, \alpha_{s - 1}, \dotsc, \alpha_{r + 1})}^{a, q, x} &= e^{f_{s - r}^{(a)}(g_{\alpha_s} g_{\alpha_{s - 1}} \dotsb g_{\alpha_{r + 1}} \omega(\alpha_{r + 1}))} \nu_0^{a, q, x, r}
\end{align*}
on $\tilde{\mathbf{G}}_q$ and also for all $H \in C(\Lambda, L^2(\tilde{\mathbf{G}}_q))$, define the function
\begin{align*}
\phi_{(\alpha_s, \alpha_{s - 1}, \dotsc, \alpha_{r + 1})}^{q, H} = \delta_{\mathtt{c}_q^{s - r}(g_{\alpha_s} g_{\alpha_{s - 1}} \dotsb g_{\alpha_{r + 1}} \omega(\alpha_{r + 1}))} * H(g_{\alpha_s} g_{\alpha_{s - 1}} \dotsb g_{\alpha_{r + 1}} \omega(\alpha_{r + 1}))
\end{align*}
in $L^2(\tilde{\mathbf{G}}_q)$ where we note that $\big\|\phi_{(\alpha_s, \alpha_{s - 1}, \dotsc, \alpha_{r + 1})}^{q, H}\big\|_2 \leq \|H\|_\infty$.

Now we present a simple bound which will be used often (see for example \cite[Lemma 7.1]{Sar22} for a proof). Fix $C_f$ provided by \cref{lem:SumExpf^aBound} henceforth.

\begin{lemma}
\label{lem:SumExpf^aBound}
There exists $C_f > 1$ such that for all $|a| < a_0'$, $x \in \Lambda$, and $k \in \mathbb N$, we have $\sum_{\alpha^k} e^{f_k^{(a)}(g(\alpha^k)x)} \leq C_f$.
\end{lemma}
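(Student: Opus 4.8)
\textbf{Proof proposal for \cref{lem:SumExpf^aBound}.}

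The plan is to exploit the thermodynamic normalization encoded in the definition of $f^{(a)}$ together with the bounded perturbation estimate $\bigl|f^{(a)} - f^{(0)}\bigr| \leq A_f|a|$ that was already fixed. The key observation is that $f^{(0)}$ is (up to a coboundary) precisely the normalized potential whose transfer operator $\mathcal{L}_0 = \mathcal{L}_{-(\delta_\Gamma+0)\tau}$ has maximal eigenvalue $1$ with normalized eigenvector $h_0/h_0 \equiv $ constant after the rescaling; more concretely, $f^{(0)} = -\delta_\Gamma\tau + \log h_0 - \log(h_0\circ T) - \log\lambda_0$ with $\lambda_0 = 1$, and the RPF theorem (\cref{thm:RPF}) tells us that $\mathcal{L}_{f^{(0)}}(\mathbf{1}) = \mathbf{1}$, i.e.\ $\sum_{u' \in T^{-1}(u)} e^{f^{(0)}(u')} = 1$ for every $u \in \Lambda$. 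Iterating this, $\sum_{\alpha^k} e^{f_k^{(0)}(g(\alpha^k)x)} = \mathcal{L}_{f^{(0)}}^k(\mathbf 1)(x) = 1$ for all $x \in \Lambda$ and $k \in \mathbb N$, since the sum over admissible $\alpha^k$ with $T^k(g(\alpha^k)x) = x$ realizes exactly the $k$-fold iterate of the Ruelle operator evaluated at the constant function.

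Second, I would control the difference between the $a$-twisted Birkhoff sum and the untwisted one. For any admissible $\alpha^k$ and any $x$, we have $\bigl|f_k^{(a)}(g(\alpha^k)x) - f_k^{(0)}(g(\alpha^k)x)\bigr| \leq \sum_{j=0}^{k-1}\bigl|f^{(a)} - f^{(0)}\bigr|\bigl(T^j(g(\alpha^k)x)\bigr) \leq k A_f |a|$. This naive bound blows up with $k$, so instead I would argue non-uniformly in $k$ but get a $k$-independent constant as follows: by the hyperbolicity of $T$ (\cref{lem:Hyperbolicity}) and Lipschitz regularity of $\tau$ and of $a\mapsto h_a$, the function $a \mapsto \Pr_T(-(\delta_\Gamma+a)\tau)$ is Lipschitz near $0$, hence for $|a|$ small the normalized potentials $f^{(a)}$ are genuine normalized potentials in the sense $\mathcal{L}_{f^{(a)}}(\mathbf 1) = \mathbf 1$ — wait, this is not quite true since only $f^{(0)}$ is exactly normalized. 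The cleaner route: observe $f^{(a)} = f^{(0)} + (f^{(a)} - f^{(0)})$ and note $f^{(a)} - f^{(0)} = -a\tau - \log(\lambda_a/\lambda_0) = -a\tau - \log\lambda_a$, which is bounded in sup norm by $|a|\|\tau\|_\infty + |\log\lambda_a| \leq C|a|$ for $|a| \leq a_0'$ (shrinking $a_0'$ if needed, using Lipschitzness of $a\mapsto\lambda_a$ and $\lambda_0=1$). Then $f_k^{(a)} = f_k^{(0)} + \sum_{j=0}^{k-1}(f^{(a)}-f^{(0)})\circ T^j$, and $\sum_{\alpha^k} e^{f_k^{(a)}(g(\alpha^k)x)} = \mathcal{L}_{f^{(0)}}^k\bigl(\text{something}\bigr)$ — this still has the telescoping problem.

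The honest fix, and what I expect to be the real content, is to use the \emph{spectral gap} for $\mathcal{L}_{f^{(0)}}$ rather than just $\mathcal{L}_{f^{(0)}}(\mathbf 1)=\mathbf 1$. Write $\mathcal{L}_{f^{(a)}} = \mathcal{L}_{f^{(0)} + \psi_a}$ where $\psi_a := f^{(a)} - f^{(0)}$ has $\|\psi_a\|_{\Lip} \leq C|a|$. For $|a|$ sufficiently small, $\mathcal{L}_{f^{(a)}}$ is a small perturbation of $\mathcal{L}_{f^{(0)}}$ in operator norm on $L(\Lambda)$, so by perturbation theory (as already invoked via \cite{Kat95} in the excerpt) its leading eigenvalue $e^{\Pr_T(f^{(a)})}$ stays within, say, $[1/2, 2]$ and its leading eigenfunction $h^{(a)}$ stays uniformly bounded above and below, uniformly for $|a| < a_0'$ after shrinking. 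Then $\sum_{\alpha^k}e^{f_k^{(a)}(g(\alpha^k)x)} = \mathcal{L}_{f^{(a)}}^k(\mathbf 1)(x) \leq e^{k\Pr_T(f^{(a)})}\|\mathbf 1\|_{\Lip}\cdot(\text{const})$; but $\Pr_T(f^{(a)}) = \Pr_T(-(\delta_\Gamma+a)\tau) + \log h_0 - \log(h_0\circ T) - \log\lambda_a = \log\lambda_a - \log\lambda_a = 0$ identically, because adding a coboundary and subtracting $\log\lambda_a$ (a constant) shifts the pressure by exactly $-\log\lambda_a$ from $\Pr_T(-(\delta_\Gamma+a)\tau) = \log\lambda_a$. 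Hence $\Pr_T(f^{(a)}) = 0$ for all $a$, so $\mathcal{L}_{f^{(a)}}^k(\mathbf 1)(x) \leq (\text{const})\cdot\|h^{(a)}\|_\infty/\inf h^{(a)} =: C_f$, uniformly in $k \in \mathbb N$, $x \in \Lambda$, and $|a| < a_0'$. Taking $C_f > 1$ (enlarging if necessary) gives the claim. The main obstacle is confirming that the uniform-in-$k$ bound survives: this rests entirely on the fact that $f^{(a)}$ was \emph{designed} to have pressure zero and uniformly-two-sided-bounded RPF eigenfunction, both of which follow from \cref{thm:RPF} plus the perturbation-theoretic Lipschitz continuity of $a\mapsto\lambda_a$ and $a\mapsto h_a$ that was already established before the lemma.
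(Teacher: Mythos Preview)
Your final argument is correct and is essentially the standard one: the sum in question is exactly $L_a^k(\mathbf 1)(x)$, the normalized potential $f^{(a)}$ has pressure zero by construction (coboundary plus the constant $-\log\lambda_a$ subtracted from a potential with pressure $\log\lambda_a$), the positive RPF eigenfunction of $L_a$ is $h_a/h_0$, and comparing $\mathbf 1$ with this eigenfunction via $\mathbf 1 \leq \bigl(\inf (h_a/h_0)\bigr)^{-1} (h_a/h_0)$ gives $L_a^k(\mathbf 1) \leq \sup(h_a/h_0)/\inf(h_a/h_0)$, uniformly in $k$; uniformity in $|a|<a_0'$ then follows from the Lipschitz dependence $a\mapsto h_a$ already recorded in the paper. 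The paper does not supply its own argument but defers to \cite[Lemma 7.1]{Sar20}, whose proof is along exactly these lines.

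One remark: your write-up would be much cleaner if you deleted the two false starts (the $kA_f|a|$ bound and the ``small perturbation of $\mathcal{L}_{f^{(0)}}$'' detour). Neither is needed, and the second is slightly misleading since you do not actually require the spectral gap or perturbation of the eigenvalue---only positivity of $L_a$ and the explicit identification of its leading eigenpair $(1, h_a/h_0)$, both of which are immediate from the definitions and \cref{thm:RPF} applied to $-(\delta_\Gamma+a)\tau$.
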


We now record a lemma which relate the complex measures defined above.

\begin{lemma}
\label{lem:muHatLessThanCnu}
There exists $C > 0$ such that for all $\xi \in \mathbb C$ with $|a| < a_0'$, $q \in \mathcal{O}$ coprime to $q_0$, $x \in \Lambda$, integers $0 < r < s$, and admissible sequences $(\alpha_s, \alpha_{s - 1}, \dotsc, \alpha_{r + 1})$, we have $\left|\mu_{(\alpha_s, \alpha_{s - 1}, \dotsc, \alpha_{r + 1})}^{\xi, q, x}\right| \leq \hat{\mu}_{(\alpha_s, \alpha_{s - 1}, \dotsc, \alpha_{r + 1})}^{a, q, x}$ and
\begin{align*}
C^{-1}\nu_{(\alpha_s, \alpha_{s - 1}, \dotsc, \alpha_{r + 1})}^{a, q, x} \leq \hat{\mu}_{(\alpha_s, \alpha_{s - 1}, \dotsc, \alpha_{r + 1})}^{a, q, x} \leq C\nu_{(\alpha_s, \alpha_{s - 1}, \dotsc, \alpha_{r + 1})}^{a, q, x}.
\end{align*}
\end{lemma}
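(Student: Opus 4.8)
The statement to prove is \cref{lem:muHatLessThanCnu}. I would organize the argument around two separate assertions: the pointwise bound $\left|\mu_{(\alpha_s, \dotsc, \alpha_{r+1})}^{\xi, q, x}\right| \leq \hat{\mu}_{(\alpha_s, \dotsc, \alpha_{r+1})}^{a, q, x}$, and the two-sided comparison between $\hat{\mu}$ and $\nu$. The first assertion is essentially immediate from the definitions: $\mu^{\xi,q,x}$ is a sum of Dirac masses $\delta_{\mathtt{c}_q^r(g(\alpha^r)x)}$ with coefficients $e^{(f_s^{(a)} + ib\tau_s)(g(\alpha^s)x)}$, whereas $\hat{\mu}^{a,q,x}$ has coefficients $e^{f_s^{(a)}(g(\alpha^s)x)}$ for the same Dirac masses, so taking absolute value of the coefficients and noting $\bigl|e^{(f_s^{(a)} + ib\tau_s)(g(\alpha^s)x)}\bigr| = e^{f_s^{(a)}(g(\alpha^s)x)}$ (since $\tau_s$ and $f_s^{(a)}$ are real-valued), we conclude; one only has to be a little careful that distinct $\alpha^r$ may produce coinciding group elements $\mathtt{c}_q^r(g(\alpha^r)x)$, but since all coefficients on the $\hat{\mu}$ side are nonnegative, the triangle inequality still goes through when these are collected.

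For the two-sided comparison, the point is that both $\hat{\mu}$ and $\nu$ are scalar multiples of the same measure $\nu_0^{a,q,x,r}$: by the displayed identity in the definitions, $\hat{\mu}_{(\alpha_s, \dotsc, \alpha_{r+1})}^{a,q,x} = e^{f_{s-r}^{(a)}(g(\alpha^s)x)}\,\nu_0^{a,q,x,r}$ while $\nu_{(\alpha_s, \dotsc, \alpha_{r+1})}^{a,q,x} = e^{f_{s-r}^{(a)}(g_{\alpha_s}\dotsb g_{\alpha_{r+1}}\omega(\alpha_{r+1}))}\,\nu_0^{a,q,x,r}$. Thus the comparison reduces to bounding the ratio of the two scalars, i.e., showing $\bigl|f_{s-r}^{(a)}(g(\alpha^s)x) - f_{s-r}^{(a)}(g_{\alpha_s}\dotsb g_{\alpha_{r+1}}\omega(\alpha_{r+1}))\bigr|$ is bounded by a constant independent of everything. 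Now $g(\alpha^s)x = g_{\alpha_s}\dotsb g_{\alpha_{r+1}}\bigl(g_{\alpha_r}\dotsb g_{\alpha_1} x\bigr)$ and $g_{\alpha_r}\dotsb g_{\alpha_1}x$ lies in the cylinder $\mathtt{C}_D[\alpha_{r+1},\dotsc]$-region of $D$ (more precisely in $D_{\alpha_{r+1}}$, hence admissible to follow $\alpha_{r+1}$), exactly as does $\omega(\alpha_{r+1})$. So both points to which we apply the composed contraction $g_{\alpha_s}\dotsb g_{\alpha_{r+1}}$ lie in a common compact piece of $D$, and by \cref{lem:Hyperbolicity} the map $T^{s-r}$ (whose branch inverse is $g_{\alpha_{r+1}}^{-1}\dotsb g_{\alpha_s}^{-1}$) is uniformly expanding with factor at least $\kappa_2 > 1$; equivalently the forward composition $g_{\alpha_s}\dotsb g_{\alpha_{r+1}}$ contracts distances by at least $c_0\kappa_2^{s-r}$. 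Hence $\|g(\alpha^s)x - g_{\alpha_s}\dotsb g_{\alpha_{r+1}}\omega(\alpha_{r+1})\| \leq c_0^{-1}\kappa_2^{-(s-r)}\,\diam(D)$. Then, writing $f_{s-r}^{(a)}$ as a Birkhoff sum of $f^{(a)}$ along the orbit and using that $f^{(a)}$ is Lipschitz (with constant $\leq T_0$) together with the exponential contraction at each intermediate step, one gets a geometric series: the difference is $\leq T_0 \sum_{i=0}^{s-r-1} (\text{contraction of the first } i \text{ iterates}) \leq T_0 \sum_{i \geq 1} c_0^{-1}\kappa_2^{-i}\,\diam(D) =: \log C$, a constant. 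Exponentiating gives $C^{-1}\nu \leq \hat{\mu} \leq C\nu$.

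I expect the main obstacle to be purely bookkeeping rather than conceptual: namely, making precise the claim that $g_{\alpha_r}\dotsb g_{\alpha_1}x$ and $\omega(\alpha_{r+1})$ sit in the same compact domain to which the contraction estimate applies, and correctly tracking that it is the \emph{tail} indices $\alpha_{r+1},\dotsc,\alpha_s$ (not the head indices) that drive the contraction — the notational conventions here (with $\alpha^j = (\alpha_j,\dotsc,\alpha_1)$ and the cocycle product ordering) are easy to get backwards. The standard \emph{bounded distortion} estimate for Birkhoff sums of Lipschitz potentials along a uniformly hyperbolic map is entirely routine once that setup is pinned down, and indeed a nearly identical statement is proved in \cite[Section 7]{Sar20}, so I would follow that treatment, citing \cref{lem:Hyperbolicity} and the uniform Lipschitz bound $T_0$ from \cref{subsec:TransferOperators} for the key inputs.
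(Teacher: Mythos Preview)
Your proposal is correct and follows essentially the same approach as the paper: the first inequality is immediate from $|e^{ib\tau_s}| = 1$, and the two-sided comparison is the standard bounded-distortion estimate obtained by writing $f_{s-r}^{(a)}$ as a Birkhoff sum, using the Lipschitz bound $T_0$ together with the hyperbolicity from \cref{lem:Hyperbolicity}, and summing the resulting geometric series --- exactly the computation the paper records as \cref{eqn:f^(a)LipschitzBounds}. One small caveat: $\hat{\mu}$ is not literally a scalar multiple of $\nu_0$, since the factor $e^{f_{s-r}^{(a)}(g(\alpha^s)x)}$ depends on $\alpha^r$ through $g(\alpha^s)x$ (the paper's displayed identity is an abuse of notation); but your bounded-distortion bound is uniform in $\alpha^r$, so the term-by-term comparison still yields the conclusion.
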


\begin{proof}
Fix $C = e^{\frac{T_0 \theta}{1 - \theta}}$. Let $\xi \in \mathbb C$ with $|a| < a_0'$, $q \in \mathcal{O}$ be coprime to $q_0$, $x \in \Lambda$, $0 < r < s$ be integers, and $(\alpha_s, \alpha_{s - 1}, \dotsc, \alpha_{r + 1})$ be an admissible sequence. It is easy to check $f_s^{(a)}(g(\alpha^s)x) = f_{s - r}^{(a)}(g(\alpha^s)x) + f_r^{(a)}(g(\alpha^r)x)$ and the first inequality of the lemma. We also have
\begin{align}
\label{eqn:f^(a)LipschitzBounds}
\begin{aligned}
&\big|f_s^{(a)}(g(\alpha^s)x) - \big(f_{s - r}^{(a)}(g_{\alpha_s}g_{\alpha_{s - 1}} \dotsb g_{\alpha_{r + 1}} \omega(\alpha_{r + 1})) + f_r^{(a)}(g(\alpha^r)x)\big)\big| \\
={}&\big|f_{s - r}^{(a)}(g(\alpha^s)x) - f_{s - r}^{(a)}(g_{\alpha_s}g_{\alpha_{s - 1}} \dotsb g_{\alpha_{r + 1}}\omega(\alpha_{r + 1}))\big| \\
\leq{}&\sum_{k = 0}^{s - r - 1}\big|f^{(a)}\bigl(T^k(g(\alpha^s)x)\bigr) - f^{(a)}\bigl(T^k(g_{\alpha_s}g_{\alpha_{s - 1}} \dotsb g_{\alpha_{r + 1}}\omega(\alpha_{r + 1}))\bigr)\big| \\
\leq{}&\Lip(f^{(a)}) \cdot \max_{j \in \{1, 2, \dotsc, N\}} \diam(D_j) \cdot \sum_{k = 0}^{s - r - 1} \frac{1}{c_0\kappa_2^k} \leq \frac{T_0 \hat{D} \kappa_2}{c_0(\kappa_2 - 1)}.
\end{aligned}
\end{align}
Hence, the lemma follows by comparing $\hat{\mu}_{(\alpha_s, \alpha_{s - 1}, \dotsc, \alpha_{r + 1})}^{a, q, x}$ and $\nu_{(\alpha_s, \alpha_{s - 1}, \dotsc, \alpha_{r + 1})}^{a, q, x}$.
\end{proof}

Now we return to our goal of approximating the transfer operator. The following theorem is proved as in \cite[Theorem 4.21]{OW16} using bounds similar to \cref{eqn:f^(a)LipschitzBounds}.

\begin{lemma}
\label{lem:TransferOperatorConvolutionApproximation}
For all $\xi \in \mathbb C$ with $|a| < a_0'$, $q \in \mathcal{O}$ coprime to $q_0$, $x \in \Lambda$, integers $0 < r < s$, and $H \in \mathcal{V}_q(\Lambda)$, we have
\begin{multline*}
\left\|M_{\xi, q}^s(H)(x) - \sum_{\alpha_{r + 1}, \alpha_{r + 2}, \dotsc, \alpha_s} \mu_{(\alpha_s, \alpha_{s - 1}, \dotsc, \alpha_{r + 1})}^{\xi, q, x} * \phi_{(\alpha_s, \alpha_{s - 1}, \dotsc, \alpha_{r + 1})}^{q, H}\right\|_2 \\
\leq c_0^{-1}\kappa_2\hat{D}C_f \Lip(H)\kappa_2^{-(s - r)}.
\end{multline*}
\end{lemma}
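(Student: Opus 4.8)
The plan is to unwind the definition of $M_{\xi, q}^s$ as a sum over length-$s$ admissible sequences, split each such sequence into a ``tail'' $(\alpha_s, \dots, \alpha_{r+1})$ and a ``head'' $\alpha^r = (\alpha_r, \dots, \alpha_1)$, and then compare the exact summand with the corresponding convolution term, absorbing the mismatch using the hyperbolicity of $T$ (Lemma~\ref{lem:Hyperbolicity}) exactly as in the estimate \eqref{eqn:f^(a)LipschitzBounds}. Concretely, I would first write
\begin{align*}
M_{\xi, q}^s(H)(x) = \sum_{\alpha_{r+1}, \dots, \alpha_s} \sum_{\alpha^r} e^{(f_s^{(a)} - ib\tau_s)(g(\alpha^s)x)} \mathtt{c}_q^s(g(\alpha^s)x) H(g(\alpha^s)x),
\end{align*}
noting that the preimages $u' \in T^{-s}(x)$ are exactly the points $g(\alpha^s)x$ for admissible $\alpha^s = (\alpha_s, \dots, \alpha_1)$. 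Then I would use the cocycle factorization $\mathtt{c}_q^s(g(\alpha^s)x) = \mathtt{c}_q^{s-r}(g_{\alpha_s} \dots g_{\alpha_{r+1}} \cdot g(\alpha^r)x) \cdot \mathtt{c}_q^r(g(\alpha^r)x)$ together with the additivity $f_s^{(a)}(g(\alpha^s)x) = f_{s-r}^{(a)}(g(\alpha^s)x) + f_r^{(a)}(g(\alpha^r)x)$ (and similarly for $\tau_s$), so that the exact summand reads, for fixed tail,
\begin{align*}
\sum_{\alpha^r} e^{(f_s^{(a)} - ib\tau_s)(g(\alpha^s)x)}\, \delta_{\mathtt{c}_q^{s-r}(g_{\alpha_s}\dots g_{\alpha_{r+1}} g(\alpha^r)x)} * \Bigl(\delta_{\mathtt{c}_q^r(g(\alpha^r)x)} * H(g(\alpha^s)x)\Bigr).
\end{align*}

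The target convolution term $\mu_{(\alpha_s, \dots, \alpha_{r+1})}^{\xi, q, x} * \phi_{(\alpha_s, \dots, \alpha_{r+1})}^{q, H}$ has, by definition, the same weights $e^{(f_s^{(a)} + ib\tau_s)(g(\alpha^s)x)}$ and point masses $\delta_{\mathtt{c}_q^r(g(\alpha^r)x)}$, but the ``$\phi$'' factor replaces $\delta_{\mathtt{c}_q^{s-r}(g_{\alpha_s}\dots g_{\alpha_{r+1}} g(\alpha^r)x)} * H(g(\alpha^s)x)$ by $\delta_{\mathtt{c}_q^{s-r}(g_{\alpha_s}\dots g_{\alpha_{r+1}}\omega(\alpha_{r+1}))} * H(g_{\alpha_s}\dots g_{\alpha_{r+1}}\omega(\alpha_{r+1}))$, i.e., it evaluates the ``top $s-r$ steps'' of both the cocycle and $H$ at the fixed representative point $g_{\alpha_s}\dots g_{\alpha_{r+1}}\omega(\alpha_{r+1})$ rather than at $g(\alpha^s)x$. (There is a cosmetic sign discrepancy between $-ib\tau_s$ in the exact expansion and $+ib\tau_s$ in the definition of $\mu^{\xi,q,x}$; I would match the paper's convention, which is consistent with the $+ib\tau$ appearing in the measures — this does not affect absolute values.) The difference between the two therefore splits into two contributions to estimate: (i) the cocycle point masses $\delta_{\mathtt{c}_q^{s-r}(\cdot)}$ differ; but both $g(\alpha^s)x$ and $g_{\alpha_s}\dots g_{\alpha_{r+1}}\omega(\alpha_{r+1})$ lie in the same cylinder $\mathtt{C}_D[\alpha_s, \dots, \alpha_{r+1}]$ of length $s-r$, and $\mathtt{c}_q^{s-r}$ is constant on each such cylinder (it only depends on the symbols $\alpha_s, \dots, \alpha_{r+1}$), so these point masses are in fact \emph{equal}; (ii) the values $H(g(\alpha^s)x)$ and $H(g_{\alpha_s}\dots g_{\alpha_{r+1}}\omega(\alpha_{r+1}))$ differ, but the two arguments lie within Euclidean distance at most $\hat D \cdot (c_0 \kappa_2^{s-r})^{-1}$ of each other by Lemma~\ref{lem:Hyperbolicity} (the map $g_{\alpha_s}\dots g_{\alpha_{r+1}} = T^{-(s-r)}$ restricted to $D_{\alpha_{r+1}}$ contracts by $c_0\kappa_2^{s-r}$), so $\|H(g(\alpha^s)x) - H(g_{\alpha_s}\dots g_{\alpha_{r+1}}\omega(\alpha_{r+1}))\|_2 \leq \Lip(H)\, \hat D\, (c_0\kappa_2^{s-r})^{-1}$.

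Putting this together: for each fixed tail, the $\ell^2$-norm of the difference of the two inner sums over $\alpha^r$ is bounded, via the triangle inequality for convolutions (which are norm-nonincreasing when convolving with a unit point mass) and $\|\delta_g * \psi\|_2 = \|\psi\|_2$, by $\bigl(\sum_{\alpha^r} e^{f_s^{(a)}(g(\alpha^s)x)}\bigr) \cdot \Lip(H)\,\hat D\,(c_0\kappa_2^{s-r})^{-1}$, where I used $|e^{(f_s^{(a)}-ib\tau_s)(\cdot)}| = e^{f_s^{(a)}(\cdot)}$. Since $f_s^{(a)}(g(\alpha^s)x) = f_{s-r}^{(a)}(g(\alpha^s)x) + f_r^{(a)}(g(\alpha^r)x)$ and $f_{s-r}^{(a)}$ is uniformly bounded above (as in \eqref{eqn:f^(a)LipschitzBounds}, or simply because $f^{(a)}$ is bounded and the geometric tail is summable), one has $\sum_{\alpha^r} e^{f_s^{(a)}(g(\alpha^s)x)} \leq \kappa_2 c_0^{-1}\cdot(\text{const})\cdot \sum_{\alpha^r} e^{f_r^{(a)}(g(\alpha^r)x)} \leq \kappa_2 c_0^{-1} C_f$ by Lemma~\ref{lem:SumExpf^aBound} — more carefully, one can bound the tail sum directly by $C_f$ and pick up the constant $\kappa_2/(c_0)$ from replacing $f_s^{(a)}$ at the true point by $f_r^{(a)}$ as in \eqref{eqn:f^(a)LipschitzBounds}. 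Finally, summing over the finitely many tails $(\alpha_{r+1}, \dots, \alpha_s)$ and again using Lemma~\ref{lem:SumExpf^aBound}-type control (the number of tails times the per-tail weight is again bounded), the total is $\leq c_0^{-1}\kappa_2 \hat D C_f \Lip(H) \kappa_2^{-(s-r)}$, as claimed. The main obstacle — though it is more bookkeeping than genuine difficulty — is organizing the telescoping so that the geometric decay $\kappa_2^{-(s-r)}$ is extracted cleanly while all the $\alpha^r$-sums and tail-sums are uniformly controlled by $C_f$; the one subtle point worth stating explicitly is that $\mathtt{c}_q^{s-r}$ is genuinely constant on length-$(s-r)$ cylinders, which is what makes the cocycle point masses in (i) agree exactly rather than merely approximately.
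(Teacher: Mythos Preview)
Your proposal is correct and follows the same approach the paper intends: the paper does not give a proof but simply refers to \cite[Theorem 4.21]{OW16} ``using bounds similar to \cref{eqn:f^(a)LipschitzBounds}'', and your argument is precisely that computation --- expand $M_{\xi,q}^s$ over admissible sequences, split into tail and head, observe that $\mathtt{c}_q^{s-r}$ depends only on the tail symbols so the cocycle point masses coincide exactly, and bound the sole remaining discrepancy $\|H(g(\alpha^s)x) - H(g_{\alpha_s}\cdots g_{\alpha_{r+1}}\omega(\alpha_{r+1}))\|_2$ via the Lipschitz constant of $H$ and hyperbolicity, then sum using \cref{lem:SumExpf^aBound}. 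Two small bookkeeping slips worth cleaning up: first, the section $g_{\alpha_s}\cdots g_{\alpha_{r+1}}$ does not take input in $D_{\alpha_{r+1}}$ but in the admissible $D_j$'s; the correct distance estimate is that after one step both $g_{\alpha_{r+1}}\omega(\alpha_{r+1})$ and $g_{\alpha_{r+1}}g(\alpha^r)x$ lie in $D_{\alpha_{r+1}}$ (diameter $\leq\hat D$), and the remaining $(s-r-1)$-fold section contracts by $(c_0\kappa_2^{s-r-1})^{-1}$ --- this is where the extra factor $\kappa_2$ in the constant comes from, not from manipulating $f_s^{(a)}$. Second, the full sum $\sum_{\alpha_{r+1},\dots,\alpha_s}\sum_{\alpha^r}e^{f_s^{(a)}(g(\alpha^s)x)}$ is just $\sum_{\alpha^s}e^{f_s^{(a)}(g(\alpha^s)x)}\leq C_f$ directly by \cref{lem:SumExpf^aBound}, so no separate tail-and-head bounding is needed. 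Your observation about the $\pm ib\tau$ sign discrepancy between the operator and the measure is well-spotted; it is a typo in the paper and, as you note, irrelevant for the absolute-value estimate.
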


We will use this approximation to study the convolution, rather than dealing with the transfer operator directly, and obtain strong bounds. This is the objective of \cref{sec:L2FlatteningLemma} but first we need to establish some important facts in \cref{sec:ZariskiDensityOfTheReturnTrajectorySubgroups}.

\section{Zariski density and trace field of the return trajectory subgroups}
\label{sec:ZariskiDensityOfTheReturnTrajectorySubgroups}
In this section, we prove Zariski density of the return trajectory subgroups in \cref{thm:Z-DenseInG} which will be required to use the expander machinery of Golsefidy--Varj\'{u} \cite{GV12} in \cref{sec:L2FlatteningLemma}.

We make a similar definition as in \cite[Section 8]{Sar22}.

\begin{definition}[Return trajectory subgroup]
For all $p \in \mathbb N$ and $(y, z) \in \mathcal{A}^2$, we define the \emph{return trajectory subgroup} $H^p(y, z)$ to be the subgroup of $\langle\Gamma\rangle$ generated by the subset $S^p(y, z)$ which consists of the elements
\begin{align*}
\prod_{j = 1}^p g_{\alpha_j} \prod_{j = 1}^p g_{\tilde{\alpha}_{p + 1 - j}}^{-1} \in \langle\Gamma\rangle
\end{align*}
for all admissible sequences $(y, \alpha_p, \alpha_{p - 1}, \dotsc, \alpha_1, z)$ and $(y, \tilde{\alpha}_p, \tilde{\alpha}_{p - 1}, \dotsc, \tilde{\alpha}_1, z)$.
\end{definition}

For all $p \in \mathbb N$ and $(y, z) \in \mathcal{A}^2$, we denote $\tilde{S}^p(y, z) = \{\tilde{\gamma} \in \langle\tilde{\Gamma}\rangle: \gamma \in S^p(y, z)\} \subset \langle\tilde{\Gamma}\rangle$ and $\tilde{H}^p(y, z) = \langle \tilde{S}^p(y, z) \rangle = \{\tilde{\gamma} \in \langle\tilde{\Gamma}\rangle: \gamma \in H^p(y, z)\} < \langle\tilde{\Gamma}\rangle$.

\begin{theorem}
\label{thm:Z-DenseInG}
For all $(y, z) \in \mathcal A^2$, there exists $p_0 \in \mathbb N$ such that for all integers $p > p_0$, the subgroup $H^p(y, z)$ is Zariski dense in $\mathbf{G}$.
\end{theorem}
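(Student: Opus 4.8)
The plan is to show that the Zariski closure $\mathbf{H} = \overline{H^p(y,z)}^{\,Z}$ is all of $\mathbf{G}$ by ruling out every proper algebraic subgroup, using the group generated by $\Gamma$ as a known Zariski dense source and the hyperbolicity/expansion afforded by \cref{lem:Hyperbolicity}. First I would record the structure of the generating set $S^p(y,z)$: each generator has the form $\gamma \gamma'^{-1}$ where $\gamma = g_{\alpha_1}\dotsb$ and $\gamma'$ range over words indexing the same pair of allowed endpoints $(y,z)$; so $S^p(y,z) \subset \langle\Gamma\rangle$ consists of ``differences'' of return words with common boundary conditions. The key structural observation is that if $\gamma_0$ is one fixed such word, then every element of $H^p(y,z)$ lies in the set generated by $\{\gamma_0 \gamma'^{-1}\}$, and conjugating by $\gamma_0$ one relates $H^p(y,z)$ to a group built from $\gamma_0^{-1}\gamma$ for $\gamma$ ranging over the admissible words. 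As $p$ grows, the collection of such words becomes rich enough — here I would invoke the Markov property and admissibility combinatorics (the underlying subshift is topologically mixing, which is where $p > p_0$ enters) — that the words $\gamma$ with fixed endpoints $(y,z)$ generate, after this translation, a semigroup whose Zariski closure contains a Zariski dense subsemigroup of a group commensurable to $\langle\Gamma\rangle$.

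The core of the argument is then a proper-subgroup-elimination in $\mathbf{G} = \SO_Q$ (or $\SL_2$ over the relevant field). I would proceed as in the standard strategy (cf.\ the approach of \cite{Sar20} and the Zariski density arguments for thin groups): suppose $\mathbf{H} \subsetneq \mathbf{G}$ is the Zariski closure; since $\mathbf{G}$ is almost simple, $\mathbf{H}^\circ$ is either trivial, or has positive-dimensional unipotent radical, or is reductive of smaller dimension. Each case is excluded by exhibiting elements of $H^p(y,z)$ that cannot lie in such a subgroup. The crucial inputs are: (i) $H^p(y,z)$ contains elements that are \emph{hyperbolic} with attracting/repelling fixed points in prescribed regions of $\partial_\infty \mathbb H^n$, coming from the hyperbolicity of the generators $g_j$ and \cref{lem:Hyperbolicity}; and (ii) one has enough independence among these fixed-point pairs that they are not all contained in any proper sphere/subspace — this is exactly the statement flagged in the introduction that ``the limit set of the return trajectory subgroups is not contained in any strictly lower dimensional sphere.'' Concretely I would take two generators $\gamma_1\gamma_1'^{-1}$ and $\gamma_2\gamma_2'^{-1}$ of $S^p(y,z)$, show that for a suitable admissible choice their powers generate a Zariski dense subgroup of a rank-one subgroup $\SO(2,1)$ or $\SO(3,1)$ inside $\mathbf{G}$, and then, varying the $\alpha$-words (again using $p$ large so that admissibility is no obstruction), show these rank-one subgroups cannot all lie in a common proper $\mathbf{H}$ because their associated circles/spheres in the boundary span.

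The step I expect to be the main obstacle is precisely (ii): proving that the fixed points and invariant spheres of the available elements of $H^p(y,z)$ span, i.e., are not all confined to a proper totally geodesic subspace of $\mathbb H^n$. This cannot come from generators of $\Gamma$ alone because the Schottky data could conceivably be arranged in a lower-dimensional sphere pre-conjugation; the point is that the \emph{differences} $\gamma\gamma'^{-1}$, as $\gamma,\gamma'$ range over admissible return words, escape any such sphere once $p$ is large — and establishing this requires the Zariski density of $\Gamma$ itself (to get generators moving points off any fixed sphere) combined with a careful bookkeeping of which pairs $(\gamma,\gamma')$ are simultaneously admissible with endpoints $(y,z)$. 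In the continued fractions setting there is the additional wrinkle that one must also control the trace field, ensuring $\mathbb{Q}(\tr H^p(y,z)) = \mathbb{Q}(i)$ rather than a proper subfield; I would handle this by computing traces of explicit products $g_a g_{a'} \dotsb$ and $g_a^{-1}\dotsb$ and exhibiting one whose trace has nonzero imaginary part, then using that $\Gamma$ itself has full trace field. Once the spanning property and (in the CF case) the trace field are in hand, the reductive/unipotent case analysis closes routinely: a Zariski dense subgroup of $\mathbf{G}$ containing rank-one subgroups whose boundary spheres span and with full trace field must be all of $\mathbf{G}$.
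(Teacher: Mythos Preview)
Your overall strategy is sound and in fact matches the paper's: reduce Zariski density to showing that the limit set $\Lambda(H^p(y,z))$ is not contained in any $(n-2)$-sphere in $\partial_\infty\mathbb H^n$. The paper packages your proposed case analysis (trivial / unipotent radical / reductive) into a single clean lemma via Karpelevi\v{c}--Mostow, so that step is not where the work lies. Where your proposal diverges is in the construction establishing the spanning property, which you correctly flag as the main obstacle but leave vague. The paper's construction is much more direct than your rank-one subgroup route: since $\Gamma$ is Zariski dense, one can pick $n+1$ limit points $u_1,\dotsc,u_{n+1}$ in $\mathtt{C}[z]$ not lying on any $(n-2)$-sphere, approximate each $u_j$ by the attracting fixed point of some $\gamma_j' \in \Gamma$, then take suitable powers and append at most two letters so that all the resulting words $\gamma_j$ have a common length $p$ and the correct admissible endpoints; the elements $\gamma_j\gamma_0^{-1} \in H^p(y,z)$ then have attracting fixed points near the $u_j$, and one is done. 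No rank-one subgroups, no analysis of invariant circles --- just $n+1$ well-placed fixed points. One further point: the trace field condition $\mathbb Q(\tr(H^p(y,z))) = \mathbb Q(i)$ is \emph{not} part of the Zariski density statement and is not needed for it; the paper proves it as a separate theorem because it is an independent hypothesis required for Weisfeiler's strong approximation, so you should decouple it from this argument.
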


We have the following corollary which is proved as in \cite[Corollary 8.5]{Sar22}.

\begin{corollary}
\label{cor:Z-DenseInU-CoverG}
There exists $p_0 \in \mathbb N$ such that for all integers $p > p_0$ and $(y, z) \in \mathcal A^2$, the subgroup $\tilde{H}^p(y, z)$ is Zariski dense in $\tilde{\mathbf{G}}$.
\end{corollary}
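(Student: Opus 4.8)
The statement to prove is \cref{cor:Z-DenseInU-CoverG}: that $\tilde{H}^p(y,z)$ is Zariski dense in $\tilde{\mathbf{G}}$ for all large $p$ and all $(y,z) \in \mathcal A^2$. Since \cref{thm:Z-DenseInG} already gives the Zariski density of the downstairs group $H^p(y,z)$ in $G = \mathbf{G}(\mathbb R)^\circ$ (equivalently, of $H^p(y,z)$ as a subset of the $\mathbb R$-points of $\mathbf{G}$, in $\mathbf{G}$), the only remaining task is to transfer this through the isogeny $\tilde{\pi}: \tilde{\mathbf G} \to \mathbf G$. The key algebraic fact is that $\tilde{\pi}$ is a surjective morphism of algebraic groups with finite (central) kernel, hence a \emph{finite} dominant morphism; consequently it is closed in the Zariski topology and, more to the point, the image of a Zariski dense subgroup of $\tilde{\mathbf G}$ is Zariski dense in $\mathbf G$, and conversely the preimage of a Zariski dense subgroup is Zariski dense in $\tilde{\mathbf G}$ (using irreducibility of $\tilde{\mathbf G}$ together with the fact that $\tilde\pi$ has finite fibers, so no proper closed subvariety can surject onto a Zariski dense set). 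The plan is therefore to argue: $\tilde H^p(y,z)$ is a subgroup of $\tilde{\mathbf G}$ whose image $\tilde\pi(\tilde H^p(y,z))$ contains $H^p(y,z)$ (in fact equals it, by definition of $\tilde\gamma$), which is Zariski dense in $\mathbf G$; hence $\tilde H^p(y,z)$ cannot be contained in any proper Zariski closed subgroup, for such a subgroup would map into a proper closed subset of $\mathbf G$ containing the dense set $H^p(y,z)$ — a contradiction unless that image is all of $\mathbf G$, in which case a dimension/connectedness count forces the closure of $\tilde H^p(y,z)$ to have full dimension, hence (being a closed subgroup containing elements mapping onto a Zariski dense set, and $\tilde{\mathbf G}$ being connected) to be all of $\tilde{\mathbf G}$.

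\textbf{Steps, in order.} First I would fix $p_0$ to be the maximum over the finitely many pairs $(y,z) \in \mathcal A^2$ of the thresholds provided by \cref{thm:Z-DenseInG}, so a single $p_0$ works uniformly. Second, fix $p > p_0$ and $(y,z)$, and let $\mathbf H = \overline{\tilde H^p(y,z)}^{\mathrm{Zar}}$ be the Zariski closure inside $\tilde{\mathbf G}$; this is an algebraic subgroup. Third, observe $\tilde\pi(\mathbf H)$ is a closed subgroup of $\mathbf G$ (image of an algebraic subgroup under a morphism of algebraic groups is closed) containing $\tilde\pi(\tilde H^p(y,z)) = H^p(y,z)$; by \cref{thm:Z-DenseInG} this forces $\tilde\pi(\mathbf H) = \mathbf G$. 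Fourth, since $\tilde\pi$ is a central isogeny, $\dim \mathbf H = \dim \tilde\pi(\mathbf H) = \dim \mathbf G = \dim \tilde{\mathbf G}$, so $\mathbf H$ is a full-dimensional closed subgroup of the connected group $\tilde{\mathbf G}$; hence $\mathbf H \supseteq \tilde{\mathbf G}^\circ = \tilde{\mathbf G}$, i.e., $\mathbf H = \tilde{\mathbf G}$. (Here one uses that $\tilde{\mathbf G}$, being simply connected semisimple, is connected.) This is exactly the argument cited as analogous to \cite[Corollary 8.5]{Sar20}.

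\textbf{Main obstacle.} The substantive content is entirely in \cref{thm:Z-DenseInG}, which is proved separately; the passage to the simply connected cover is essentially formal. The one point requiring a small amount of care is the claim that $\dim \mathbf H = \dim \tilde\pi(\mathbf H)$ — i.e., that the isogeny does not drop dimension on subvarieties — which holds because $\tilde\pi$ restricted to $\mathbf H$ is again finite (its kernel is contained in the finite kernel of $\tilde\pi$), hence dominant onto its image of the same dimension. A second minor point is ensuring that $\tilde H^p(y,z)$ is genuinely a subgroup of $\tilde{\mathbf G}(\mathcal O)$, not merely a subsemigroup: this is built into its definition via $\tilde{S}^p(y,z)$, whose generators are the specified lifts of elements of $S^p(y,z)$, each of which is a product of generators times inverses of generators, and $\tilde{H}^p(y,z)$ is defined as the group generated by $\tilde S^p(y,z)$; one should just remark that the lifts are compatible with the group operations because $\tilde\pi$ is a homomorphism and the lift $\tilde\gamma$ is well-defined on $\langle\tilde\Gamma\rangle$. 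With these remarks the corollary follows.
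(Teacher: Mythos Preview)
Your proposal is correct and follows essentially the same approach as the paper, which simply cites \cite[Corollary 8.5]{Sar20} for this standard lift of Zariski density through the central isogeny $\tilde\pi$. Your handling of uniformity in $(y,z)$ by taking the maximum threshold over the finite set $\mathcal A^2$, and your dimension argument using connectedness of $\tilde{\mathbf G}$, are exactly what is needed.
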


Recall the upper half space model. Let $B^{\mathrm{E}}_\epsilon(u) \subset \mathbb R^{n - 1} \subset \partial_\infty\mathbb H^n$ denote the open Euclidean ball of radius $\epsilon > 0$ centered at $u \in \mathbb R^{n - 1}$. By a $k$-sphere in $\partial_\infty\mathbb H^n$ we mean that it is a Euclidean $k$-dimensional sphere or affine space in $\mathbb R^{n - 1}$, for all $0 \leq k \leq n - 2$. For the proof of \cref{thm:Z-DenseInG}, we recall the following well-known lemma and include its proof for convenience.

\begin{lemma}
\label{lem:NotZ-DenseImpliesLimitSetIsContainedInASphere}
If $H < G$ is not Zariski dense, then its limit set $\Lambda(H)$ is contained in a $(n - 2)$-sphere in $\partial_\infty \mathbb H^n$.
\end{lemma}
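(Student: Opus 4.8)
The plan is to pass to the Zariski closure $\mathbf{L} = \overline{H}^{\,\mathrm{Zar}} \subsetneq \mathbf{G}$ and use its algebraic structure to force $H \subseteq \mathbf{L}(\mathbb{R})$ to stabilize a small geometric object inside $\overline{\mathbb{H}^n}$. The key structural input is the trichotomy: a proper algebraic subgroup $\mathbf{L}$ of the rank-one group $\mathbf{G} = \SO_Q$ either (i) has $\mathbf{L}(\mathbb{R})$ contained in a maximal compact subgroup, hence fixing a point $p \in \mathbb{H}^n$; or (ii) stabilizes a proper totally geodesic subspace $\mathbb{H}^m \subseteq \mathbb{H}^n$ with $1 \le m \le n-1$; or (iii) is contained in the parabolic subgroup $P_\xi$ stabilizing an isotropic line, i.e. $\mathbf{L}(\mathbb{R})$ fixes a point $\xi \in \partial_\infty\mathbb{H}^n$. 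This is classical, but if one wants to prove it one splits on whether $\mathbf{L}$ is reductive: if the unipotent radical $\mathrm{R}_u(\mathbf{L})$ is nontrivial, it lies in a unique maximal unipotent subgroup $N_\xi$ (every nontrivial unipotent of $\SO_Q$ is a parabolic isometry with a unique fixed point on $\partial_\infty\mathbb{H}^n$), and since $\mathbf{L}$ normalizes $\mathrm{R}_u(\mathbf{L})$ it fixes $\xi$, giving (iii); if $\mathbf{L}$ is reductive, the $\mathbf{L}^\circ$-module $\mathbb{R}^{n+1}$ is semisimple, so either it has a proper nonzero invariant subspace $W$ — choosing $W$ non-degenerate when possible gives (ii), or (i) when $W$ or $W^\perp$ is a timelike line, and a degenerate invariant subspace contributes an invariant isotropic line, giving (iii) — or $\mathbb{R}^{n+1}$ is $\mathbf{L}^\circ$-irreducible, which for the standard representation of the rank-one group $\SO_Q$ forces $\mathbf{L}^\circ = \mathbf{G}$ (contradicting properness) unless $\mathbf{L}^\circ$ is compact, which is case (i) since connected compact subgroups fix a timelike vector.

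Granting the trichotomy, I would conclude as follows. In case (i), $H$ is a discrete subgroup of a compact group, hence finite, so $\Lambda(H) = \varnothing$, contained in any $(n-2)$-sphere. In case (ii), taking the reference point $o$ inside $\mathbb{H}^m$ gives $Ho \subseteq \mathbb{H}^m$, so $\Lambda(H) \subseteq \partial_\infty\mathbb{H}^m$, a $(m-1)$-sphere with $m-1 \le n-2$, which is contained in an $(n-2)$-sphere. In case (iii), $H$ is a discrete virtually solvable (indeed solvable-by-compact) subgroup of $\Isom(\mathbb{H}^n)$, hence elementary with $\#\Lambda(H) \le 2$; two points lie in a $0$-sphere, which is contained in an $(n-2)$-sphere since $n \ge 2$. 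This exhausts all cases.

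The main obstacle is case (iii): the literal statement fails for non-discrete $H$ — for example $H = AN \le \SO(n,1)$ has $Ho = \mathbb{H}^n$ and $\Lambda(H) = \partial_\infty\mathbb{H}^n$ — so discreteness of $H$, which holds in all applications since the return trajectory subgroups lie inside the discrete group $\langle\Gamma\rangle$, is used essentially to rule out a large limit set when $\mathbf{L}$ merely fixes a single boundary point. The other slightly delicate point is the trichotomy itself, i.e. the classification of proper algebraic subgroups of $\SO_Q$ (equivalently of proper Lie subalgebras of $\mathfrak{so}(n,1)$); this is standard and I would cite it rather than belabor the case analysis. Everything else — the nesting of subspheres and basepoint-independence of the limit set — is routine.
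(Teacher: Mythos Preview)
Your proof is correct and close in spirit to the paper's, but organized differently. The paper first disposes of the elementary case (where $\#\Lambda(H)\le 2$), and in the nonelementary case passes to a maximal connected proper Lie subgroup $H^\dagger$ containing $(\overline{H}^{\mathrm{Zar}})^\circ$; since $\#\Lambda(H)=\infty$, $H^\dagger$ cannot lie in a parabolic, hence is reductive by a result of Mostow, and then the Karpelevi\v{c}--Mostow theorem directly furnishes an invariant proper totally geodesic subspace. Your trichotomy (compact / preserves a totally geodesic subspace / contained in a parabolic) unpacks the same structure theory that underlies Karpelevi\v{c}--Mostow, so your cases (i) and (iii) together correspond to the paper's elementary case and your case (ii) to the paper's invocation of Karpelevi\v{c}--Mostow. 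The paper's route is shorter because it treats the classification as a black box; yours is more transparent about what is happening inside $\SO_Q$ but leaves more to the reader --- in particular your claim that irreducibility of $\mathbb{R}^{n+1}$ forces $\mathbf{L}^\circ=\mathbf{G}$ (up to compactness) is itself essentially the Karpelevi\v{c}--Mostow statement in rank one, so you end up citing the same theorem in the end. Your observation that discreteness of $H$ is genuinely needed --- and is available in the application since $H<\langle\Gamma\rangle$ --- is well taken: the paper's proof also uses it implicitly, in the step ``elementary $\Rightarrow\#\Lambda(H)\le 2$''.
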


\begin{proof}
Let $H < G$ be a subgroup which is not Zariski dense. If $H < G$ is elementary, then $\#\Lambda(H) \leq 2$ and hence the lemma follows. Otherwise, $H < G$ is nonelementary and hence $\#\Lambda(H) = \infty$. Let $H^\dagger < G$ be the maximal connected proper Lie subgroup containing the identity component of the Zariski closure of $H$ in $\mathbf{G}(\mathbb R)$. Note that $\#\Lambda(H) = \infty$ implies that $H^\dagger$ is not contained in any parabolic subgroup. Thus, $H^\dagger$ is reductive by \cite{Mos61}. The Karpelevi\v{c}--Mostow theorem \cite{Kar53,Mos55} then states that the (nontrivial) semisimple factor of $H^\dagger$ preserves a proper totally geodesic submanifold which in this case is a $(n - 1)$-sphere in $\mathbb H^n$ perpendicular to $\mathbb R^{n - 1}$. Consequently, $\Lambda(H)$ is contained in a unique $(n - 2)$-sphere in $\partial_\infty \mathbb H^n$.
\end{proof}

\begin{proof}[Proof of \cref{thm:Z-DenseInG}]
Let $(y, z) \in \mathcal{A}^2$. Recall that $(y, y)$ is an admissible pair. Since there are at least two Schottky generators, there exists $y_0 \in \mathcal{A} \setminus \{y\}$ such that $(y, y_0)$ is also an admissible pair. As $\Gamma$ is Zariski dense, we can use the contrapositive of \cite[Proposition 3.12]{Win15} to choose a set of distinct limit points $\{u_1, u_2, \dotsc, u_{n + 1}\} \subset \mathtt{C}[z] = \Lambda \cap D_z$ such that it is not contained in any $(n - 2)$-sphere in $\partial_\infty\mathbb H^n$. There exists $\epsilon > 0$ such that if $u_j' \in B_\epsilon^{\mathrm{E}}(u_j)$ for all $1 \leq j \leq n + 1$, then $\{u_1, u_2, \dotsc, u_{n + 1}\} \subset D_z$ and not contained in any $(n - 2)$-sphere in $\partial_\infty\mathbb H^n$. Note that $\{\gamma^+ \in \partial_\infty\mathbb H^n: \gamma \in \Gamma\} \subset \Lambda$ is dense. Thus, we can choose hyperbolic elements $\gamma_1', \gamma_2', \dotsc, \gamma_{n + 1}' \in \Gamma$ with $\len(\gamma_j') = L_j$ such that $(\gamma_j')^+ \in B_{\epsilon/2}^{\mathrm{E}}(u_j)$ for all $1 \leq j \leq n + 1$. Note that the words all start with $g_z$. Moreover, they can be chosen such that the words end also with $g_z$ because otherwise it simply requires appending the words with at most $2$ admissible generators of $\Gamma$. Fix $L = \prod_{j = 1}^{n + 1} L_j$. Either $(z, y)$ or $(z, y_0)$ is an admissible pair. Accordingly, fix the words $h_0 = g_y g_{y_0}$ and $h = g_y^2$ if the former holds and $h_0 = g_{y_0}^2$ and $h = g_{y_0} g_{y}$ otherwise. For some $r \in \mathbb N$, define $\gamma_0 = g_z^{rL}h_0$ and $\gamma_j = (\gamma_j')^{\frac{rL}{L_j}}h$ for all $1 \leq j \leq n + 1$ which all have word length $rL + 2$. Taking $r$ sufficiently large and $p = rL + 2$, we have $\gamma_j \gamma_0^{-1} \in H^p(y, z)$ with $(\gamma_j \gamma_0^{-1})^+ \in B_\epsilon^{\mathrm{E}}(u_j)$ for all $1 \leq j \leq n + 1$. Thus, $\Lambda(H^p(y, z))$ is not contained in any $(n - 2)$-sphere in $\partial_\infty\mathbb H^n$. Hence, $H^p(y, z)$ is Zariski dense by \cref{lem:NotZ-DenseImpliesLimitSetIsContainedInASphere}.
\end{proof}

In the continued fractions semigroup setting, we also require the following trace field property.

\begin{theorem}
\label{thm:TraceField}
We have $\mathbb Q(\tr(H^p(y, z))) = \mathbb Q(i)$ when $\Gamma$ is a continued fractions semigroup.
\end{theorem}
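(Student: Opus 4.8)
The plan is to exhibit enough elements of $H^p(y,z)$ (for large $p$) whose traces generate $\mathbb{Q}(i)$ over $\mathbb{Q}$, using the fact that $\mathscr{A} \not\subset \mathbb{N}$ so that $\tr(\Gamma)$ already generates $\mathbb{Q}(i)$. The first step is to observe that every element of the form $\gamma\tilde\gamma^{-1}$ with $\gamma,\tilde\gamma$ as in the definition of $S^p(y,z)$ lies in $\SL_2(\mathbb{Z}[i])$, so all traces are Gaussian integers, and it suffices to produce one element of $H^p(y,z)$ with trace not in $\mathbb{Z}$, i.e. with nonzero imaginary part. Since $\tr(H^p(y,z)) \supseteq \mathbb{Z}$ is automatic (one can always arrange return trajectories giving, e.g., the identity or parabolic-type words contributing integer traces, or simply note $2 = \tr(e) \in \tr(H^p(y,z))$ and use that the trace set is closed under enough operations), the real content is the non-real trace.

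The second step is the construction. As in the proof of \cref{thm:Z-DenseInG}, I would pick admissible sequences $(y,\alpha_p,\dotsc,\alpha_1,z)$ and $(y,\tilde\alpha_p,\dotsc,\tilde\alpha_1,z)$ so that $\gamma = \prod_j g_{\alpha_j}$ and $\tilde\gamma = \prod_j g_{\tilde\alpha_{p+1-j}}$ differ in a controlled way — for instance, take the two sequences identical except in one slot, where one uses $g_a$ and the other $g_{a'}$ for $a,a' \in \mathscr{A}$ with $a - a' \notin \mathbb{R}$ (possible precisely because $\mathscr{A} \not\subset \mathbb{N}$). Then $\gamma\tilde\gamma^{-1}$ is, up to fixed prefixes/suffixes built from generators, a word of the shape $w_1 g_a g_{a'}^{-1} w_2$ or $w\, g_a\, w'\, g_{a'}^{-1}\, w''$, and one computes its trace as a polynomial (with coefficients that are polynomials in the entries of the $g$'s, hence Gaussian integers) that depends affinely and nontrivially on $a$. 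Choosing the surrounding word and the parameters so that the coefficient of $a$ in $\tr(\gamma\tilde\gamma^{-1})$ is a Gaussian integer that is coprime, or at least not a real multiple, to the coefficient of $a'$ will force $\tr(\gamma\tilde\gamma^{-1}) \notin \mathbb{R}$ for a suitable choice of $a,a' \in \mathscr{A}$.

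The cleanest route is probably to reduce to traces of elements of $\Gamma$ itself: since $H^p(y,z) < \langle\Gamma\rangle$ and Zariski density of $\Gamma$ (hence of $H^p(y,z)$ by \cref{thm:Z-DenseInG}) gives that $H^p(y,z)$ contains elements approximating any element of $G$, and since the trace is a polynomial map, one argues that if $\tr(H^p(y,z)) \subset \mathbb{R}$ then the Zariski closure of $H^p(y,z)$ would be contained in the proper subvariety $\{g : \tr(g) \in \mathbb{R}\}$ — but this set is not Zariski closed over $\mathbb{C}$ in the naive sense, so one instead argues at the level of the $\mathbb{Q}$-structure: a Zariski dense subgroup of $\SL_2$ with all traces in $\mathbb{Q}(\sqrt{-1})^{\mathrm{real}} = \mathbb{Q}$ would, by standard arguments (Vinberg, or the trace-field being an invariant of the Zariski closure up to conjugacy), be conjugate into $\SL_2(\mathbb{R})$ or a quaternion algebra over $\mathbb{Q}$, contradicting that it is Zariski dense in $\SL_2(\mathbb{C})$. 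Combining $\mathbb{Q} \subseteq \mathbb{Q}(\tr(H^p(y,z))) \subseteq \mathbb{Q}(i)$ with $\mathbb{Q}(\tr(H^p(y,z))) \neq \mathbb{Q}$ and $[\mathbb{Q}(i):\mathbb{Q}] = 2$ yields $\mathbb{Q}(\tr(H^p(y,z))) = \mathbb{Q}(i)$.

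\textbf{Main obstacle.} The delicate point is producing an \emph{explicit} return-trajectory element with non-real trace (as opposed to invoking a structural argument, which requires care about what "trace field as an invariant of the Zariski closure" means for subgroups that are not themselves lattices). I expect the hands-on computation — tracking how $\tr(\gamma\tilde\gamma^{-1})$ depends on a single varying entry $a \in \mathscr{A}$ through a long product of matrices $g_{\alpha_j} = \begin{pmatrix} 0 & 1 \\ 1 & \alpha_j \end{pmatrix}$, and showing the dependence is genuinely $\mathbb{Q}(i)$-affine with a coefficient forcing non-reality — to be the technical heart, handled by choosing the two admissible sequences to agree outside a short window so that the trace becomes a small, directly computable expression in $a$, $a'$, and a few fixed generators.
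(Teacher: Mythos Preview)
Your ``cleanest route'' via Zariski density contains a genuine gap. Being conjugate into $\SL_2(\mathbb{R})$ does \emph{not} contradict Zariski density in $\SL_2$: for instance, $\SL_2(\mathbb{Z})$ is Zariski dense in $\SL_2$ (over $\mathbb{Q}$ or $\mathbb{C}$) yet has trace field $\mathbb{Q}$. You correctly observe that $\{g : \tr(g) \in \mathbb{R}\}$ is not Zariski closed, but passing to the $\mathbb{Q}$-structure does not rescue the argument: Zariski density in the algebraic group $\mathbf{G} = \SL_2$ is insensitive to whether the subgroup sits inside $\SL_2(\mathbb{R})$ or genuinely uses $\SL_2(\mathbb{C})$. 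This is precisely why the paper proves the trace field condition \emph{separately} from Zariski density --- both are needed as independent hypotheses for Weisfeiler's strong approximation (see the proof of \cref{lem:GV_Expander}, where \cref{cor:Z-DenseInU-CoverG} and \cref{thm:TraceField} are invoked in tandem).

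Your first route --- explicit computation --- is the correct one, and is exactly what the paper does, though the calculation is much shorter than you anticipate. In the continued fractions setting all sequences are admissible, so for any $p,y,z$ one may take the two defining sequences of an element of $S^p(y,z)$ to agree except in one slot; this puts elements such as $g_{aa}g_{bb}^{-1}$ (with $a,b \in \mathscr{A}$ distinct and $\{a,b\} \not\subset \mathbb{R}$) directly into $S^p(y,z)$, with no long words or asymptotic tracking required. The paper then computes $\tr(g_{aa}g_{bb}^{-1}) = (a-b)^2 + 2$, which is non-real unless $(a-b)^2 \in \mathbb{R}$; a short case analysis on whether $(a-b)^2$, $(ab)^2$, $ab$, and $2a^3b - a^4$ are real --- using slightly longer explicit words like $g_{aa}^2 g_{bb}^{-2}$ and $g_{ab}^2 g_{ba}^{-2}$ --- shows these cannot all be real when $\{a,b\} \not\subset \mathbb{R}$.
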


\begin{proof}
It suffices to show that there exists $\gamma \in H^p(y, z)$ such that $\tr(\gamma) \in \mathbb C \setminus \mathbb R$. We show this by computing traces of various elements in $H^p(y, z)$. This laborious task can be eased with a computer algebra system. We proceed via a series of cases where we use the notation introduced in \cref{subsec:ContinuedFractionsSemigroupSetting}. Recall that $\mathscr{A} \not\subset \mathbb N$ and $\#\mathscr{A} \geq 2$. Thus, we can choose distinct elements $a, b \in \mathscr{A}$ such that $\{a, b\} \not\subset \mathbb R$.

\medskip
\noindent
\textit{Case 1.} Suppose $(a - b)^2 \in \mathbb C \setminus \mathbb R$. Then, we compute that
\begin{align*}
\tr\bigl(g_{a, a}g_{b, b}^{-1}\bigr) = (a - b)^2 + 2 \in \mathbb C \setminus \mathbb R
\end{align*}
which proves the lemma.

\medskip
\noindent
\textit{Case 2.} Suppose $(a - b)^2 \in \mathbb R$ but $(ab)^2 \in \mathbb C \setminus \mathbb R$. Then, we compute that
\begin{align*}
\tr\bigl(g_{a, a}g_{a, a}g_{b, b}^{-1}g_{b, b}^{-1}\bigr) = (a - b)^4 + ((ab)^2 + 4)(a - b)^2 + 2 \in \mathbb C \setminus \mathbb R
\end{align*}
which proves the lemma.

\medskip
\noindent
\textit{Case 3.} Suppose $(a - b)^2, (ab)^2 \in \mathbb R$ but $ab \in \mathbb C \setminus \mathbb R$. Then, we compute that
\begin{align*}
\tr\bigl(g_{a, b}g_{a, b}g_{b, a}^{-1}g_{b, a}^{-1}\bigr) = -((ab)^2 + 4ab + 4)(a - b)^2 + 2 \in \mathbb C \setminus \mathbb R
\end{align*}
which proves the lemma.

\medskip
\noindent
\textit{Case 4.} Suppose $(a - b)^2, ab \in \mathbb R$ but $2a^3b - a^4 \in \mathbb C \setminus \mathbb R$. Then, we compute that
\begin{align*}
\tr\bigl(g_{a, a}g_{a, a}g_{b, a}^{-1}g_{b, a}^{-1}\bigr) = 2a^3b - a^4 - (ab)^2 + 2 \in \mathbb C \setminus \mathbb R
\end{align*}
which proves the lemma.

\medskip
\noindent
\textit{Case 5.} Suppose $(a - b)^2, ab, 2a^3b - a^4 \in \mathbb R$. We show that in fact this case is an impossibility by obtaining a contradiction. Since $(a - b)^2, ab \in \mathbb R$ and $\{a, b\} \subset \mathscr{A} \subset \mathbb N + i\mathbb Z$ while $\{a, b\} \not\subset \mathbb R$, we conclude that $b = \overline{a}$. Now write $a = re^{i\theta}$ and $b = \overline{a} = re^{-i\theta}$ for some $r > 0$ and $\theta \in \bigl(-\frac{\pi}{2}, \frac{\pi}{2}\bigr)$. Then, we compute that $2a^3b - a^4 = -r^4(\cos(4\theta) - 2\cos(2\theta)) + 8ir^4\sin^3(\theta)\cos(\theta) \in \mathbb R$ which holds if and only if $\sin^3(\theta)\cos(\theta) = 0$. Thus $\theta = 0$. But then $a = b$ and $\{a, b\} \subset \mathbb R$ which is a contradiction.
\end{proof}

\section{\texorpdfstring{$L^2$}{L-2}-flattening lemma}
\label{sec:L2FlatteningLemma}
In this section, we outline the proof of the following $L^2$-flattening lemma. The arguments are based on \cite[Section 9]{Sar22} which is itself generalized from \cite[Appendix]{MOW19} due to Bourgain--Kontorovich--Magee. The main tool required in the proof is the expander machinery of Golsefidy--Varj\'{u} \cite{GV12} which cannot be used directly but culminates in \cref{lem:ExpanderMachineryBound}.

\begin{lemma}
\label{lem:L2FlatteningLemma}
There exist $C > 0$, $C_0 > 0$, and $l \in \mathbb N$ such that for all $\xi \in \mathbb C$ with $|a| < a_0'$, square-free $q \in \mathcal{O}$ coprime to $q_0$, $x \in \Lambda$, integers $C_0 \log(N(q)) \leq r < s$ with $r \in l\mathbb Z$, admissible sequences $(\alpha_s, \alpha_{s - 1}, \dotsc, \alpha_{r + 1})$, and $\phi \in E_q^q$ with $\|\phi\|_2 = 1$, we have
\begin{align*}
\left\|\mu_{(\alpha_s, \alpha_{s - 1}, \dotsc, \alpha_{r + 1})}^{\xi, q, x} * \phi\right\|_2 &\leq C N(q)^{-\frac{1}{3}} \left\|\nu_{(\alpha_s, \alpha_{s - 1}, \dotsc, \alpha_{r + 1})}^{a, q, x}\right\|_1; \\
\left\|\hat{\mu}_{(\alpha_s, \alpha_{s - 1}, \dotsc, \alpha_{r + 1})}^{a, q, x} * \phi\right\|_2 &\leq C N(q)^{-\frac{1}{3}} \left\|\nu_{(\alpha_s, \alpha_{s - 1}, \dotsc, \alpha_{r + 1})}^{a, q, x}\right\|_1.
\end{align*}
\end{lemma}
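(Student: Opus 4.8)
The plan is to reduce the $L^2$-flattening estimate to the expander machinery of Golsefidy--Varj\'{u} via a random-walk comparison. First I would observe that, by \cref{lem:muHatLessThanCnu}, the measure $\hat{\mu}_{(\alpha_s, \dotsc, \alpha_{r+1})}^{a, q, x}$ is comparable (up to a universal multiplicative constant) to $\nu_{(\alpha_s, \dotsc, \alpha_{r+1})}^{a, q, x}$, which in turn equals a scalar multiple of $\nu_0^{a, q, x, r}$; and since $\left|\mu_{(\alpha_s, \dotsc, \alpha_{r+1})}^{\xi, q, x}\right| \leq \hat{\mu}_{(\alpha_s, \dotsc, \alpha_{r+1})}^{a, q, x}$ pointwise, it suffices to prove the bound for the single measure $\nu_0^{a, q, x, r}$ (normalized), i.e.\ to show $\bigl\|\tfrac{1}{\|\nu_0^{a,q,x,r}\|_1}\nu_0^{a, q, x, r} * \phi\bigr\|_2 \leq C N(q)^{-1/3}$ uniformly over $\phi \in E_q^q$ with $\|\phi\|_2 = 1$, provided $r \geq C_0 \log N(q)$ and $r \in l\mathbb{Z}$.

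Next I would connect $\nu_0^{a, q, x, r}$, which is supported on $\{\mathtt{c}_q^r(g(\alpha^r)x) : \alpha^r\}$, to a genuine random walk on $\tilde{\mathbf{G}}_q$. Writing $r = lm$, I would group the admissible sequence of length $r$ into $m$ blocks of length $l$, so that $\mathtt{c}_q^r$ factors as a product of $m$ congruence cocycle blocks; each block contributes an element $\pi_q(\tilde{g}_{\beta_1}\dotsb\tilde{g}_{\beta_l})$ of the return trajectory subgroup type. The weights $e^{f_r^{(a)}(g(\alpha^r)x)}$ are, by the Gibbs/Markov estimates (using \cref{lem:Hyperbolicity} and the Lipschitz bounds as in \cref{eqn:f^(a)LipschitzBounds}), comparable to a product of block weights up to bounded distortion, so after renormalizing, $\nu_0^{a,q,x,r}/\|\nu_0^{a,q,x,r}\|_1$ is comparable to an $m$-fold convolution $\theta^{*m}$ of a fixed symmetric-ish probability measure $\theta$ on $\tilde{\mathbf{G}}_q$ (the symmetrization comes from the return-trajectory structure pairing forward and inverse generators). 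Here is where \cref{thm:Z-DenseInG}, \cref{cor:Z-DenseInU-CoverG}, and (in the continued fractions case) \cref{thm:TraceField} enter: they guarantee that the group generated by $\supp(\theta)$, pushed to $\tilde{\mathbf{G}}(\mathcal{O})$, is Zariski dense with the correct trace field, hence the hypotheses of Golsefidy--Varj\'{u} \cite[Theorem 1]{GV12} (resp.\ \cite[Corollary 6]{GV12}) are met for all square-free $q$ coprime to $q_0$. That theorem yields a spectral gap: $\|\theta^{*m} * \phi\|_2 \leq e^{-c m}\|\phi\|_2$ for $\phi \in L_0^2(\tilde{\mathbf{G}}_q)$, hence on the sub-representation $E_q^q \subset L_0^2(\tilde{\mathbf{G}}_q)$ as well, as long as $m \geq c' \log\#\tilde{\mathbf{G}}_q \asymp \log N(q)$.

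Then I would assemble the estimate: choosing $C_0$ large enough that $r = lm \geq C_0\log N(q)$ forces $m \geq c'\log N(q) + O(1)$, the spectral gap gives $\|\theta^{*m} * \phi\|_2 \leq e^{-cm} \leq N(q)^{-cc'}$, and taking $C_0$ large enough makes $cc' \geq \tfrac{1}{3}$ (the exponent $\tfrac{1}{3}$ is comfortably below whatever gap the expander machinery delivers, absorbing the block-comparison constants). Undoing the normalization reinserts the factor $\|\nu_0^{a,q,x,r}\|_1$, which by \cref{lem:SumExpf^aBound} and \cref{lem:muHatLessThanCnu} is comparable to $\|\nu_{(\alpha_s, \dotsc, \alpha_{r+1})}^{a,q,x}\|_1$; this gives both displayed inequalities at once (the $\mu$ one from the pointwise domination, the $\hat\mu$ one directly).

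The main obstacle is the passage from the dynamically-weighted sum $\nu_0^{a,q,x,r}$ to a clean convolution power of a \emph{fixed} measure on $\tilde{\mathbf{G}}_q$ to which \cite{GV12} applies verbatim: one must simultaneously (i) control the multiplicative distortion of the Gibbs weights $e^{f_r^{(a)}}$ across the $m$ blocks uniformly in $x$, $a$, and the tail sequence $(\alpha_s, \dotsc, \alpha_{r+1})$ — this is where the admissibility bookkeeping with the point $\omega(\alpha_{r+1})$ and the hyperbolic contraction of \cref{lem:Hyperbolicity} do the work — and (ii) arrange the block generating set so that its image generates a group satisfying the Golsefidy--Varj\'{u} hypotheses, which is exactly why the return trajectory subgroups $H^p(y,z)$ and their Zariski density/trace field (\cref{thm:Z-DenseInG}, \cref{thm:TraceField}) were isolated. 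Once those are in hand, the flattening lemma is a fairly mechanical consequence, following \cite[Section 9]{Sar20} closely; the role of the square-free hypothesis and coprimality to $q_0$ is precisely to invoke \cite[Theorem 1]{GV12}. I would present this last part by citing the parallel argument in \cite{Sar20} rather than reproducing it.
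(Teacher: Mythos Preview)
Your overall strategy is aligned with the paper's---both follow \cite[Section 9]{Sar20}---but there is a genuine gap in your final step. You claim that once the $\hat\mu$ bound is established, ``the $\mu$ one [follows] from the pointwise domination'' $|\mu| \leq \hat\mu$. This is false: pointwise domination of measures gives only $\|\mu*\phi\|_2 \leq \|\hat\mu*|\phi|\|_2$, and $|\phi|$ is \emph{not} in $L_0^2(\tilde{\mathbf{G}}_q)$ when $\phi \in E_q^q$, so the spectral gap you obtained for $\hat\mu$ (or $\nu_0$) acting on mean-zero functions says nothing about $\hat\mu*|\phi|$. The complex phases $e^{ib\tau_s}$ in $\mu$ prevent it from being a scalar multiple of $\nu_0$, and the nearly-flat/Cauchy--Schwarz argument behind \cref{lem:EtaOperatorBound} genuinely uses positivity of the block weights. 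This is exactly why the paper invokes \cref{lem:ConvolutionBoundOnE_q^q}: one first uses \cref{lem:ExpanderMachineryBound} on the \emph{positive} measure $\nu$ to force $(\#\tilde{\mathbf{G}}_q)^{1/2}\|\nu\|_2 \lesssim \|\nu\|_1$ (i.e.\ $\nu$ is nearly uniform in $L^2$), whence $\|\mu\|_2 \leq \|\hat\mu\|_2 \lesssim \|\nu\|_2$ is small, and then the quasirandomness bound \cref{lem:ConvolutionBoundOnE_q^q} converts small $\|\mu\|_2$ into $\|\tilde\mu|_{E_q^q}\|_{\mathrm{op}} \leq CN(q)^{-1/3}\|\nu\|_1$. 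Your sketch omits this second ingredient entirely.

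A secondary inaccuracy: $\nu_0^{a,q,x,r}/\|\nu_0^{a,q,x,r}\|_1$ is \emph{not} comparable to a convolution power $\theta^{*m}$ of a single fixed measure---the admissibility constraints link adjacent blocks, so the paper works instead with a sum over block-label sequences of convolutions of measures $\eta^q\bigl(\alpha_j^{(l-p)_1},\alpha_{j-1}^{(l-p)_1}\bigr)$ depending on neighboring labels (your $\nu_1^q$). Moreover, Golsefidy--Varj\'u gives expansion for the Cayley graph on the return-trajectory generators $\tilde S^p_q$, not directly for these block measures; the transfer (\cref{lem:EtaOperatorBound}) requires the nearly-flat property (\cref{lem:NearlyFlat}) together with the observation that ratios of elements of $\supp(\eta^q)$ lie in $\tilde S^p_q$. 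You flag this as the ``main obstacle'' and defer to \cite{Sar20}, which is reasonable, but your description of it (a ``symmetric-ish'' $\theta$ with GV applied verbatim) would not survive scrutiny.
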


In this section, we fix any $p > p_0$ from \cref{cor:Z-DenseInU-CoverG} so that the corollary applies when it is needed in \cref{lem:GV_Expander}. For the purposes of proving \cref{lem:L2FlatteningLemma}, we will fix $\xi \in \mathbb C$ with $|a| < a_0'$, $x \in \Lambda$, $r \in \mathbb N$ with factorization $r = r'l$ for some fixed $r' \in \mathbb N$ and some fixed integer $l > p$ henceforth in this section. For all $q \in \mathcal{O}$ coprime to $q_0$, for all admissible sequences $(\alpha_s, \alpha_{s - 1}, \dotsc, \alpha_{r + 1})$, denote the complex measures $\mu_{(\alpha_s, \alpha_{s - 1}, \dotsc, \alpha_{r + 1})}^{\xi, q, x}$, $\nu_0^{a, q, x, r}$, $\hat{\mu}_{(\alpha_s, \alpha_{s - 1}, \dotsc, \alpha_{r + 1})}^{a, q, x}$, and $\nu_{(\alpha_s, \alpha_{s - 1}, \dotsc, \alpha_{r + 1})}^{a, q, x}$ by $\mu_{(\alpha_s, \alpha_{s - 1}, \dotsc, \alpha_{r + 1})}^q$, $\nu_0^q$, $\hat{\mu}_{(\alpha_s, \alpha_{s - 1}, \dotsc, \alpha_{r + 1})}^q$, and $\nu_{(\alpha_s, \alpha_{s - 1}, \dotsc, \alpha_{r + 1})}^q$ respectively.

Let $\alpha^r$ be an admissible sequence. We introduce the following additional notations to manipulate sequences. Define
\begin{align*}
\alpha_j^l &= (\alpha_{jl}, \alpha_{jl - 1}, \dotsc, \alpha_{(j - 1)l + 1}); \\
\alpha_j^{(l - p)_1} &= (\alpha_{jl}, \alpha_{jl - 1}, \dotsc, \alpha_{(j - 1)l + p + 1}); \\
\alpha_j^{(p)_2} &= (\alpha_{(j - 1)l + p}, \alpha_{(j - 1)l + p - 1}, \dotsc, \alpha_{(j - 1)l + 1})
\end{align*}
for all $1 \leq j \leq r'$. With these notations, we have $\alpha^r = (\alpha_{r'}^l, \alpha_{r' - 1}^l, \dotsc,  \alpha_1^l) = (\alpha_r, \alpha_{r - 1}, \dotsc, \alpha_1)$ and $\alpha_j^l = \bigl(\alpha_j^{(l - p)_1}, \alpha_j^{(p)_2}\bigr)$ for all $1 \leq j \leq r'$. We also have $T^k(g(\alpha^j)x) = g(\alpha^{j - k})x$ for all $x \in \Lambda$, $1 \leq j \leq r$, and $0 \leq k \leq j - 1$.

For all admissible sequences $\alpha^r$, we compute that
\begin{align*}
f_r^{(a)}(g(\alpha^r)x) ={}&\sum_{k = 0}^{r - 1} f^{(a)}(T^k(g(\alpha^r)x)) = \sum_{k = 0}^{r - 1} f^{(a)}(g(\alpha^{r - k})x) \\
={}&\sum_{j = 0}^{r' - 1} \sum_{k = 0}^{l - 1} f^{(a)}(g(\alpha^{r - (jl + k)})x) = \sum_{j = 0}^{r' - 1} \sum_{k = 0}^{l - 1} f^{(a)}(T^k(g(\alpha^{r - jl})x)) \\
={}&\sum_{j = 0}^{r' - 1} f_l^{(a)}(g(\alpha^{r - jl})x) = \sum_{j = 1}^{r'} f_l^{(a)}(g(\alpha^{jl})x).
\end{align*}
We can estimate each term in the sum above so that in the $j$\textsuperscript{th} term, we remove dependence on $\alpha_k^{(p)_2}$ for all distinct integers $1 \leq j, k \leq r'$. This is not required for $j = 1$ since the first term does not have any dependence on $\alpha_k^{(p)_2}$ for all $2 \leq k \leq r'$.

\Cref{lem:Estimate_f_ToRemoveDependence} is proved as in \cite[Lemma 9.2]{Sar22} using bounds similar to \cref{eqn:f^(a)LipschitzBounds}.

\begin{lemma}
\label{lem:Estimate_f_ToRemoveDependence}
There exists $C > 0$ such that for all admissible sequences $\alpha^r$, we have
\begin{align*}
\big|f_l^{(a)}(g(\alpha^{jl})x) - f_l^{(a)}\big(g\big(\alpha_j^l, \alpha_{j - 1}^{(l - p)_1}\big) \omega\big(\alpha_{j - 1}^{(l - p)_1}\big)\big)\big| \leq C \kappa_2^{-l}
\end{align*}
for all $2 \leq j \leq r'$, where $C$ is independent of $|a| < a_0'$, $x \in \Lambda$, $r \in \mathbb N$ and its factorization $r = r'l$ with $l > p$.
\end{lemma}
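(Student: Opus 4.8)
The plan is to follow the argument of \cite[Lemma 9.2]{Sar20}: the key observation is that the two arguments of $f_l^{(a)}$ lie in a common cylinder whose symbolic length $2l - p - 1$ overshoots, by $l - p$ symbols, the window $\{0, 1, \dotsc, l - 1\}$ of iterates on which $f_l^{(a)}$ depends, so that \cref{lem:Hyperbolicity} forces the whole relevant orbit segment to stay exponentially close. Concretely, write $u = g(\alpha^{jl})x$ and $u' = g\bigl(\alpha_j^l, \alpha_{j - 1}^{(l - p)_1}\bigr)\,\omega\bigl(\alpha_{j - 1}^{(l - p)_1}\bigr)$. Using $j \geq 2$ together with $T^k(g(\alpha^{jl})x) = g(\alpha^{jl - k})x$, the first $2l - p$ symbols of the itinerary of $u$ are $(\alpha_{jl}, \alpha_{jl - 1}, \dotsc, \alpha_{(j - 2)l + p + 1})$, which is exactly the admissible word $\bigl(\alpha_j^l, \alpha_{j - 1}^{(l - p)_1}\bigr)$ prescribing the cylinder that contains $u'$. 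Hence $u, u' \in \mathtt{C}_D\bigl[\alpha_j^l, \alpha_{j - 1}^{(l - p)_1}\bigr]$.

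First I would record, exactly as in \cref{eqn:f^(a)LipschitzBounds}, that for each $0 \leq k \leq l - 1$ the points $T^k(u)$ and $T^k(u')$ lie in a common cylinder of length $2l - p - 1 - k \geq 1$, so that $T^{\,2l - p - 1 - k}$ sends both into a single ball $D_i$ while expanding distances by at least $c_0 \kappa_2^{\,2l - p - 1 - k}$ by \cref{lem:Hyperbolicity}; this yields
\begin{align*}
\|T^k(u) - T^k(u')\| \leq \frac{\hat{D}}{c_0\, \kappa_2^{\,2l - p - 1 - k}}.
\end{align*}
Summing the Lipschitz estimate for $f^{(a)}$ over $k = 0, 1, \dotsc, l - 1$ and using $\Lip(f^{(a)}) < T_0$ then gives
\begin{align*}
\bigl|f_l^{(a)}(u) - f_l^{(a)}(u')\bigr| \leq T_0 \hat{D} \sum_{k = 0}^{l - 1} \frac{1}{c_0\, \kappa_2^{\,2l - p - 1 - k}} \leq \frac{T_0 \hat{D}\, \kappa_2^{\,p + 1}}{c_0(\kappa_2 - 1)}\, \kappa_2^{-l},
\end{align*}
the last step being a geometric series whose dominant term, at $k = l - 1$, is $\kappa_2^{-(l - p)}$. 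This proves the lemma with $C = \tfrac{T_0 \hat{D}\, \kappa_2^{\,p + 1}}{c_0(\kappa_2 - 1)}$, which depends only on the fixed quantities $T_0, \hat{D}, c_0, \kappa_2, p$ and is therefore independent of $|a| < a_0'$, $x \in \Lambda$, and the factorization $r = r'l$ with $l > p$.

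The main obstacle is purely the index bookkeeping hidden in the first step: one must verify, for every $2 \leq j \leq r'$, that the itinerary of $g(\alpha^{jl})x$ genuinely extends through the block $\alpha_{j - 1}^{(l - p)_1}$ — this is precisely where the hypothesis $j \geq 2$ enters, the boundary case $j = 2$ requiring $(j - 2)l + p \geq 0$ — and that these $l - p$ additional common symbols, lying beyond the window $\{0, 1, \dotsc, l - 1\}$ on which $f_l^{(a)}$ depends, are exactly what upgrade the $O(1)$ bound of \cref{eqn:f^(a)LipschitzBounds} to the claimed $O(\kappa_2^{-l})$ decay, with a constant uniform in $l$.
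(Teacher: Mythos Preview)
Your proof is correct and follows essentially the same approach the paper indicates (it defers to \cite[Lemma 9.2]{Sar20} and the estimate \cref{eqn:f^(a)LipschitzBounds}): you correctly identify that $u$ and $u'$ share the common cylinder $\mathtt{C}_D\bigl[\alpha_j^l,\alpha_{j-1}^{(l-p)_1}\bigr]$ of length $2l-p-1$, use the conformal hyperbolicity bound from \cref{lem:Hyperbolicity} to control $\|T^k(u)-T^k(u')\|$ for $0\le k\le l-1$, and sum the resulting geometric series to extract the $\kappa_2^{-l}$ decay with the explicit uniform constant $C=\tfrac{T_0\hat{D}\kappa_2^{p+1}}{c_0(\kappa_2-1)}$.
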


To make sense of the notations in what follows, we make the convention that $\alpha_0^{(l - p)_1}$ is the empty sequence for all admissible sequences $\alpha^r$. Using the calculations and \cref{lem:Estimate_f_ToRemoveDependence} above, for all $q \in \mathcal{O}$ coprime to $q_0$, for all admissible sequences $\alpha^r$, define the coefficients
\begin{align*}
E\bigl(\alpha_j^l, \alpha_{j - 1}^{(l - p)_1}\bigr) =
\begin{cases}
e^{f_l^{(a)}(g(\alpha_1^l)x)}, & j = 1 \\
e^{f_l^{(a)}(g(\alpha_j^l, \alpha_{j - 1}^{(l - p)_1}) \omega(\alpha_{j - 1}^{(l - p)_1}))}, & 2 \leq j \leq r'
\end{cases}
\end{align*}
and the measures
\begin{align*}
\eta^q\bigl(\alpha_j^{(l - p)_1}, \alpha_{j - 1}^{(l - p)_1}\bigr) = \sum_{\alpha_j^{(p)_2}} E\bigl(\alpha_j^l, \alpha_{j - 1}^{(l - p)_1}\bigr) \delta_{\mathtt{c}_q^l(g(\alpha^{jl})x)} \qquad \text{for all $1 \leq j \leq r'$}
\end{align*}
where we show the dependence of the admissible choices of $\alpha_j^{(p)_2}$ on $\alpha_j^{(l - p)_1}$ and $\alpha_{j - 1}^{(l - p)_1}$ (or more precisely only on the last entry of $\alpha_j^{(l - p)_1}$ and the first entry of $\alpha_{j - 1}^{(l - p)_1}$). These measures satisfy a property as shown in \cref{lem:NearlyFlat} which is called \emph{nearly flat}. It is proved as in \cite[Lemma 9.3]{Sar22} using bounds similar to \cref{eqn:f^(a)LipschitzBounds}.

\begin{lemma}
\label{lem:NearlyFlat}
There exists $C > 1$ such that for all $1 \leq j \leq r'$, and for all pairs of admissible sequences $\bigl(\alpha_j^l, \alpha_{j - 1}^{(l - p)_1}\bigr)$ and $\bigl(\tilde{\alpha}_j^l, \tilde{\alpha}_{j - 1}^{(l - p)_1}\bigr)$ with $\bigl(\alpha_j^{(l - p)_1}, \alpha_{j - 1}^{(l - p)_1}\bigr) = \bigl(\tilde{\alpha}_j^{(l - p)_1}, \tilde{\alpha}_{j - 1}^{(l - p)_1}\bigr)$, we have
\begin{align*}
\frac{E\bigl(\tilde{\alpha}_j^l, \tilde{\alpha}_{j - 1}^{(l - p)_1}\bigr)}{E\bigl(\alpha_j^l, \alpha_{j - 1}^{(l - p)_1}\bigr)} \leq C
\end{align*}
where $C$ is independent of $|a| < a_0'$, $x \in \Lambda$, $r \in \mathbb N$ and its factorization $r = r'l$ with $l > p$.
\end{lemma}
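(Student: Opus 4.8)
The plan is to reuse, essentially verbatim, the telescoping estimate of \cite[Lemma 9.3]{Sar20}: bound the difference of the Birkhoff sums $f_l^{(a)}$ at two nearby points by combining the Lipschitz bound $\Lip(f^{(a)}) \leq T_0$ with the hyperbolicity of \cref{lem:Hyperbolicity}, exactly in the style of \eqref{eqn:f^(a)LipschitzBounds}. Fix $1 \leq j \leq r'$ and a pair of admissible sequences as in the statement, and write $u$ for the point of $\Lambda$ at which $f_l^{(a)}$ is evaluated in the definition of $E\bigl(\alpha_j^l, \alpha_{j - 1}^{(l - p)_1}\bigr)$ — that is, $u = g(\alpha_1^l)x$ if $j = 1$ and $u = g\bigl(\alpha_j^l, \alpha_{j - 1}^{(l - p)_1}\bigr)\omega\bigl(\alpha_{j - 1}^{(l - p)_1}\bigr)$ if $j \geq 2$ — and likewise $\tilde u$ for the tilded data. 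Since the ratio in the statement equals $e^{f_l^{(a)}(\tilde u) - f_l^{(a)}(u)}$, it suffices to bound $\bigl|f_l^{(a)}(u) - f_l^{(a)}(\tilde u)\bigr|$ by a constant depending only on $p$.

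The key observation is that, since $\alpha_j^{(l - p)_1} = \tilde\alpha_j^{(l - p)_1}$ and $\alpha_{j - 1}^{(l - p)_1} = \tilde\alpha_{j - 1}^{(l - p)_1}$, one has $u = g\bigl(\alpha_j^{(l - p)_1}\bigr)w$ and $\tilde u = g\bigl(\alpha_j^{(l - p)_1}\bigr)\tilde w$ for two points $w, \tilde w \in D$ (differing because $\alpha_j^{(p)_2}$ and $\tilde\alpha_j^{(p)_2}$ differ in general). First I would split
\begin{align*}
f_l^{(a)}(u) - f_l^{(a)}(\tilde u) = \sum_{k = 0}^{l - p - 1}\bigl(f^{(a)}(T^k u) - f^{(a)}(T^k\tilde u)\bigr) + \sum_{k = l - p}^{l - 1}\bigl(f^{(a)}(T^k u) - f^{(a)}(T^k\tilde u)\bigr).
\end{align*}
For $0 \leq k \leq l - p - 1$, the points $T^k u$ and $T^k\tilde u$ are the images of $w$ and $\tilde w$ under the \emph{common} contraction $g\bigl(\alpha_{jl - k}, \dotsc, \alpha_{(j - 1)l + p + 1}\bigr)$, a word of length $l - p - k$ (a suffix of the shared block $\alpha_j^{(l - p)_1}$); by \cref{lem:Hyperbolicity} and conformality this map contracts the Euclidean metric by a factor $\leq c_0^{-1}\kappa_2^{-(l - p - k)}$, whence $\bigl|f^{(a)}(T^k u) - f^{(a)}(T^k\tilde u)\bigr| \leq T_0 c_0^{-1}\diam(D)\,\kappa_2^{-(l - p - k)}$, and summing the geometric series bounds the first sum by $\dfrac{T_0\diam(D)\,\kappa_2}{c_0(\kappa_2 - 1)}$, uniformly in $|a| < a_0'$, $x$, $r$, $r'$ and $l$.

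For the last $p$ terms, where $T^k u$ and $T^k\tilde u$ need not be close, I would use the crude bound $\bigl|f^{(a)}(T^k u) - f^{(a)}(T^k\tilde u)\bigr| \leq 2\|f^{(a)}\|_\infty \leq 2T_0$, so that the second sum is at most $2pT_0$. Combining, $\bigl|f_l^{(a)}(u) - f_l^{(a)}(\tilde u)\bigr| \leq \dfrac{T_0\diam(D)\,\kappa_2}{c_0(\kappa_2 - 1)} + 2pT_0$, so one takes $C = \exp\!\bigl(\tfrac{T_0\diam(D)\,\kappa_2}{c_0(\kappa_2 - 1)} + 2pT_0\bigr) > 1$; the case $j = 1$ is identical with $u = g(\alpha_1^l)x$, $w = g(\alpha_1^{(p)_2})x$. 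I do not expect a genuine obstacle: the argument is routine once one tracks which entries of the two admissible sequences agree. The only subtlety is that the discrepancy between the two sequences is confined to the final $p$ entries $\alpha_j^{(p)_2}$ of the $j$\textsuperscript{th} block, for which the contraction coming from the shared prefix $\alpha_j^{(l - p)_1}$ gives no help, but which contribute only the fixed additive constant $2pT_0$ — and this is precisely why $p$ is fixed (via \cref{cor:Z-DenseInU-CoverG}) before $l > p$ is chosen.
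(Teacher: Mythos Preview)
Your proposal is correct and follows essentially the same approach as the paper, which simply cites \cite[Lemma 9.3]{Sar20} and points to the telescoping bound in \eqref{eqn:f^(a)LipschitzBounds}. The split into the first $l-p$ terms (controlled by the shared prefix $\alpha_j^{(l-p)_1}$ via hyperbolicity and a geometric series) and the last $p$ terms (crudely bounded by $2p\|f^{(a)}\|_\infty$) is exactly the intended argument; one cosmetic point is that in \eqref{eqn:f^(a)LipschitzBounds} the paper uses $\hat D = \max_j \diam(D_j)$ rather than $\diam(D)$, since after a single shared generator both points already lie in the same disk, but this only changes the constant.
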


For all $q \in \mathcal{O}$ coprime to $q_0$, we also define the measure
\begin{align*}
\nu_1^q = \sum_{\alpha_1^{(l - p)_1}, \alpha_2^{(l - p)_1}, \dotsc, \alpha_{r'}^{(l - p)_1}} \mathop{\bigast}\limits_{j = 1}^{r'} \eta^q\bigl(\alpha_j^{(l - p)_1}, \alpha_{j - 1}^{(l - p)_1}\bigr)
\end{align*}
which in particular consists of convolutions of nearly flat measures. \cref{lem:EstimateNu} shows that we can estimate $\nu_0^q$ with $\nu_1^q$ up to a multiplicative constant and vice versa where the constant has exponential dependence on $r'$ in the factorization $r = r'l$. It is proved using \cref{lem:Estimate_f_ToRemoveDependence} as in \cite[Lemma 9.4]{Sar22}.

\begin{lemma}
\label{lem:EstimateNu}
There exists $C > 0$ such that for all $q \in \mathcal{O}$ coprime to $q_0$, we have $\nu_0^q \leq e^{r' C\kappa_2^{-l}}\nu_1^q$ and $\nu_1^q \leq e^{r' C\kappa_2^{-l}} \nu_0^q$ where $C$ is independent of $|a| < a_0'$, $x \in \Lambda$, $r \in \mathbb N$ and its factorization $r = r'l$ with $l > p$.
\end{lemma}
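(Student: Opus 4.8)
The statement compares the measure $\nu_0^q = \sum_{\alpha^r} e^{f_r^{(a)}(g(\alpha^r)x)} \delta_{\mathtt{c}_q^r(g(\alpha^r)x)}$ with the convolution measure $\nu_1^q$, so the plan is to unwind both definitions term by term and compare the coefficients exponential-by-exponential. First I would observe that the convolution $\mathop{\bigast}_{j = 1}^{r'} \eta^q\bigl(\alpha_j^{(l - p)_1}, \alpha_{j - 1}^{(l - p)_1}\bigr)$, once expanded, is a sum of Dirac masses indexed by all admissible choices of the blocks $\alpha_j^{(p)_2}$, $1 \le j \le r'$ — i.e.\ indexed by exactly the same set of admissible sequences $\alpha^r$ that indexes $\nu_0^q$ — because convolution of $\delta_{g_1} * \delta_{g_2} * \dotsb$ gives $\delta_{g_1 g_2 \dotsb}$ and the cocycle relation $\mathtt{c}_q^r(g(\alpha^r)x) = \prod_j \mathtt{c}_q^l(g(\alpha^{jl})x)$ (with the product taken in the correct order, reading off the block structure $\alpha^r = (\alpha_{r'}^l, \dotsc, \alpha_1^l)$) recombines the block cocycles into the full cocycle. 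Thus both $\nu_0^q$ and $\nu_1^q$ are supported on the same atoms, and the only thing to compare is, for each fixed $\alpha^r$, the coefficient $e^{f_r^{(a)}(g(\alpha^r)x)}$ against the product $\prod_{j = 1}^{r'} E\bigl(\alpha_j^l, \alpha_{j - 1}^{(l - p)_1}\bigr)$.

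Next I would use the block decomposition of the Birkhoff sum computed in the text just before \cref{lem:Estimate_f_ToRemoveDependence}, namely $f_r^{(a)}(g(\alpha^r)x) = \sum_{j = 1}^{r'} f_l^{(a)}(g(\alpha^{jl})x)$. For the $j = 1$ term this matches $E(\alpha_1^l)$ exactly by definition. For each $2 \le j \le r'$, \cref{lem:Estimate_f_ToRemoveDependence} bounds the difference $\bigl|f_l^{(a)}(g(\alpha^{jl})x) - f_l^{(a)}\bigl(g\bigl(\alpha_j^l, \alpha_{j - 1}^{(l - p)_1}\bigr) \omega\bigl(\alpha_{j - 1}^{(l - p)_1}\bigr)\bigr)\bigr| \le C \kappa_2^{-l}$, and the second quantity here is precisely $\log E\bigl(\alpha_j^l, \alpha_{j - 1}^{(l - p)_1}\bigr)$. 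Summing these $r' - 1 \le r'$ bounds gives
\begin{align*}
\Bigl|f_r^{(a)}(g(\alpha^r)x) - \sum_{j = 1}^{r'} \log E\bigl(\alpha_j^l, \alpha_{j - 1}^{(l - p)_1}\bigr)\Bigr| \le r' C \kappa_2^{-l},
\end{align*}
hence $e^{-r' C \kappa_2^{-l}} \prod_j E \le e^{f_r^{(a)}(g(\alpha^r)x)} \le e^{r' C \kappa_2^{-l}} \prod_j E$, uniformly over all admissible $\alpha^r$. Summing over the common atom set and collecting the Dirac masses then yields $\nu_0^q \le e^{r' C \kappa_2^{-l}} \nu_1^q$ and the reverse inequality, which is the claim (after renaming $C$). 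The uniformity of $C$ in $|a| < a_0'$, $x$, and the factorization $r = r'l$ is inherited from \cref{lem:Estimate_f_ToRemoveDependence}.

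The only genuinely delicate point — and the place I would be most careful — is the bookkeeping in the first step: making sure that, in the expansion of the convolution $\mathop{\bigast}_{j=1}^{r'} \eta^q(\cdots)$, the order of multiplication of the atoms $\delta_{\mathtt{c}_q^l(g(\alpha^{jl})x)}$ is consistent with the definition $\mathtt{c}_q^r(u) = \mathtt{c}_q(T^{r-1}(u)) \cdots \mathtt{c}_q(u)$ and with the convention (stated in the text) that a sequence of sequences is concatenated, so that the indexing set of atoms of $\nu_1^q$ really is in bijection with that of $\nu_0^q$ via $\alpha^r = (\alpha_{r'}^l, \dotsc, \alpha_1^l)$ and the group element attached to $\alpha^r$ agrees on both sides. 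Once this identification of supports is pinned down, the rest is the routine exponential-sum estimate above, exactly parallel to \cite[Lemma 9.4]{Sar20}.
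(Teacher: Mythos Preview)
Your proposal is correct and follows essentially the same approach as the paper, which simply cites \cite[Lemma 9.4]{Sar20} and \cref{lem:Estimate_f_ToRemoveDependence}: expand both measures over the same admissible index set, match the Dirac masses via the block cocycle identity, and compare the exponential weights blockwise using \cref{lem:Estimate_f_ToRemoveDependence}. Your flagged ``delicate point'' about the cocycle ordering is exactly the bookkeeping one must check, and it works out as you describe.
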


We now quote the strong approximation theorem of Weisfeiler \cite{Wei84} and the expander machinery of Golsefidy--Varj\'{u} \cite{GV12} in the context of our setting which will be used to prove \cref{lem:GV_Expander} regarding a spectral gap. Here, $\mathbb K = \mathbb Q$ (resp. $\mathbb K = \mathbb Q(i)$) if $\mathcal{O} = \mathbb Z$  (resp. $\mathcal{O} = \mathbb Z[i]$). Note that the stronger \cite[Theorem 1]{GV12} is available for $\mathbb K = \mathbb Q$, though we do not need it. Also, recall from \cref{sec:Preliminaries} that we have already fixed $q_0 \in \mathcal{O}$ satisfying both the theorems below when invoked in \cref{lem:GV_Expander} for the return trajectory subgroup.

\begin{theorem}[{\cite[Theorem 10.1]{Wei84}}]
\label{thm:Wei}
Let $\mathbf{H}$ be a connected simply connected absolutely almost simple algebraic group defined over $\mathbb K$. Let $\Omega < \mathbf{H}(\mathcal{O})$ be a finitely generated Zariski dense subgroup with $\mathbb Q(\tr(\Omega)) = \mathbb K$. Then, there exists $q_0 \in \mathcal{O}$ such that the reduction maps $\pi_q|_{\Omega}: \Omega \to \mathbf{H}(\mathcal{O}/q\mathcal{O})$ are surjective for all $q \in \mathcal{O}$ coprime to $q_0$.
\end{theorem}

\begin{theorem}[{\cite[Corollary 6]{GV12}}]
\label{thm:GV}
Let $\Omega < \GL_N(\mathbb K)$ for some $N \in \mathbb N$ be a subgroup generated by a finite symmetric subset $S \subset \Omega$. If the Zariski closure of $\Omega$ is semisimple, then there exists $q_0 \in \mathcal{O}$ such that the Cayley graphs $\Cay(\pi_q(\Omega), \pi_q(S))$ form expanders with respect to square-free $q \in \mathcal{O}$ coprime to $q_0$.
\end{theorem}

Let $q \in \mathcal{O}$ be coprime to $q_0$, $1 \leq j \leq r'$ be an integer, and $\bigl(\alpha_j^l, \alpha_{j - 1}^{(l - p)_1}\bigr)$ and $\bigl(\tilde{\alpha}_j^l, \tilde{\alpha}_{j - 1}^{(l - p)_1}\bigr)$ be pairs of admissible sequences such that $\bigl(\alpha_j^{(l - p)_1}, \alpha_{j - 1}^{(l - p)_1}\bigr) = \bigl(\tilde{\alpha}_j^{(l - p)_1}, \tilde{\alpha}_{j - 1}^{(l - p)_1}\bigr)$. We have
\begin{align*}
\mathtt{c}_q^l(g(\alpha^{jl})x) &= \pi_q(g_{\alpha_{(j - 1)l + 1}} \cdot g_{\alpha_{(j - 1)l + 2}} \dotsb g_{\alpha_{jl}})
\end{align*}
for the first sequence and similarly for the second one. Using $g_{\alpha_{(j - 1)l + p + k}} = g_{\tilde{\alpha}_{(j - 1)l + p + k}}$ for all $1 \leq k \leq l - p$, we calculate that
\begin{align*}
\mathtt{c}_q^l(g(\alpha^{jl})x) \mathtt{c}_q^l(g(\tilde{\alpha}^{jl})x)^{-1} = \pi_q\left(\prod_{k = 1}^p g_{\alpha_{(j - 1)l + k}} \prod_{k = 1}^p g_{\tilde{\alpha}_{(j - 1)l + p + 1 - k}}^{-1}\right).
\end{align*}

Now \cref{cor:Z-DenseInU-CoverG} allows us to use \cref{thm:Wei,thm:GV} to prove the following lemma.

\begin{lemma}
\label{lem:GV_Expander}
There exists $\epsilon \in (0, 1)$ such that for all integers $0 \leq j \leq r'$, for all pairs of admissible sequences $\bigl(\alpha_{j + 1}^{(l - p)_2}, \alpha_j^{(l - p)_2}\bigr)$, for all square-free $q \in \mathcal{O}$ coprime to $q_0$, for all $\phi \in L_0^2(\tilde{\mathbf{G}}_q)$ with $\|\phi\|_2 = 1$, there exist admissible sequences $\bigl(\beta_{jl + 1}, \beta_j^{(p)_1}, \beta_{jl - p}\bigr)$ and $\bigl(\tilde{\beta}_{jl + 1}, \tilde{\beta}_j^{(p)_1}, \tilde{\beta}_{jl - p}\bigr)$ with $\beta_{jl + 1} = \tilde{\beta}_{jl + 1} = \alpha_{jl + 1}$ and $\beta_{jl - p} = \tilde{\beta}_{jl - p} = \alpha_{jl - p}$ such that
\begin{align*}
\|\delta_g * \phi - \phi\|_2 \geq \epsilon
\end{align*}
where $g = \pi_q\left(\prod_{k = 1}^p g_{\alpha_{(j - 1)l + k}} \prod_{k = 1}^p g_{\tilde{\alpha}_{(j - 1)l + p + 1 - k}}^{-1}\right)$ and $\epsilon$ is independent of $r \in \mathbb N$ and its factorization $r = r'l$ with $l > p$.
\end{lemma}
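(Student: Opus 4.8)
The plan is to derive this from the expander property of the return trajectory subgroups modulo $q$, which \cite{GV12} supplies once Zariski density is in hand, followed by an elementary pigeonhole step to pick out a single good generator. First I would translate the statement into one about Cayley graphs of $\tilde{\mathbf{G}}_q$. Given the fixed symbols $\alpha_{jl + 1}$ and $\alpha_{jl - p}$, choose $(y, z) \in \mathcal{A}^2$ to be the pair of alphabet symbols bracketing the block of $p$ coordinates we are allowed to vary, arranged so that admissible sequences $\bigl(\beta_{jl + 1}, \beta_j^{(p)_1}, \beta_{jl - p}\bigr)$ with the prescribed first and last entries are in bijection with admissible sequences $(y, \beta_p, \beta_{p - 1}, \dotsc, \beta_1, z)$. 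With this identification, as the two free middle blocks range over all admissible choices, the element $g$ in the statement ranges exactly over $\pi_q\bigl(\tilde{S}^p(y, z)\bigr)$, which is finite and symmetric (swapping the two sequences inverts the element). By \cref{cor:Z-DenseInU-CoverG}, $\tilde{H}^p(y, z) = \langle \tilde{S}^p(y, z)\rangle$ is Zariski dense in $\tilde{\mathbf{G}}$ for every $(y, z) \in \mathcal{A}^2$, and since $\mathcal{A}$ is finite only finitely many such subgroups, each with a finite $p$-dependent generating set, occur.

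Next I would apply the strong approximation theorem of Weisfeiler \cite{Wei84} --- using the choice of $q_0$, and in the continued fractions setting also the full trace field property \cref{thm:TraceField} --- to conclude that $\pi_q$ restricted to $\tilde{H}^p(y, z)$ surjects onto $\tilde{\mathbf{G}}_q$ for $q$ coprime to $q_0$. Then \cite[Theorem 1]{GV12} (respectively \cite[Corollary 6]{GV12}), whose hypotheses are met precisely because of this Zariski density and the choice of $q_0$, gives for each $(y, z)$ a constant $c_{y, z} > 0$, independent of the square-free $q$ coprime to $q_0$, such that the Cayley graphs $\Cay\bigl(\tilde{\mathbf{G}}_q, \pi_q(\tilde{S}^p(y, z))\bigr)$ form an expander family; concretely,
\[
\sum_{g \in \pi_q(\tilde{S}^p(y, z))} \|\delta_g * \phi - \phi\|_2^2 \geq c_{y, z} \qquad \text{whenever } \phi \in L_0^2(\tilde{\mathbf{G}}_q), \ \|\phi\|_2 = 1 .
\]
Put $\epsilon = \min_{(y, z) \in \mathcal{A}^2} \sqrt{c_{y, z} / \# S^p(y, z)}$, a positive constant depending on neither $q$ nor $r$ nor its factorization $r = r' l$. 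For any such $\phi$ there must then exist a generator $g \in \pi_q(\tilde{S}^p(y, z))$ with $\|\delta_g * \phi - \phi\|_2 \geq \epsilon$, and tracing back which pair of admissible middle blocks realises this $g$ yields the sequences $\bigl(\beta_{jl + 1}, \beta_j^{(p)_1}, \beta_{jl - p}\bigr)$ and $\bigl(\tilde{\beta}_{jl + 1}, \tilde{\beta}_j^{(p)_1}, \tilde{\beta}_{jl - p}\bigr)$ asserted in the lemma.

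The main obstacle here is really upstream and does not lie in this lemma itself: verifying the hypotheses of \cite{GV12}, namely the Zariski density of $\tilde{H}^p(y, z)$ (\cref{cor:Z-DenseInU-CoverG}, via \cref{thm:Z-DenseInG}) and, in the continued fractions case, that its trace field is all of $\mathbb{Q}(i)$ (\cref{thm:TraceField}); these are established in \cref{sec:ZariskiDensityOfTheReturnTrajectorySubgroups}. Granting them, what remains here is bookkeeping: matching the index conventions in the definition of $S^p(y, z)$ against the block decomposition $\alpha^r = (\alpha_{r'}^l, \dotsc, \alpha_1^l)$ and the auxiliary notations $\alpha_j^{(l - p)_1}$ and $\alpha_j^{(p)_2}$ introduced above, checking that the generating set, although it depends on $(y, z)$, is fixed once $p$ is fixed so that the expansion constant from \cite{GV12} is uniform in $q$, and the pigeonhole passage from the $\ell^2$-sum spectral gap to a bound for one generator.
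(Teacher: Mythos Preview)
Your proposal is correct and follows essentially the same approach as the paper: identify the relevant return trajectory subgroup $\tilde{H}^p(y,z)$ from the bracketing symbols, invoke strong approximation (via Zariski density and, in the continued fractions case, the full trace field) to get surjectivity onto $\tilde{\mathbf{G}}_q$, apply the Golsefidy--Varj\'{u} expander machinery, and then pigeonhole from the spectral gap to a single generator. The only cosmetic difference is that the paper phrases the expander bound through the graph Laplacian eigenvalue $\lambda_2(\Delta_q) \geq \epsilon$ and the resulting $\ell^1$-sum $\sum_h \|\delta_h * \phi - \phi\|_2 \geq \epsilon \cdot \#\tilde{S}^p_q$, whereas you use an equivalent $\ell^2$-sum formulation; both yield the same pigeonhole conclusion.
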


\begin{proof}
Uniformity of $\epsilon$ with respect to $r \in \mathbb N$ with factorization $r = r'l$ with $l > p$, integers $0 \leq j \leq r'$, and pairs of admissible sequences $\bigl(\alpha_{j + 1}^{(l - p)_2}, \alpha_j^{(l - p)_2}\bigr)$ is trivial since $\epsilon$ only depends on the first entry $\alpha_{jl + 1} \in \mathcal A$ of $\alpha_{j + 1}^{(l - p)_2}$ and the last entry $\alpha_{jl - p} \in \mathcal A$ of $\alpha_j^{(l - p)_2}$ and there are only a finite number of such elements. So let $0 \leq j \leq r'$ be an integer and $\bigl(\alpha_{j + 1}^{(l - p)_2}, \alpha_j^{(l - p)_2}\bigr)$ be a pair of admissible sequences. Denote $\tilde{S}^p(\alpha_{jl + 1}, \alpha_{jl - p})$ by $\tilde{S}^p$ and $\tilde{H}^p(\alpha_{jl + 1}, \alpha_{jl - p})$ by $\tilde{H}^p$. For all $q \in \mathcal{O}$, let $\tilde{S}^p_q = \pi_q(\tilde{S}^p)$ and $\tilde{H}^p_q = \pi_q(\tilde{H}^p) = \langle\tilde{S}^p_q\rangle$. We now invoke \cref{thm:Wei}. For the Schottky semigroup setting, only \cref{cor:Z-DenseInU-CoverG} suffices for the hypotheses to be fulfilled. For the continued fractions semigroup setting, both \cref{cor:Z-DenseInU-CoverG} and \cref{thm:TraceField} in tandem with \cite[Theorems 1 and 2]{Vin71} are required for the hypotheses to be fulfilled. In both the settings, we can conclude $\tilde{H}^p_q = \tilde{\mathbf{G}}_q$ for all $q \in \mathcal{O}$ coprime to $q_0$. Thus, again by \cref{cor:Z-DenseInU-CoverG}, we can use \cref{thm:GV} to further conclude that the Cayley graphs $\Cay(\tilde{\mathbf{G}}_q, \tilde{S}^p_q)$ form expanders with respect to \emph{square-free} $q \in \mathcal{O}$ coprime to $q_0$. This means that there exists $\epsilon \in (0, 1)$ such that for the graph Laplacian $\Delta_q: L^2(\tilde{\mathbf{G}}_q) \to L^2(\tilde{\mathbf{G}}_q)$ which is a self-adjoint operator defined by $\Delta_q(\phi)  = \phi - \frac{1}{\#\tilde{S}^p_q}\sum_{h \in \tilde{S}^p_q} \delta_h * \phi$ for all $\phi \in L^2(\tilde{\mathbf{G}}_q)$, the smallest eigenvalue is $\lambda_1(\Delta_q) = 0$ and the next smallest eigenvalue satisfies $\lambda_2(\Delta_q) \geq \epsilon$ for all square-free $q \in \mathcal{O}$ coprime to $q_0$. Note that the eigenspace corresponding to $\lambda_1(\Delta_q) = 0$ consists of constant functions. We conclude that for all $\phi \in L_0^2(\tilde{\mathbf{G}}_q)$ with $\|\phi\|_2 = 1$, we have $\|\Delta_q(\phi)\|_2 \geq \epsilon$ which implies $\sum_{h \in \tilde{S}^p_q} \|\delta_h * \phi - \phi\|_2 \geq \epsilon \cdot \#\tilde{S}^p_q$ and so there exists $g \in \tilde{S}^p_q$ such that $\|\delta_g * \phi - \phi\|_2 \geq \epsilon$, for all square-free $q \in \mathcal{O}$ coprime to $q_0$. But $\tilde{S}^p \subset \tilde{H}^p < \tilde{\Gamma}$ and recall the induced isomorphisms $\overline{\pi_{q}|_{\tilde{\Gamma}}}: \tilde{\Gamma}_q \backslash \tilde{\Gamma} \to \tilde{G}_q$ and $\overline{\tilde{\pi}}: \tilde{\Gamma}_q \backslash \tilde{\Gamma} \to \Gamma_q \backslash \Gamma$. Following these isomorphisms, we find that $g = \pi_q\left(\prod_{k = 1}^p g_{\alpha_{(j - 1)l + k}} \prod_{k = 1}^p g_{\tilde{\alpha}_{(j - 1)l + p + 1 - k}}^{-1}\right)$.
\end{proof}

Let $q \in \mathcal{O}$ be coprime to $q_0$. For all complex measures $\eta$ on $\tilde{\mathbf{G}}_q$, define the operator $\tilde{\eta}: L^2(\tilde{\mathbf{G}}_q) \to L^2(\tilde{\mathbf{G}}_q)$ by $\tilde{\eta}(\phi) = \eta * \phi$ for all $\phi \in L^2(\tilde{\mathbf{G}}_q)$ and denote by $\tilde{\eta}^*$ its adjoint. Define $\eta^*$ to be the complex measure on $\tilde{\mathbf{G}}_q$ by $\eta^*(g) = \overline{\eta(g^{-1})}$ for all $g \in \tilde{\mathbf{G}}_q$ so that $\tilde{\eta}^*(\phi) = \eta^* * \phi$ for all $\phi \in L^2(\tilde{\mathbf{G}}_q)$. The following \cref{lem:EtaOperatorBound} is proved as in \cite[Lemma 9.6]{Sar22} using \cref{lem:NearlyFlat,lem:GV_Expander}.

\begin{lemma}
\label{lem:EtaOperatorBound}
There exists $C \in (0, 1)$ such that for all square-free $q \in \mathcal{O}$ coprime to $q_0$, integers $1 \leq j \leq r'$, pairs of admissible sequences $\bigl(\alpha_j^{(l - p)_1}, \alpha_{j - 1}^{(l - p)_1}\bigr)$, for all $\phi \in L_0^2(\tilde{G}_q, \mathbb C)$ with $\|\phi\|_2 = 1$, we have
\begin{align*}
\left\|\eta^q\bigl(\alpha_j^{(l - p)_1}, \alpha_{j - 1}^{(l - p)_1}\bigr) * \phi\right\|_2 \leq C \left\|\eta^q\bigl(\alpha_j^{(l - p)_1}, \alpha_{j - 1}^{(l - p)_1}\bigr)\right\|_1
\end{align*}
where $C$ is independent of $|a| < a_0'$, $x \in \Lambda$, $r \in \mathbb N$ with factorization $r = r'l$ with $l > p$.
\end{lemma}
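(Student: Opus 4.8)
The plan is to follow the template of \cite[Lemma 9.6]{Sar20}: abbreviate $\eta := \eta^q\bigl(\alpha_j^{(l-p)_1},\alpha_{j-1}^{(l-p)_1}\bigr) = \sum_{\alpha_j^{(p)_2}} E(\alpha)\,\delta_{g_\alpha}$, where $\alpha$ ranges over the admissible extensions of the two fixed blocks by a $p$-block $\alpha_j^{(p)_2}$, each coefficient $E(\alpha) = E\bigl(\alpha_j^l,\alpha_{j-1}^{(l-p)_1}\bigr) > 0$, and $g_\alpha = \mathtt{c}_q^l(g(\alpha^{jl})x) \in \tilde{\mathbf{G}}_q$. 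I would then show that convolution by the normalized probability measure $\eta/\|\eta\|_1$ contracts the $L^2$-norm on $L_0^2(\tilde{\mathbf{G}}_q)$ by a definite factor. The two ingredients are the near-flatness of the $E(\alpha)$ (\cref{lem:NearlyFlat}) and the one-step spectral gap (\cref{lem:GV_Expander}), combined by the standard polarization argument.

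First I would record the algebra. Convolution by a point mass is a unitary operator on $L^2(\tilde{\mathbf{G}}_q)$, and $\langle \delta_{g_\alpha} * \phi, \delta_{g_{\tilde\alpha}} * \phi\rangle = \langle \delta_{g_\alpha g_{\tilde\alpha}^{-1}} * \phi, \phi\rangle$. Since the two fixed blocks are common to every atom of $\eta$, the cocycle computation preceding this lemma shows that, for any two atoms $\alpha,\tilde\alpha$,
\[
g_\alpha g_{\tilde\alpha}^{-1} = \pi_q\Bigl(\prod_{k=1}^p g_{\alpha_{(j-1)l+k}}\prod_{k=1}^p g_{\tilde\alpha_{(j-1)l+p+1-k}}^{-1}\Bigr) \in \pi_q\bigl(\tilde{S}^p(y,z)\bigr),
\]
where $(y,z)\in\mathcal A^2$ is fixed by the bracketing symbols of the fixed data (when $j=1$ the missing bracketing symbol is supplied by $x\in\Lambda$). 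Expanding $\|\eta * \phi\|_2^2$, using the above, and applying $\mathrm{Re}\langle \delta_h * \phi,\phi\rangle = \|\phi\|_2^2 - \tfrac12\|\delta_h * \phi - \phi\|_2^2$ together with the fact that the resulting sum is real, I get for $\|\phi\|_2 = 1$
\[
\|\eta * \phi\|_2^2 = \|\eta\|_1^2 - \frac12\sum_{\alpha,\tilde\alpha} E(\alpha)E(\tilde\alpha)\,\bigl\|\delta_{g_\alpha g_{\tilde\alpha}^{-1}} * \phi - \phi\bigr\|_2^2 .
\]

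To finish, I would apply \cref{lem:GV_Expander} to this subgroup $\tilde{S}^p(y,z)$ and the given $\phi \in L_0^2(\tilde{\mathbf{G}}_q)$ with $\|\phi\|_2 = 1$: after the appropriate identification of indices it produces two admissible $p$-blocks $\beta,\tilde\beta$ — hence two genuine atoms of $\eta$ for the same fixed data — with $\bigl\|\delta_{g_\beta g_{\tilde\beta}^{-1}} * \phi - \phi\bigr\|_2 \geq \epsilon$. Keeping only the $(\beta,\tilde\beta)$ term above (all terms are nonnegative), and bounding $E(\beta)E(\tilde\beta) \geq (\min_\alpha E(\alpha))^2 \geq \bigl(\|\eta\|_1/(N^p C_0)\bigr)^2$ by \cref{lem:NearlyFlat} (with constant $C_0$, using $\#\{\text{atoms}\} \leq N^p$), I obtain
\[
\|\eta * \phi\|_2^2 \leq \|\eta\|_1^2\Bigl(1 - \frac{\epsilon^2}{2N^{2p}C_0^2}\Bigr),
\]
which is the claim with $C = \bigl(1 - \epsilon^2/(2N^{2p}C_0^2)\bigr)^{1/2} \in (0,1)$. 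This constant depends only on $N$, the fixed $p$, and the uniform constants $\epsilon$ (\cref{lem:GV_Expander}) and $C_0$ (\cref{lem:NearlyFlat}), so it is independent of $|a|<a_0'$, $x$, $q$, $r$, and the factorization $r=r'l$ with $l>p$.

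The computation itself is routine; the only point requiring care is the index bookkeeping — checking that the pair $\beta,\tilde\beta$ furnished by \cref{lem:GV_Expander} really is a pair of admissible choices of $\alpha_j^{(p)_2}$ attached to the \emph{same} fixed blocks $\alpha_j^{(l-p)_1}$ and $\alpha_{j-1}^{(l-p)_1}$ (this is precisely why that lemma pins down its boundary symbols), and verifying that the boundary case $j=1$, with $\alpha_0^{(l-p)_1}$ empty, goes through identically.
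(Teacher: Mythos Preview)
Your proposal is correct and follows essentially the same approach as the paper, which defers to \cite[Lemma 9.6]{Sar20}: expand $\|\eta*\phi\|_2^2$ by polarization, use the near-flatness from \cref{lem:NearlyFlat} to lower-bound any single coefficient by a fixed fraction of $\|\eta\|_1$, and invoke \cref{lem:GV_Expander} to extract one pair of atoms with a uniform displacement $\epsilon$. Your treatment of the index bookkeeping (including the $j=1$ case, where the missing lower bracketing symbol is supplied by the cylinder containing $x$) is exactly the care the paper alludes to.
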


\Cref{lem:ExpanderMachineryBound} is proved as in \cite[Lemma 9.7]{Sar22} using \cref{lem:EstimateNu,lem:EtaOperatorBound}.

\begin{lemma}
\label{lem:ExpanderMachineryBound}
There exists $l_0 \in \mathbb N$ such that if $l > l_0$, then there exists $C \in (0, 1)$ such that for all square-free $q \in \mathcal{O}$ coprime to $q_0$, integers $s > r$, admissible sequences $(\alpha_s, \alpha_{s - 1}, \dotsc, \alpha_{r + 1})$, and $\phi \in L_0^2(\tilde{\mathbf{G}}_q)$ with $\|\phi\|_2 = 1$, we have
\begin{align*}
\left\|\nu_{(\alpha_s, \alpha_{s - 1}, \dotsc, \alpha_{r + 1})}^q * \phi\right\|_2 \leq C^r \left\|\nu_{(\alpha_s, \alpha_{s - 1}, \dotsc, \alpha_{r + 1})}^q\right\|_1
\end{align*}
where $C$ is independent of $|a| < a_0'$, $x \in \Lambda$, $r \in \mathbb N$ but dependent on the factorization $r = r'l$.
\end{lemma}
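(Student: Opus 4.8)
The plan is to iterate the single-block expander estimate of \cref{lem:EtaOperatorBound} across the $r' = r/l$ factors of the convolution product, and then translate the resulting bound back from $\nu_1^q$ to $\nu_{(\alpha_s, \dotsc, \alpha_{r+1})}^q$ via \cref{lem:EstimateNu} and \cref{lem:muHatLessThanCnu}. First I would recall from the discussion preceding \cref{lem:NearlyFlat} that $\nu_1^q$ is literally a sum, over the ``interface'' sequences $\alpha_1^{(l-p)_1}, \dotsc, \alpha_{r'}^{(l-p)_1}$, of convolution products $\bigast_{j=1}^{r'} \eta^q(\alpha_j^{(l-p)_1}, \alpha_{j-1}^{(l-p)_1})$ of the nearly flat measures. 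Given $\phi \in L_0^2(\tilde{\mathbf{G}}_q)$ with $\|\phi\|_2 = 1$, I would write $\psi = \bigl(\bigast_{j=1}^{r'-1} \eta^q(\cdots)\bigr) * \phi$ and observe that $\psi \in L_0^2(\tilde{\mathbf{G}}_q)$, since convolution by any measure preserves the codimension-one subspace $L_0^2$ (the sum of the values of $\eta^q * \phi$ equals $\eta^q(\tilde{\mathbf{G}}_q) \cdot \sum \phi = 0$). Hence \cref{lem:EtaOperatorBound} applies to the last factor with $\phi$ replaced by $\psi/\|\psi\|_2$, yielding a gain of the constant $C \in (0,1)$, and by induction on $r'$ one peels off one factor at a time to get $\bigl\|\bigl(\bigast_{j=1}^{r'} \eta^q(\cdots)\bigr) * \phi\bigr\|_2 \leq C^{r'} \prod_{j=1}^{r'} \|\eta^q(\alpha_j^{(l-p)_1}, \alpha_{j-1}^{(l-p)_1})\|_1$. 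Since each $\eta^q$ is a nonnegative measure, the product of $L^1$-norms is exactly $\|\bigast_{j=1}^{r'}\eta^q(\cdots)\|_1$, and summing over the interface sequences gives $\|\nu_1^q * \phi\|_2 \leq C^{r'} \|\nu_1^q\|_1$ (the triangle inequality on the left, and additivity of total mass of nonnegative measures on the right).

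Next I would pass back to $\nu_{(\alpha_s, \dotsc, \alpha_{r+1})}^q$. By \cref{lem:EstimateNu} we have $\nu_0^q \leq e^{r' C' \kappa_2^{-l}} \nu_1^q$ and $\nu_1^q \leq e^{r' C' \kappa_2^{-l}} \nu_0^q$ for a constant $C'$ independent of the data, hence $\|\nu_0^q * \phi\|_2 \leq e^{r' C' \kappa_2^{-l}} \|\nu_1^q * \phi\|_2 \leq e^{r' C' \kappa_2^{-l}} C^{r'} \|\nu_1^q\|_1 \leq e^{2 r' C' \kappa_2^{-l}} C^{r'} \|\nu_0^q\|_1$. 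Recalling from the definitions in \cref{sec:ApproximatingTheTransferOperator} that $\nu_{(\alpha_s, \dotsc, \alpha_{r+1})}^q = e^{f_{s-r}^{(a)}(g_{\alpha_s}\dotsb g_{\alpha_{r+1}}\omega(\alpha_{r+1}))} \nu_0^q$ is simply a positive scalar multiple of $\nu_0^q = \nu_0^{a,q,x,r}$, that scalar cancels on both sides, so $\|\nu_{(\alpha_s,\dotsc,\alpha_{r+1})}^q * \phi\|_2 \leq \bigl(e^{2 C' \kappa_2^{-l}} C\bigr)^{r'} \|\nu_{(\alpha_s,\dotsc,\alpha_{r+1})}^q\|_1$. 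Now choose $l_0 \in \mathbb N$ large enough that $e^{2 C' \kappa_2^{-l_0}} C < 1$; this is possible because $C < 1$ is fixed and $\kappa_2 > 1$, so $e^{2C'\kappa_2^{-l}} \to 1$. For $l > l_0$ set $\hat{C} = e^{2 C' \kappa_2^{-l}} C \in (0,1)$. Finally, since $r' = r/l \geq r/(\text{something}) $ — more precisely since $l$ is fixed once and for all, $r' \geq r/l$ and $\hat{C}^{r'} = (\hat{C}^{1/l})^r$, so taking $C = \hat{C}^{1/l} \in (0,1)$ (which depends on the factorization through $l$, exactly as the statement allows) gives the claimed bound $\|\nu_{(\alpha_s,\dotsc,\alpha_{r+1})}^q * \phi\|_2 \leq C^r \|\nu_{(\alpha_s,\dotsc,\alpha_{r+1})}^q\|_1$.

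The main obstacle, and the place where care is needed, is the inductive peeling step: one must verify at each stage that the partially convolved function $\bigl(\bigast_{j=1}^{m} \eta^q(\cdots)\bigr) * \phi$ still lies in $L_0^2(\tilde{\mathbf{G}}_q)$ so that \cref{lem:EtaOperatorBound} (which requires a unit vector in $L_0^2$) can be re-applied, and one must track the normalization correctly when that function is not already a unit vector — dividing by its norm, applying the lemma, and multiplying back, so that the constant $C$ accumulates multiplicatively while the $\|\cdot\|_1$ factors accumulate as a product. The invariance of $L_0^2$ under convolution is immediate, and the normalization bookkeeping is routine, so there is no genuine difficulty here beyond organizing the induction; the substantive content has already been placed in \cref{lem:EtaOperatorBound} and \cref{lem:EstimateNu}. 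One should also note explicitly that the hypothesis $l > l_0$ in the statement is used precisely to force $\hat{C} < 1$, absorbing the error $e^{r'C'\kappa_2^{-l}}$ coming from \cref{lem:EstimateNu} into the decay $C^{r'}$.
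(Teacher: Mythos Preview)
Your overall strategy is the one the paper indicates: iterate \cref{lem:EtaOperatorBound} across the $r'$ convolution factors to get $\|\nu_1^q * \phi\|_2 \leq C^{r'}\|\nu_1^q\|_1$, then use \cref{lem:EstimateNu} to pass to $\nu_0^q$ (and hence to $\nu_{(\alpha_s,\dotsc,\alpha_{r+1})}^q$, a positive scalar multiple of $\nu_0^q$). The peeling argument, the preservation of $L_0^2$ under convolution, and the final absorption of $e^{2C'\kappa_2^{-l}}$ into the base of the exponential are all fine.

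There is, however, a genuine gap at the transfer step. You write that $\nu_0^q \leq e^{r'C'\kappa_2^{-l}}\nu_1^q$ implies $\|\nu_0^q * \phi\|_2 \leq e^{r'C'\kappa_2^{-l}}\|\nu_1^q * \phi\|_2$. This implication is false: a pointwise inequality between nonnegative measures does not survive convolution with a \emph{signed} $\phi$, because cancellation in $\nu_1^q * \phi$ need not be matched in $\nu_0^q * \phi$. For a clean counterexample take $G = \mathbb Z/2\mathbb Z$, $\nu_1 = \delta_0 + \delta_1$, $\nu_0 = B\delta_0 + B^{-1}\delta_1$ (so $B^{-1}\nu_1 \leq \nu_0 \leq B\nu_1$, exactly the two-sided bound you have), and $\phi = \delta_0 - \delta_1 \in L_0^2(G)$: then $\nu_1 * \phi = 0$ while $\|\nu_0 * \phi\|_2 = \sqrt{2}(B - B^{-1}) > 0$. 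The relationship between $\nu_0^q$ and $\nu_1^q$ is precisely of this type --- same support, weights perturbed multiplicatively in $[e^{-r'C'\kappa_2^{-l}}, e^{r'C'\kappa_2^{-l}}]$ --- so the step is unjustified as written.

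The repair is to fold the comparison from \cref{lem:Estimate_f_ToRemoveDependence} into each peeling step rather than applying \cref{lem:EstimateNu} once at the end. At block $j$ (with the interfaces and the history $\alpha^{(j-1)l}$ fixed), the measure $\sum_{\alpha_j^{(p)_2}} e^{f_l^{(a)}(g(\alpha^{jl})x)}\delta_{\mathtt{c}_q^l(g(\alpha^{jl})x)}$ has the same support as $\eta^q(\alpha_j^{(l-p)_1}, \alpha_{j-1}^{(l-p)_1})$ and weights within a factor $e^{\pm C'\kappa_2^{-l}}$ of it. Writing this perturbed block measure as $e^{-C'\kappa_2^{-l}}\eta^q(\cdots)$ plus a nonnegative remainder of mass at most $(e^{C'\kappa_2^{-l}} - e^{-C'\kappa_2^{-l}})\|\eta^q(\cdots)\|_1$, one obtains from \cref{lem:EtaOperatorBound} a per-block contraction $\hat C_0 \leq 1 - (1 - C_0)e^{-2C'\kappa_2^{-l}} < 1$ for $l$ large. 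Peeling from the outermost block inward (so that at each step the history is already fixed) and summing over interfaces then gives the bound for $\nu_0^q$ directly, and the rest of your argument (scalar cancellation, choice of $l_0$, passage from $\hat C_0^{r'}$ to $C^r$) goes through unchanged.
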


Note that $E_q^q$ is a $\tilde{\mu}$-invariant submodule of the left $\mathbb C[\tilde{\mathbf{G}}_q]$-module $L^2(\tilde{\mathbf{G}}_q)$ for all nonzero $q \in \mathcal{O}$ and complex measures $\mu$ on $\tilde{\mathbf{G}}_q$. The following lemma is proved as in \cite[Lemma 9.8]{Sar22}.

\begin{lemma}
\label{lem:ConvolutionBoundOnE_q^q}
There exists $C > 0$ such that for all square-free $q \in \mathcal{O}$ coprime to $q_0$, for all complex measures $\mu$ on $\tilde{\mathbf{G}}_q$, we have
\begin{align*}
\|\tilde{\mu}|_{E_q^q}\|_{\mathrm{op}} \leq C N(q)^{-\frac{1}{3}} (\#\tilde{\mathbf{G}}_q)^{\frac{1}{2}} \|\mu\|_2.
\end{align*}
\end{lemma}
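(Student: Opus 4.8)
The plan is to reduce the bound on $\|\tilde\mu|_{E_q^q}\|_{\mathrm{op}}$ for an arbitrary complex measure $\mu$ to the case of Dirac masses $\mu = \delta_g$ by the triangle inequality, and then to exploit the structure of the $\tilde{\mathbf G}_q$-submodule $E_q^q$ together with a dimension count. First I would write $\tilde\mu = \sum_{g \in \tilde{\mathbf G}_q} \mu(g)\,\tilde{\delta_g}$, where $\tilde{\delta_g}$ is the (unitary) right-translation operator $\phi \mapsto \delta_g * \phi$. Each $\tilde{\delta_g}$ acts on $L^2(\tilde{\mathbf G}_q)$ by an isometry, hence $\|\tilde\mu\|_{\mathrm{op}} \le \|\mu\|_1$; but the crude bound $\|\mu\|_1$ is too weak, so instead I would estimate the operator norm on the \emph{single isotypic block} $E_q^q$ more carefully. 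The key point is that $E_q^q$, being the space of ``new at level $q$'' functions, decomposes as a sum of copies of the nontrivial irreducible representations of $\tilde{\mathbf G}_q$ that do not factor through any proper quotient $\tilde{\mathbf G}_{q'}$ with $q'\mid q$, $q'\ne q$; for square-free $q$ coprime to $q_0$ these are (by strong approximation, cf.\ the discussion around \cref{lem:GV_Expander}) the representations in which every local factor is nontrivial.

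The main mechanism is the following: on a representation $(\rho, V_\rho)$ with $\rho$ nontrivial, the operator $\sum_g \mu(g)\rho(g)$ has operator norm at most $\|\mu\|_1$ trivially, but when combined with the $L^2$-normalization on $\tilde{\mathbf G}_q$ one gains a factor $(\dim V_\rho)^{1/2}/(\#\tilde{\mathbf G}_q)^{1/2}$ relating the Hilbert–Schmidt norm to the operator norm, and then Frobenius reciprocity / Parseval on $\tilde{\mathbf G}_q$ converts $\|\mu\|_1$-type data into $\|\mu\|_2$. Concretely I would use the Peter–Weyl decomposition $L^2(\tilde{\mathbf G}_q) = \bigoplus_\rho V_\rho \otimes V_\rho^*$, note that $E_q^q$ lives in the sum over ``primitive'' $\rho$, write $\|\tilde\mu|_{E_q^q}\|_{\mathrm{op}} \le \max_{\rho\ \text{primitive}} \|\hat\mu(\rho)\|_{\mathrm{op}}$ where $\hat\mu(\rho) = \sum_g \mu(g)\rho(g)$, and bound $\|\hat\mu(\rho)\|_{\mathrm{op}} \le \|\hat\mu(\rho)\|_{\mathrm{HS}} \le (\#\tilde{\mathbf G}_q)^{1/2}\,\|\mu\|_2 / (\dim V_\rho)^{1/2}$, the last step being Plancherel for $\tilde{\mathbf G}_q$. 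It then remains to show that every primitive $\rho$ has $\dim V_\rho \gtrsim N(q)^{2/3}$. This is a quasi-randomness/minimal-degree statement for the groups $\tilde{\mathbf G}(\mathcal O/q\mathcal O)$: by the Chinese remainder theorem and square-freeness it factors over primes $\mathfrak p\mid q$, for each of which one invokes the Landazuri–Seitz / Frobenius-type lower bound on the smallest dimension of a nontrivial representation of a finite simple (or quasi-simple) group of Lie type, which for $\tilde{\mathbf G}(\mathbb F_{\mathfrak p})$ is a positive power of $N(\mathfrak p)$ — comfortably at least $N(\mathfrak p)^{2/3}$ in the relevant ranks (rank $\ge 1$, and $\mathrm{SL}_2$ gives $\frac{N(\mathfrak p)-1}{2}$, $\mathrm{Spin}(n,1)$-type groups give even larger). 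Multiplying over $\mathfrak p\mid q$ yields $\dim V_\rho \ge c\,N(q)^{2/3}$ since $\rho$ primitive forces a nontrivial local factor at \emph{every} $\mathfrak p\mid q$.

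The routine verifications — that $E_q^q$ is exactly the primitive part, that Plancherel gives the stated HS bound, and the bookkeeping of the conductor — I would cite from \cite[Lemma 9.8]{Sar20} as indicated. The step I expect to be the genuine obstacle, or at least the one requiring care, is pinning down the exponent: one needs the minimal-degree bound $\dim V_\rho \ge c\, N(\mathfrak p)^{2/3}$ to survive the worst case (which is $\mathrm{SL}_2$, $n=2$ or $n=3$, where the relevant power is essentially $1$, so $2/3$ is safely below it) and to multiply cleanly over the square-free factorization; the square-free and coprime-to-$q_0$ hypotheses are exactly what make the local factors quasi-simple so that Landazuri–Seitz applies uniformly, and what let the conductor multiply. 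Everything else is formal harmonic analysis on the finite group $\tilde{\mathbf G}_q$.
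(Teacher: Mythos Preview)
Your proposal is correct and is essentially the standard Gowers/Sarnak--Xue quasi-randomness argument that the paper is invoking via \cite[Lemma 9.8]{Sar20}: Peter--Weyl identifies $E_q^q$ with the primitive isotypic block, Plancherel converts $\|\hat\mu(\rho)\|_{\mathrm{HS}}$ into $(\#\tilde{\mathbf G}_q)^{1/2}\|\mu\|_2/(\dim V_\rho)^{1/2}$, and the minimal-degree lower bound $\dim V_\rho \gg N(q)^{2/3}$ over primitive $\rho$ (via Landazuri--Seitz locally and multiplicativity over the square-free factorization) finishes it. The paper's remark that the square-free hypothesis is not actually needed, citing \cite[Proposition 4.2]{KS13} for the dimension bound in the general case, confirms that this is exactly the intended mechanism.
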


\begin{remark}
The hypothesis that $q \in \mathcal{O}$ be square-free is not required in \cref{lem:ConvolutionBoundOnE_q^q}. In general, we do not get Chevalley groups but \cite[Proposition 4.2]{KS13} still holds.
\end{remark}

Now, \cref{lem:L2FlatteningLemma} is proved using \cref{lem:ExpanderMachineryBound} and \cref{lem:ConvolutionBoundOnE_q^q} exactly as in the proof of \cite[Lemma 9.1]{Sar22}.

\section{Lipschitz norm bounds and proof of \texorpdfstring{\cref{thm:ReducedTheoremSmall|b|}}{\autoref{thm:ReducedTheoremSmall|b|}}}
\label{sec:SupremumAndLipschitzBounds}
In this section we use \cref{lem:L2FlatteningLemma} to prove the Lipschitz norm bounds in \cref{lem:ReducedTheoremEstimate}. We then use them to prove \cref{thm:ReducedTheoremSmall|b|} by induction. We start with fixing some notations and easy bounds.

Let $q \in \mathcal{O}$ be nonzero. Fix integers
\begin{align*}
r_q &\in [C_0 \log(N(q)), C_0 \log(N(q)) + l) \cap l\mathbb Z; \\
s_q &\in \left(r_q - \frac{\log(N(q)) + \log(4C_1C_f)}{\log(\theta)}, C_s\log(N(q))\right)
\end{align*}
where we fix $C_0$ and $l$ to be constants from \cref{lem:L2FlatteningLemma}, $C_1$ to be the constant from \cref{lem:Small|b|Bound} and $C_s = C_0 - \frac{1}{\log(\theta)} + \frac{l}{\log(2)} - \frac{\log(4C_1C_f)}{\log(\theta)\log(2)} + \frac{1}{\log(2)}$ so that there is enough room for the integer $s_q$ to exist. These definitions of constants ensure that $C_0 \log(N(q)) \leq r_q < s_q$ and $4C_1C_f \theta^{s_q - r_q} \leq N(q)^{-1}$. For all $\xi \in \mathbb C$ with $|a| < a_0'$, for all square-free $q \in \mathcal{O}$ coprime to $q_0$, for all $x \in \Sigma^+$, for all integers $C_0 \log(N(q)) \leq r < s$ with $r \in l\mathbb Z$, for all admissible sequences $(\alpha_s, \alpha_{s - 1}, \dotsc, \alpha_{r + 1})$, we have
\begin{align*}
\left\|\nu_{(\alpha_s, \alpha_{s - 1}, \dotsc, \alpha_{r + 1})}^{a, q, x}\right\|_1 &= e^{f_{s - r}^{(a)}(g_{\alpha_s} g_{\alpha_{s - 1}} \dotsb g_{\alpha_{r + 1}} \omega(\alpha_{r + 1}))} \left(\sum_{\alpha^r} e^{f_r^{(a)}(g(\alpha^r)x)}\right) \\
&\leq C_f e^{f_{s - r}^{(a)}(g_{\alpha_s} g_{\alpha_{s - 1}} \dotsb g_{\alpha_{r + 1}} \omega(\alpha_{r + 1}))}
\end{align*}
by \cref{lem:SumExpf^aBound} and hence \cref{lem:L2FlatteningLemma} implies that for all $\phi \in E_q^q$ we have
\begin{align}
\label{eqn:BoundFromL2FlatteningLemma}
\|\mu * \phi\|_2 \leq CC_f N(q)^{-\frac{1}{3}} e^{f_{s - r}^{(a)}(g_{\alpha_s} g_{\alpha_{s - 1}} \dotsb g_{\alpha_{r + 1}} \omega(\alpha_{r + 1}))} \|\phi\|_2
\end{align}
where $\mu$ denotes either $\mu_{(\alpha_s, \alpha_{s - 1}, \dotsc, \alpha_{r + 1})}^{\xi, q, x}$ or $\hat{\mu}_{(\alpha_s, \alpha_{s - 1}, \dotsc, \alpha_{r + 1})}^{a, q, x}$ and $C$ is the constant from the same lemma. We also need the following basic lemma proved as in \cite[Lemma 10.2]{Sar22}.

\begin{lemma}
\label{lem:Small|b|Bound}
There exists $C > 1$ such that for all $\xi \in \mathbb C$ with $|a| < a_0'$ and $|b| \leq b_0$, elements $x, y \in \Lambda$, $s \in \mathbb N$, and admissible sequences $\alpha^s$, we have
\begin{align*}
\left|1 - e^{(f_s^{(a)} + ib\tau_s)(g(\alpha^s)y) - (f_s^{(a)} + ib\tau_s)(g(\alpha^s)x)}\right| \leq C \|x - y\|.
\end{align*}
\end{lemma}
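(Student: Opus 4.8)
The plan is first to prove a bound $|w| \le C'\|x - y\|$, uniform in $s$, on the exponent
\[
w := (f_s^{(a)} + ib\tau_s)(g(\alpha^s)y) - (f_s^{(a)} + ib\tau_s)(g(\alpha^s)x),
\]
and then to convert it into the statement via the elementary inequality $|1 - e^w| \le |w|\,e^{|w|}$, valid for every $w \in \mathbb C$; here $C'$ will depend only on $T_0$, $b_0$, $c_0$, $\kappa_2$. Write also $u = g(\alpha^s)x$ and $u' = g(\alpha^s)y$.

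The main input for the bound on $|w|$ is the contraction of inverse branches. The points $u$ and $u'$ lie in the common cylinder $\mathtt{C}_D[\alpha^s]$ (as ensured by admissibility), and $T^k(g(\alpha^s)z) = g(\alpha^{s-k})z$, so in particular $T^s u = x$ and $T^s u' = y$. Since the Möbius maps $g(\alpha^{s-k})$ are conformal, their operator norms are the reciprocals of the corresponding expansion factors of $T^{s-k}$, whence $\|(dg(\alpha^{s-k}))_v\|_{\mathrm{op}} \le c_0^{-1}\kappa_2^{-(s-k)}$ by \cref{lem:Hyperbolicity}, for $v$ ranging over the ball $D_j$ containing $x$ and $y$ — the ping-pong geometry together with admissibility of $(\alpha_1, j)$ keeping the relevant segment inside the domain of the branch, exactly as in \cref{eqn:f^(a)LipschitzBounds}. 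The mean value inequality then gives
\[
\|T^k u - T^k u'\| = \|g(\alpha^{s-k})x - g(\alpha^{s-k})y\| \le \frac{1}{c_0\,\kappa_2^{\,s-k}}\,\|x - y\|, \qquad 0 \le k \le s - 1 .
\]
Telescoping $f_s^{(a)}$ and $\tau_s$ as Birkhoff sums, bounding $\Lip(f^{(a)})$ and $\Lip(\tau)$ by $T_0$ and $|b|$ by $b_0$, and summing the geometric series $\sum_{k=0}^{s-1}\kappa_2^{-(s-k)} < (\kappa_2 - 1)^{-1}$, I obtain $|w| \le \frac{T_0(1 + b_0)}{c_0(\kappa_2 - 1)}\,\|x - y\|$, a bound independent of $s$, of $\xi$ (with $|a| < a_0'$ and $|b| \le b_0$), and of the admissible sequence $\alpha^s$.

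To conclude: since $\Lambda$ is compact, $\|x - y\| \le \diam(\Lambda)$, so $|w| \le C'\diam(\Lambda)$, and hence $|1 - e^w| \le |w|\,e^{|w|} \le C'e^{C'\diam(\Lambda)}\,\|x - y\|$; enlarging the constant so that it exceeds $1$ completes the argument. The one step requiring genuine care is the contraction estimate — one must confirm that $u$ and $u'$ truly lie in a single level-$s$ cylinder, so that $g(\alpha^{s-k})$ is the relevant uniformly contracting branch and the segment joining $x$ and $y$ stays inside its domain — but this is the same ping-pong/Markov bookkeeping already carried out for \cref{eqn:f^(a)LipschitzBounds}, and introduces no new difficulty; everything else is routine.
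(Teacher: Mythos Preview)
Your proof is correct and follows exactly the standard approach the paper has in mind when it refers to \cite[Lemma 10.2]{Sar20}: bound the exponent $w$ by a geometric series using the uniform contraction of inverse branches (\cref{lem:Hyperbolicity}) together with the Lipschitz bound $T_0$ on $f^{(a)}$ and $\tau$, then convert via $|1-e^{w}|\le |w|e^{|w|}$ and the compactness of $\Lambda$. One small remark: your deferral of the ``segment stays in the domain'' point to \cref{eqn:f^(a)LipschitzBounds} is slightly imprecise, since that computation uses cylinder diameters rather than a mean-value argument; but your own argument already handles it---when $x,y$ lie in the same disk $D_j$ (with $(\alpha_1,j)$ admissible) the segment stays in the convex ball $D_j$ and the derivative bound from \cref{lem:Hyperbolicity} applies along it, while if $x,y$ lie in distinct disks then $\|x-y\|$ is bounded below by the minimum inter-disk distance and $|w|$ is uniformly bounded (by the same Birkhoff-sum estimate via cylinder diameters), so the inequality is trivial.
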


Using the bounds from \cref{lem:TransferOperatorConvolutionApproximation,eqn:BoundFromL2FlatteningLemma,lem:Small|b|Bound}, we can derive the following \cref{lem:ReducedTheoremEstimate} as in \cite[Lemmas 10.1 and 10.3]{Sar22}.

\begin{lemma}
\label{lem:ReducedTheoremEstimate}
There exist $\theta \in (0, 1)$ and $q_1 \in \mathbb N$ such that for all $\xi \in \mathbb C$ with $|a| < a_0'$ and $|b| \leq b_0$, square-free $q \in \mathcal{O}$ coprime to $q_0$ with $N(q) > q_1$, and $H \in \mathcal{W}_q^q(\Lambda)$, we have
\begin{align*}
\big\|M_{\xi, q}^{s_q}(H)\big\|_{\Lip} \leq \frac{1}{2}N(q)^{-\theta} \|H\|_{\Lip}.
\end{align*}
\end{lemma}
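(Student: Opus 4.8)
The plan is to combine the three preceding ingredients---the convolution approximation of \cref{lem:TransferOperatorConvolutionApproximation}, the $L^2$-flattening bound \cref{eqn:BoundFromL2FlatteningLemma}, and the Lipschitz control \cref{lem:Small|b|Bound}---to bound separately the sup-norm and the Lipschitz seminorm of $M_{\xi, q}^{s_q}(H)$. Throughout, we work with the specific integers $r_q < s_q$ fixed at the start of the section, whose defining inequalities guarantee $C_0\log(N(q)) \leq r_q$ (so the flattening lemma applies at scale $r = r_q$), that $r_q \in l\mathbb{Z}$, and that $4C_1C_f\theta^{s_q - r_q} \leq N(q)^{-1}$. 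Since $H \in \mathcal{W}_q^q(\Lambda)$, every value $H(u)$ lies in $E_q^q = L_0^2$-type new subspace, so the convolution estimate \cref{eqn:BoundFromL2FlatteningLemma} is available for $\mu = \mu_{(\alpha_s, \dotsc, \alpha_{r+1})}^{\xi, q, x}$ acting on $\phi = \phi_{(\alpha_s, \dotsc, \alpha_{r+1})}^{q, H}$, which also lies in $E_q^q$ since $E_q^q$ is $\tilde\mu$-invariant.

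For the sup-norm bound, I would fix $x \in \Lambda$, apply \cref{lem:TransferOperatorConvolutionApproximation} with $r = r_q$, $s = s_q$, and estimate
\begin{align*}
\big\|M_{\xi, q}^{s_q}(H)(x)\big\|_2 \leq \sum_{\alpha_{r_q+1}, \dotsc, \alpha_{s_q}} \big\|\mu_{(\alpha_{s_q}, \dotsc, \alpha_{r_q+1})}^{\xi, q, x} * \phi_{(\alpha_{s_q}, \dotsc, \alpha_{r_q+1})}^{q, H}\big\|_2 + c_0^{-1}\kappa_2\hat{D}C_f \Lip(H)\kappa_2^{-(s_q - r_q)}.
\end{align*}
Each convolution term is bounded via \cref{eqn:BoundFromL2FlatteningLemma} by $CC_f N(q)^{-1/3} e^{f_{s_q - r_q}^{(a)}(g_{\alpha_{s_q}}\dotsb g_{\alpha_{r_q+1}}\omega(\alpha_{r_q+1}))}\|H\|_\infty$, and summing $e^{f_{s_q - r_q}^{(a)}(\cdot)}$ over the admissible tails $(\alpha_{r_q+1}, \dotsc, \alpha_{s_q})$ is $O(1)$ by \cref{lem:SumExpf^aBound} (a bounded-distortion argument identifies this sum with $\sum_{\beta^{s_q - r_q}} e^{f_{s_q-r_q}^{(a)}(g(\beta)\cdot)} \leq C_f$). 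Since $s_q - r_q \gtrsim \log N(q)$ by construction, the error term $\kappa_2^{-(s_q - r_q)}$ is also $\lesssim N(q)^{-c}$ for some $c > 0$; choosing $\theta \in (0,1)$ small enough and $q_1$ large enough absorbs everything into $\tfrac14 N(q)^{-\theta}\|H\|_{\Lip}$.

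The Lipschitz bound is the more delicate half and I expect it to be the main obstacle. Here one compares $M_{\xi, q}^{s_q}(H)(x) - M_{\xi, q}^{s_q}(H)(y)$ for $x, y$ in a common cylinder; the standard strategy (following \cite[Lemmas 10.1 and 10.3]{Sar20}) is to insert and subtract the convolution approximations at $x$ and at $y$, so that the difference splits into: (i) two approximation-error terms, each $\lesssim \Lip(H)\kappa_2^{-(s_q - r_q)}$; (ii) a term where the functions $\phi^{q,H}$ differ, controlled by $\Lip(H)$ times the contraction factor $\kappa_2^{-(s_q - r_q)}$ from \cref{lem:Hyperbolicity} since the base points $g_{\alpha_{s_q}}\dotsb g_{\alpha_{r_q+1}}\omega(\alpha_{r_q+1})$ do not depend on $x, y$ at all; and (iii) a term where the measures $\mu^{\xi,q,x}$ and $\mu^{\xi,q,y}$ differ, where one factors out the flattening gain $N(q)^{-1/3}$ as in \cref{eqn:BoundFromL2FlatteningLemma} and then uses \cref{lem:Small|b|Bound} to bound the difference of the exponential weights $e^{(f_{s_q}^{(a)} + ib\tau_{s_q})(\cdot)}$ by $C\|x - y\|$ times $\|H\|_\infty$, again times a convergent sum of $e^{f_{s_q - r_q}^{(a)}}$-weights. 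The point is that in every term either the $L^2$-flattening factor $N(q)^{-1/3}$ or the hyperbolic contraction $\kappa_2^{-(s_q - r_q)} \lesssim N(q)^{-c}$ appears, with all other constants uniform in $\xi$ (for $|a| < a_0'$, $|b| \leq b_0$), in $x$, and in $q$; the interplay of these two gains against the factor $\tfrac12 N(q)^{-\theta}$ is exactly what forces the specific choice of $s_q$ relative to $r_q$ made at the start of the section. Collecting the sup-norm and Lipschitz estimates and again enlarging $q_1$ if needed yields $\|M_{\xi, q}^{s_q}(H)\|_{\Lip} \leq \tfrac12 N(q)^{-\theta}\|H\|_{\Lip}$.
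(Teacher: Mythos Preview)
Your proposal is correct and follows essentially the same approach as the paper, which simply states that \cref{lem:ReducedTheoremEstimate} is derived from \cref{lem:TransferOperatorConvolutionApproximation}, \cref{eqn:BoundFromL2FlatteningLemma}, and \cref{lem:Small|b|Bound} ``as in \cite[Lemmas 10.1 and 10.3]{Sar20}.'' One small remark: in your Lipschitz decomposition, term (ii) is actually vacuous since, as you yourself note, the functions $\phi_{(\alpha_{s_q},\dotsc,\alpha_{r_q+1})}^{q,H}$ depend only on the tail sequence and not on $x$ or $y$; the substantive work is entirely in (i) and (iii).
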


\begin{proof}[Proof of \cref{thm:ReducedTheoremSmall|b|}]
Fix $\theta \in (0, 1)$ and $q_1 \in \mathbb N$ from \cref{lem:ReducedTheoremEstimate}. Fix $a_0 = a_0'$ and recall that we already fixed $b_0 = 1$. Recall the constant $C_s$. Let $\xi \in \mathbb C$ with $|a| < a_0$ and $|b| \leq b_0$. Let $q \in \mathcal{O}$ be square-free and coprime to $q_0$ with $N(q) > q_1$. Denote $s_q$ by $s$. Let $j \in \mathbb Z_{\geq 0}$ and $H \in \mathcal{W}_q^q(\Lambda)$. Then by induction, \cref{lem:ReducedTheoremEstimate} implies
\begin{align*}
\big\|M_{\xi, q}^{js}(H)\big\|_{\Lip} \leq N(q)^{-j\theta} \|H\|_{\Lip}.
\end{align*}
\end{proof}

\section{Reduction of \texorpdfstring{\cref{itm:CongruenceOperatorSpectralBoundsLarge|b|}}{Property \ref{itm:CongruenceOperatorSpectralBoundsLarge|b|}} of \texorpdfstring{\cref{thm:CongruenceOperatorSpectralBounds}}{\autoref{thm:CongruenceOperatorSpectralBounds}}}
\label{sec:Dolgopyat'sMethod}
In this section, we describe the reduction from \cref{itm:CongruenceOperatorSpectralBoundsLarge|b|} of \cref{thm:CongruenceOperatorSpectralBounds} to \cref{thm:Dolgopyat} which is the main technical theorem in our setting associated to Dolgopyat's method \cite{Dol98}. These techniques are now well developed and we mainly follow \cite{OW16,Sto11,SW21}.

Firstly, \cref{itm:CongruenceOperatorSpectralBoundsLarge|b|} in \cref{thm:CongruenceOperatorSpectralBounds} can be derived from \cref{thm:TheoremLarge|b|} by using the a priori estimates in \cref{lem:EstimatesToConvertL2BoundToLipschitzBound}. We omit the derivation since it is a minor modification of the one provided just after \cite[Proposition 5.3]{Nau05}.

\begin{theorem}
\label{thm:TheoremLarge|b|}
There exist $\eta > 0$, $C > 0$, $a_0 > 0$, and $b_0 > 0$ such that for all $\xi \in \mathbb C$ with $|a| < a_0$ and $|b| > b_0$, nonzero $q \in \mathcal{O}$, $k \in \mathbb N$, and $H \in \mathcal{V}_q(\Lambda)$, we have
\begin{align*}
\left\|M_{\xi, q}^k(H)\right\|_2 \leq Ce^{-\eta k} \|H\|_{1, |b|}.
\end{align*}
\end{theorem}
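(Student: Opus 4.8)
The plan is to reduce the vector-valued bound to a scalar one and then run Stoyanov's version \cite{Sto11} of Dolgopyat's method. The first observation is that, because the right regular representation of $\tilde{\mathbf{G}}_q$ on $L^2(\tilde{\mathbf{G}}_q)$ is unitary, one has $\|\mathtt{c}_q(u')\phi\|_2=\|\phi\|_2$, so the triangle inequality gives the pointwise domination $\|M_{\xi,q}^k(H)(\cdot)\|_2\le L_a^k(u\mapsto\|H(u)\|_2)(\cdot)$, where $L_a=M_{a,1}$ is the normalized real transfer operator of \cref{subsec:TransferOperators}, which has top eigenvalue $1$. This by itself only yields boundedness; all decay must come from cancellation among the phases $e^{-ib\tau_k(u')}$. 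The one feature of the congruence setting that I would exploit is that $\mathtt{c}_q^k$ is \emph{constant on every length-$k$ cylinder}; this makes the vector-valued oscillatory argument reduce, cylinder by cylinder, to the scalar one, which is exactly why Stoyanov's framework applies cleanly (without the triadic partition of \cite{Nau05,MOW19}) and why no Federer property of $\nu_\Lambda$ is needed — the Gibbs property \cref{eqn:PropertyOfGibbsMeasures} suffices.

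\textbf{A priori bounds and reduction into a cone.} I would first record the Lasota--Yorke-type inequalities for $M_{\xi,q}$, namely $\|M_{\xi,q}^k(H)\|_\infty\le C\|H\|_\infty$ and $\Lip(M_{\xi,q}^k(H))\lesssim\max(1,|b|)\,\|M_{\xi,q}^k(H)\|_\infty+\kappa_2^{-k}\Lip(H)$, together with the matching pointwise estimates for the size function; these follow from the hyperbolicity \cref{lem:Hyperbolicity} and the normalization in \cref{thm:RPF}. They show that after a bounded number of iterates the size function of $M_{\xi,q}^k(H)$ lies in the cone $\mathcal{C}_b=\{h\in L(\Lambda,\mathbb{R}_{>0}):\Lip(\log h)\le E\max(1,|b|)\}$ for a fixed $E$; given the $\frac{1}{\max(1,|b|)}$ weight in $\|\cdot\|_{1,|b|}$, it therefore suffices to prove geometric $L^2$-decay along the iteration for $H$ whose size function already lies in $\mathcal{C}_b$, with $\mathcal{C}_b$ preserved at each stage.

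\textbf{The Dolgopyat estimate.} The engine is the technical theorem proved in \cref{sec:ConstructionOfDolgopyatOperators,sec:ProofOfDolgopyat'sMethod} (\cref{thm:Dolgopyat}): there are a fixed $N_1\in\mathbb{N}$, $\rho\in(0,1)$, $b_0>0$, and a finite family of \emph{Dolgopyat operators} $\mathcal{N}_J$ — obtained from $L_a^{N_1}$ by multiplying the weights on a collection $J$ of cylinders by a factor $1-\epsilon$ — with $\mathcal{N}_J(\mathcal{C}_b)\subset\mathcal{C}_b$, with $\|\mathcal{N}_Jh\|_2\le\rho\,\|h\|_2$, and such that for every $H$ whose size function $h$ lies in $\mathcal{C}_b$ there is a $J$ in the family with $\|M_{\xi,q}^{N_1}(H)(\cdot)\|_2\le\mathcal{N}_Jh$. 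To choose $J$ one inspects pairs of branches lying over adjacent sub-cylinders: either $\|H(\cdot)\|_2$ already drops well below $h$ on one of them and that branch is damped, or $\|H(\cdot)\|_2\asymp h$ there, in which case the LNIC of \cref{sec:LocalNon-IntegrabilityCondition} forces the relevant temporal-distance function to be non-constant along the base cylinder, so that $b\,\tau_{N_1}$ winds around a definite number of times once $|b|>b_0$; since $\mathtt{c}_q^{N_1}$ is constant on length-$N_1$ cylinders, the identity $\|e^{i\psi_1}w_1+e^{i\psi_2}w_2\|_2^2=\|w_1\|_2^2+\|w_2\|_2^2+2\Re\!\big(e^{i(\psi_2-\psi_1)}\langle w_1,w_2\rangle\big)$ then yields a genuine gain over the triangle bound on a subset of definite $\nu_\Lambda$-measure, which is what gets damped. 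The contraction $\|\mathcal{N}_Jh\|_2\le\rho\|h\|_2$ follows from $L_0^*\nu_\Lambda=\nu_\Lambda$ (absorbing the bounded dependence on $a$) together with the lower bound on the damped mass, where \cref{eqn:PropertyOfGibbsMeasures} enters directly.

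\textbf{Conclusion and the hard point.} Iterating the Dolgopyat estimate, while using the a priori bounds of the second step to carry a $\mathcal{C}_b$-majorant of $\|M_{\xi,q}^{mN_1}(H)(\cdot)\|_2$ at each stage, gives $\|M_{\xi,q}^{mN_1}(H)\|_2\le C\rho^m\|H\|_{1,|b|}$; the a priori sup-bound absorbs the residue $k\bmod N_1$, so with $\eta=-\tfrac{1}{N_1}\log\rho$ the theorem follows. The main obstacle, by a wide margin, is the LNIC underlying \cref{thm:Dolgopyat}: in dimension $n\ge3$ the temporal-distance computations that force genuine oscillation of the phase are considerably more delicate than in \cite{Nau05}, and they — not the (now essentially routine) Lasota--Yorke estimates and iteration bookkeeping — are the real content.
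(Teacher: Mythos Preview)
Your proposal is correct and follows essentially the same approach as the paper: the paper explicitly omits the proof, stating that \cref{thm:TheoremLarge|b|} ``follows from \cref{thm:Dolgopyat} by a standard inductive argument'' found in the cited references, and your outline is precisely that standard argument---use the a priori Lasota--Yorke bounds (\cref{lem:PreliminaryLogLipschitz}) to enter the cone $\mathcal{C}_{E|b|}(\Lambda)$ with an initial majorant $h\equiv\|H\|_{1,|b|}$, then iterate \cref{thm:Dolgopyat} to produce a sequence of Dolgopyat operators contracting in $L^2(\nu_\Lambda)$ while preserving the cone and the domination \cref{itm:DominatedByh,itm:LogLipschitzh}. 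Your identification of the key structural point---that $\mathtt{c}_q^m$ is constant on length-$m$ cylinders, so the unitary cocycle does not interfere with the phase-cancellation argument---and of the LNIC as the genuine difficulty, matches the paper's emphasis exactly.
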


\begin{lemma}
\label{lem:EstimatesToConvertL2BoundToLipschitzBound}
There exist $\eta > 0$, $\kappa_1 > 0$, $\kappa_2 > 0$, $a_0 > 0$, and $b_0 > 0$ such that for all $\xi \in \mathbb C$ with $|a| < a_0$ and $|b| > b_0$, nonzero $q \in \mathcal{O}$, $k \in \mathbb N$, and $H \in \mathcal{V}_q(\Lambda)$, we have
\begin{align*}
\Lip\bigl(M_{\xi, q}^k(H)\bigr) &\leq \kappa_1 |b| \cdot \|H\|_\infty + e^{-\eta k}\Lip(H); \\
\bigl\|M_{0, q}^k(H)\bigr\|_\infty &\leq \int_\Lambda \|H(u)\|_2 \, d\nu_\Lambda(u) + \kappa_2 e^{-\eta k}\Lip(H).
\end{align*}
\end{lemma}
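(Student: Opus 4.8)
The two inequalities in \cref{lem:EstimatesToConvertL2BoundToLipschitzBound} are standard a priori bounds for iterated transfer operators, and the plan is to prove both by the usual Lasota--Yorke type estimates adapted to the operator $M_{\xi, q}$. The key structural input is that $M_{\xi, q}^k$ is built from the contracting branches of $T^{-k}$ together with the smooth weight $e^{(f_k^{(a)} - ib\tau_k)}$ and the unitary cocycle $\mathtt{c}_q^k$; in particular $\|\mathtt{c}_q^k(u')\phi\|_2 = \|\phi\|_2$ so the cocycle never enlarges norms. For the Lipschitz bound, I would differentiate (i.e.\ take a difference quotient of) the defining sum $M_{\xi, q}^k(H)(u) = \sum_{u' \in T^{-k}(u)} e^{(f_k^{(a)} - ib\tau_k)(u')}\mathtt{c}_q^k(u')H(u')$ along a path from $u$ to a nearby point, distributing the difference over three factors: the exponential weight, the cocycle, and $H$ itself. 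The term where the difference hits $H$ is controlled by $\Lip(H)$ times the contraction factor coming from \cref{lem:Hyperbolicity} (each preimage moves by at most $\asymp \kappa_2^{-k}\|u - u'_{\mathrm{base}}\|$), which after summing against $\sum_{\alpha^k} e^{f_k^{(a)}} \leq C_f$ (\cref{lem:SumExpf^aBound}) yields the $e^{-\eta k}\Lip(H)$ contribution for suitable $\eta < \log \kappa_2$. The terms where the difference hits the weight $e^{-ib\tau_k}$ or the cocycle each produce a factor that is $O(|b|)$ (from $\partial(b\tau_k)$, since $\tau_k$ has bounded-on-cylinders variation by the bounds of the form \cref{eqn:f^(a)LipschitzBounds}) times $\|H\|_\infty$, and summing gives the $\kappa_1|b|\cdot\|H\|_\infty$ term; the cocycle contribution is actually Lipschitz with a constant depending only on the generators, hence absorbed into $\kappa_1|b|$.

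For the supremum bound on $M_{0, q}^k(H)$, the point is that $M_{0,q}$ is the ``honestly normalized'' operator: $L_0 = M_{0,1}$ has leading eigenvalue $1$ with eigenfunction $h_0/h_0 \equiv 1$ (constant, by the normalization in \cref{subsec:TransferOperators}) and $L_0^*(\nu_\Lambda) = \nu_\Lambda$. Applying the RPF theorem \cref{thm:RPF} to the pointwise norm, one gets $\|M_{0,q}^k(H)\|(u) \le (L_0^k \|H\|)(u)$ since $\|\mathtt{c}_q^k \phi\|_2 = \|\phi\|_2$ kills the cocycle, and then the spectral gap for $L_0|_{L(\Lambda)}$ (property~(3) of \cref{thm:RPF}) gives $L_0^k\|H\| \to \nu_\Lambda(\|H\|) \cdot \mathbf 1$ exponentially fast in $L(\Lambda, \mathbb R)$, with rate $e^{-\eta k}$ and error controlled by $\Lip(\|H\|) \le \Lip(H)$. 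This is exactly $\|M_{0,q}^k(H)\|_\infty \le \int_\Lambda \|H(u)\|_2 \, d\nu_\Lambda(u) + \kappa_2 e^{-\eta k}\Lip(H)$. One should check $\nu(\|H\|) = \int_\Lambda \|H(u)\|_2\, d\nu_\Lambda(u)$, i.e.\ that the relevant measure is $\nu_\Lambda$ — this is where $d\nu_\Lambda = h_0\, d\nu_0$ and $L_0^*\nu_\Lambda = \nu_\Lambda$ are used.

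The main obstacle is bookkeeping rather than conceptual: keeping the constant in front of $|b|$ uniform in $q$ and in $\xi$ with $|a| < a_0$, and making sure the contraction rate $\eta$ chosen for the Lipschitz estimate is the same (or can be harmonized with) the spectral-gap rate used in the supremum estimate, so that a single $\eta$ works in both displays. Uniformity in $q$ is automatic since the cocycle is an isometry and the underlying dynamics $(T, \tau, f^{(a)})$ does not see $q$ at all; uniformity in $a$ over the compact interval $[-a_0', a_0']$ follows from the Lipschitz dependence $a \mapsto h_a$, $a \mapsto \lambda_a$ recorded in \cref{subsec:TransferOperators} together with $\bigl|f^{(a)} - f^{(0)}\bigr| \le A_f|a|$ and $\|f^{(a)}\|_{\Lip} \le T_0$, after possibly shrinking $a_0$. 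Since these manipulations are (as the excerpt notes for the analogous step) ``a minor modification of the one provided just after \cite[Proposition 5.3]{Nau05},'' I would present the Lipschitz estimate in detail via the three-term split above and merely indicate that the supremum estimate follows from \cref{thm:RPF} applied to $\|H\|$, citing the standard argument.
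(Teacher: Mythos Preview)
Your approach is correct and essentially matches the paper's own sketch (the first inequality via the Lasota--Yorke split as in \cite[Proposition~2.1]{PP90}, the second via $\|M_{0,q}^k(H)\| \le L_0^k\|H\|$ together with $L_0^*\nu_\Lambda = \nu_\Lambda$). Two small refinements: the cocycle $\mathtt{c}_q^k$ is locally constant on cylinders, so it contributes nothing to the Lipschitz variation and there is no ``cocycle term'' to absorb; and to get the error in the second inequality controlled by $\Lip(H)$ alone rather than $\|H\|_{\Lip}$, apply the spectral gap to $\|H\| - \nu_\Lambda(\|H\|)$, whose full Lipschitz norm is at most $(1+\diam(\Lambda))\Lip(H)$ since $\|g - \nu_\Lambda(g)\|_\infty \le \diam(\Lambda)\Lip(g)$.
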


\begin{remark}
We omit proving \cref{lem:EstimatesToConvertL2BoundToLipschitzBound} since they have appeared before. The first inequality in \cref{lem:EstimatesToConvertL2BoundToLipschitzBound} is a corrected version of the corresponding inequalities in \cite[Lemma 5.2]{Nau05} and \cite[Lemma 23]{MOW19}. It is proved as in \cite[Proposition 2.1]{PP90} with minor modifications and using the hypothesis $|b| > b_0$. The second inequality is proved by expanding $M_{0, q}^k(H)$ from definitions, using $L_0^*(\nu_\Lambda) = \nu_\Lambda$, and then using the Lipschitz property with the average norm $\int_\Lambda \|H(u)\|_2 \, d\nu_\Lambda(u)$.
\end{remark}

Secondly, \cref{thm:TheoremLarge|b|} follows from \cref{thm:Dolgopyat} by a standard inductive argument which we omit since it can be found in the following papers. Theorems like \cref{thm:Dolgopyat}, which is the main objective in Dolgopyat's method, have appeared in many papers such as \cite{Dol98,Nau05,Sto11,OW16,MOW19,SW21,Sar22} where the \emph{congruence} versions have first appeared in \cite{OW16} and subsequently in \cite{MOW19,Sar22}. The main novelty here is that \cref{thm:Dolgopyat} is also a \emph{congruence} version for the Schottky \emph{semigroup} and the continued fractions \emph{semigroup} settings and in \emph{higher} dimensions. We mention that the proofs in \cref{sec:ProofOfDolgopyat'sMethod} for Dolgopyat's method go through uniformly in nonzero $q \in \mathcal{O}$ because of unitarity and local constancy of the cocycle $\mathtt{c}$, first observed in \cite{OW16}.

As in \cite[Section 5]{Sto11}, we define the crucial new distance function $d$ on $\Lambda$ by
\begin{align*}
d(u, u') =
\begin{cases}
\displaystyle\min_{\substack{u, u' \in \mathtt{C}\\ \mathtt{C} \text{ is a cylinder}}} \diam(\mathtt{C}), & u, u' \in D_j \text{ for some } j \in \mathcal{A} \\
1, & \text{otherwise}
\end{cases}
\qquad
\text{for all $u, u' \in \Lambda$}.
\end{align*}
We denote $L_d(\Lambda, \mathbb R)$ to be the space of $d$-Lipschitz functions and $\Lip_d(h)$ to be the $d$-Lipschitz constant for all $h \in L_d(\Lambda, \mathbb R)$. We define the cones
\begin{align*}
\mathcal{C}_B(\Lambda) &= \{h \in L_d(\Lambda, \mathbb R): h > 0, |h(u) - h(u')| \leq Bh(u)d(u, u') \text{ for all } u, u' \in \Lambda\} \\
\tilde{\mathcal{C}}_B(\Lambda) &= \Big\{h \in L_d(\Lambda, \mathbb R): h > 0, e^{-B d(u, u')} \leq \frac{h(u)}{h(u')} \leq e^{B d(u, u')} \text{ for all } u, u' \in \Lambda\Big\}.
\end{align*}

\begin{remark}
If $h \in \mathcal{C}_B(\Lambda)$, then using the convexity of $-\log$, we can derive that $h$ is $\log$-$d$-Lipschitz, i.e., $|(\log \circ h)(u) - (\log \circ h)(u')| \leq B d(u, u')$ for all $u, u' \in \Lambda$. It follows that $\mathcal{C}_B(\Lambda) \subset \tilde{\mathcal{C}}_B(\Lambda)$, however the reverse containment is not true.
\end{remark}

\begin{theorem}
\label{thm:Dolgopyat}
There exist $m \in \mathbb N$, $\eta \in (0, 1)$, $E > \max\big(1, \frac{1}{b_0}\big)$, $a_0 > 0$, $b_0 > 0$, and a set of operators $\{\mathcal{N}_{a, J}: L_d(\Lambda, \mathbb R) \to L_d(\Lambda, \mathbb R): |a| < a_0, J \in \mathcal{J}(b) \text{ for some } |b| > b_0\}$, where $\mathcal{J}(b)$ is some finite set for all $|b| > b_0$, such that
\begin{enumerate}
\item\label{itm:DolgopyatProperty1}	$\mathcal{N}_{a, J}(\mathcal{C}_{E|b|}(\Lambda)) \subset \mathcal{C}_{E|b|}(\Lambda)$ for all $|a| < a_0$, $J \in \mathcal{J}(b)$, and $|b| > b_0$;
\item\label{itm:DolgopyatProperty2}	$\|\mathcal{N}_{a, J}(h)\|_2 \leq \eta \|h\|_2$ for all $h \in \mathcal{C}_{E|b|}(\Lambda)$, $|a| < a_0$, $J \in \mathcal{J}(b)$, and $|b| > b_0$;
\item\label{itm:DolgopyatProperty3}	for all $\xi \in \mathbb C$ with $|a| < a_0$ and $|b| > b_0$ and nonzero $q \in \mathcal{O}$, if $H \in \mathcal{V}_q(\Lambda)$ and $h \in \mathcal{C}_{E|b|}(\Lambda)$ satisfy
\begin{enumerate}[label=(1\alph*), ref=\theenumi(1\alph*)]
\item\label{itm:DominatedByh}	$\|H(u)\|_2 \leq h(u)$ for all $u \in \Lambda$;
\item\label{itm:LogLipschitzh}	$\|H(u) - H(u')\|_2 \leq E|b|h(u)d(u, u')$ for all $u, u' \in \Lambda$;
\end{enumerate}
then there exists $J \in \mathcal{J}(b)$ such that
\begin{enumerate}[label=(2\alph*), ref=\theenumi(2\alph*)]
\item\label{itm:DominatedByDolgopyat}	$\big\|M_{\xi, q}^m(H)(u)\big\|_2 \leq \mathcal{N}_{a, J}(h)(u)$ for all $u \in \Lambda$;
\item\label{itm:LogLipschitzDolgopyat}	$\big\|M_{\xi, q}^m(H)(u) - M_{\xi, q}^m(H)(u')\big\|_2 \leq E|b|\mathcal{N}_{a, J}(h)(u)d(u, u')$ for all $u, u' \in \Lambda$.
\end{enumerate}
\end{enumerate}
\end{theorem}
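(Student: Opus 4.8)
The plan is to follow the blueprint of Stoyanov's version of Dolgopyat's method, as in \cite[Section 5]{Sto11}, adapted to the congruence setting along the lines of \cite{OW16,MOW19,Sar20}. The construction of the operators $\mathcal{N}_{a, J}$ is the heart of the matter. First I would fix the integer $m \in \mathbb N$ large enough that the contraction estimates from \cref{lem:Hyperbolicity} give uniform separation of the inverse branches of $T^m$ on cylinders of length $m$, and so that the distortion bounds (via the Gibbs property \cref{eqn:PropertyOfGibbsMeasures} and the bound in \cref{lem:SumExpf^aBound}) are under control. The set $\mathcal{J}(b)$ should index, for each pair of nearby cylinders on which oscillation of the phase $b\tau_m$ forces cancellation, a choice of a ``dense set'' of inverse branches where we insert a damping factor $1 - \eta\chi$; this is the standard Dolgopyat construction, and $\mathcal{J}(b)$ is finite because there are only finitely many cylinders of length $m$ and the relevant combinatorial data (which branch on which cylinder carries the bump function) is finite.

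The second step is to define $\mathcal{N}_{a, J}$ as the normalized transfer operator $L_a^m$ with a multiplicative perturbation: $\mathcal{N}_{a, J}(h)(u) = \sum_{u' \in T^{-m}(u)} e^{f_m^{(a)}(u')}(1 - \eta\beta_J(u'))h(u')$, where $\beta_J$ is a carefully chosen $[0,1]$-valued $d$-Lipschitz function supported on the branches prescribed by $J$, with $\Lip_d$ controlled independently of $b$. Property \cref{itm:DolgopyatProperty1}, invariance of the cone $\mathcal{C}_{E|b|}(\Lambda)$, follows from a direct computation: one bounds $|\mathcal{N}_{a,J}(h)(u) - \mathcal{N}_{a,J}(h)(u')|$ by splitting into the contribution of the $e^{f_m^{(a)}}$ weights (using that $f^{(a)}$ is Lipschitz with constant $\le T_0$, and the contraction rate $\kappa_2 > 1$), the contribution of $h$ (using $h \in \mathcal{C}_{E|b|}(\Lambda)$), and the contribution of the bump $\beta_J$ (using its bounded $d$-Lipschitz norm and that $\eta$ is small), and checking that with $E$ chosen large enough relative to $T_0$, the various constants, and the contraction rates — and $\eta$ small — the output still lies in $\mathcal{C}_{E|b|}(\Lambda)$. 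Property \cref{itm:DolgopyatProperty2}, the $L^2(\nu_\Lambda)$-contraction $\|\mathcal{N}_{a,J}(h)\|_2 \le \eta\|h\|_2$ with $\eta < 1$, uses that $L_0^*(\nu_\Lambda) = \nu_\Lambda$ so that $L_a^m$ is essentially an $L^2$-isometry up to the $\lambda_a$-normalization and the Lipschitz deformation of the eigenfunction $h_a/h_0$ (here we use that $a \mapsto h_a$ is Lipschitz on $[-a_0', a_0']$ and take $a_0$ small), while the factor $(1 - \eta\beta_J)$ removes a definite proportion of mass: since $\beta_J \equiv 1$ on a set of branches whose total $\nu_\Lambda$-weight is bounded below uniformly (by the Gibbs property — this is precisely where we avoid needing a separate Federer/doubling property, as remarked in the introduction), one gets $\|\mathcal{N}_{a,J}(h)\|_2^2 \le (1 - c\eta)\|h\|_2^2$ for some $c > 0$, hence the claim after shrinking $\eta$.

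The third and most delicate step is property \cref{itm:DolgopyatProperty3}: given $H$ and $h$ satisfying \cref{itm:DominatedByh} and \cref{itm:LogLipschitzh}, produce $J \in \mathcal{J}(b)$ with \cref{itm:DominatedByDolgopyat} and \cref{itm:LogLipschitzDolgopyat}. This is the genuine Dolgopyat dichotomy. On cylinders where $H$ does not oscillate much relative to its $L^2(\tilde{\mathbf G}_q)$-norm, one can dominate $\|M_{\xi,q}^m(H)(u)\|_2$ by the full weighted sum $\le \sum e^{f_m^{(a)}(u')}h(u')$ with room to spare — but on cylinders where it is close to extremal, the cocycle values $\mathtt{c}_q^m(u')$ being \emph{unitary} (the right regular representation) means $\|M_{\xi,q}^m(H)(u)\|_2 \le \sum e^{f_m^{(a)}(u')}\|H(u')\|_2$ with equality forcing near-alignment of the vectors $e^{ib\tau_m(u')}\mathtt{c}_q^m(u')H(u')$ in $L^2(\tilde{\mathbf G}_q)$; the oscillation of $b\tau_m$ across a pair of adjacent cylinders (using $|b| > b_0$ and the lower bound $|T'| \ge \kappa_2 > 1$ so that $\tau_m$ genuinely varies at scale comparable to the cylinder) then prevents this on at least one of the two cylinders, so one inserts the bump there. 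The triangle-inequality bookkeeping to pass from the pointwise cancellation to \cref{itm:DominatedByDolgopyat}, and the separate (easier) estimate \cref{itm:LogLipschitzDolgopyat} controlling the $d$-Lipschitz norm of $M_{\xi,q}^m(H)$ via \cref{lem:Hyperbolicity} and the Lipschitz bounds on $f^{(a)}$ and $\tau$, then complete the argument. The main obstacle is making the dichotomy quantitative uniformly in $q$: since the cocycle is unitary on $L^2(\tilde{\mathbf G}_q)$, the congruence structure does not interfere with the $L^2$-norm estimates, so in fact the argument is insensitive to $q$ — the real work is the careful choice of the Dolgopyat constants $m, \eta, E$ so that the cone invariance, the $L^2$-contraction, and the dichotomy are simultaneously consistent, together with verifying that the new distance function $d$ (rather than the Euclidean one) behaves well under $T$, which is exactly where adapting Stoyanov's framework pays off.
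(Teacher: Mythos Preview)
Your overall blueprint is right --- Stoyanov's framework, Dolgopyat operators of the form $L_a^m(\beta_J h)$ with a damping factor on selected inverse branches, cone invariance via Lasota--Yorke, $L^2$-contraction via the Gibbs property, and the dichotomy for property~\ref{itm:DolgopyatProperty3}. The observation that unitarity of the cocycle on $L^2(\tilde{\mathbf G}_q)$ makes the argument uniform in $q$ is also correct and important.

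However, there is a genuine gap in your cancellation mechanism for property~\ref{itm:DominatedByDolgopyat}. You write that ``the oscillation of $b\tau_m$ across a pair of adjacent cylinders (using $|b| > b_0$ and the lower bound $|T'| \ge \kappa_2 > 1$ so that $\tau_m$ genuinely varies at scale comparable to the cylinder) then prevents this on at least one of the two cylinders.'' But variation of $\tau_m$ on a single branch is not what is needed. The vectors that must fail to align are $e^{ib\tau_m(v_1(u))}\mathtt{c}^m(v_1(u))H(v_1(u))$ and $e^{ib\tau_m(v_2(u))}\mathtt{c}^m(v_2(u))H(v_2(u))$ for two \emph{distinct} inverse branches $v_1, v_2$ of $T^m$; what forces a definite angle between them as $u$ ranges over a cylinder is a lower bound on the variation of the \emph{relative phase}
\[
(\tau_m \circ v_1 - \tau_m \circ v_2)(u) - (\tau_m \circ v_1 - \tau_m \circ v_2)(u').
\]
Hyperbolicity alone does not give this: if $\tau$ were cohomologous to a locally constant function, the difference $\tau_m \circ v_1 - \tau_m \circ v_2$ would be constant on cylinders and the phases would stay perfectly correlated, killing the mechanism. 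Ruling this out is precisely the \emph{local non-integrability condition} (LNIC), and establishing it in the higher-dimensional Schottky and continued fractions settings is the main technical novelty of the paper (\cref{sec:LocalNon-IntegrabilityCondition}, specifically \cref{pro:tauNotCohomologousToLocallyConstantFunction} and \cref{cor:LNIC_Corollary}). Your proposal does not mention LNIC and instead attributes the oscillation to expansion, which is insufficient.

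Two smaller points. First, the combinatorics of $\mathcal{J}(b)$ are $b$-dependent: the cylinders $\mathtt{C}_l(b)$ are the maximal ones with $\diam(\mathtt{C}) \le \epsilon_1/|b|$, not cylinders of fixed length $m$, and $\mathcal{J}(b)$ consists of \emph{dense} subsets $J \subset \Xi(b)$ (those hitting every $\mathtt{C}_l(b)$); this density is exactly what drives the $L^2$-contraction in \cref{lem:DolgopyatProperty2}. Second, $\Lip_d(\beta_J)$ is not bounded independently of $b$ --- it is of order $|b|$ (see \cref{eqn:LipschitzConstantbeta_J}) --- which is consistent with the cone $\mathcal{C}_{E|b|}(\Lambda)$ also scaling with $|b|$, but your statement to the contrary would break the cone-invariance bookkeeping.
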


\section{Local non-integrability condition and non-concentration property}
\label{sec:LocalNon-IntegrabilityCondition}
In this section, we will prove that the temporal distance function satisfies the local non-integrability condition (LNIC) and that the limit set satisfies the non-concentration property (NCP). We start with some definitions.

\begin{definition}[Temporal distance function]
For all $\alpha, \beta \in \Sigma$ and $k \in \mathcal{A}$ such that $(k, \alpha)$ and $(k, \beta)$ are admissible, the \emph{temporal distance function} $\varphi_{\alpha, \beta} \in C^1((D_k)^2, \mathbb R)$ is defined by
\begin{align*}
\varphi_{\alpha, \beta, u'}(u) = \varphi_{\alpha, \beta}(u, u') = \Delta_{\alpha}(u, u') - \Delta_{\beta}(u, u') \qquad \text{for all $u, u' \in D_k$}
\end{align*}
where for all $\alpha \in \Sigma$ and $k \in \mathcal{A}$ such that $(k, \alpha)$ is admissible, $\Delta_\alpha \in C^1((D_k)^2, \mathbb R)$ is defined by
\begin{align*}
\Delta_{\alpha}(u, u') = \sum_{j = 0}^\infty \big(\tau\big(g_{\alpha_j} g_{\alpha_{j - 1}} \dotsb g_{\alpha_0}u\big) - \tau\big(g_{\alpha_j} g_{\alpha_{j - 1}} \dotsb g_{\alpha_0}u'\big)\big)
\end{align*}
for all $u, u' \in D_k$.
\end{definition}

\begin{remark}
As $\tau$ is differentiable on cylinders of $D$, any partial derivative series corresponding to $\Delta_{\alpha}$ in the above definition converges uniformly using \cref{lem:Hyperbolicity}. Consequently, $\Delta_{\alpha}$ and $\varphi_{\alpha, \beta}$ are indeed of class $C^1$.
\end{remark}

\begin{definition}[LNIC]
We say $\varphi$ satisfies the \emph{local non-integrability condition (LNIC)} if there exist $\alpha, \beta \in \Sigma$, $k \in \mathcal{A}$, and $u_0, u_0' \in \mathtt{C}[k] = \Lambda \cap D_k$ such that $\nabla\varphi_{\alpha, \beta, u_0'}(u_0) \neq 0$.
\end{definition}

As in \cite{Nau05}, \cref{pro:ClosedGeodesicsBijectiveWithPeriodicOrbitsOf_T} will be very useful. It appears in \cite[Proposition 3.4]{PR97} for closed hyperbolic surfaces. Since the dimension is not critical, it holds in higher dimensions as recognized in the proof of \cite[Proposition 3.4]{GLZ04}. Also, it generalizes to the semigroup setting. We define $\len([\gamma]) = \inf_{\gamma' \in [\gamma]} \len(\gamma')$ for all conjugacy classes $[\gamma]$ of $\Gamma$ and denote by $\ell(\gamma)$ the translation length of $\gamma \in \Gamma$.

\begin{proposition}
\label{pro:ClosedGeodesicsBijectiveWithPeriodicOrbitsOf_T}
Let $\Gamma_0 \subset \Gamma$ be a Schottky subsemigroup. There is a one-to-one correspondence between nontrivial conjugacy classes $[\gamma]$ of $\Gamma_0$ and periodic orbits $\{T^j(u)\}_{j = 0}^{k - 1} \subset \Lambda(\Gamma_0)$ of length $k = \len([\gamma]) \in \mathbb N$ with $\tau_k(u) = \sum_{j = 0}^{k - 1} \tau(T^j(u)) = \ell(\gamma)$.
\end{proposition}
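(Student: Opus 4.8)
The plan is to set up the correspondence explicitly in both directions and then check it is a well-defined bijection respecting word length and translation length. Given a nontrivial conjugacy class $[\gamma]$ in $\Gamma_0$, choose the representative $\gamma \in [\gamma]$ that is \emph{cyclically reduced} and of minimal word length $k = \len([\gamma])$; write $\gamma = g_{\alpha_0} g_{\alpha_1} \dotsb g_{\alpha_{k-1}}$ as a reduced word in the Schottky generators. Cyclic reducedness means the bi-infinite periodic sequence $\overline{\alpha} = (\dotsc, \alpha_{k-1}, \alpha_0, \alpha_1, \dotsc)$ obtained by repeating the block $(\alpha_0, \dotsc, \alpha_{k-1})$ is admissible (this is where one uses that $\Gamma_0$ is Schottky, via the ping-pong/Markov structure of \cref{sec:CodingTheDynamicalSystem}: the only failure of admissibility would be an inverse cancellation $|\alpha_{j+1} - \alpha_j| = N_0$, and any such occurrence, including the wrap-around pair $(\alpha_{k-1}, \alpha_0)$, can be removed by conjugation without increasing length, contradicting minimality). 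The associated attracting fixed point $u = \gamma^+ \in \Lambda(\Gamma_0)$ then satisfies $T^k(u) = u$ because $T^k$ restricted to the cylinder $\mathtt C[\alpha_0, \dotsc, \alpha_{k-1}]$ is exactly the action of $g_{\alpha_0}^{-1}$ precomposed appropriately — more precisely $T^k|_{\mathtt C[\alpha_0,\dotsc,\alpha_{k-1}]} = \gamma^{-1}\cdot(\,\cdot\,)$ — and $\gamma^{-1}$ fixes $\gamma^+$. This produces the periodic orbit $\{T^j(u)\}_{j=0}^{k-1}$, and its period is exactly $k$ (not a proper divisor) since $\gamma$ is not a proper power in a free semigroup, which forces the coding block to be primitive.

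Next I would record the length identity. By \cref{pro:ClosedGeodesicsBijectiveWithPeriodicOrbitsOf_T}'s very statement we need $\tau_k(u) = \ell(\gamma)$. This follows from the conformal-distortion interpretation of $\tau$: $\tau$ is the logarithm of the derivative norm of $T$ (conjugated by $J$), so $\tau_k(u) = \log \|d(J T^k J^{-1})_{J(u)}\|_{\mathrm{op}}$ at the fixed point $u$, and this is precisely $\log$ of the expansion factor of $\gamma^{-1}$ — equivalently of the contraction factor of $\gamma$ at $\gamma^+$ — which by the standard hyperbolic geometry of loxodromic isometries equals the translation length $\ell(\gamma)$. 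One needs to know this is independent of the chosen conjugate and of $J$: invariance under change of $J$ is because $J$ and $J'$ differ by an isometry so the operator norm of the derivative of the conjugated map at the corresponding fixed point is unchanged; invariance under conjugation of $\gamma$ is because translation length is a conjugacy invariant.

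For the reverse map: given a periodic orbit $\{T^j(u)\}_{j=0}^{k-1}$ with minimal period $k$, read off the itinerary $\alpha_j \in \mathcal A$ with $T^j(u) \in D_{\alpha_j}$, form $\gamma = g_{\alpha_0} g_{\alpha_1}\dotsb g_{\alpha_{k-1}} \in \Gamma_0$, and check $\gamma$ is hyperbolic with $\gamma^+ = u$ (its coding block is admissible and primitive, so the word is reduced and cyclically reduced, hence $\len([\gamma]) = k$). The two constructions are mutually inverse: starting from $[\gamma]$, passing to the orbit, and back recovers the cyclically reduced representative up to cyclic permutation of the generator block — but cyclic permutation of the word corresponds exactly to conjugation, hence the same class $[\gamma]$; conversely, shifting the base point of the periodic orbit corresponds to cyclically permuting $(\alpha_0,\dotsc,\alpha_{k-1})$, so the orbit (as an unbased set) is well-defined from $[\gamma]$. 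I expect the main obstacle to be the bookkeeping around cyclic reducedness in the \emph{semigroup} setting with the partial inverse relations (the generators $g_{N_0+j} = g_j^{-1}$ for $0 \le j \le N_1$): one must verify carefully that minimizing word length within a conjugacy class genuinely produces an admissible periodic coding, handling the wrap-around adjacency, and that distinct classes cannot collapse to the same orbit. Everything else is a transcription of the proof of \cite[Proposition 3.4]{PR97} and \cite[Proposition 3.4]{GLZ04}, whose arguments are dimension-independent and extend verbatim once the semigroup reducedness issue is settled.
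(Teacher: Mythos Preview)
Your proposal is correct and follows essentially the same approach as the paper: the paper does not prove this proposition in-text but defers to \cite[Proposition 3.4]{PR97} and \cite[Proposition 3.4]{GLZ04}, noting only in a remark that the explicit bijection is $u = \gamma^+$ for $\gamma \in [\gamma]$ with $\len(\gamma) = \len([\gamma])$---exactly the map you construct. Your write-up supplies the details the paper omits (cyclic reducedness, primitivity, the $\tau_k(u) = \ell(\gamma)$ computation via the derivative at the fixed point), so there is nothing to correct.
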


\begin{remark}
The explicit bijection in the above proposition is obtained by assigning $u = \gamma^+ \in \Lambda(\Gamma_0)$ for $\gamma \in [\gamma]$ such that $k = \len(\gamma) = \len([\gamma])$.
\end{remark}

The following is a useful lemma, where $A < \PSL_2(\mathbb C)$ is the subgroup of diagonal elements. We also denote by $\theta(\gamma)$ the rotational angle of a hyperbolic element $\gamma \in \PSL_2(\mathbb C)$ so that $\ell(\gamma) + i\theta(\gamma)$ is its \emph{complex} translation length.

\begin{lemma}
\label{lem:ComplexTranslationLengthOfProduct}
Let $g, h \in \PSL_2(\mathbb C)$ be hyperbolic elements such that $g \in A$ and $h \in QAQ^{-1}$ for some
$
Q =
\left(
\begin{smallmatrix}
a & b \\
c & d
\end{smallmatrix}
\right)
\in \PSL_2(\mathbb C)
$
and $gh$ is also hyperbolic.
Then, we have
\begin{align*}
\cosh\left(\frac{\ell(gh) + i\theta(gh)}{2}\right) ={}&\cosh\left(\frac{\ell(g)}{2}\right) \cosh\left(\frac{\ell(h)}{2}\right) \\
&{}+ (ad + bc) \cdot \sinh\left(\frac{\ell(g)}{2}\right) \sinh\left(\frac{\ell(h)}{2}\right)
\end{align*}
where the equality is to be understood up to sign.
\end{lemma}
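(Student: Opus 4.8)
The identity is a statement about the trace of a product in $\SL_2(\mathbb C)$, so the plan is to translate everything into matrix language and compute. First I would recall that for a hyperbolic element $\gamma \in \PSL_2(\mathbb C)$ with complex translation length $\ell(\gamma) + i\theta(\gamma)$, any lift to $\SL_2(\mathbb C)$ has trace $\pm 2\cosh\bigl(\tfrac{\ell(\gamma) + i\theta(\gamma)}{2}\bigr)$; this is the standard relationship between the trace and the eigenvalues $e^{\pm(\ell + i\theta)/2}$, and it accounts for the "up to sign" caveat in the statement. So the left-hand side $\cosh\bigl(\tfrac{\ell(gh) + i\theta(gh)}{2}\bigr)$ is $\pm\tfrac{1}{2}\tr(gh)$ for suitable lifts.

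Next I would write down explicit lifts. Since $g \in A$ is diagonal, write $g = \left(\begin{smallmatrix} \lambda & 0 \\ 0 & \lambda^{-1}\end{smallmatrix}\right)$ with $\lambda = e^{(\ell(g)+i\theta(g))/2}$, so that $\tfrac12\tr(g) = \cosh\bigl(\tfrac{\ell(g)+i\theta(g)}{2}\bigr)$ and, crucially, the off-diagonal-free form gives $\tfrac12(\lambda - \lambda^{-1}) = \sinh\bigl(\tfrac{\ell(g)+i\theta(g)}{2}\bigr)$. Similarly $h = Q\left(\begin{smallmatrix}\mu & 0 \\ 0 & \mu^{-1}\end{smallmatrix}\right)Q^{-1}$ with $\mu = e^{(\ell(h)+i\theta(h))/2}$ and $Q = \left(\begin{smallmatrix} a & b \\ c & d\end{smallmatrix}\right)$, $ad - bc = 1$. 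Then I would just multiply out $\tr(gh)$. Using $Q^{-1} = \left(\begin{smallmatrix} d & -b \\ -c & a\end{smallmatrix}\right)$, the conjugate $h = Q \operatorname{diag}(\mu,\mu^{-1}) Q^{-1}$ has entries that are explicit in $a,b,c,d,\mu$; its diagonal entries are $ad\mu - bc\mu^{-1}$ and $ad\mu^{-1} - bc\mu$. Multiplying on the left by $\operatorname{diag}(\lambda,\lambda^{-1})$ and taking the trace gives
\begin{align*}
\tr(gh) = \lambda(ad\mu - bc\mu^{-1}) + \lambda^{-1}(ad\mu^{-1} - bc\mu).
\end{align*}
Now I would regroup this as $ad(\lambda\mu + \lambda^{-1}\mu^{-1}) - bc(\lambda\mu^{-1} + \lambda^{-1}\mu)$ and rewrite using hyperbolic functions: $\lambda\mu + \lambda^{-1}\mu^{-1} = 2\cosh\bigl(\tfrac{\ell(g)+\ell(h) + i(\theta(g)+\theta(h))}{2}\bigr)$ and $\lambda\mu^{-1} + \lambda^{-1}\mu = 2\cosh\bigl(\tfrac{\ell(g)-\ell(h)+i(\theta(g)-\theta(h))}{2}\bigr)$, then expand both with the addition formula for $\cosh$. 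Writing $x = \tfrac{\ell(g)+i\theta(g)}{2}$, $y = \tfrac{\ell(h)+i\theta(h)}{2}$, one gets $\tfrac12\tr(gh) = ad(\cosh x\cosh y + \sinh x \sinh y) - bc(\cosh x \cosh y - \sinh x\sinh y) = (ad - bc)\cosh x\cosh y + (ad+bc)\sinh x \sinh y = \cosh x\cosh y + (ad+bc)\sinh x\sinh y$, using $ad - bc = 1$. Comparing with the claimed formula and remembering $\cosh x = \cosh\bigl(\tfrac{\ell(g)}{2}\bigr)$ only when $\theta(g) = 0$ — so I should either keep the imaginary parts or, more carefully, observe that in the intended application $g$ and $h$ have $\theta = 0$ in the relevant normalization, or simply interpret $\ell$ throughout as the complex translation length; I would check the precise conventions of the paper and state the identity with the correct (possibly complex) arguments, then collapse to the displayed form under whatever reality normalization is in force.

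**Main obstacle.** The computation itself is routine; the only real subtlety is bookkeeping the sign ambiguity and the role of the rotational angle $\theta$. I expect the genuine care is needed in reconciling $\cosh\bigl(\tfrac{\ell(g)+i\theta(g)}{2}\bigr)$ with the "$\cosh\bigl(\tfrac{\ell(g)}{2}\bigr)$" appearing on the right-hand side — presumably the lemma is applied in a situation where the lifts have been chosen so that $\theta$ contributions are absorbed or vanish, or where $\ell$ is understood as the complex length — and in verifying that the two signs (from $\pm$ in the trace-vs-$\cosh$ dictionary and from working in $\PSL_2$ rather than $\SL_2$) are exactly what produce the "equality up to sign" in the statement. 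Once the conventions are pinned down, the rest is the two-line matrix multiplication and a $\cosh$ addition formula.
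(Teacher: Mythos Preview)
Your approach is essentially identical to the paper's: write $g=\left(\begin{smallmatrix} e^s & 0\\ 0 & e^{-s}\end{smallmatrix}\right)$, $h=Q\left(\begin{smallmatrix} e^t & 0\\ 0 & e^{-t}\end{smallmatrix}\right)Q^{-1}$, compute $\tfrac12\tr(gh)=ad\cosh(s+t)-bc\cosh(s-t)$, expand with the addition formula, and invoke $\tfrac12\tr(gh)=\cosh\bigl(\tfrac{\ell(gh)+i\theta(gh)}{2}\bigr)$ up to sign. Regarding your flagged concern: the paper takes $s,t\in\mathbb R$, so the intended reading is indeed $\theta(g)=\theta(h)=0$ (in the application the lemma is used for the elements $\gamma_j=Q_j a_{\ell_j}Q_j^{-1}$, which have no rotational part by construction), and your computation then collapses exactly to the displayed identity.
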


\begin{proof}
Let $g, h \in \PSL_2(\mathbb C)$ as in the lemma. Write them explicitly as
\begin{align*}
g &=
\begin{pmatrix}
e^s & 0 \\
0 & e^{-s}
\end{pmatrix};
&
h &=
\begin{pmatrix}
a & b \\
c & d
\end{pmatrix}
\begin{pmatrix}
e^t & 0 \\
0 & e^{-t}
\end{pmatrix}
\begin{pmatrix}
a & b \\
c & d
\end{pmatrix}^{-1}
\end{align*}
for some $s, t \in \mathbb R$. Recalling $ad - bc = 1$, $s = \frac{\ell(g)}{2}$, and $t = \frac{\ell(h)}{2}$, we calculate that
\begin{align*}
\frac{\tr(gh)}{2}
&= ad \cdot \cosh(s + t) - bc \cdot \cosh(s - t) \\
&= \cosh\left(\frac{\ell(g)}{2}\right)\cosh\left(\frac{\ell(h)}{2}\right) + (ad + bc) \cdot \sinh\left(\frac{\ell(g)}{2}\right)\sinh\left(\frac{\ell(h)}{2}\right)
\end{align*}
\emph{up to sign}. The lemma now follows from the formula for the complex translation length $\cosh\left(\frac{\ell(gh) + i\theta(gh)}{2}\right) = \frac{\tr(gh)}{2}$
which also holds \emph{up to sign} \cite[Lemma 12.1.2]{MR03}.
\end{proof}

The following is the key proposition analogous to \cite[Lemma 4.4]{Nau05}. Though its proof is inspired from the latter, there are greater complications arising from the higher dimensional setting.

\begin{proposition}
\label{pro:tauNotCohomologousToLocallyConstantFunction}
The distortion function $\tau$ is not cohomologous to a locally constant function $\phi: \Lambda \to \mathbb R$.
\end{proposition}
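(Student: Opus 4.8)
The plan is to argue by contradiction, following the strategy of Naud \cite[Lemma 4.4]{Nau05} but feeding it \cref{lem:ComplexTranslationLengthOfProduct}, \cref{pro:ClosedGeodesicsBijectiveWithPeriodicOrbitsOf_T}, and the Zariski density of $\Gamma$ in place of his two-dimensional group-theoretic input. So I would suppose, for contradiction, that $\tau - \phi = \psi \circ T - \psi$ on $\Lambda$ for some locally constant $\phi \colon \Lambda \to \mathbb{R}$ and some continuous $\psi \colon \Lambda \to \mathbb{R}$. By compactness of $\Lambda$ there is an $N \in \mathbb{N}$ with $\phi$ constant on every cylinder of length $N$, so $\phi$ takes only finitely many values $c_1, \dotsc, c_M$, and $\Lambda_0 := \mathbb{Z}c_1 + \dotsb + \mathbb{Z}c_M$ is a finitely generated subgroup of $(\mathbb{R}, +)$, say of rank $d_0$.

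The first step is to reduce to the length spectrum. For any periodic point $p$ of $T$ of period $k$ the coboundary telescopes along the orbit, so $\tau_k(p) = \phi_k(p) = \sum_{j = 0}^{k - 1} \phi(T^j p) \in \Lambda_0$, being a sum of $k$ of the values $c_i$. By \cref{pro:ClosedGeodesicsBijectiveWithPeriodicOrbitsOf_T} (with $\Gamma_0 = \Gamma$), every translation length $\ell(\gamma)$, $\gamma \in \Gamma$, arises as such a $\tau_k(p)$; hence $\{\ell(\gamma) : \gamma \in \Gamma\} \subseteq \Lambda_0$, that is, the length spectrum of $\Gamma$ would be contained in a finitely generated subgroup of $\mathbb{R}$.

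The second, main step is to contradict this using Zariski density. Since $\ell(\cdot)$ depends only on the $G$-conjugacy class, I may conjugate the whole configuration by a fixed element of $G$ (not necessarily in $\Gamma$) without changing $\{\ell(\gamma) : \gamma \in \Gamma\}$, so I can assume that a fixed hyperbolic $g \in \Gamma$ lies in the diagonal subgroup $A$. For a hyperbolic $h \in \Gamma$ with axis disjoint from that of $g$, write $h \in QAQ^{-1}$ with $Q = \begin{pmatrix} a & b \\ c & d \end{pmatrix}$; then $ad$ (equivalently $bc = ad - 1$) is determined by the cross-ratio of the four fixed points. Applying \cref{lem:ComplexTranslationLengthOfProduct} (extended verbatim to loxodromic elements by using complex translation lengths, and with the analogous elementary hyperbolic geometry in the $\mathrm{SO}(n,1)$ case) to $g^m$ and $h^n$, and using $\lvert e^{\ell(g)/2} \rvert, \lvert e^{\ell(h)/2} \rvert > 1$, one obtains $\ell(g^m h^n) = m\,\ell(g) + n\,\ell(h) + 2\log\lvert ad \rvert + o(1)$ as $m, n \to \infty$. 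I would then vary $h$: the fixed points of hyperbolic elements of $\Gamma$ are dense in $\Lambda$ — this follows from Zariski density exactly as in the proof of \cref{thm:Z-DenseInG}, via \cref{lem:NotZ-DenseImpliesLimitSetIsContainedInASphere} and \cite[Proposition 3.12]{Win15} — so the correction $2\log\lvert ad \rvert$ varies over an uncountable set. The aim is to leverage this, playing several such families against each other and keeping track of the correction terms, to produce for any prescribed $r$ elements $\gamma_1, \dotsc, \gamma_{r + 1} \in \Gamma$ with $\mathbb{Q}$-linearly independent translation lengths; taking $r = d_0$ then contradicts $\{\ell(\gamma)\} \subseteq \Lambda_0$.

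The hard part will be precisely this last manufacturing step. The approximate identity above only places $2\log\lvert ad \rvert$ in the \emph{closure} $\overline{\Lambda_0}$, which is useless because the length spectrum of a Zariski dense group is already dense in $\mathbb{R}$; one must instead extract exact arithmetic information about $\Lambda_0$ itself. In the two-dimensional Fuchsian case this is supplied by the classical $\mathrm{SL}_2$ trace identities (which involve $h^{-1}$), but in the present \emph{semigroup} setting those identities are not available, and in higher dimensions the loxodromic rotational parts $\theta(\cdot)$ also have to be controlled; overcoming both difficulties is where genuinely new techniques beyond Naud's argument are needed, and constitutes the technical core of the proof.
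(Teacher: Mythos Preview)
Your setup and contradiction hypothesis are the same as the paper's, but your reduction in the first step is too coarse and this is exactly why you get stuck. You pass from local constancy of $\phi$ to the statement ``the length spectrum lies in a finitely generated subgroup $\Lambda_0$'', and then correctly observe that the asymptotic $\ell(g^m h^n) = m\,\ell(g) + n\,\ell(h) + 2\log|ad| + o(1)$ only pins $2\log|ad|$ to $\overline{\Lambda_0}$, which is vacuous. You then declare the extraction of exact arithmetic information to be the ``technical core'' without supplying it. That is the gap.

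The paper does not pass through the weak statement ``$\{\ell(\gamma)\} \subset \Lambda_0$'' at all. Instead it exploits local constancy \emph{directly} to obtain an exact, non-asymptotic additivity relation for carefully chosen elements: choosing $h_1 = g_1^{r_1}$ and $h_2 = g_1^{r_1}g_2^{r_2}$ with $r_1, r_2$ large enough relative to the locality scale of $\phi$, the coboundary telescopes and local constancy forces
\[
\ell\bigl(h_1^{p_1}h_2^{p_2}\bigr) = \ell\bigl(h_1^{p_1}\bigr) + \ell\bigl(h_2^{p_2}\bigr) \qquad \text{for all } p_1, p_2 \in \mathbb N,
\]
with no error term. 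This exact identity, fed into \cref{lem:ComplexTranslationLengthOfProduct} together with Poincar\'e recurrence to kill the rotational parts, yields the rigid constraint $|ad| = 1$ on the conjugating matrix $Q$, which in turn forces the attracting fixed point $(g_1^{r_1}g_2^{r_2})^+$ onto the line $\Re(\xi) = \tfrac{1}{2}$ in a suitable $\mathbb H^3$-arrangement; a direct geometric argument then shows this is impossible for large $r_1, r_2$. The moral is that ``cohomologous to locally constant'' is much stronger than ``periods lie in a lattice'': it gives you equalities of Birkhoff sums along \emph{non-periodic} orbit segments sharing a long prefix, and that is what produces the exact additivity you need.
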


\begin{proof}
Suppose that the distortion function $\tau$ is cohomologous to some locally constant function $\phi: \Lambda \to \mathbb R$. We use this to first derive an important identity. By compactness of $\Lambda$, there is a finite cover consisting of cylinders on which $\phi$ is constant. Hence, there exists $p_\phi \in \mathbb N$ such that $\phi$ is constant on any cylinder of length $p_\phi - 1$. In both the Schottky semigroup and the continued fractions semigroup settings, increasing $p_\phi$ and reordering the generators if necessary, there are distinct generators $g_1, g_2 \in \Gamma$ such that for any integers $r_1, r_2 > p_\phi$, we have that $\bigl\{g_1^{r_1}, g_2^{r_2}\bigr\}$ generates a Schottky subsemigroup $\Gamma_0 \subset \Gamma$. Let $\alpha_1 = (1, 1, \dotsc, 1)$ and $\alpha_2 = (2, 2, \dotsc, 2)$ with $\len(\alpha_1) = r_1 - 1$ and $\len(\alpha_2) = r_2 - 1$. Denote $x_1 = g_1^+ \in \Lambda$ and $x_2 = \bigl(g_1^{r_1}g_2^{r_2}\bigr)^+ \in \Lambda$. We have $\tau(x_1) = \phi(x_1)$ and $\tau_{r_1 + r_2}(x_2) = \phi_{r_1 + r_2}(x_2)$. By \cref{pro:ClosedGeodesicsBijectiveWithPeriodicOrbitsOf_T}, we also have $\tau(x_1) = \ell(g_1)$ and $\tau_{r_1 + r_2}(x_2) = \ell\big(g_1^{r_1}g_2^{r_2}\big)$. Thus, by local constancy, $\phi(u) = \ell(g_1)$ for all $u \in \mathtt{C}[\alpha_1]$ and $\phi_{r_1 + r_2}(u) = \ell\big(g_1^{r_1}g_2^{r_2}\big)$ for all $u \in \mathtt{C}[\alpha_1, \alpha_2, \alpha_1]$. Let $p_1, p_2 \in \mathbb N$. Let $g = \big(g_1^{r_1}\big)^{p_1}\big(g_1^{r_1}g_2^{r_2}\big)^{p_2}$ and $z \in \Lambda$ be its attracting fixed point. Let $\lambda_1 = (\alpha_1, \alpha_1, \dotsc, \alpha_1)$ and $\lambda_2 = (\alpha_1, \alpha_2, \alpha_1, \alpha_2, \dotsc, \alpha_1, \alpha_2)$ with $\len(\lambda_1) = p_1r_1 - 1$ and $\len(\lambda_2) = p_2(r_1 + r_2) - 1$. As before, we have both $\tau_{p_1r_1 + p_2(r_1 + r_2)}(z) = \phi_{p_1r_1 + p_2(r_1 + r_2)}(z) = \phi_{p_1r_1}(z) + \phi_{p_2(r_1 + r_2)}(T^{p_1r_1}(z))$ and $\tau_{p_1r_1 + p_2(r_1 + r_2)}(z) = \ell(g)$ which implies that $\ell(g) = \phi_{p_1r_1}(u) + \phi_{p_2(r_1 + r_2)}(T^{p_1r_1}(u))$ for all $u \in \mathtt{C}[\lambda_1, \lambda_2, \alpha_1]$ by local constancy. Thus, $\ell(g) = p_1r_1\ell(g_1) + p_2\ell\big(g_1^{r_1}g_2^{r_2}\big) = \ell\big(\big(g_1^{r_1}\big)^{p_1}\big) + \ell\big(\big(g_1^{r_1}g_2^{r_2}\big)^{p_2}\big)$. Denoting $h_1 = g_1^{r_1}$ and $h_2 = g_1^{r_1}g_2^{r_2}$, we have shown that
\begin{align}
\label{eqn:TranslationLengthOfProductIsSumOfTranslationLengths}
\ell\bigl(h_1^{p_1}h_2^{p_2}\bigr) = \ell\bigl(h_1^{p_1}\bigr) + \ell\bigl(h_2^{p_2}\bigr) \qquad \text{for all $p_1, p_2 \in \mathbb N$}.
\end{align}

We will later use \cref{eqn:TranslationLengthOfProductIsSumOfTranslationLengths} to derive \cref{eqn:ad_IsInACircle}. We will use the upper half space model of $\mathbb H^n$. For all $t \in \mathbb R$, let $a_t \in G$ denote the semisimple element which acts by translation by $t$ in the positive direction along the $e_n$-axis and $A = \{a_t: t \in \mathbb R\}$. Let $K = \Stab_G(e_n) < G$ and $M = \Stab_G(v_{e_n}) < K$ where $v_{e_n} = (e_n, e_n) \in \T_{e_n}(\mathbb H^n)$. Note that $K \cong \SO(n)$ and $M \cong \SO(n - 1)$ where the $M$-action on $\mathbb R^{n - 1} \subset \partial_\infty \mathbb H^n$ coincides with the standard representation of $\SO(n - 1)$. Let $d$ denote the distance function induced from any fixed left $G$-invariant and right $K$-invariant Riemannian metric on $G$ which descends to the hyperbolic metric on $G/K \cong \mathbb H^n$. Denoting $\ell_j = \ell(h_j) > 0$, there are $m_j \in M$ and $Q_j \in G$ such that
\begin{align*}
h_j = Q_ja_{\ell_j}m_j Q_j^{-1} \qquad \text{for all $j \in \{1, 2\}$}.
\end{align*}
For all $j \in \{1, 2\}$, let
\begin{align*}
\gamma_j = Q_ja_{\ell_j} Q_j^{-1}
\end{align*}
which has the same axis and translation length as $h_j$ so that $\ell(\gamma_j) = \ell_j$.

We now obtain some useful estimates. Using Poincar\'{e} recurrence theorem on $M$ and continuity of conjugation, we have that for all $j \in \{1, 2\}$ and $\epsilon > 0$, there exists arbitrarily large $p_j \in \mathbb N$ such that $d\big(Q_jm_j^{p_j}Q_j^{-1}, e\big) < \epsilon$ and hence $d\big(h_j^{p_j}, \gamma_j^{p_j}\big) < \epsilon$. We use this estimate to argue that in fact, for all $\epsilon > 0$, there exist arbitrarily large $p_2 \in \mathbb N$ and arbitrarily large $p_1(p_2) \in \mathbb N$ such that
\begin{align}
\label{eqn:ProductOfPowersEstimate}
d\bigl(h_1^{p_1(p_2)}h_2^{p_2}, \gamma_1^{p_1(p_2)}\gamma_2^{p_2}\bigr) < \epsilon.
\end{align}
Let $\epsilon > 0$. Fix an arbitrarily large $p_2 \in \mathbb N$ such that $d\bigl(h_2^{p_2}, \gamma_2^{p_2}\bigr) < \frac{\epsilon}{2}$. Then, there exists arbitrarily large $p_1(p_2) \in \mathbb N$ such that $d\bigl(h_1^{p_1(p_2)}, \gamma_1^{p_1(p_2)}\bigr)$ is sufficiently small such that $d\bigl(\gamma_2^{p_2}, h_1^{-p_1(p_2)}\gamma_1^{p_1(p_2)}\gamma_2^{p_2}\bigr) < \frac{\epsilon}{2}$ by continuity of conjugation. \Cref{eqn:ProductOfPowersEstimate} now follows by triangle inequality. We now use \cref{eqn:ProductOfPowersEstimate} to argue that for all $\epsilon > 0$, there exist arbitrarily large $p_2 \in \mathbb N$ and arbitrarily large $p_1(p_2) \in \mathbb N$ such that
\begin{align}
\label{eqn:TranslationLengthOfProductOfPowersEstimate}
\bigl|\ell\bigl(h_1^{p_1(p_2)}h_2^{p_2}\bigr) - \ell\bigl(\gamma_1^{p_1(p_2)}\gamma_2^{p_2}\bigr)\bigr| < \epsilon.
\end{align}
Let $\epsilon > 0$. Fix any point $x_0$ on the geodesic with forward and backward endpoints $h_1^+ = \gamma_1^+$ and $h_2^- = \gamma_2^-$. Let $\epsilon' \in \bigl(0, \frac{\epsilon}{6}\bigr)$ and $\epsilon'' > 0$ such that $d(x, gx) < \frac{\epsilon}{3}$ for all $x \in B_{\epsilon'}(x_0)$ and $g \in B_{\epsilon''}(e) \subset G$. Recall that $\{h_1, h_2\}$ generates a Schottky semigroup and hence so does $\{\gamma_1, \gamma_2\}$. Consequently, if $p_1, p_2 \in \mathbb N$ are sufficiently large, then the axes of $h_1^{p_1}h_2^{p_2}$ and $\gamma_1^{p_1}\gamma_2^{p_2}$ will intersect $B_{\epsilon'}(x_0)$. Hence, by \cref{eqn:ProductOfPowersEstimate}, we can fix arbitrarily large $p_2 \in \mathbb N$ and arbitrarily large $p_1(p_2) \in \mathbb N$ such that $d\bigl(h_1^{p_1(p_2)}h_2^{p_2}, \gamma_1^{p_1(p_2)}\gamma_2^{p_2}\bigr) < \epsilon''$ and the axes of $h_1^{p_1(p_2)}h_2^{p_2}$ and $\gamma_1^{p_1(p_2)}\gamma_2^{p_2}$ intersect $B_{\epsilon'}(x_0)$. Suppose that $x_h, x_\gamma \in B_{\epsilon'}(x_0)$ are points on the respective axes. Then, by repeated triangle inequality and symmetry, we have
\begin{align*}
&\bigl|d\bigl(x_h, h_1^{p_1(p_2)}h_2^{p_2} x_h\bigr) - d\bigl(x_\gamma, \gamma_1^{p_1(p_2)}\gamma_2^{p_2} x_\gamma\bigr)\bigr| \\
\leq{}&d(x_h, x_\gamma) + d\bigl(\gamma_1^{p_1(p_2)}\gamma_2^{p_2} x_\gamma, h_1^{p_1(p_2)}h_2^{p_2} x_\gamma\bigr) + d\bigl(h_1^{p_1(p_2)}h_2^{p_2} x_\gamma, h_1^{p_1(p_2)}h_2^{p_2} x_h\bigr) < \epsilon.
\end{align*}
\Cref{eqn:TranslationLengthOfProductOfPowersEstimate} now follows from definitions, $d\bigl(x_h, h_1^{p_1(p_2)}h_2^{p_2} x_h\bigr) = \ell\bigl(h_1^{p_1(p_2)}h_2^{p_2}\bigr)$ and $d\bigl(x_\gamma, \gamma_1^{p_1(p_2)}\gamma_2^{p_2} x_\gamma\bigr) = \ell\bigl(\gamma_1^{p_1(p_2)}\gamma_2^{p_2}\bigr)$. We need one last estimate for hyperbolic functions. Let $\epsilon > 0$ and $x, y, u, v > 0$ with $|x - y| < \epsilon$ and $y = u + v$. Then, using the Taylor series for $\sinh$ about $y$ and the sum of arguments formulas, we have
\begin{align}
\begin{aligned}
\label{eqn:HyperbolicFunctionEstimate}
&\left|\frac{\sinh(x)}{\cosh(u)\cosh(v)} - \frac{\sinh(y)}{\cosh(u)\cosh(v)}\right| = \left|\sum_{j = 1}^\infty \frac{\sinh^{(j)}(y)}{j!\cosh(u)\cosh(v)}(x - y)^j\right| \\
\leq{}&\frac{\cosh(y)}{\cosh(u)\cosh(v)}\sum_{j = 0}^\infty \frac{|x - y|^{1 + 2j}}{(1 + 2j)!} + \frac{\sinh(y)}{\cosh(u)\cosh(v)}\sum_{j = 1}^\infty \frac{|x - y|^{2j}}{(2j)!} \\
<{}&(1 + \tanh(u)\tanh(v))\sum_{j = 0}^\infty \frac{\epsilon^{1 + 2j}}{(1 + 2j)!} + (\tanh(u) + \tanh(v))\sum_{j = 1}^\infty \frac{\epsilon^{2j}}{(2j)!} \\
<{}&\frac{4\epsilon}{1 - \epsilon}.
\end{aligned}
\end{align}

We will now derive \cref{eqn:ad_IsInACircle} using the above tools. Denote the $3$-dimensional hyperbolic submanifold $\mathbb H^3 = \Span\{e_1, e_2, e_n\} \cap \mathbb H^n \subset \mathbb H^n$ with boundary $\partial_\infty \mathbb H^3 = \Span\{e_1, e_2\} \cup \{\infty\} \subset \partial_\infty \mathbb H^n$. Identify $\partial_\infty \mathbb H^3$ with the Riemann sphere $\hat{\mathbb C}$ such that $\Span\{e_1, e_2\} \cong \mathbb C$ is a linear isomorphism identifying $e_1$ with $1$ and $e_2$ with $i$. Applying an isometry in $G$, we can assume that $g_1^+ = h_1^+ = \gamma_1^+ = \infty \in \partial_\infty \mathbb H^3 \cong \hat{\mathbb C}$ and $g_1^- = h_1^- = \gamma_1^- = 0 \in \partial_\infty \mathbb H^3 \cong \hat{\mathbb C}$. Applying an isometry in $M$, we can also assume that $h_2^\pm = \gamma_2^\pm \in \Span\{e_1, e_2\}$ so that their axes are contained in $\mathbb H^3$. Further applying a \emph{unique} isometry in $AM$ preserving $\mathbb H^3$, we can assume that $h_2^- = \gamma_2^- = e_1 \in \partial_\infty \mathbb H^3$ which corresponds to $1 \in \hat{\mathbb C}$. Consequently, $h_2^+ = \gamma_2^+ \in \partial_\infty \mathbb H^3$ and its corresponding element in $\hat{\mathbb C}$ are uniquely determined. We call the above setup the \emph{$(h_1, h_2)$-$\mathbb H^3$-arrangement} and similarly for other pairs of elements in $G$ with no common fixed point. Moreover, it is clear from the forms of $\gamma_1$ and $\gamma_2$ above that they preserve $\mathbb H^3$. The subgroup of $G$ preserving $\mathbb H^3$ is Lie isomorphic to $\PSL_2(\mathbb C)$. Hence, we can view $\gamma_1, \gamma_2, Q_1, Q_2 \in \PSL_2(\mathbb C)$ where in particular $Q_1 = e$. Write
\begin{align*}
Q_2 =
\begin{pmatrix}
a & b \\
c & d
\end{pmatrix}.
\end{align*}
Note that from the above assumption, we have $Q_2(0) = 1$. For convenience, we use tildes on $\ell$ and $\theta$ to denote half of its counterparts. Let $\epsilon = \frac{1}{m}$ for some $m \in \mathbb N$. Fix $\epsilon' \in \left(0, \min\left(\frac{1}{2}, \frac{\epsilon}{2 \cdot 4 \cdot 6 \cdot 2}\right)\right)$. Then, for all $x, y, u, v > 0$ with $|x - y| < \epsilon'$ and $y = u + v$, the estimate in \cref{eqn:HyperbolicFunctionEstimate} is then bounded by $\frac{4\epsilon'}{1 - \epsilon'} < \frac{\epsilon}{6 \cdot 2}$ and hence using the mean value theorem for the square function on $[0, 3]$, we obtain the estimate
\begin{align}
\label{eqn:SquareOfHyperbolicFunctionEstimate}
\left|\frac{\sinh^2(x)}{\cosh^2(u)\cosh^2(v)} - \frac{\sinh^2(y)}{\cosh^2(u)\cosh^2(v)}\right| < \frac{\epsilon}{2}.
\end{align}
Similarly, since $|\tanh'| \leq 1$, we fix $\epsilon'' = \frac{\epsilon}{8(1 + |\Re(bc)|)^2}$ so that for all $x, y > 0$ with $|x - y| < \epsilon''$, we have
\begin{align}
\label{eqn:TanhEstimate}
4(1 + |\Re(bc)|)^2|\tanh(x) - \tanh(y)| < \frac{\epsilon}{2}.
\end{align}
By \cref{eqn:TranslationLengthOfProductOfPowersEstimate}, we can fix a arbitrarily large $p_2 \in \mathbb N$ and then fix a corresponding arbitrarily large $p_1 \in \mathbb N$ such that
\begin{align}
\label{eqn:TranslationLengthOfProductOfPowersEstimateFor1/m}
\bigl|\tilde{\ell}\bigl(h_1^{p_1}h_2^{p_2}\bigr) - \tilde{\ell}\bigl(\gamma_1^{p_1}\gamma_2^{p_2}\bigr)\bigr| < \min(\epsilon', \epsilon'').
\end{align}
Now, the sum of arguments formula gives
\begin{align*}
\cosh\bigl(\tilde{\ell}\bigl(\gamma_1^{p_1}\gamma_2^{p_2}\bigr) + i\tilde{\theta}\bigl(\gamma_1^{p_1}\gamma_2^{p_2}\bigr)\bigr) ={}&\cosh\bigl(\tilde{\ell}\bigl(\gamma_1^{p_1}\gamma_2^{p_2}\bigr)\bigr) \cos\bigl(\tilde{\theta}\bigl(\gamma_1^{p_1}\gamma_2^{p_2}\bigr)\bigr) \\
&{}+ i\sinh\bigl(\tilde{\ell}\bigl(\gamma_1^{p_1}\gamma_2^{p_2}\bigr)\bigr) \sin\bigl(\tilde{\theta}\bigl(\gamma_1^{p_1}\gamma_2^{p_2}\bigr)\bigr)
\end{align*}
while \cref{lem:ComplexTranslationLengthOfProduct} gives
\begin{align*}
\cosh\bigl(\tilde{\ell}\bigl(\gamma_1^{p_1}\gamma_2^{p_2}\bigr) + i\tilde{\theta}\bigl(\gamma_1^{p_1}\gamma_2^{p_2}\bigr)\bigr) ={}&\cosh\bigl(p_1\tilde{\ell}_1\bigr) \cosh\bigl(p_2\tilde{\ell}_2\bigr) \\
&{}+ (ad + bc) \cdot \sinh\bigl(p_1\tilde{\ell}_1\bigr) \sinh\bigl(p_2\tilde{\ell}_2\bigr)
\end{align*}
up to sign. Comparing the real and imaginary parts in the above equations give
\begin{align*}
\cosh\bigl(\tilde{\ell}\bigl(\gamma_1^{p_1}\gamma_2^{p_2}\bigr)\bigr) \cos\bigl(\tilde{\theta}\bigl(\gamma_1^{p_1}\gamma_2^{p_2}\bigr)\bigr) ={}&\cosh\bigl(p_1\tilde{\ell}_1\bigr) \cosh\bigl(p_2\tilde{\ell}_2\bigr) \\
&{}+ (1 + 2\Re(bc)) \cdot \sinh\bigl(p_1\tilde{\ell}_1\bigr) \sinh\bigl(p_2\tilde{\ell}_2\bigr); \\
\sinh\bigl(\tilde{\ell}\bigl(\gamma_1^{p_1}\gamma_2^{p_2}\bigr)\bigr) \sin\bigl(\tilde{\theta}\bigl(\gamma_1^{p_1}\gamma_2^{p_2}\bigr)\bigr) ={}& 2\Im(bc) \cdot \sinh\bigl(p_1\tilde{\ell}_1\bigr) \sinh\bigl(p_2\tilde{\ell}_2\bigr)
\end{align*}
up to sign. Using $\sin^2\bigl(\tilde{\theta}\bigl(\gamma_1^{p_1}\gamma_2^{p_2}\bigr)\bigr) + \cos^2\bigl(\tilde{\theta}\bigl(\gamma_1^{p_1}\gamma_2^{p_2}\bigr)\bigr) = 1$ with the above pair of equations, the sign ambiguity is removed and we get
\begin{align*}
\sinh^2\bigl(\tilde{\ell}\bigl(\gamma_1^{p_1}\gamma_2^{p_2}\bigr)\bigr) = {}&\big(\cosh\bigl(p_1\tilde{\ell}_1\bigr) \cosh\bigl(p_2\tilde{\ell}_2\bigr) + (1 + 2\Re(bc)) \cdot \sinh\bigl(p_1\tilde{\ell}_1\bigr) \sinh\bigl(p_2\tilde{\ell}_2\bigr)\big)^2 \\
&{}\cdot \tanh^2\bigl(\tilde{\ell}\bigl(\gamma_1^{p_1}\gamma_2^{p_2}\bigr)\bigr) + 4\Im(bc)^2 \cdot \sinh^2\bigl(p_1\tilde{\ell}_1\bigr) \sinh^2\bigl(p_2\tilde{\ell}_2\bigr).
\end{align*}
Dividing through by $\cosh^2\bigl(p_1\tilde{\ell}_1\bigr) \cosh^2\bigl(p_2\tilde{\ell}_2\bigr)$ and recalling the identity in \cref{eqn:TranslationLengthOfProductIsSumOfTranslationLengths}, the estimates from \cref{eqn:SquareOfHyperbolicFunctionEstimate,eqn:TanhEstimate,eqn:TranslationLengthOfProductOfPowersEstimateFor1/m}, and $\epsilon = \frac{1}{m}$, we get
\begin{align}
\begin{aligned}
\label{eqn:MainInequalityFromTranslationLengthFormulas}
&\bigl|\big(1 + (1 + 2\Re(bc)) \cdot \tanh\bigl(p_1\tilde{\ell}_1\bigr) \tanh\bigl(p_2\tilde{\ell}_2\bigr)\big)^2 \tanh^2\bigl(p_1\tilde{\ell}_1 + p_2\tilde{\ell}_2\bigr) \\
{}&+ 4\Im(bc)^2 \cdot \tanh^2\bigl(p_1\tilde{\ell}_1\bigr) \tanh^2\bigl(p_2\tilde{\ell}_2\bigr) - \bigl(\tanh\bigl(p_1\tilde{\ell}_1\bigr) + \tanh\bigl(p_2\tilde{\ell}_2\bigr)\bigr)^2\bigr| < \frac{1}{m}.
\end{aligned}
\end{align}
Thus, for all $m \in \mathbb N$, we have shown \cref{eqn:MainInequalityFromTranslationLengthFormulas} for any arbitrarily large $p_2 \in \mathbb N$ and any corresponding arbitrarily large $p_1 \in \mathbb N$ which satisfies \cref{eqn:TranslationLengthOfProductOfPowersEstimateFor1/m}. So there exist sequences $\{p_{1, m}\}_{m \in \mathbb N}, \{p_{2, m}\}_{m \in \mathbb N} \subset \mathbb N$ with $\lim_{m \to \infty}p_{1, m} = \lim_{m \to \infty}p_{2, m} = \infty$ such that for all $m \in \mathbb N$, the integers $p_{1, m}$ and $p_{2, m}$ satisfy \cref{eqn:MainInequalityFromTranslationLengthFormulas}. Taking the limit $m \to \infty$ in \cref{eqn:MainInequalityFromTranslationLengthFormulas} and simplifying the result gives the equation of a circle
\begin{align}
\label{eqn:ad_IsInACircle}
|ad| = |bc + 1| = 1.
\end{align}

Denote $z = ad$ so that $|z| = 1$. Then we have
\begin{align*}
Q_2 =
\begin{pmatrix}
a & b \\
\frac{z - 1}{b} & \frac{z}{a}
\end{pmatrix}.
\end{align*}
Now, $Q_2(0) = 1$ implies $ab = z$ and so $h_2^+ = Q_2(\infty) = \frac{z}{z - 1}$. Also, $|z| = 1$ if and only if $\frac{z}{z - 1} \in \bigl\{\xi \in \mathbb C: \Re(\xi) = \frac{1}{2}\bigr\} \cup \{\infty\}$. But $h_2^+ \neq h_1^+ = \infty$. Thus, we have shown using the hypothesis of this proposition that for all integers $r_1, r_2 > p_\phi$, we have
\begin{align}
\label{eqn:h_2^+IsOnTheLineRealPartEquals1/2}
\bigl(g_1^{r_1}g_2^{r_2}\bigr)^+ \in \left\{\xi \in \mathbb C: \Re(\xi) = \frac{1}{2}\right\}
\end{align}
in the $\bigl(g_1, g_1^{r_1}g_2^{r_2}\bigr)$-$\mathbb H^3$-arrangement.

We show that the above property is impossible. Denote by $B_r^{\mathrm{E}}(x) \subset \mathbb R^{ n - 1} \subset \partial_\infty \mathbb H^n$ (resp. $B_r^{\mathbb C}(x) \subset \mathbb C \subset \hat{\mathbb C}$) the open Euclidean ball of radius $r > 0$ centered at $x \in \mathbb R^{ n - 1}$ (resp. $x \in \mathbb C$). We start in the $(g_1, g_2)$-$\mathbb H^3$-arrangement. Let $g_1 = a_{\ell(g_1)}m_{g_1}$ for some $m_{g_1} \in M$ and $\tilde{D}_2 \ni g_2^+$ be the ball for the Schottky generating set $\bigl\{g_1^{r_1}, g_2^{r_2}\bigr\}$. Given $\epsilon_1 > 0$, there exists an integer $R_2 > p_\phi$ such that for all $r_1 \in \mathbb N$ and integers $r_2 > R_2$, we have $\bigl(g_1^{r_1}g_2^{r_2}\bigr)^- \in B_{\epsilon_1}^{\mathrm{E}}(g_2^-) = B_{\epsilon_1}^{\mathrm{E}}(e_1) \subset \mathbb R^{n - 1} \subset \partial_\infty \mathbb H^n$ and $g_2^{r_2 - 1} \cdot \tilde{D}_2 \subset B_{\epsilon_1}^{\mathrm{E}}\bigl(g_2^+\bigr)$. Given $\epsilon_2 > 0$, we take $\epsilon_1$ sufficiently small such that there exist $\epsilon' > 0$ and $am \in AM \cap B_{\epsilon'}(e)$ such that $am \cdot \bigl(g_1^{r_1}g_2^{r_2}\bigr)^- = e_1$  for all $r_1 \in \mathbb N$ and integers $r_2 > R_2$, and $a'm' \cdot B_{\epsilon_1}^{\mathrm{E}}\bigl(g_2^+\bigr) \subset B_{\epsilon_2}^{\mathrm{E}}\bigl(g_2^+\bigr)$ for all $a'm' \in AM \cap B_{\epsilon'}(e)$. We now consider the following two cases, each of which leads to a contradiction.

\medskip
\noindent
\textit{Case 1.} Suppose $\bigl\{m_{g_1}^j \cdot g_2^+\bigr\}_{j \in \mathbb Z_{\geq 0}} \not\subset \Span\{e_2, e_3, \dotsc, e_{n - 1}\}$. Fix $\tilde{g}_2^+ = m_{g_1}^{j_1} \cdot g_2^+$ for some $j_1 \in \mathbb{Z}_{\geq 0}$ and $\epsilon > 0$ such that $B_{2\epsilon}^{\mathrm{E}}\bigl(\tilde{g}_2^+\bigr) \cap \Span\{e_2, e_3, \dotsc, e_{n - 1}\} = \varnothing$. Take $\epsilon_2 = \frac{\epsilon}{2}$ and obtain $\epsilon_1$, $\epsilon'$, and $R_2$ as above. Also let $r_2 > R_2$ be an integer and $am \in AM \cap B_{\epsilon'}(e)$ as above. By Poincar\'{e} recurrence theorem on $M$, take an arbitrarily large integer $r_1 > p_\phi$ such that $m_{g_1}^{r_1} \cdot B_{\epsilon_2}^{\mathrm{E}}\bigl(g_2^+\bigr) \subset B_\epsilon^{\mathrm{E}}\bigl(\tilde{g}_2^+\bigr)$. By the Brouwer fixed point theorem, $\bigl(g_1^{r_1}g_2^{r_2}\bigr)^+ \in g_1^{r_1}g_2^{r_2 - 1} \cdot \tilde{D}_2 \subset a_{\ell(g_1)}^{r_1} m_{g_1}^{r_1} \cdot B_{\epsilon_1}^{\mathrm{E}}\bigl(g_2^+\bigr)$ and hence $am \cdot \bigl(g_1^{r_1}g_2^{r_2}\bigr)^+ \in a_{\ell(g_1)}^{r_1} m_{g_1}^{r_1} am' \cdot B_{\epsilon_1}^{\mathrm{E}}\bigl(g_2^+\bigr) \subset a_{\ell(g_1)}^{r_1} m_{g_1}^{r_1} \cdot B_{\epsilon_2}^{\mathrm{E}}\bigl(g_2^+\bigr) \subset a_{\ell(g_1)}^{r_1} \cdot B_\epsilon^{\mathrm{E}}\bigl(\tilde{g}_2^+\bigr)$ where $m' = m_{g_1}^{-r_1}mm_{g_1}^{r_1} \in M \cap B_{\epsilon'}(e)$. There exists $\tilde{m} \in M$ such that $\tilde{m} \cdot e_1 = e_1$ and $a\tilde{m}m \cdot \bigl(g_1^{r_1}g_2^{r_2}\bigr)^+ \in \partial_\infty \mathbb H^3$. Now, $a\tilde{m}m \cdot \bigl(g_1^{r_1}g_2^{r_2}\bigr)^+ \in a_{\ell(g_1)}^{r_1} \cdot B_\epsilon^{\mathrm{E}}\bigl(\tilde{m} \cdot \tilde{g}_2^+\bigr)$. Denoting $\tilde{z}_2 \in \mathbb C \subset \hat{\mathbb C}$ to be the point corresponding to $\tilde{m} \cdot \tilde{g}_2^+ \in \partial_\infty \mathbb H^3$, we have $|\Re(\tilde{z}_2)| = \bigl|\bigl\langle \tilde{m} \cdot \tilde{g}_2^+, e_1\bigr\rangle\bigr| = \bigl|\bigl\langle \tilde{g}_2^+, e_1\bigr\rangle\bigr| \geq 2\epsilon$. Note that $a\tilde{m}m$ is an isometry from the $(g_1, g_2)$-$\mathbb H^3$-arrangement to the $\bigl(g_1, g_1^{r_1}g_2^{r_2}\bigr)$-$\mathbb H^3$-arrangement. Thus, $\bigl(g_1^{r_1}g_2^{r_2}\bigr)^+ \in a_{\ell(g_1)}^{r_1} \cdot B_\epsilon^{\mathbb C}(\tilde{z}_2)$ in the $\bigl(g_1, g_1^{r_1}g_2^{r_2}\bigr)$-$\mathbb H^3$-arrangement. If $r_1$ is sufficiently large, we obtain $\bigl|\Re\bigl(\bigl(g_1^{r_1}g_2^{r_2}\bigr)^+\bigr)\bigr| > e^{r_1\ell(g_1)} (|\Re(\tilde{z}_2)| - \epsilon) \geq e^{r_1\ell(g_1)} \epsilon > \frac{1}{2}$ contradicting \cref{eqn:h_2^+IsOnTheLineRealPartEquals1/2}.

\medskip
\noindent
\textit{Case 2.} Suppose $\bigl\{m_{g_1}^j \cdot g_2^+\bigr\}_{j \in \mathbb Z_{\geq 0}} \subset \Span\{e_2, e_3, \dotsc, e_{n - 1}\}$. Fix any integer $r_1 > p_\phi$ and $\tilde{g}_2^+ = m_{g_1}^{r_1} \cdot g_2^+$. Take $\epsilon = \epsilon_2 = \frac{1}{4e^{r_1\ell(g_1)}}$ and obtain $\epsilon_1$, $\epsilon'$, and $R_2$ as above. Also let $r_2 > R_2$ be an integer and $am \in AM \cap B_{\epsilon'}(e)$ as above. Proceeding as in Case 1, there exists $\tilde{m} \in M$ such that $\tilde{m} \cdot e_1 = e_1$ and $a\tilde{m}m \cdot \bigl(g_1^{r_1}g_2^{r_2}\bigr)^+ \in \partial_\infty \mathbb H^3$. Then, $a\tilde{m}m \cdot \bigl(g_1^{r_1}g_2^{r_2}\bigr)^+ \in a_{\ell(g_1)}^{r_1} \cdot B_{\epsilon_2}^{\mathrm{E}}\bigl(\tilde{m} \cdot \tilde{g}_2^+\bigr)$. Denoting $\tilde{z}_2 \in \mathbb C \subset \hat{\mathbb C}$ to be the point corresponding to $\tilde{m} \cdot \tilde{g}_2^+ \in \partial_\infty \mathbb H^3$, we have $|\Re(\tilde{z}_2)| = |\langle \tilde{m} \cdot \tilde{g}_2^+, e_1\rangle| = |\langle \tilde{g}_2^+, e_1\rangle| = 0$. Again, $a\tilde{m}m$ is an isometry from the $(g_1, g_2)$-$\mathbb H^3$-arrangement to the $\bigl(g_1, g_1^{r_1}g_2^{r_2}\bigr)$-$\mathbb H^3$-arrangement. Thus, $\bigl(g_1^{r_1}g_2^{r_2}\bigr)^+ \in a_{\ell(g_1)}^{r_1} \cdot B_\epsilon^{\mathbb C}(\tilde{z}_2)$ in the $\bigl(g_1, g_1^{r_1}g_2^{r_2}\bigr)$-$\mathbb H^3$-arrangement. We obtain $\bigl|\Re\bigl(\bigl(g_1^{r_1}g_2^{r_2}\bigr)^+\bigr)\bigr| < e^{r_1\ell(g_1)} \epsilon = \frac{1}{4} < \frac{1}{2}$ contradicting \cref{eqn:h_2^+IsOnTheLineRealPartEquals1/2}.
\end{proof}

\begin{remark}
The above proof is greatly simplified if, after passing to some power, $h_1$ preserves $\mathbb H^3$ in the $(h_1, h_2)$-$\mathbb H^3$-arrangement, for example if $n = 3$ or $m_1$ is a torsion element. In this case, we can set $\gamma_1 = h_1$ and $p_1$ does not depend on $p_2$ in \cref{eqn:ProductOfPowersEstimate} and so from \cref{eqn:MainInequalityFromTranslationLengthFormulas} we get the stronger condition that $(|bc|^2 + \Re(bc)) X + \Re(bc) \in \mathbb R[X]$ has infinitely many zeros $X_{p_1} = \tanh\bigl(p_1\tilde{\ell}_1\bigr)$ for all $p_1 \in \mathbb N$. Thus, $b = 0$ or $c = 0$ contradicting the fact that $h_1$ and $h_2$ cannot have any common fixed point.
\end{remark}

\Cref{pro:varphiSatisfiesLNIC} now follows by adapting Naud's argument after \cite[Lemma 4.4]{Nau05} and replacing the latter with \cref{pro:tauNotCohomologousToLocallyConstantFunction}. Note that \cite[Lemma 4.3]{Nau05} continues to hold in our setting. To adapt his analyticity argument for the proof of LNIC to higher dimensions, we simply need to use the Grauert tube (see the explanation after \cite[Lemma 3.1]{GLZ04}) and \cite[Proposition 3.12]{Win15}.

\begin{proposition}
\label{pro:varphiSatisfiesLNIC}
The distortion function $\varphi$ satisfies LNIC.
\end{proposition}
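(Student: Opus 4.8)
The plan is to prove \cref{pro:varphiSatisfiesLNIC} by contradiction, reducing it to \cref{pro:tauNotCohomologousToLocallyConstantFunction} along the lines of Naud \cite[Subsection 4.2]{Nau05}, with the one-dimensional analyticity input replaced by a Grauert tube argument valid in all dimensions. Suppose that LNIC fails, so that for every $\alpha, \beta \in \Sigma$, every $k \in \mathcal{A}$ with $(k, \alpha)$ and $(k, \beta)$ admissible, and every $u_0' \in \mathtt{C}[k]$, we have $(d\varphi_{\alpha, \beta}(\cdot, u_0'))_{u_0} = 0$ for all $u_0 \in \mathtt{C}[k] = \Lambda \cap D_k$. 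The first step is to upgrade the regularity of the temporal distance functions: $\tau$ is built from the conformal actions of the generators on $\overline{\mathbb{H}^n}$, and by \cref{lem:Hyperbolicity} the series defining $\Delta_\alpha$ and all of its formal term-by-term derivatives converge geometrically on cylinders of $D$. Hence each $\Delta_\alpha$, and therefore each $\varphi_{\alpha, \beta}(\cdot, u_0')$, is real-analytic on $D_k$ and extends holomorphically to a fixed Grauert tube neighbourhood of $D_k$ in $\mathbb{C}^{n-1}$, as in the discussion following \cite[Lemma 3.1]{GLZ04}.

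The second step, which I expect to be the crux, is to promote the pointwise vanishing of $d_u\varphi_{\alpha, \beta}(\cdot, u_0')$ on the Cantor set $\mathtt{C}[k]$ to vanishing on an open neighbourhood. In \cite{Nau05} the limit set is a subset of $\mathbb{R}$ and one works with real-analytic functions of a single variable, so accumulation of zeros suffices; in the present higher-dimensional setting $\Lambda \subset \mathbb{R}^{n-1}$ is a Cantor set that could a priori be contained in a proper real-analytic subvariety of $D_k$. I would rule this out using the Zariski density of $\Gamma$: by \cite[Proposition 3.12]{Win15}, the limit points of $\Gamma$ lying in $\mathtt{C}[k]$ are not contained in any proper real-analytic subvariety of $D_k$, so a real-analytic function vanishing on $\mathtt{C}[k]$ vanishes identically near $\mathtt{C}[k]$. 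Applying this to $u_0 \mapsto (d\varphi_{\alpha, \beta}(\cdot, u_0'))_{u_0}$ (more precisely to each of its components, or to a suitable directional derivative) yields that $\varphi_{\alpha, \beta}(\cdot, u_0')$ is locally constant in the first variable, for every admissible $\alpha, \beta, k$ and every $u_0' \in \mathtt{C}[k]$.

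The final step is purely formal given \cref{pro:tauNotCohomologousToLocallyConstantFunction}. Naud's Lemma 4.3 — which carries over verbatim to the Schottky and continued fractions semigroup settings and to arbitrary $n$, since it only uses the telescoping structure of $\Delta_\alpha$ together with \cref{pro:ClosedGeodesicsBijectiveWithPeriodicOrbitsOf_T} — asserts that if every temporal distance function $\varphi_{\alpha, \beta}$ is locally constant in the first variable, then $\tau$ is cohomologous to a locally constant function $\phi \colon \Lambda \to \mathbb{R}$. This directly contradicts \cref{pro:tauNotCohomologousToLocallyConstantFunction}. Therefore LNIC holds, proving \cref{pro:varphiSatisfiesLNIC}. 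The only genuinely new work compared to \cite{Nau05} is the analyticity/propagation argument of the second step, where the control of the Grauert tube via \cref{lem:Hyperbolicity} and the non-degeneracy of the limit set via \cite[Proposition 3.12]{Win15} must be combined carefully; the reductions in the first and third steps are routine adaptations.
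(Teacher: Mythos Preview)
Your proposal is correct and follows essentially the same route as the paper's own argument: the paper also proceeds by contradiction, invoking \cite[Lemma 4.3]{Nau05} (which it notes carries over unchanged), replacing Naud's one-variable analyticity step with the Grauert tube argument from the discussion after \cite[Lemma 3.1]{GLZ04}, and using \cite[Proposition 3.12]{Win15} for the non-degeneracy of the limit set, all to reduce to \cref{pro:tauNotCohomologousToLocallyConstantFunction}. You have identified precisely the same ingredients and assembled them in the same order.
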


We now prove the non-concentration property (NCP). Recall that the semigroup $\Gamma$ preserves $\Hull(\Lambda)$, the convex hull of $\Lambda$ in $\mathbb H^n$. Let $D_j^0$ simply denote $D_j$ in the Schottky semigroup setting and the untrimmed disks corresponding to $D_j$ in the continued fractions semigroup setting. Denote by $\tilde{D}_j^0 \subset \overline{\mathbb H^n}$ the closed Euclidean half ball over $D_j^0$. Since $\Lambda \subset \bigsqcup_{j \in \mathcal{A}} \interior(D_j^0)$, so $F := \Hull(\Lambda) \setminus \bigcup_{j \in \mathcal{A}} \interior(\tilde{D}_j^0) \subset \mathbb H^n$ is compact. Thus, we obtain the following lemma.

\begin{lemma}
\label{lem:GammaCocompactActionOnHull}
The semigroup $\Gamma$ acts cocompactly on $\Hull(\Lambda)$.
\end{lemma}

\begin{proposition}[NCP]
\label{pro:NonConcentrationProperty}
There exists $\delta \in (0, 1)$ such that for all $x \in \Lambda$, cylinders $\mathtt{C} \subset \Lambda$ containing $x$, and $w \in \mathbb R^{n - 1}$ with $\|w\| = 1$, there exists $y \in \mathtt{C} \setminus B_{\diam(\mathtt{C})/4}^{\mathrm{E}}(x)$ such that $|\langle y - x, w \rangle| \geq \delta \diam(\mathtt{C})$.
\end{proposition}

\begin{proof}
Since $\mathcal{A}$ is finite, it suffices to prove the proposition with $\Lambda$ replaced by $\mathtt{C}[k]$ for any $k \in \mathcal{A}$. By way of contradiction, suppose the proposition is false. Then for all $j \in \mathbb N$, taking $\delta_j = \frac{1}{j}$, there exist $x_j \in \mathtt{C}[k]$, cylinder $\mathtt{C}_j \subset \mathtt{C}[k]$ containing $x_j$, and $w_j \in \mathbb R^{n - 1}$ with $\|w_j\| = 1$ such that $|\langle y - x_j, w_j \rangle| \leq \delta_j \diam(\mathtt{C}_j) = \frac{\diam(\mathtt{C}_j)}{j}$ for all $y \in \mathtt{C}_j \setminus B_{\diam(\mathtt{C}_j)/4}^{\mathrm{E}}(x_j)$. Hence, we can rewrite this as
\begin{align}
\label{eqn:IfLemmaIsFalse}
\mathtt{C}_j \setminus B_{\diam(\mathtt{C}_j)/4}^{\mathrm{E}}(x_j) \subset \left\{y \in \mathbb R^{n - 1}: |\langle y - x_j, w_j \rangle| \leq \frac{\diam(\mathtt{C}_j)}{j}\right\} \qquad \text{for all $j \in \mathbb N$}.
\end{align}

We want to use the self-similarity property of the fractal set $\Lambda$. Recall the subgroups $A = \{a_t: t \in \mathbb R\} < G$ and $N^- = \{n_x^-: x \in \mathbb R^{n - 1}\} < G$ whose elements act on $\overline{\mathbb H^n}$ by dilation by $e^t$ and by translation by $x$, respectively. Recall that we may view $G$ as the oriented orthonormal frame bundle. Note that $\infty$ and $x$ are the forward and backward endpoints of the frame $n_x^-$, respectively. Let $N^+ < G$ be the opposite horospherical subgroup. Fix some $x_0 \in \Lambda \setminus \mathtt{C}[k]$. For all $j \in \mathbb N$, there exists $n_j^+ \in N^+$ such that $x_0$ is the forward endpoint of the frame $n_{x_j}^-n_j^+$ and hence there exists $t_0 \in \mathbb R$ such that $n_{x_j}^-n_j^+a_{t_0}o \in F$. For all $j \in \mathbb N$, take $\gamma_j \in \Gamma$ to be the word which corresponds to $\mathtt{C}_j$, i.e., $\gamma_j^{-1}\mathtt{C}_j = \mathtt{C}[k]$ and $t_j > t_0$ such that $n_{x_j}^-n_j^+a_{-t_j}o \in \gamma_j F$. Compactness of $\mathtt{C}[k]$ implies $\{a_{t_j}n_j^+a_{-t_j}\}_{j \in \mathbb N} \subset G$ is bounded and so together with \cref{lem:GammaCocompactActionOnHull}, there exists some fixed compact set $\Omega \subset G$ such that $n_{x_j}^-a_{-t_j} = \gamma_j h_j$ with $h_j \in \Omega$ for all $j \in \mathbb N$. Thus, applying $\gamma_j^{-1} = h_ja_{t_j}n_{-x_j}^-$ in \cref{eqn:IfLemmaIsFalse} gives
\begin{align}
\mathtt{C}[k] \setminus h_jB_{t_j\diam(\mathtt{C}_j)/4}^{\mathrm{E}}(0) \subset h_j\left\{y \in \mathbb R^{n - 1}: |\langle y, w_j \rangle| \leq \frac{t_j \diam(\mathtt{C}_j)}{j}\right\}
\end{align}
for all $j \in \mathbb N$. Of course $t_j \diam(\mathtt{C}_j)/4 < \diam(\mathtt{C}[k])$ for all $j \in \mathbb N$. Now, using \cref{lem:CylinderDiameterBound}, fix $p_1 \in \mathbb N$ such that $\rho^{p_1} \leq \frac{1}{4}$ and a subcylinder $\mathtt{C}_j' \subset \mathtt{C}_j \cap B_{\diam(\mathtt{C}_j)/4}^{\mathrm{E}}(x_j)$ containing $x_j$ of length $\len(\mathtt{C}_j') = \len(\mathtt{C}_j) + p_0p_1$ for all $j \in \mathbb N$. Then the subcylinder $\gamma_j^{-1}\mathtt{C}_j' \subset \mathtt{C}[k] \cap h_jB_{t_j\diam(\mathtt{C}_j)/4}^{\mathrm{E}}(0)$ is of length $\len(\gamma_j^{-1}\mathtt{C}_j') = p_0p_1$ and so \cref{lem:CylinderDiameterBound} again gives the lower bound $t_j\diam(\mathtt{C}_j)/2 \geq \diam(\gamma_j^{-1}\mathtt{C}_j') \geq \rho^{p_0p_1}\diam(\mathtt{C}[k]) =: 2R$ for all $j \in \mathbb N$. By compactness, we can pass to subsequences such that $\lim_{j \to \infty} w_j = w \in \mathbb R^{n - 1}$ with $\|w\| = 1$ and $\lim_{j \to \infty} h_j = h \in \Omega$. So in the limit $j \to \infty$, we have $\mathtt{C}[k] \setminus hB_R^{\mathrm{E}}(0) \subset h\left\{y \in \mathbb R^{n - 1}: \langle y, w \rangle = 0\right\}$ which contradicts \cite[Proposition 3.12]{Win15} since $\Gamma < G$ is Zariski dense.
\end{proof}

Fix $\delta_1$ to be the $\delta$ provided by \cref{pro:NonConcentrationProperty} henceforth.

The following crucial corollary of \cref{pro:varphiSatisfiesLNIC} is derived similar to \cite[Proposition 5.5]{Nau05}.

\begin{corollary}
\label{cor:LNIC_Corollary}
There exist $\delta > 0$, $C > 0$, $\epsilon > 0$, and $m_0 \in \mathbb N$ such that for all $m > m_0$, there exist distinct sections $v_1, v_2: D \to D$ of $T^m$, i.e., $T^m \circ v_j = \Id_D$ for all $j \in \{1, 2\}$, such that
\begin{enumerate}
\item $\delta \leq \|\nabla(\tau_m \circ v_1 - \tau_m \circ v_2)(u)\| \leq C$ for all $u \in D$;
\item for all $u \in D$ and $u' \in D \cap B_\epsilon^{\mathrm{E}}(u)$, if $\left|\left\langle \frac{\nabla(\tau_m \circ v_1 - \tau_m \circ v_2)(u)}{\|\nabla(\tau_m \circ v_1 - \tau_m \circ v_2)(u)\|}, \omega \right\rangle\right| \geq \frac{\delta_1}{2}$ for some $\omega \in \mathbb R^{n - 1}$ with $\|\omega\| = 1$, then $\left|\left\langle \frac{\nabla(\tau_m \circ v_1 - \tau_m \circ v_2)(u')}{\|\nabla(\tau_m \circ v_1 - \tau_m \circ v_2)(u')\|}, \omega \right\rangle\right| \geq \frac{\delta_1}{2}$.
\end{enumerate}
\end{corollary}

Fix $\delta_1'$, $\epsilon_0$, and $m_0$ to be the $\delta$, $\epsilon$, and $m_0$ provided by \cref{cor:LNIC_Corollary}  and $\delta_0 = \delta_1\delta_1'$ henceforth.

\section{Construction of Dolgopyat operators}
\label{sec:ConstructionOfDolgopyatOperators}
The goal of this section is to construct the required Dolgopyat operators. We will start by recording some lemmas.

The following lemma (see \cite[Proposition 3.6]{OW16}) can be derived using the bounds from \cref{lem:Hyperbolicity} as in \cite[Proposition 3.3]{Sto11}.

\begin{lemma}
\label{lem:CylinderDiameterBound}
There exist $p_0 \in \mathbb N$, $\rho \in (0, 1)$ such that for all $l \in \mathbb Z_{\geq 0}$, for all cylinders $\mathtt{C} \subset \Lambda$ with $\len(\mathtt{C}) = l$, for all subcylinders $\mathtt{C}', \mathtt{C}'' \subset \mathtt{C}$ with $\len(\mathtt{C}') = l + 1$ and $\len(\mathtt{C}'') = l + p_0$ respectively, we have $\diam(\mathtt{C}'') \leq \rho\diam(\mathtt{C}) \leq \diam(\mathtt{C}')$.
\end{lemma}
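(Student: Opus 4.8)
The plan is to follow the argument of \cite[Proposition 3.3]{Sto11} (see also \cite[Proposition 3.6]{OW16}), reducing the statement to the two-sided bounds of \cref{lem:Hyperbolicity} together with conformality of $T$ and a bounded distortion estimate. The key intermediate fact I would establish first is a uniform geometric comparison: there is a constant $C_1 \geq 1$, depending only on $c_0, \kappa_1, \kappa_2, \hat D, \check D$ and $\Lip(\tau)$, such that for every cylinder $\mathtt{C} \subset \Lambda$ with $\len(\mathtt{C}) = k$ and every $x \in \mathtt{C}$,
\begin{align*}
C_1^{-1} \bigl\|(dT^k)_x\bigr\|_{\mathrm{op}}^{-1} \leq \diam(\mathtt{C}) \leq C_1 \bigl\|(dT^k)_x\bigr\|_{\mathrm{op}}^{-1}.
\end{align*}
Writing $\mathtt{C} = \mathtt{C}[\alpha]$ with $\alpha = (\alpha_0, \dotsc, \alpha_k)$, the Markov property implies $T^k$ restricts to a homeomorphism of $\mathtt{C}_D[\alpha]$ onto $D_{\alpha_k}$; let $v$ denote the inverse branch. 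The upper bound is the mean value inequality applied to $v$ together with $\diam(D_{\alpha_k}) \leq \hat D$. For the lower bound I would use that $T$, hence $T^k$, acts conformally on $\partial_\infty \mathbb H^n$ — as a composition of Möbius transformations in the Schottky setting and as a holomorphic map on $\hat{\mathbb C}$ in the continued fractions setting — so that $(dT^k)_u$ is a scalar multiple of an orthogonal transformation, and bounded distortion, namely $\bigl| \log\|(dT^k)_u\|_{\mathrm{op}} - \log\|(dT^k)_{u'}\|_{\mathrm{op}} \bigr| \leq C_2$ for $u, u'$ in the same length-$k$ cylinder, follows from $\Lip(\tau) < \infty$ and the geometric contraction in \cref{lem:Hyperbolicity} exactly as in the estimate \cref{eqn:f^(a)LipschitzBounds}. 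Since $\mathtt{C} = v(\Lambda \cap D_{\alpha_k})$ and $\diam(\Lambda \cap D_{\alpha_k}) \geq \varsigma$ for some uniform $\varsigma > 0$ (there are finitely many symbols and $\Lambda$ meets each $D_j$ in an infinite Cantor-like set), conformality prevents $v$ from distorting this set into a thin sliver and the lower bound follows.

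Next I would exploit multiplicativity. Fixing $x \in \mathtt{C}$, $x' \in \mathtt{C}'$, $x'' \in \mathtt{C}''$ and using that the inverse branch of $T^{l + p_0}$ producing $\mathtt{C}''$ factors through the inverse branch of $T^l$ producing $\mathtt{C}$, the chain rule and conformality give
\begin{align*}
\bigl\|(dT^{l + p_0})_{x''}\bigr\|_{\mathrm{op}} = \bigl\|(dT^{p_0})_{T^l x''}\bigr\|_{\mathrm{op}} \cdot \bigl\|(dT^l)_{x''}\bigr\|_{\mathrm{op}},
\end{align*}
and bounded distortion makes $\|(dT^l)_{x''}\|_{\mathrm{op}}$ comparable to $\|(dT^l)_x\|_{\mathrm{op}}$. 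Combining this with the displayed comparison and the lower bound $\|(dT^{p_0})_{T^l x''}\|_{\mathrm{op}} \geq c_0 \kappa_2^{p_0}$ from \cref{lem:Hyperbolicity} yields $\diam(\mathtt{C}'') \leq C_3 c_0^{-1} \kappa_2^{-p_0}\, \diam(\mathtt{C})$ for a constant $C_3$ independent of everything. The same reasoning applied to $\mathtt{C}'$, this time using $\|(dT)_{T^l x'}\|_{\mathrm{op}} \leq c_0^{-1}\kappa_1$, yields $\diam(\mathtt{C}') \geq C_3^{-1} c_0 \kappa_1^{-1}\, \diam(\mathtt{C})$.

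It then suffices to choose $p_0 \in \mathbb N$ large enough that $C_3 c_0^{-1}\kappa_2^{-p_0} < 1$ and $C_3 c_0^{-1}\kappa_2^{-p_0} \leq C_3^{-1} c_0 \kappa_1^{-1}$, which is possible since $\kappa_2 > 1$, and then set $\rho = C_3 c_0^{-1}\kappa_2^{-p_0} \in (0, 1)$; both inequalities of the lemma follow from the two displayed bounds. The main obstacle is the uniform comparison of $\diam(\mathtt{C})$ with $\|(dT^{\len(\mathtt{C})})_x\|_{\mathrm{op}}^{-1}$: one must verify bounded distortion simultaneously in the two settings and, crucially, that conformality of $T$ makes the operator norm a faithful measure of the local scaling in every direction, so that cylinders are not distorted — this is exactly where the structure of $\SO(n, 1)$ acting by Möbius transformations, respectively $\SL_2(\mathbb C)$ acting on $\hat{\mathbb C}$, enters. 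Everything after that comparison is the chain rule and the choice of $p_0$.
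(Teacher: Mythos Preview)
Your proposal is correct and follows essentially the same approach as the paper: the paper does not give an explicit proof but states that the lemma ``can be derived using the bounds from \cref{lem:Hyperbolicity} as in \cite[Proposition 3.3]{Sto11}'' (see also \cite[Proposition 3.6]{OW16}), which is exactly the route you take. Your write-up in fact supplies the details the paper omits---the bounded distortion estimate via conformality, the diameter--derivative comparison, and the chain rule argument---so nothing further is needed.
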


Fix $p_0$ and $\rho$ provided by \cref{lem:CylinderDiameterBound} henceforth. Fix $p_1 \in \mathbb N$ such that
\begin{align}
\label{eqn:Constantp1}
\rho^{p_1 - 1} \leq \frac{\delta_0c_0(\kappa_2 - 1)}{64T_0}.
\end{align}

Now we state a Lasota--Yorke \cite{LY73} type of lemma which will be useful. It is proved similar to \cite[Lemma 3.9]{OW16} and \cite[Lemma 7.3]{SW21}. Note that from those proofs, it is clear that we can take any $A_0 > 2c_0^{-1}e^{\frac{T_0}{c_0(\kappa_2 - 1)}}\max\bigl(1, \frac{T_0}{\kappa_2 - 1}\bigr)$.

\begin{lemma}
\label{lem:PreliminaryLogLipschitz}
There exists $A_0 > 0$ such that for all $\xi \in \mathbb C$ with $|a| < a_0'$ and $|b| > 1$, for all nonzero $q \in \mathcal{O}$, for all $k \in \mathbb N$, we have
\begin{enumerate}
\item\label{itm:PreliminaryLogLipschitzProperty1}	if $h \in \mathcal{C}_B(\Lambda)$ (resp. $h \in \tilde{\mathcal{C}}_B(\Lambda)$) for some $B > 0$, then $L_a^k(h) \in \mathcal{C}_{B'}(\Lambda)$ (resp. $L_a^k(h) \in \tilde{\mathcal{C}}_{B'}(\Lambda)$) for $B' = A_0\left(\frac{B}{\kappa_2^k} + 1\right)$;
\item\label{itm:PreliminaryLogLipschitzProperty2}	if $H \in C(\Lambda, L^2(\tilde{\mathbf{G}}_q))$ and $h \in B(\Lambda, \mathbb R)$ satisfy
\begin{align*}
\|H(u) - H(u')\|_2 \leq Bh(u)d(u, u')
\end{align*}
for all $u, u' \in \Lambda$, for some $B > 0$, then
\begin{align*}
\left\|M_{\xi, q}^k(H)(u) -M_{\xi, q}^k(H)(u')\right\|_2 \leq A_0\left(\frac{B}{\kappa_2^k}L_a^k(h)(u) + |b|L_a^k\|H\|(u)\right)d(u, u')
\end{align*}
for all $u, u' \in \Lambda$.
\end{enumerate}
\end{lemma}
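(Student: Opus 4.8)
The plan is to prove \cref{lem:PreliminaryLogLipschitz} by reducing to the case $k = 1$ and then iterating, following the template of the Lasota--Yorke argument used in \cite[Lemma 3.9]{OW16} and \cite[Lemma 7.3]{SW21}. The single common tool behind all parts is the Birkhoff-sum Lipschitz estimate that gave \cref{eqn:f^(a)LipschitzBounds}: for any section $v$ of $T^k$ (equivalently, a branch of the inverse) and $u, u'$ in a common cylinder $D_j$, one has $|\tau_k(v(u)) - \tau_k(v(u'))| \leq \frac{T_0}{c_0(\kappa_2 - 1)} d(u, u')$ and likewise $|f_k^{(a)}(v(u)) - f_k^{(a)}(v(u'))| \leq \frac{T_0}{c_0(\kappa_2 - 1)} d(u, u')$, where I use that $d(u, u')$ dominates the true diameter of the relevant cylinder and that $T_0$ bounds the Lipschitz constants of $\tau$ and $f^{(a)}$. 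I would also record the consequence $e^{\pm \frac{T_0}{c_0(\kappa_2 - 1)} d(u,u')} \leq e^{\frac{T_0}{c_0(\kappa_2-1)}}$ since $d \leq 1$, which is where the constant $A_0 > 2c_0^{-1} e^{T_0/(c_0(\kappa_2-1))}\max(1, T_0/(\kappa_2-1))$ comes from.

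For \cref{itm:PreliminaryLogLipschitzProperty1}, write $L_a^k(h)(u) = \sum_{u' \in T^{-k}(u)} e^{f_k^{(a)}(u')} h(u')$ and compare $L_a^k(h)(u)$ with $L_a^k(h)(u')$ term-by-term over the matching branches (valid by the Markov property since $u, u'$ lie in a common $D_j$). For each branch, split the ratio of summands into the $h$-factor, controlled by the hypothesis $h \in \mathcal{C}_B(\Lambda)$ (or $\tilde{\mathcal{C}}_B$) applied at the preimages, where the contraction \cref{lem:Hyperbolicity} converts $d$ at the preimages into $\kappa_2^{-k} d(u,u')$ up to a bounded constant, plus the $e^{f_k^{(a)}}$-factor, controlled by the Birkhoff estimate above. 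Summing and using $\sum e^{f_k^{(a)}(u')} h(u') = L_a^k(h)(u)$ in the denominator yields a bound of the shape $L_a^k(h)(u') / L_a^k(h)(u) \leq 1 + A_0(B\kappa_2^{-k} + 1) d(u,u')$; the $+1$ absorbs the distortion coming from the potential. This gives membership in $\mathcal{C}_{B'}(\Lambda)$ with $B' = A_0(B\kappa_2^{-k} + 1)$; the $\tilde{\mathcal{C}}_B$ version is the same computation phrased multiplicatively (and is the easier one, since one never needs to pull the constant out of a logarithm).

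For \cref{itm:PreliminaryLogLipschitzProperty2}, expand $M_{\xi,q}^k(H)(u) - M_{\xi,q}^k(H)(u') = \sum_{\text{branches}} \bigl[e^{(f_k^{(a)} - ib\tau_k)(v(u))} \mathtt{c}_q^k(v(u)) H(v(u)) - e^{(f_k^{(a)} - ib\tau_k)(v(u'))} \mathtt{c}_q^k(v(u')) H(v(u'))\bigr]$ and, within each branch, insert/subtract the intermediate term $e^{(f_k^{(a)} - ib\tau_k)(v(u))} \mathtt{c}_q^k(v(u)) H(v(u'))$ — note $\mathtt{c}_q^k$ is constant on a cylinder of length $k$, so $\mathtt{c}_q^k(v(u)) = \mathtt{c}_q^k(v(u'))$ and has operator norm $1$ on $L^2(\tilde{\mathbf G}_q)$. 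The first piece is $e^{f_k^{(a)}(v(u))} \|H(v(u)) - H(v(u'))\|_2 \leq e^{f_k^{(a)}(v(u))} B h(v(u)) d(v(u), v(u')) \leq B \kappa_2^{-k}$ (times a bounded constant) $e^{f_k^{(a)}(v(u))} h(v(u)) d(u,u')$, which sums to $A_0 B \kappa_2^{-k} L_a^k(h)(u) d(u,u')$. The second piece is $\bigl|e^{(f_k^{(a)} - ib\tau_k)(v(u))} - e^{(f_k^{(a)} - ib\tau_k)(v(u'))}\bigr| \cdot \|H(v(u'))\|_2$; factoring out $e^{f_k^{(a)}(v(u))}$ and using $|e^{z} - e^{z'}| \leq |z - z'| e^{\max(\Re z, \Re z')}$ together with $|b| \geq 1$ shows this is bounded by $\text{const} \cdot |b| \, e^{f_k^{(a)}(v(u))} \|H(v(u'))\|_2 \, d(u,u')$ (the $f_k^{(a)}$-part of $|z - z'|$ is lower order and $d$-Lipschitz, the $b\tau_k$-part contributes the factor $|b|$ via the Birkhoff estimate for $\tau_k$, and replacing $\|H(v(u'))\|_2$ by $\|H\|$ evaluated at the branch of $u$ costs only another bounded multiplicative constant absorbed into $A_0$). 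Summing over branches gives $A_0 |b| L_a^k(\|H\|)(u) d(u,u')$, and adding the two contributions produces exactly the claimed inequality.

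The main obstacle is purely bookkeeping: making sure that the single constant $A_0$, chosen as in the stated threshold, simultaneously dominates (i) the distortion $e^{T_0/(c_0(\kappa_2-1))}$ coming from comparing $e^{f_k^{(a)}}$ at nearby branch points, (ii) the geometric-series factor $\kappa_2/(\kappa_2 - 1)$ from summing $\sum_k c_0 \kappa_2^{-k}$ in \cref{lem:Hyperbolicity}, and (iii) the $\max(1, T_0/(\kappa_2-1))$ needed because the $b\tau_k$ term contributes $T_0$ per unit of contracted distance while the $h$-term contributes $1$. There is no genuine analytic difficulty — everything reduces to the hyperbolicity estimate \cref{lem:Hyperbolicity}, the Markov property, the unitarity of the congruence cocycle on $L^2(\tilde{\mathbf G}_q)$, and the elementary inequality $|e^z - e^{z'}| \leq |z-z'| e^{\max(\Re z, \Re z')}$ — so I would keep the write-up short and simply cite \cite[Lemma 3.9]{OW16} and \cite[Lemma 7.3]{SW21} for the details, remarking only on the modifications needed to carry the $L^2(\tilde{\mathbf G}_q)$-valued $H$ and the congruence cocycle $\mathtt{c}_q^k$, which change nothing since $\|\mathtt{c}_q^k(u')\|_{\mathrm{op}} = 1$.
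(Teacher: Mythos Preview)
Your proposal is correct and follows exactly the approach the paper takes: the paper does not give an explicit proof but simply cites \cite[Lemma 3.9]{OW16} and \cite[Lemma 7.3]{SW21}, noting that the constant can be taken as any $A_0 > 2c_0^{-1}e^{T_0/(c_0(\kappa_2-1))}\max\bigl(1, \tfrac{T_0}{\kappa_2-1}\bigr)$, and your outline is precisely the standard Lasota--Yorke argument carried out in those references, with the correct bookkeeping for that constant.
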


Fix $A_0$ provided by \cref{lem:PreliminaryLogLipschitz}. Fix positive constants
\begin{align}
\label{eqn:Constantb0}
b_0 &= 1; \\
\label{eqn:ConstantE}
E &> \max(1, 2A_0); \\
\label{eqn:Constantepsilon1}
\epsilon_1 &< \min\left(\check{D}, \frac{\epsilon_0}{2}, \frac{\pi c_0(\kappa_2 - 1)}{2T_0}\right); \\
\label{eqn:Constantm}
m &> m_0 \text{ such that } \kappa_2^m > \max\left(8A_0, \frac{4E\rho^{p_1}\epsilon_1}{c_0}, \frac{4 \cdot 128E}{c_0 \delta_0 \rho}\right); \\
\label{eqn:Constantmu}
\mu &< \min\left(\frac{2E\epsilon_1 c_0 \rho^{p_0p_1 + 1}}{\kappa_1^m}, \frac{1}{4}, \frac{1}{16 \cdot 16e^{2mT_0}}\left(\frac{\delta_0\rho\epsilon_1}{64}\right)^2\right).
\end{align}
Having fixed $m$, we can now fix $v_1$ and $v_2$ to be the corresponding distinct sections of $T^m$ provided by \cref{cor:LNIC_Corollary}.

For all $|b| > b_0$, we define $\{\mathtt{C}_1(b), \mathtt{C}_2(b), \dotsc, \mathtt{C}_{c_b}(b)\}$ for some $c_b \in \mathbb N$ to be the set of maximal cylinders $\mathtt{C} \subset \Lambda$ with $\diam(\mathtt{C}) \leq \frac{\epsilon_1}{|b|}$ so that $\Lambda = \bigcup_{l = 1}^{c_b} \mathtt{C}_l(b)$. By \cref{lem:Hyperbolicity} and \cref{eqn:Constantepsilon1}, these cylinders are of length at least $1$. Also, from definitions, the maps $\mathtt{c}^m \circ v_j|_{\mathtt{C}_l(b)}: \mathtt{C}_l(b) \to \tilde{\Gamma}$ are constant for all $|b| > b_0$, $1 \leq l \leq c_b$, and $j \in \{1, 2\}$.

Let $|b| > b_0$. Define $\mathtt{c}_{l, j}(b)$ to be the constant image of $\mathtt{c}^m \circ v_j|_{\mathtt{C}_l(b)}$, for all $1 \leq l \leq c_b$ and $j \in \{1, 2\}$. We define $\{\mathtt{D}_1(b), \mathtt{D}_2(b), \dotsc, \mathtt{D}_{d_b}(b)\}$ for some $d_b \in \mathbb N$ to be the set of subcylinders $\mathtt{D} \subset \mathtt{C}_l(b)$ with $\len(\mathtt{D}) = \len(\mathtt{C}_l(b)) + p_0p_1$ for some integer $1 \leq l \leq c_b$. We say $\mathtt{D}_k(b)$ and $\mathtt{D}_{k'}(b)$ are \emph{adjacent} if $\mathtt{D}_k(b), \mathtt{D}_{k'}(b) \subset \mathtt{C}_l(b)$ for some integer $1 \leq l \leq c_b$. We define $\Xi(b) = \{1, 2\} \times \{1, 2, \dotsc, d_b\}$ and $\mathtt{X}_{j, k}(b) = v_j(\mathtt{D}_k(b))$ for all $(j, k) \in \Xi(b)$. Note that $\mathtt{X}_{j, k}(b) \cap \mathtt{X}_{j', k'}(b) = \varnothing$ since $v_1$ and $v_2$ are distinct sections of $T^m$, for all $(j, k), (j', k') \in \Xi(b)$ with $(j, k) \neq (j', k')$. For all $J \subset \Xi(b)$, we define the function $\beta_J = \chi_{U} - \mu \sum_{(j, k) \in J} \chi_{\mathtt{X}_{j, k}(b)}$, and it can be checked that in fact $\beta_J \in L_d(\Lambda, \mathbb R)$.

Let $|b| > b_0$. Here we record a number of basic facts derived from \cref{lem:Hyperbolicity,lem:CylinderDiameterBound}. For all integers $1 \leq l \leq c_b$ and $(j, k) \in \Xi(b)$, we have the diameter bounds
\begin{gather}
\label{eqn:DiameterBoundC_l}
\rho\frac{\epsilon_1}{|b|} \leq \diam(\mathtt{C}_l(b)) \leq \frac{\epsilon_1}{|b|}; \\
\label{eqn:DiameterBoundD_k}
\rho^{p_0p_1 + 1}\frac{\epsilon_1}{|b|} \leq \diam(\mathtt{D}_k(b)) \leq \rho^{p_1}\frac{\epsilon_1}{|b|}; \\
\label{eqn:DiameterBoundX_jk}
\frac{\epsilon_1c_0\rho^{p_0p_1 + 1}}{|b|\kappa_1^m} \leq \diam(\mathtt{X}_{j, k}^\ell(b)) \leq \frac{\epsilon_1\rho^{p_1}}{|b|c_0\kappa_2^m}.
\end{gather}
For all $J \subset \Xi(b)$, we have $\beta_J \in L_d(\Lambda, \mathbb R)$ with $d$-Lipschitz constant
\begin{align}
\label{eqn:LipschitzConstantbeta_J}
\Lip_d(\beta_J) \leq \frac{\mu}{\min_{(j, k) \in J} \diam(\mathtt{X}_{j, k}(b))} \leq \frac{\mu |b|\kappa_1^m}{\epsilon_1c_0\rho^{p_0p_1 + 1}}.
\end{align}

\begin{definition}
For all $\xi \in \mathbb C$ with $|a| < a_0'$ and $|b| > b_0$, and $J \subset \Xi(b)$, we define the \emph{Dolgopyat operators} $\mathcal{N}_{a, J}: L_d(\Lambda, \mathbb R) \to L_d(\Lambda, \mathbb R)$ by
\begin{align*}
\mathcal{N}_{a, J}(h) = L_{a}^m(\beta_J h) \qquad \text{for all $h \in L_d(\Lambda, \mathbb R)$}.
\end{align*}
\end{definition}

\begin{definition}
For all $|b| > b_0$, a subset $J \subset \Xi(b)$ is said to be \emph{dense} if for all integers $1 \leq l \leq c_b$, there exists $(j, k) \in J$ such that $\mathtt{D}_k(b) \subset \mathtt{C}_l(b)$.
\end{definition}

For all $|b| > b_0$,  define $\mathcal{J}(b) = \{J \subset \Xi(b): J \text{ is dense}\}$.

\section{Proof of \texorpdfstring{\cref{thm:Dolgopyat}}{\autoref{thm:Dolgopyat}}}
\label{sec:ProofOfDolgopyat'sMethod}
In this section, we will prove \cref{thm:Dolgopyat} by showing each of its properties.

Firstly, we omit the proofs of \cref{itm:DolgopyatProperty1,itm:LogLipschitzDolgopyat} in \cref{thm:Dolgopyat} as they are very similar to \cite[Lemma 3.15]{OW16} and \cite[Lemmas 9.1 and 9.2]{SW21} (which originally appear in \cite[Proposition 6 and Lemma 11]{Dol98}). The proofs use \cref{lem:PreliminaryLogLipschitz} and the choice of constants \cref{eqn:ConstantE,eqn:Constantm,eqn:Constantmu}.

\subsection{Proof of \texorpdfstring{\cref{itm:DolgopyatProperty2}}{Property \ref{itm:DolgopyatProperty2}} in \texorpdfstring{\cref{thm:Dolgopyat}}{\autoref{thm:Dolgopyat}}}
We first recall a definition from \cite{Sto11} which will be useful.

\begin{definition}
\label{def:tSDenseSubset}
We say that a subset  $W \subset \Lambda$ is \emph{$(t, C)$-dense} for some $t > 0$ and $C \geq 1$, if there exists a set of mutually disjoint cylinders $\{\mathtt{B}_1, \mathtt{B}_2, \dotsc, \mathtt{B}_k\}$ for some $k \in \mathbb N$ with $\bigsqcup_{j = 1}^k \mathtt{B}_j = \Lambda$ such that for all integers $1 \leq j \leq k$, we have
\begin{enumerate}
\item	$\diam(\mathtt{B}_j) \leq tC$
\item	there exists a subcylinder $\mathtt{B}_j' \subset W \cap \mathtt{B}_j$ with $\diam(\mathtt{B}_j') \geq t$.
\end{enumerate}
\end{definition}

\begin{lemma}
\label{lem:WDenseInequality}
Let $B > 0$ and $C \geq 1$. There exists $\eta \in (0, 1)$ such that for all $t > 0$, $(t, C)$-dense subsets $W \subset \Lambda$, and $h \in \tilde{\mathcal{C}}_{B/t}(\Lambda)$, we have $\int_W h \, d\nu_\Lambda \geq \eta \int_\Lambda h \, d\nu_\Lambda$.
\end{lemma}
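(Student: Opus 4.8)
The plan is to reduce the global integral inequality to a local, cylinder-by-cylinder comparison and then exploit two facts: the Gibbs property \eqref{eqn:PropertyOfGibbsMeasures} of $\nu_\Lambda$, which gives that $\nu_\Lambda$ assigns comparable mass to a cylinder and to any of its subcylinders of bounded relative depth, and the log-Lipschitz control coming from $h \in \tilde{\mathcal{C}}_{B/t}(\Lambda)$, which keeps $h$ from oscillating too much on a set of $d$-diameter $\lesssim t$. Fix a set of mutually disjoint cylinders $\{\mathtt{B}_1,\dotsc,\mathtt{B}_k\}$ with $\bigsqcup_j \mathtt{B}_j = \Lambda$ witnessing $(t,C)$-density, and for each $j$ fix the guaranteed subcylinder $\mathtt{B}_j' \subset W \cap \mathtt{B}_j$ with $\diam(\mathtt{B}_j') \geq t$. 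It then suffices to show that there is $\eta \in (0,1)$, independent of $t$, $W$, and $h$, with
\begin{align*}
\int_{\mathtt{B}_j'} h \, d\nu_\Lambda \geq \eta \int_{\mathtt{B}_j} h \, d\nu_\Lambda \qquad \text{for all } 1 \leq j \leq k,
\end{align*}
since summing over $j$ and using $\bigsqcup_j \mathtt{B}_j' \subset W$ gives the claim.

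First I would control the oscillation of $h$. Since $\diam(\mathtt{B}_j) \leq tC$ and $h \in \tilde{\mathcal{C}}_{B/t}(\Lambda)$, for any $u, u' \in \mathtt{B}_j$ we have $d(u,u') \leq \diam(\mathtt{B}_j) \leq tC$, hence $h(u)/h(u') \leq e^{(B/t) d(u,u')} \leq e^{BC}$. Thus on each $\mathtt{B}_j$ the function $h$ varies by at most the multiplicative factor $e^{BC}$, so
\begin{align*}
\int_{\mathtt{B}_j'} h \, d\nu_\Lambda \geq e^{-BC} \Bigl(\inf_{\mathtt{B}_j} h\Bigr) \nu_\Lambda(\mathtt{B}_j') \cdot e^{BC} \cdot e^{-BC}, \qquad \int_{\mathtt{B}_j} h \, d\nu_\Lambda \leq e^{BC} \Bigl(\inf_{\mathtt{B}_j} h\Bigr) \nu_\Lambda(\mathtt{B}_j),
\end{align*}
so that $\int_{\mathtt{B}_j'} h \, d\nu_\Lambda \big/ \int_{\mathtt{B}_j} h \, d\nu_\Lambda \geq e^{-2BC} \, \nu_\Lambda(\mathtt{B}_j')/\nu_\Lambda(\mathtt{B}_j)$. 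It remains to bound $\nu_\Lambda(\mathtt{B}_j')/\nu_\Lambda(\mathtt{B}_j)$ below by a constant depending only on $B$ and $C$ (really only on $C$ and the hyperbolicity constants). This is where \cref{lem:Hyperbolicity}, \cref{lem:CylinderDiameterBound}, and the Gibbs property enter: because $\diam(\mathtt{B}_j) \leq tC$ and $\diam(\mathtt{B}_j') \geq t$, the difference $\len(\mathtt{B}_j') - \len(\mathtt{B}_j)$ is bounded above by a constant $p_* = p_*(C)$ — indeed each extra symbol shrinks the diameter by at least a factor $\kappa_1^{-1}$ (up to the constant $c_0$), so after more than $\log_{\kappa_1}(C c_0^{-2})$ extra symbols a subcylinder of $\mathtt{B}_j$ would already have diameter less than $t$. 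Combining this with the Gibbs bounds \eqref{eqn:PropertyOfGibbsMeasures}, $\nu_\Lambda(\mathtt{B}_j') \asymp e^{-\delta_\Gamma \tau_{\len(\mathtt{B}_j')}(x')}$ and $\nu_\Lambda(\mathtt{B}_j) \asymp e^{-\delta_\Gamma \tau_{\len(\mathtt{B}_j)}(x)}$ for points $x' \in \mathtt{B}_j' \subset \mathtt{B}_j$, $x \in \mathtt{B}_j$, and the fact that $\tau_{\len(\mathtt{B}_j')} - \tau_{\len(\mathtt{B}_j)}$ is a Birkhoff sum of at most $p_*$ terms each bounded by $T_0$, we get $\nu_\Lambda(\mathtt{B}_j')/\nu_\Lambda(\mathtt{B}_j) \geq c$ for a constant $c = c(C) > 0$ (after also using bounded distortion of $\tau_k$ along a cylinder, again via \cref{lem:Hyperbolicity}).

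Putting this together, $\eta := c(C) \, e^{-2BC} \in (0,1)$ works. The main obstacle is the uniformity of the bound $\len(\mathtt{B}_j') - \len(\mathtt{B}_j) \leq p_*(C)$ and the accompanying control on $\tau_{\len(\mathtt{B}_j')} - \tau_{\len(\mathtt{B}_j)}$: one must check carefully that the diameter-to-length dictionary supplied by \cref{lem:Hyperbolicity} (and the triadic-type shrinking in \cref{lem:CylinderDiameterBound}) really does bound the number of extra symbols solely in terms of $C$ and the fixed constants $c_0, \kappa_1, \kappa_2$, and that the new distance $d$ on $\Lambda$ agrees with cylinder diameters well enough for this to go through. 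Once that bookkeeping is done, everything else is a direct application of the Gibbs inequality \eqref{eqn:PropertyOfGibbsMeasures} and the cone condition $h \in \tilde{\mathcal{C}}_{B/t}(\Lambda)$.
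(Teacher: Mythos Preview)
Your proposal is correct and follows essentially the same strategy as the paper's proof: decompose $\Lambda$ into the cylinders $\mathtt{B}_j$, use the cone condition to bound the oscillation of $h$ on each $\mathtt{B}_j$ by $e^{BC}$, and then use the Gibbs property together with a uniform bound on $\len(\mathtt{B}_j') - \len(\mathtt{B}_j)$ (coming from the diameter constraints and \cref{lem:CylinderDiameterBound}) to get $\nu_\Lambda(\mathtt{B}_j')/\nu_\Lambda(\mathtt{B}_j) \geq c(C)$. The only cosmetic differences are that the paper obtains a single factor $e^{-BC}$ rather than your $e^{-2BC}$ (your first displayed inequality is garbled; one simply needs $\int_{\mathtt{B}_j'} h \geq (\inf_{\mathtt{B}_j} h)\,\nu_\Lambda(\mathtt{B}_j')$ and $\int_{\mathtt{B}_j} h \leq e^{BC}(\inf_{\mathtt{B}_j} h)\,\nu_\Lambda(\mathtt{B}_j)$), and the paper bounds the depth difference via $\rho$ and $p_0$ from \cref{lem:CylinderDiameterBound} rather than via $\kappa_1, c_0$ from \cref{lem:Hyperbolicity}.
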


\begin{proof}
Let $B > 0$ and $C \geq 1$. Fix $\eta = e^{-BC} \cdot \frac{c_1^\Lambda}{c_2^\Lambda}e^{-p_0 \delta_\Gamma \overline{\tau}\left(1 - \frac{\log(C)}{\log(\rho)}\right)} \in (0, 1)$. Let $t > 0$, $W \subset \Lambda$ be a $(t, C)$-dense subset, and $\{\mathtt{B}_1, \mathtt{B}_2, \dotsc, \mathtt{B}_k\}$ be the set of mutually disjoint cylinders as in \cref{def:tSDenseSubset}. Then $\bigsqcup_{j = 1}^k \mathtt{B}_j = \Lambda$, $\sum_{j = 1}^k \nu_\Lambda(\mathtt{B}_j) = 1$, $\diam(\mathtt{B}_j) \leq tC$, and there exists a subcylinder $\mathtt{B}_j' \subset W \cap \mathtt{B}_j$ with $\diam(\mathtt{B}_j') \geq t$, for all integers $1 \leq j \leq k$. Let $h \in \tilde{\mathcal{C}}_{B/t}(\Lambda)$. Consider some integers $1 \leq j \leq k$. Setting $l_j = \inf_{u \in \mathtt{B}_j} h(u)$ and $L_j = \sup_{u \in \mathtt{B}_j} h(u)$, we have $l_j \geq L_j e^{-BC}$. Write $\len(\mathtt{B}_j') = \len(\mathtt{B}_j) + r_jp_0 + s_j$ for some $r_j, s_j \in \mathbb Z_{\geq 0}$ with $s_j < p_0$. \Cref{lem:CylinderDiameterBound} gives $t \leq \diam(\mathtt{B}_j') \leq \rho^{r_j}\diam(\mathtt{B}_j) \leq \rho^{r_j}Ct$ which implies $r_j \leq -\frac{\log(C)}{\log(\rho)}$. The property of Gibbs measures in \cref{eqn:PropertyOfGibbsMeasures} gives $\frac{\nu_\Lambda(\mathtt{B}_j')}{\nu_\Lambda(\mathtt{B}_j)} \geq \frac{c_1^\Lambda}{c_2^\Lambda}e^{-(r_jp_0 + s_j)\delta_\Gamma\overline{\tau}} \geq \frac{c_1^\Lambda}{c_2^\Lambda}e^{-p_0 \delta_\Gamma \overline{\tau}\left(1 - \frac{\log(C)}{\log(\rho)}\right)}$ which implies $\nu_\Lambda(W \cap \mathtt{B}_j) \geq \nu_\Lambda(\mathtt{B}_j') \geq \frac{c_1^\Lambda}{c_2^\Lambda}e^{-p_0 \delta_\Gamma \overline{\tau}\left(1 - \frac{\log(C)}{\log(\rho)}\right)} \nu_\Lambda(\mathtt{B}_j)$. Thus, decomposing $W = \bigsqcup_{j = 1}^k W \cap \mathtt{B}_j$, we have
\begin{align*}
\int_W h(u) \, d\nu_\Lambda(u) 
&\geq \sum_{j = 1}^k l_j \nu_\Lambda(W \cap \mathtt{B}_j) \geq \sum_{j = 1}^k L_j e^{-BC} \nu_\Lambda(W \cap \mathtt{B}_j) \\
&\geq e^{-BC} \cdot \frac{c_1^\Lambda}{c_2^\Lambda}e^{-p_0 \delta_\Gamma \overline{\tau}\left(1 - \frac{\log(C)}{\log(\rho)}\right)} \sum_{j = 1}^k L_j \nu_\Lambda(\mathtt{B}_j) \\
&\geq \eta\sum_{j = 1}^k \int_{\mathtt{B}_j} h(u) \, d\nu_\Lambda(u) = \eta \int_\Lambda h(u) \, d\nu_\Lambda(u).
\end{align*}
\end{proof}

\begin{lemma}
\label{lem:DolgopyatProperty2}
There exist $a_0 > 0$ and $\eta \in (0, 1)$ such that for all $\xi \in \mathbb C$ with $|a| < a_0$ and $|b| > b_0$, $J \in \mathcal J(b)$, and $h \in \mathcal{C}_{E|b|}(\Lambda)$, we have $\|\mathcal{N}_{a, J}(h)\|_2 \leq \eta \|h\|_2$.
\end{lemma}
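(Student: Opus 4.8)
The strategy is the standard $L^2$-contraction argument for Dolgopyat operators, following \cite[Lemma 5.?]{Sto11}, adapted to our setting. The starting point is to exploit the density of $J \in \mathcal J(b)$: for each maximal cylinder $\mathtt{C}_l(b)$ there is some $(j,k) \in J$ with $\mathtt{D}_k(b) \subset \mathtt{C}_l(b)$, and hence on the set $\mathtt{X}_{j,k}(b) = v_j(\mathtt{D}_k(b))$ the weight $\beta_J$ drops to $1 - \mu$. The point is that $\mathcal{N}_{a,J}(h)(u) = L_a^m(\beta_J h)(u)$ is a sum over the $N^m$ (admissible) preimages $v(u)$ of $u$ under $T^m$ with weights $e^{f_m^{(a)}(v(u))} h(v(u))$, and whenever $v = v_j$ with $v_j(u) \in \mathtt{X}_{j,k}(b)$, that one term is reduced by the factor $(1-\mu)$. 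So pointwise $|\mathcal{N}_{a,J}(h)(u)|^2 \leq (L_a^m(\beta_J h)(u))^2$, and the gain is that $\mathcal{N}_{a,J}(h) \leq L_a^m(h) - \mu\, L_a^m(\chi_{\mathtt{X}_{j,k}(b)} h)$ on the fiber hitting $\mathtt{X}_{j,k}(b)$; integrating a squared version of this against $\nu_\Lambda$ and using $L_0^*(\nu_\Lambda) = \nu_\Lambda$ converts everything to $\int_\Lambda$-integrals.

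Concretely, I would first establish that $h \in \mathcal{C}_{E|b|}(\Lambda) \subset \tilde{\mathcal{C}}_{E|b|}(\Lambda)$ is nearly constant on cylinders of diameter $\lesssim 1/|b|$, in particular on each $\mathtt{X}_{j,k}(b)$ (whose diameter is $\leq \frac{\epsilon_1 \rho^{p_1}}{|b| c_0 \kappa_2^m}$ by \cref{eqn:DiameterBoundX_jk}), so that on such a cylinder $h$ varies by a bounded multiplicative factor; combined with the bounded distortion of $f_m^{(a)}$ from \cref{lem:Hyperbolicity} (as in \cref{eqn:f^(a)LipschitzBounds}), the "missing mass" $\mu L_a^m(\chi_{\mathtt{X}_{j,k}(b)} h)(u)$ is bounded below by a definite fraction of $L_a^m(h)(u)$ for $u$ in the projection $T^m(\mathtt{X}_{j,k}(b)) = \mathtt{D}_k(b)$. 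This is where the choice of $m$ in \cref{eqn:Constantm} and $\mu$ in \cref{eqn:Constantmu} enter. Then, defining the "bad set" $W = \bigcup_{(j,k) \in J} \mathtt{D}_k(b)$, density of $J$ says precisely that $W$ meets every $\mathtt{C}_l(b)$ in a subcylinder $\mathtt{D}_k(b)$ with $\diam(\mathtt{D}_k(b)) \geq \rho^{p_0 p_1 + 1} \frac{\epsilon_1}{|b|}$ by \cref{eqn:DiameterBoundD_k} and $\diam(\mathtt{C}_l(b)) \leq \frac{\epsilon_1}{|b|}$ by \cref{eqn:DiameterBoundC_l}, i.e., $W$ is $(t, C)$-dense with $t = \rho^{p_0 p_1 + 1}\frac{\epsilon_1}{|b|}$ and $C = \rho^{-(p_0 p_1 + 1)}$.

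With these pieces, the computation runs:
\begin{align*}
\|\mathcal{N}_{a,J}(h)\|_2^2 = \int_\Lambda (L_a^m(\beta_J h))^2 \, d\nu_\Lambda \leq \|h\|_2^2 - c \int_W \bigl(L_a^m(h)\bigr)^2 \, d\nu_\Lambda
\end{align*}
for some explicit $c > 0$ (after using Cauchy--Schwarz to bound the cross terms, $L_0^* \nu_\Lambda = \nu_\Lambda$, and the lower bound on the missing mass on $W$). Since $L_a^m(h) \in \mathcal{C}_{B'}(\Lambda) \subset \tilde{\mathcal{C}}_{B'}(\Lambda)$ with $B' = A_0(\frac{E|b|}{\kappa_2^m} + 1) \leq E|b|/2 + A_0 |b| \leq E|b|$ by \cref{lem:PreliminaryLogLipschitz} and \cref{eqn:ConstantE,eqn:Constantm}, and $B'/|b|$ is bounded, we can apply \cref{lem:WDenseInequality} to $(L_a^m(h))^2 \in \tilde{\mathcal{C}}_{2B'/(...)}$ — more precisely one first observes $(L_a^m h)^2$ is itself in a cone $\tilde{\mathcal{C}}_{B''/t}$ with $B''$ independent of $t, |b|$ — to get $\int_W (L_a^m h)^2 \, d\nu_\Lambda \geq \eta_0 \int_\Lambda (L_a^m h)^2 \, d\nu_\Lambda$. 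Finally $\int_\Lambda (L_a^m h)^2 \, d\nu_\Lambda$ is comparable to $\|h\|_2^2$: the lower bound $\int_\Lambda (L_a^m h)^2 \, d\nu_\Lambda \geq (\int_\Lambda L_a^m h \, d\nu_\Lambda)^2 = (\int_\Lambda h \, d\nu_\Lambda)^2 \gtrsim \|h\|_2^2$ holds because $h \in \tilde{\mathcal{C}}_{E|b|}$ combined with the Gibbs property \cref{eqn:PropertyOfGibbsMeasures} forces $\|h\|_\infty$ to be comparable to its average (this is where $a_0$ may need to be shrunk so the eigendata perturb little, though here it is more about $h$ being log-Lipschitz on the whole of $\Lambda$ — note the cone condition is genuinely global). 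Combining, $\|\mathcal{N}_{a,J}(h)\|_2^2 \leq (1 - c\eta_0 c')\|h\|_2^2$, giving $\eta = \sqrt{1 - c\eta_0 c'} \in (0,1)$.

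The main obstacle I anticipate is the bookkeeping in the pointwise inequality for $(L_a^m(\beta_J h))^2$ and tracking that the subtracted positive term survives after Cauchy--Schwarz — one must be careful that only \emph{one} preimage per fiber is guaranteed to land in a given $\mathtt{X}_{j,k}(b)$, so the gain per point is only one term out of $N^m$, and one needs the $\tilde{\mathcal{C}}_{E|b|}$-cone condition to control the ratio of that term to the full sum $L_a^m(h)$ uniformly; getting an effective constant $c$ out of this, compatible with the already-fixed $\epsilon_1, m, \mu$, is the delicate step. The uniformity over $|b| > b_0$ is handled because all diameter bounds scale with $1/|b|$ and the cone/density constants are scale-invariant, and the uniformity over $|a| < a_0$ just requires $a_0$ small enough that the Lipschitz dependence of $h_a, \lambda_a$ (hence of $f^{(a)}$, controlled by $A_f$) does not spoil the bounds.
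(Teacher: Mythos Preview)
Your overall strategy is correct in its broad strokes --- defining $W = \bigcup_{(j,k) \in J} \mathtt{D}_k(b)$, verifying it is $(t,C)$-dense with $t = \rho^{p_0p_1+1}\epsilon_1/|b|$ and $C = \rho^{-(p_0p_1+1)}$, and invoking \cref{lem:WDenseInequality} --- but there is a genuine gap that appears in two places, with the same root cause. You claim (i) that the single missing term $\mu L_a^m(\chi_{\mathtt{X}_{j,k}(b)} h)(u)$ is a definite fraction of $L_a^m(h)(u)$ on $\mathtt{D}_k(b)$, and (ii) that $\bigl(\int_\Lambda h\bigr)^2 \gtrsim \int_\Lambda h^2$ because ``$\|h\|_\infty$ is comparable to its average''. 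Both fail uniformly in $|b|$: membership in $\mathcal{C}_{E|b|}(\Lambda)$ only gives $h(u)/h(u') \leq e^{E|b|\,d(u,u')}$, and since distinct inverse branches $v(u), v_j(u)$ (and points in different $D_j$) sit at $d$-distance of order $1$, the ratio $h(v(u))/h(v_j(u))$ --- and likewise the global oscillation $\sup h / \inf h$ --- can be as large as $e^{E|b|}$. So both the constant $c$ in your intermediate inequality and the constant $c'$ in the last step degenerate to $0$ as $|b| \to \infty$.

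The paper sidesteps this entirely by a different placement of Cauchy--Schwarz. First one reduces from $a$ to $0$ pointwise via $\mathcal{N}_{a,J}(h)^2 \leq e^{mA_f a_0} \mathcal{N}_{0,J}(h)^2$. Then one applies Cauchy--Schwarz \emph{inside} the transfer operator:
\begin{align*}
\mathcal{N}_{0,J}(h)^2 = L_0^m(\beta_J h)^2 \leq L_0^m(\beta_J^2) \cdot L_0^m(h^2).
\end{align*}
On $W$ one has the pointwise bound $L_0^m(\beta_J^2)(u) \leq 1 - \mu e^{-mT_0}$ (this needs only a lower bound on the $e^{f_m^{(0)}}$-weight of one preimage, not on any ratio involving $h$), while everywhere $L_0^m(\beta_J^2) \leq 1$. \cref{lem:WDenseInequality} is then applied to $L_0^m(h^2) \in \tilde{\mathcal{C}}_{E|b|}(\Lambda) = \tilde{\mathcal{C}}_{B/t}(\Lambda)$ (by \cref{lem:PreliminaryLogLipschitz}, exactly as you computed), and the integration closes up \emph{exactly} via $\int_\Lambda L_0^m(h^2)\,d\nu_\Lambda = \int_\Lambda h^2\,d\nu_\Lambda$ from $L_0^*\nu_\Lambda = \nu_\Lambda$ --- no comparison between $(\int h)^2$ and $\int h^2$ is ever needed. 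The constant $a_0$ enters only to make $e^{mA_f a_0}(1 - \eta'\mu e^{-mT_0}) < 1$.
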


\begin{proof}
Fix $B = E\epsilon_1\rho^{p_0p_1 + 1} > 0$ and $C = \rho^{-(p_0p_1 + 1)} \geq 1$. Fix $\eta' \in (0, 1)$ to be the $\eta$ provided by \cref{lem:WDenseInequality}. Fix a positive constant
\begin{align*}
a_0 < \min\left(a_0', \frac{1}{mA_f}\log\left(\frac{1}{1 - \eta'\mu e^{-mT_0}}\right)\right)
\end{align*}
so that we can also fix $\eta = \sqrt{e^{mA_fa_0}(1 - \eta'\mu e^{-mT_0})} \in (0, 1)$. Recall that we already fixed $b_0 = 1$. Let $\xi \in \mathbb C$ with $|a| < a_0$ and $|b| > b_0$, $J \in \mathcal J(b)$, and $h \in \mathcal{C}_{E|b|}(\Lambda)$. We have the estimate $\mathcal{N}_{a, J}(h)^2 \leq e^{mA_fa_0}\mathcal{N}_{0, J}(h)^2$ since $|f^{(a)} - f^{(0)}| \leq A_f|a| < A_fa_0$ and by the Cauchy--Schwarz inequality, we have
\begin{align*}
\mathcal{N}_{0, J}(h)^2 = L_0^m(\beta_J h)^2 \leq L_0^m({\beta_J}^2) L_0^m(h^2).
\end{align*}
We would like to apply \cref{lem:WDenseInequality} on $h$ but first we need to ensure all the hypotheses hold. Let $t = \rho^{p_0p_1 + 1}\frac{\epsilon_1}{|b|}$ and note that $\frac{B}{t} = E|b|$. Let $W = \bigsqcup_{(j, k) \in J} \mathtt{D}_k(b) \subset \Lambda$. We will show that $W$ is $(t, C)$-dense. Let $\mathtt{B}_l = \mathtt{C}_l(b)$ for all $1 \leq l \leq c_b$. Then $\mathtt{B}_l \cap \mathtt{B}_{l'} = \varnothing$ and $\diam(\mathtt{B}_l) \leq tC$ by \cref{eqn:DiameterBoundC_l} for all integers $1 \leq l, l' \leq c_b$ with $l \neq l'$, and $\bigsqcup_{l = 1}^{c_b} \mathtt{B}_l = \Lambda$. Since $J$ is dense, for all integers $1 \leq l \leq c_b$, there exists $(j, k) \in J$ such that $\mathtt{D}_k(b) \subset \mathtt{C}_l(b)$ and so choosing $\mathtt{B}_l' = \mathtt{D}_k(b)$ gives $\mathtt{B}_l' \subset W \cap \mathtt{B}_l$ with $\diam(\mathtt{B}_l') \geq t$ by \cref{eqn:DiameterBoundD_k}. Hence, $W$ is $(t, C)$-dense. Since $h \in \mathcal{C}_{E|b|}(\Lambda) \subset \tilde{\mathcal{C}}_{E|b|}(\Lambda)$, we have $h^2 \in \tilde{\mathcal{C}}_{2E|b|}(\Lambda)$. Applying \cref{lem:PreliminaryLogLipschitz} gives $L_0^m(h^2) \in \tilde{\mathcal{C}}_{B'}(\Lambda)$ where $B' = A_0\left(\frac{2E|b|}{\kappa_2^m} + 1\right) \leq A_0\left(\frac{2E|b|}{8A_0} + \frac{E|b|}{2A_0}\right) \leq E|b|$ using \cref{eqn:ConstantE,eqn:Constantm}. Thus $L_0^m(h^2) \in \tilde{\mathcal{C}}_{E|b|}(\Lambda) = \tilde{\mathcal{C}}_{B/t}(\Lambda)$. Now we can apply \cref{lem:WDenseInequality} to get $\int_W L_0^m(h^2) \, d\nu_\Lambda \geq \eta' \int_\Lambda L_0^m(h^2) \, d\nu_\Lambda$. Note that $L_0^m({\beta_J}^2)(u) \leq L_0^m(\chi_U - \mu \chi_{\mathtt{X}_{j, k}(b)})(u) \leq 1 - \mu e^{-mT_0}$ for all $u \in W$ by choosing any $(j, k) \in J$. So putting everything together and using $L_0^*(\nu_\Lambda) = \nu_\Lambda$, we have
\begin{align*}
&\int_\Lambda \mathcal{N}_{a, J}(h)^2 \, d\nu_\Lambda \leq \int_\Lambda e^{mA_fa_0}\mathcal{N}_{0, J}(h)^2 \, d\nu_\Lambda \\
\leq{}&e^{mA_fa_0}\left(\int_{W} L_0^m({\beta_J}^2) L_0^m(h^2) \, d\nu_\Lambda + \int_{\Lambda \setminus W} L_0^m({\beta_J}^2) L_0^m(h^2) \, d\nu_\Lambda\right) \\
\leq{}&e^{mA_fa_0}\left((1 - \mu e^{-mT_0})\int_{W} L_0^m(h^2) \, d\nu_\Lambda + \int_{\Lambda \setminus W} L_0^m(h^2) \, d\nu_\Lambda\right) \\
={}&e^{mA_fa_0}\left(\int_\Lambda L_0^m(h^2) \, d\nu_\Lambda - \mu e^{-mT_0}\int_{W} L_0^m(h^2) \, d\nu_\Lambda\right) \\
\leq{}&e^{mA_fa_0}(1 - \eta'\mu e^{-mT_0})\int_\Lambda L_0^m(h^2) \, d\nu_\Lambda = \eta^2 \int_\Lambda h^2 \, d\nu_\Lambda.
\end{align*}
\end{proof}

\subsection{Proof of \texorpdfstring{\cref{itm:DominatedByDolgopyat}}{Property \ref{itm:DominatedByDolgopyat}} in \texorpdfstring{\cref{thm:Dolgopyat}}{\autoref{thm:Dolgopyat}}}
\Cref{lem:LNIC_Output} follows from \cref{pro:NonConcentrationProperty,cor:LNIC_Corollary} and \cref{eqn:DiameterBoundC_l,eqn:Constantepsilon1,eqn:DiameterBoundD_k,eqn:Constantp1} similar to \cite[Lemma 5.9]{Sto11}.

\begin{lemma}
\label{lem:LNIC_Output}
Let $|b| > b_0$. Suppose $\mathtt{D}_k(b) \subset \mathtt{C}_l(b)$ for some integers $1 \leq k \leq d_b$ and $1 \leq l \leq c_b$. Then there exists an adjacent $\mathtt{D}_{k'}(b) \subset \mathtt{C}_l(b)$ for some integer $1 \leq k' \leq d_b$ such that
\begin{align*}
\frac{\delta_0 \rho \epsilon_1}{16} \leq |b| \cdot |(\tau_m \circ v_1 - \tau_m \circ v_2)(u) - (\tau_m \circ v_1 - \tau_m \circ v_2)(u')| \leq \pi
\end{align*}
for all $u \in \mathtt{D}_k(b)$ and $u' \in \mathtt{D}_{k'}(b)$.
\end{lemma}

Now, for all $\xi \in \mathbb C$ with $|a| < a_0'$ and $|b| > b_0$, $H \in C(\Lambda, L^2(\tilde{\mathbf{G}}_q))$ for some nonzero $q \in \mathcal{O}$, and $h \in \mathcal{C}_{E|b|}(\Lambda)$, we define the functions $\chi_1^{[\xi, H, h]}, \chi_2^{[\xi, H, h]}: \Lambda \to \mathbb R$ by
\begin{align*}
&\chi_1^{[\xi, H, h]}(u) \\
={}&\frac{\left\|e^{(f_m^{(a)} + ib\tau_m)(v_1(u))} \mathtt{c}_{l, 1}(b)H(v_1(u)) + e^{(f_m^{(a)} + ib\tau_m)(v_2(u))} \mathtt{c}_{l, 2}(b)H(v_2(u))\right\|_2}{(1 - \mu)e^{f_m^{(a)}(v_1(u))}h(v_1(u)) + e^{f_m^{(a)}(v_2(u))}h(v_2(u))}; \\
&\chi_2^{[\xi, H, h]}(u) \\
={}&\frac{\left\|e^{(f_m^{(a)} + ib\tau_m)(v_1(u))} \mathtt{c}_{l, 1}(b)H(v_1(u)) + e^{(f_m^{(a)} + ib\tau_m)(v_2(u))} \mathtt{c}_{l, 2}(b)H(v_2(u))\right\|_2}{e^{f_m^{(a)}(v_1(u))}h(v_1(u)) + (1 - \mu)e^{f_m^{(a)}(v_2(u))}h(v_2(u))}
\end{align*}
for all $u \in \mathtt{C}_l(b)$ and $1 \leq l \leq c_b$. We also need the following lemma which can be proved as in \cite[Lemma 3.17]{OW16} and \cite[Lemma 9.8]{SW21} (which originally appears in \cite[Lemma 14]{Dol98}).

\begin{lemma}
\label{lem:HTrappedByh}
Let $|b| > b_0$ and $q \in \mathcal{O}$ be nonzero. Suppose $H \in C(\Lambda, L^2(\tilde{\mathbf{G}}_q))$ and $h \in \mathcal{C}_{E|b|}(\Lambda)$ satisfy \cref{itm:DominatedByh,itm:LogLipschitzh} in \cref{thm:Dolgopyat}. Then, for all $(j, k) \in \Xi(b)$, we have
\begin{align*}
\frac{1}{2} \leq \frac{h(v_j(u))}{h(v_j(u'))} \leq 2
\end{align*}
for all $u, u' \in \mathtt{D}_k(b)$ and also either of the alternatives
\begin{alternative}
\item\label{alt:HLessThan3/4h}	$\big\|H(v_j(u))\big\|_2 \leq \frac{3}{4}h(v_j(u))$ for all $u \in \mathtt{D}_k(b)$;
\item\label{alt:HGreaterThan1/4h}	$\big\|H(v_j(u))\big\|_2 \geq \frac{1}{4}h(v_j(u))$ for all $u \in \mathtt{D}_k(b)$.
\end{alternative}
\end{lemma}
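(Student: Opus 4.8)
The plan is to prove the two claims in the order they are stated, exploiting that $h\in\mathcal{C}_{E|b|}(\Lambda)$ is log-Lipschitz with respect to $d$, that $v_j$ are sections of $T^m$, and the diameter bounds \cref{eqn:DiameterBoundC_l,eqn:DiameterBoundD_k,eqn:DiameterBoundX_jk}. First I would establish the ratio bound $\frac12\le h(v_j(u))/h(v_j(u'))\le 2$ for $u,u'\in\mathtt{D}_k(b)$. Since $\mathtt{X}_{j,k}(b)=v_j(\mathtt{D}_k(b))$ is a cylinder, any two points $v_j(u),v_j(u')$ in it lie in a common cylinder, so $d(v_j(u),v_j(u'))\le\diam(\mathtt{X}_{j,k}(b))$. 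By \cref{eqn:DiameterBoundX_jk} this is at most $\frac{\epsilon_1\rho^{p_1}}{|b|c_0\kappa_2^m}$. Because $h\in\mathcal{C}_{E|b|}(\Lambda)\subset\tilde{\mathcal{C}}_{E|b|}(\Lambda)$ (see the remark after the definition of the cones), we have $h(v_j(u))/h(v_j(u'))\le e^{E|b|\,d(v_j(u),v_j(u'))}\le \exp\!\big(\tfrac{E\epsilon_1\rho^{p_1}}{c_0\kappa_2^m}\big)$, and the choice $\kappa_2^m>\tfrac{4E\rho^{p_1}\epsilon_1}{c_0}$ from \cref{eqn:Constantm} makes the exponent at most $\tfrac14<\log 2$. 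Hence the ratio lies in $[\tfrac12,2]$.

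Next I would prove the dichotomy \cref{alt:HLessThan3/4h}--\cref{alt:HGreaterThan1/4h}. Suppose neither alternative holds: then there exist $u_1,u_2\in\mathtt{D}_k(b)$ with $\|H(v_j(u_1))\|_2>\tfrac34h(v_j(u_1))$ and $\|H(v_j(u_2))\|_2<\tfrac14h(v_j(u_2))$. I would estimate $\big|\,\|H(v_j(u_1))\|_2-\|H(v_j(u_2))\|_2\,\big|$ from below using the triangle inequality together with the ratio bound just established on $h$: roughly, $\|H(v_j(u_1))\|_2-\|H(v_j(u_2))\|_2 > \tfrac34 h(v_j(u_1)) - \tfrac14 h(v_j(u_2)) \geq \tfrac34\cdot\tfrac12 h(v_j(u_2)) - \tfrac14 h(v_j(u_2)) = \tfrac18 h(v_j(u_2))$, which is bounded below by a positive multiple of $h(v_j(u))$ for any $u\in\mathtt{D}_k(b)$. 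On the other hand, hypothesis \cref{itm:LogLipschitzh} gives the upper bound $\|H(v_j(u_1))-H(v_j(u_2))\|_2\le E|b|\,h(v_j(u_1))\,d(v_j(u_1),v_j(u_2))\le E|b|\,h(v_j(u_1))\diam(\mathtt{X}_{j,k}(b))$, and again by \cref{eqn:DiameterBoundX_jk} and \cref{eqn:Constantm} this is a \emph{small} multiple of $h(v_j(u_1))$, hence (using the ratio bound once more) a small multiple of $h(v_j(u))$. Choosing the constant $m$ large enough — which is exactly what the condition $\kappa_2^m>8A_0$ and the other clauses of \cref{eqn:Constantm} arrange — the lower bound exceeds the upper bound, a contradiction. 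Therefore one of the two alternatives must hold.

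The routine part is bookkeeping the numerical constants so that "small multiple" genuinely beats "$\tfrac18$"; this is where I expect to lean on \cref{eqn:Constantm} and possibly \cref{eqn:Constantepsilon1}. The only mild subtlety — the step I would be most careful about — is making sure the diameter estimate for $\mathtt{X}_{j,k}(b)$ is applied to $d$-distances rather than Euclidean distances, i.e. that $v_j(u),v_j(u')$ genuinely lie in a common cylinder of the asserted length so that \cref{lem:CylinderDiameterBound} and hence \cref{eqn:DiameterBoundX_jk} apply; this is where the fact that $v_j$ is a section of $T^m$ (so $v_j(\mathtt{D}_k(b))$ is itself a cylinder of length $\len(\mathtt{D}_k(b))+m$) is used. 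With that in hand the lemma follows exactly as in \cite[Lemma 3.17]{OW16} and \cite[Lemma 9.8]{SW21}.
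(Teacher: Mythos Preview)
Your approach is correct and matches the standard argument the paper defers to (\cite[Lemma 3.17]{OW16}, \cite[Lemma 9.8]{SW21}, \cite[Lemma 14]{Dol98}); the paper gives no independent proof. One small point of bookkeeping: in the dichotomy step, if you normalize both bounds with respect to $h(v_j(u_1))$ rather than $h(v_j(u_2))$, the arithmetic closes immediately with the constants already fixed in \cref{eqn:Constantm}. Namely, $\|H(v_j(u_1))\|_2 - \|H(v_j(u_2))\|_2 > \tfrac34 h(v_j(u_1)) - \tfrac14\cdot 2\,h(v_j(u_1)) = \tfrac14 h(v_j(u_1))$, while \cref{itm:LogLipschitzh} and \cref{eqn:DiameterBoundX_jk} give the upper bound $E|b|\,h(v_j(u_1))\cdot\tfrac{\epsilon_1\rho^{p_1}}{|b|c_0\kappa_2^m} < \tfrac14 h(v_j(u_1))$ by the clause $\kappa_2^m > \tfrac{4E\rho^{p_1}\epsilon_1}{c_0}$ of \cref{eqn:Constantm} (this clause, not $\kappa_2^m>8A_0$, is the operative one here). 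Your comment about $d$-distances is exactly right: since $\mathtt{X}_{j,k}(b)=v_j(\mathtt{D}_k(b))$ is itself a cylinder, $d(v_j(u),v_j(u'))\le\diam(\mathtt{X}_{j,k}(b))$ by the definition of $d$.
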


For any integer $k \geq 2$, let $\Theta: (\mathbb R^k \setminus \{0\}) \times (\mathbb R^k \setminus \{0\}) \to [0, \pi]$ be the map which gives the angle defined by $\Theta(w_1, w_2) = \arccos\left(\frac{\langle w_1, w_2\rangle}{\|w_1\| \cdot \|w_2\|}\right)$ for all $w_1, w_2 \in \mathbb R^k \setminus \{0\}$, where we use the standard inner product and norm. \Cref{lem:StrongTriangleInequality} is a stronger version of the triangle inequality proven by elementary trigonometry.

\begin{lemma}
\label{lem:StrongTriangleInequality}
Let $k \geq 2$ be an integer. Suppose $w_1, w_2 \in \mathbb R^k \setminus \{0\}$ such that $\Theta(w_1, w_2) \geq \alpha$ and $\frac{\|w_1\|}{\|w_2\|} \leq L$ for some $\alpha \in [0, \pi]$ and $L \geq 1$. Then, we have
\begin{align*}
\|w_1 + w_2\| \leq \left(1 - \frac{\alpha^2}{16L}\right)\|w_1\| + \|w_2\|.
\end{align*}
\end{lemma}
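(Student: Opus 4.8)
The plan is to reduce to the two-dimensional situation and estimate directly. Since $\|w_1+w_2\|$, $\|w_1\|$, $\|w_2\|$ and $\Theta(w_1,w_2)$ all depend only on the plane spanned by $w_1$ and $w_2$, we may work in that plane; write $r = \|w_1\|/\|w_2\| \in (0, L]$ and $\theta = \Theta(w_1,w_2) \in [\alpha, \pi]$, and normalize $\|w_2\| = 1$. Then the law of cosines gives $\|w_1+w_2\|^2 = r^2 + 2r\cos\theta + 1$. The target inequality, after dividing by $\|w_2\| = 1$, is
\begin{align*}
\sqrt{r^2 + 2r\cos\theta + 1} \leq \left(1 - \tfrac{\alpha^2}{16L}\right) r + 1.
\end{align*}
First I would note that the right-hand side is positive (since $\alpha^2/(16L) \leq \pi^2/16 < 1$), so it is equivalent to compare squares.

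Squaring the right-hand side and subtracting, the claim becomes
\begin{align*}
r^2 + 2r\cos\theta + 1 \leq \left(1 - \tfrac{\alpha^2}{16L}\right)^2 r^2 + 2\left(1 - \tfrac{\alpha^2}{16L}\right) r + 1,
\end{align*}
i.e. after cancelling the $1$'s and dividing by $r > 0$,
\begin{align*}
r + 2\cos\theta \leq \left(1 - \tfrac{\alpha^2}{16L}\right)^2 r + 2\left(1 - \tfrac{\alpha^2}{16L}\right),
\end{align*}
which rearranges to $2(1 - \cos\theta) \geq \tfrac{\alpha^2}{16L}\bigl(2 - \tfrac{\alpha^2}{16L}\bigr) r$. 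Since $\theta \geq \alpha$ we have $1 - \cos\theta \geq 1 - \cos\alpha \geq \alpha^2/\pi^2$ — in fact the elementary bound $1 - \cos\alpha \geq \alpha^2/\pi^2 \geq \alpha^2/16$ holds for $\alpha \in [0,\pi]$ — while the right-hand side is at most $\tfrac{\alpha^2}{16L}\cdot 2 \cdot L = \tfrac{\alpha^2}{8}$ using $r \leq L$ and $2 - \tfrac{\alpha^2}{16L} \leq 2$. Thus it suffices to check $2\cdot\tfrac{\alpha^2}{16} \geq \tfrac{\alpha^2}{8}$, which is an equality, so the inequality holds.

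I expect the only mild subtlety to be the elementary estimate $1 - \cos\alpha \geq \alpha^2/16$ on $[0,\pi]$: this follows from convexity, e.g. since $1 - \cos\alpha \geq (2/\pi^2)\alpha^2$ on $[0,\pi]$ (the function $(1-\cos\alpha)/\alpha^2$ is decreasing and equals $2/\pi^2$ at $\alpha = \pi$), and $2/\pi^2 > 1/16$. The rest is a routine manipulation of the law of cosines, so there is no real obstacle; the one point of care is to keep all quantities on which we divide strictly positive, which is guaranteed by $w_1, w_2 \neq 0$ and $\alpha^2/(16L) < 1$.
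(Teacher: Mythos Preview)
Your approach via the law of cosines is exactly what the paper has in mind (it only says the lemma ``is proven by elementary trigonometry'' and gives no details), so the strategy is correct. However, there is a slip in your algebra at the rearrangement step. Writing $\epsilon = \alpha^2/(16L)$, from
\[
r + 2\cos\theta \leq (1-\epsilon)^2 r + 2(1-\epsilon)
\]
you should get
\[
r\epsilon(2-\epsilon) \leq 2(1-\cos\theta) - 2\epsilon,
\qquad\text{i.e.}\qquad
2(1-\cos\theta) \geq r\epsilon(2-\epsilon) + 2\epsilon,
\]
not $2(1-\cos\theta) \geq r\epsilon(2-\epsilon)$; you dropped the $2\epsilon$ coming from $2(1-\epsilon) = 2 - 2\epsilon$. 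With the missing term included, the right-hand side is at most $2L\epsilon + 2\epsilon = \tfrac{\alpha^2}{8} + \tfrac{\alpha^2}{8L} \leq \tfrac{\alpha^2}{4}$, and your bound $2(1-\cos\theta) \geq \tfrac{\alpha^2}{8}$ from $1-\cos\alpha \geq \alpha^2/16$ is no longer enough.

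The fix is immediate, and in fact you already have the ingredient: use the sharper estimate you mention, $1-\cos\alpha \geq (2/\pi^2)\alpha^2$, so that $2(1-\cos\theta) \geq (4/\pi^2)\alpha^2$. Since $4/\pi^2 > 1/4$, this closes the gap. So the proof is essentially right, but as written the key inequality step is off by that one term and the weaker cosine bound you settle on does not suffice.
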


\begin{lemma}
\label{lem:chiLessThan1}
Let $\xi \in \mathbb C$ with $|a| < a_0'$ and $|b| > b_0$. Let $q \in \mathcal{O}$ be nonzero and suppose $H \in C(\Lambda, L^2(\tilde{\mathbf{G}}_q))$ and $h \in \mathcal{C}_{E|b|}(\Lambda)$ satisfy \cref{itm:DominatedByh,itm:LogLipschitzh} in \cref{thm:Dolgopyat}. For all integers $1 \leq l \leq c_b$, there exists $(j, k) \in \Xi(b)$ such that $\mathtt{D}_k(b) \subset \mathtt{C}_l(b)$ and such that $\chi_j^{[\xi, H, h]}(u) \leq 1$ for all $u \in \mathtt{D}_k(b)$.
\end{lemma}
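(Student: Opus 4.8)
The plan is to fix an integer $1 \leq l \leq c_b$ and show that among the four sections $v_j|_{\mathtt{D}_k(b)}$ with $\mathtt{D}_k(b) \subset \mathtt{C}_l(b)$, at least one choice $(j, k)$ forces $\chi_j[\xi, H, h] \leq 1$ on $\mathtt{D}_k(b)$. The key dichotomy comes from \cref{lem:HTrappedByh}: on each subcylinder $\mathtt{D}_k(b)$ and for each $j \in \{1, 2\}$, either \cref{alt:HLessThan3/4h} holds (the ``small'' case, $\|H(v_j(u))\|_2 \leq \frac34 h(v_j(u))$) or \cref{alt:HGreaterThan1/4h} holds (the ``large'' case). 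First I would treat the easy case: if on some $\mathtt{D}_k(b) \subset \mathtt{C}_l(b)$ there is a $j$ for which \cref{alt:HLessThan3/4h} holds, then in the numerator of $\chi_j$ we bound $\|\cdots\|_2$ by the triangle inequality and $\|\mathtt{c}_{l,j}(b)\phi\|_2 = \|\phi\|_2$ (the cocycle values act by the right regular representation, hence unitarily), getting numerator $\leq \frac34 e^{f_m^{(a)}(v_1(u))}h(v_1(u)) + e^{f_m^{(a)}(v_2(u))}h(v_2(u))$ (say, when $j=1$), which is $\leq (1-\mu)e^{f_m^{(a)}(v_1(u))}h(v_1(u)) + e^{f_m^{(a)}(v_2(u))}h(v_2(u))$ since $\mu < \frac14$ by \cref{eqn:Constantmu}; this gives $\chi_1 \leq 1$. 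The symmetric bound handles $j=2$.

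The substantive case is when \cref{alt:HGreaterThan1/4h} holds for \emph{both} $j = 1$ and $j = 2$ on all subcylinders $\mathtt{D}_k(b) \subset \mathtt{C}_l(b)$ — here we must exploit oscillation of the phase $b\tau_m$. The idea, following \cite[Lemma 5.9]{Sto11} and \cite[Lemma 3.17]{OW16}, is: pick two subcylinders $\mathtt{D}_k(b), \mathtt{D}_{k'}(b) \subset \mathtt{C}_l(b)$ that are far apart, i.e.\ with $\|u_0 - u_0'\| \geq \frac12 \diam(\mathtt{C}_l(b))$ for some $u_0 \in \mathtt{D}_k(b)$, $u_0' \in \mathtt{D}_{k'}(b)$ — such a pair exists because $p_1$ is chosen (via \cref{eqn:Constantp1}) so that subcylinders of depth $p_0 p_1$ are small enough relative to $\mathtt{C}_l(b)$ that at least two of them reach to near-opposite ends. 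Then \cref{lem:LNIC_Output} gives $\frac{\delta_0 \rho \epsilon_1}{16} \leq |b|\,|(\tau_m \circ v_1 - \tau_m \circ v_2)(u) - (\tau_m \circ v_1 - \tau_m \circ v_2)(u')| \leq \pi$ for all $u \in \mathtt{D}_k(b)$, $u' \in \mathtt{D}_{k'}(b)$. The combination of this angular separation (applied to the two $\mathbb R^{2\#\tilde{\mathbf{G}}_q}$-vectors obtained by viewing the complex $L^2(\tilde{\mathbf{G}}_q)$-valued summands as real vectors, at the point $u$ versus the point $u'$), together with the Lipschitz control from \cref{itm:LogLipschitzh}, the comparability $\frac12 \leq h(v_j(u))/h(v_j(u')) \leq 2$ from \cref{lem:HTrappedByh}, and the lower bound $\|H(v_j(u))\|_2 \geq \frac14 h(v_j(u))$, means that at either $u$ or at $u'$ the two summands $e^{(f_m^{(a)}+ib\tau_m)(v_j(\cdot))}\mathtt{c}_{l,j}(b)H(v_j(\cdot))$, $j=1,2$, make an angle bounded below by a fixed multiple of $\frac{\delta_0 \rho \epsilon_1}{16}$; one must be careful to track that the phases $b\tau_m(v_j(u))$ for $j=1,2$ and the ``internal'' phases of $\mathtt{c}_{l,j}(b)H(v_j(u))$ do not conspire to align the vectors at \emph{both} points — this is precisely where the strict inequality $\frac{\delta_0\rho\epsilon_1}{16} > 0$ in \cref{lem:LNIC_Output} is used, comparing the angle at $u$ with the angle at $u'$.

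Applying \cref{lem:StrongTriangleInequality} to the summands at the chosen point then gives a numerator saving: $\|w_1 + w_2\| \leq (1 - \frac{\alpha^2}{16L})\|w_1\| + \|w_2\|$ with $\alpha$ a fixed positive multiple of $\delta_0 \rho \epsilon_1$ (absorbing the $|b|$-scaling, the exponentials $e^{f_m^{(a)}}$ which are bounded by $e^{mT_0}$, and the constant $L$) — and the point is that $\mu$ was chosen in \cref{eqn:Constantmu} precisely small enough that $\mu e^{f_m^{(a)}(v_j(u))}h(v_j(u))$ does not exceed this saving, since $\mu \lesssim \frac{1}{16\cdot 16 e^{2mT_0}}(\frac{\delta_0\rho\epsilon_1}{64})^2$. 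Consequently $\chi_j[\xi,H,h](u) \leq 1$ on that subcylinder, and we take $(j,k)$ (or $(j,k')$) to be the witnessing pair. The main obstacle is the careful angle-comparison argument in the ``large'' case — specifically verifying that the cocycle factors $\mathtt{c}_{l,j}(b)$, being \emph{constant} on $\mathtt{C}_l(b)$, do not affect the angle between the $u$-vectors and the $u'$-vectors (they apply the same unitary to both points), so that only the genuine oscillation of $b\tau_m$ and the small Lipschitz perturbation of $H$ govern the geometry; once this is isolated, the estimate reduces to the scalar Dolgopyat computation of \cite{Sto11,OW16} carried out in the Hilbert space $L^2(\tilde{\mathbf{G}}_q)$ in place of $\mathbb{C}$.
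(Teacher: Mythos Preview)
Your plan is correct and matches the paper's proof essentially step for step: the same dichotomy from \cref{lem:HTrappedByh}, the same easy case via $\mu < \tfrac14$, and in the hard case the same selection of two well-separated subcylinders $\mathtt{D}_k(b), \mathtt{D}_{k'}(b) \subset \mathtt{C}_l(b)$, followed by the angle-comparison argument (the paper makes explicit the inequality $\Theta(V_1(u),V_2(u)) + \Theta(V_1(u'),V_2(u')) \geq \tfrac{\delta_0\rho\epsilon_1}{32}$, whence one of the two is at least $\tfrac{\delta_0\rho\epsilon_1}{64}$) and \cref{lem:StrongTriangleInequality} with $L = 16e^{2mT_0}$. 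The only organizational difference is that the paper first fixes the pair $k,k'$ realizing $\|u_0 - u_0'\| \geq \tfrac12\diam(\mathtt{C}_l(b))$ and then runs the dichotomy on those four indices $(j,k),(j,k')$, whereas you phrase the dichotomy globally over all subcylinders of $\mathtt{C}_l(b)$; the two are logically equivalent.
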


\begin{proof}
Recall that we already fixed $b_0 = 1$. Let $\xi \in \mathbb C$ with $|a| < a_0'$ and $|b| > b_0$. Let $q \in \mathcal{O}$ be nonzero and suppose $H \in C(\Lambda, L^2(\tilde{\mathbf{G}}_q))$ and $h \in \mathcal{C}_{E|b|}(\Lambda)$ satisfy \cref{itm:DominatedByh,itm:LogLipschitzh} in \cref{thm:Dolgopyat}. Let $1 \leq l \leq c_b$ be an integer. There exist adjacent $\mathtt{D}_k(b), \mathtt{D}_{k'}(b) \subset \mathtt{C}_l(b)$ for some integers $1 \leq k, k' \leq d_b$ so that the conclusion of \cref{lem:LNIC_Output} holds. Now, suppose \cref{alt:HLessThan3/4h} in \cref{lem:HTrappedByh} holds for one of $(j, k), (j, k') \in \Xi(b)$ for some $j \in \{1, 2\}$. Without loss of generality, we can assume it holds for $(j, k) \in \Xi(b)$ and then it is a straightforward calculation to check that $\chi_j^{[\xi, H, h]}(u) \leq 1$ for all $u \in \mathtt{D}_k(b)$, using \cref{eqn:Constantmu}. Otherwise, \cref{alt:HGreaterThan1/4h} in \cref{lem:HTrappedByh} holds for all of $(1, k), (2, k), (1, k'), (2, k') \in \Xi(b)$. Let $u \in \mathtt{D}_k(b)$ and $u' \in \mathtt{D}_{k'}(b)$. Note that $\big\|H(v_j(u))\big\|_2, \big\|H(v_j(u'))\big\|_2 > 0$ for all $j \in \{1, 2\}$. We would like to apply \cref{lem:StrongTriangleInequality} but first we need to establish bounds on relative angle and relative size using $L^2(\tilde{\mathbf{G}}_q) \cong \mathbb R^{2\#\tilde{\mathbf{G}}_q}$ as real vector spaces. We start with the former. For all $j \in \{1, 2\}$, let $u_j \in \{u, u'\}$ such that $\big\|H(v_j(u_j))\big\|_2 = \min\left(\big\|H(v_j(u))\big\|_2, \big\|H(v_j(u'))\big\|_2\right)$. Then recalling \cref{lem:Hyperbolicity,eqn:Constantm}, for all $j \in \{1, 2\}$, we have
\begin{align*}
\frac{\big\|H(v_j(u)) - H(v_j(u'))\big\|_2}{\min\left(\big\|H(v_j(u))\big\|_2, \big\|H(v_j(u'))\big\|_2\right)} &\leq \frac{E|b|h(v_j(u_j)) \|v_j(u) - v_j(u')\|}{\big\|H(v_j(u_j))\big\|_2} \\
&\leq 4E|b| \cdot \frac{\epsilon_1}{|b|c_0\kappa_2^m} \leq \frac{\delta_0 \rho \epsilon_1}{128}.
\end{align*}
Thus, $\sin(\Theta(H(v_j(u)), H(v_j(u')))) \leq \frac{\delta_0 \rho \epsilon_1}{128}$ with $\Theta(H(v_j(u)), H(v_j(u'))) \in [0, \frac{\pi}{2})$ for all $j \in \{1, 2\}$ by the sine law. A simple inequality $\frac{\theta}{2} \leq \sin(\theta)$ for all $\theta \in \left[0, \frac{\pi}{2}\right]$ gives $\Theta(H(v_j(u)), H(v_j(u'))) \leq 2\sin(\Theta(H(v_j(u)), H(v_j(u')))) \leq \frac{\delta_0 \rho \epsilon_1}{64}$ for all $j \in \{1, 2\}$. Define $\varphi: U \to \mathbb R$ by $\varphi = b(\tau_m \circ v_1 - \tau_m \circ v_2)$. By \cref{lem:LNIC_Output}, we have $\frac{\delta_0 \rho \epsilon_1}{16} \leq |\varphi(u) - \varphi(u')| \leq \pi$. We will use these bounds to obtain a lower bound for $\Theta(V_1(u), V_2(u))$ or $\Theta(V_1(u'), V_2(u'))$ where we define
\begin{align*}
V_j(w) &= e^{(f_m^{(a)} + ib\tau_m)(v_j(w))} \mathtt{c}_{l, j}(b)H(v_j(w)) \qquad \text{for all $w \in U$ and $j \in \{1, 2\}$}.
\end{align*}
Using the triangle inequality and the unitarity of the cocycle, we have
\begin{align*}
&\Theta(V_1(u), V_2(u)) = \Theta\big(e^{i\varphi(u)} \mathtt{c}_{l, 1}(b)H(v_1(u)), \mathtt{c}_{l, 2}(b)H(v_2(u))\big) \\
\geq{}&\Theta\big(e^{i\varphi(u)} \mathtt{c}_{l, 1}(b)H(v_1(u)), e^{i\varphi(u')} \mathtt{c}_{l, 1}(b)H(v_1(u))\big) \\
&{}- \Theta\big(e^{i\varphi(u')} \mathtt{c}_{l, 1}(b)H(v_1(u)), e^{i\varphi(u')} \mathtt{c}_{l, 1}(b)H(v_1(u'))\big) \\
&{}- \Theta\big(\mathtt{c}_{l, 2}(b)H(v_2(u)), \mathtt{c}_{l, 2}(b)H(v_2(u'))\big) \\
&{}- \Theta\big(e^{i\varphi(u')} \mathtt{c}_{l, 1}(b)H(v_1(u')), \mathtt{c}_{l, 2}(b)H(v_2(u'))\big) \\
={}&|\varphi(u) - \varphi(u')| - \Theta\big(H(v_1(u)), H(v_1(u'))\big) - \Theta\big(H(v_2(u)), H(v_2(u'))\big) \\
{}&- \Theta\big(e^{i\varphi(u')} \mathtt{c}_{l, 1}(b)H(v_1(u')), \mathtt{c}_{l, 2}(b)H(v_2(u'))\big) \\
\geq{}&\frac{\delta_0 \rho \epsilon_1}{16} - \frac{\delta_0 \rho \epsilon_1}{64} - \frac{\delta_0 \rho \epsilon_1}{64} - \Theta(V_1(u'), V_2(u')) = \frac{\delta_0 \rho \epsilon_1}{32} - \Theta(V_1(u'), V_2(u')).
\end{align*}
Hence $\Theta(V_1(u), V_2(u)) + \Theta(V_1(u'), V_2(u')) \geq \frac{\delta_0 \rho \epsilon_1}{32}$ for all $u \in \mathtt{D}_k(b)$ and $u' \in \mathtt{D}_{k'}(b)$. Thus, without loss of generality, we can assume that $\Theta(V_1(u), V_2(u)) \geq \frac{\delta_0 \rho \epsilon_1}{64}$ for all $u \in \mathtt{D}_k(b)$, which establishes the required bound on relative angle. For the bound on relative size, let $(j, j') \in \{(1, 2), (2, 1)\}$ such that $h(v_j(u_0)) \leq h(v_{j'}(u_0))$ for some $u_0 \in \mathtt{D}_k(b)$. Then by \cref{lem:HTrappedByh}, we have
\begin{align*}
\frac{\|V_j(u)\|_2}{\|V_{j'}(u)\|_2} &= \frac{e^{f_m^{(a)}(v_j(u))}\big\|H(v_j(u))\big\|_2}{e^{f_m^{(a)}(v_{j'}(u))}\big\|H(v_{j'}(u))\big\|_2} \leq \frac{4e^{f_m^{(a)}(v_j(u)) - f_m^{(a)}(v_{j'}(u))}h(v_j(u))}{h(v_{j'}(u))} \\
&\leq \frac{16e^{2mT_0}h(v_j(u_0))}{h(v_{j'}(u_0))} \leq 16e^{2mT_0}
\end{align*}
for all $u \in \mathtt{D}_k(b)$, which establishes the required bound on relative size. Now applying \cref{lem:StrongTriangleInequality,eqn:Constantmu} and $\|H\| \leq h$ on $\|V_j(u) + V_{j'}(u)\|_2$ gives $\chi_j^{[\xi, H, h]}(u) \leq 1$ for all $u \in \mathtt{D}_k(b)$.
\end{proof}

\begin{lemma}
\label{lem:DominatedByDolgopyat}
There exists $a_0 > 0$ such that for all $\xi \in \mathbb C$ with $|a| < a_0$ and $|b| > b_0$, and nonzero $q \in \mathcal{O}$, if $H \in C(\Lambda, L^2(\tilde{\mathbf{G}}_q))$ and $h \in \mathcal{C}_{E|b|}(\Lambda)$ satisfy \cref{itm:DominatedByh,itm:LogLipschitzh} in \cref{thm:Dolgopyat}, then there exists $J \in \mathcal{J}(b)$ such that
\begin{align*}
\left\|M_{\xi, q}^m(H)(u)\right\|_2 \leq \mathcal{N}_{a, J}(h)(u) \qquad \text{for all $u \in \Lambda$}.
\end{align*}
\end{lemma}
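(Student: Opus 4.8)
The plan is to pick the dense index set $J$ using \cref{lem:chiLessThan1} and then prove the pointwise domination by a branch-by-branch comparison of $M_{\xi, q}^m(H)(u)$ with $\mathcal{N}_{a, J}(h)(u) = L_a^m(\beta_J h)(u)$. Take $a_0 = a_0'$ and let $\xi, q, H, h$ be as in the statement. For each integer $1 \le l \le c_b$, \cref{lem:chiLessThan1} supplies a pair $(j(l), k(l)) \in \Xi(b)$ with $\mathtt{D}_{k(l)}(b) \subset \mathtt{C}_l(b)$ and $\chi_{j(l)}[\xi, H, h](u) \le 1$ for all $u \in \mathtt{D}_{k(l)}(b)$; I would set $J = \{(j(l), k(l)) : 1 \le l \le c_b\}$, which by construction is dense, so $J \in \mathcal{J}(b)$. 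The key structural observation to record is that, since the maximal cylinders $\mathtt{C}_l(b)$ partition $\Lambda$, for each $l$ the only pair of $J$ whose cylinder component lies inside $\mathtt{C}_l(b)$ is $(j(l), k(l))$ itself.

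Next I would fix $u \in \Lambda$ and let $l$ be the unique index with $u \in \mathtt{C}_l(b)$. Indexing the fibre $T^{-m}(u) \cap \Lambda$ by the inverse branches $v$ of $T^m$ at $u$ (with $v_1(u), v_2(u)$ among them, coming from the distinguished sections), one has $M_{\xi, q}^m(H)(u) = \sum_v V_v(u)$ with $V_v(u) = e^{(f_m^{(a)} - ib\tau_m)(v(u))}\mathtt{c}_q^m(v(u))H(v(u))$, and $\mathcal{N}_{a, J}(h)(u) = \sum_v e^{f_m^{(a)}(v(u))}\beta_J(v(u))h(v(u))$. Because the sets $\mathtt{X}_{j', k'}(b) = v_{j'}(\mathtt{D}_{k'}(b))$ are mutually disjoint, and because $v(u) \in \mathtt{X}_{j', k'}(b)$ forces both $u \in \mathtt{D}_{k'}(b)$ and $v(u) = v_{j'}(u)$, the previous paragraph gives the dichotomy: $\beta_J(v(u)) = 1$ for every branch $v$ when $u \notin \mathtt{D}_{k(l)}(b)$, while when $u \in \mathtt{D}_{k(l)}(b)$ the unique branch with $\beta_J(v(u)) \neq 1$ is $v = v_{j(l)}$, for which $\beta_J(v_{j(l)}(u)) = 1 - \mu$ (here one uses $v_1(u) \neq v_2(u)$ on $\mathtt{D}_{k(l)}(b)$, which follows from $\mathtt{X}_{1, k}(b) \cap \mathtt{X}_{2, k}(b) = \varnothing$).

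Granting this, the estimate falls out. If $u \notin \mathtt{D}_{k(l)}(b)$, then the triangle inequality, unitarity of the cocycle, and \cref{itm:DominatedByh} give $\|M_{\xi, q}^m(H)(u)\|_2 \le \sum_v e^{f_m^{(a)}(v(u))}\|H(v(u))\|_2 \le \sum_v e^{f_m^{(a)}(v(u))}h(v(u)) = \mathcal{N}_{a, J}(h)(u)$. If $u \in \mathtt{D}_{k(l)}(b)$, write $j = j(l)$ and split $\|M_{\xi, q}^m(H)(u)\|_2 \le \|V_{v_1}(u) + V_{v_2}(u)\|_2 + \sum_{v \notin \{v_1, v_2\}} e^{f_m^{(a)}(v(u))}h(v(u))$; comparing with $\mathcal{N}_{a, J}(h)(u)$, it remains to verify $\|V_{v_1}(u) + V_{v_2}(u)\|_2 \le (1 - \mu)e^{f_m^{(a)}(v_j(u))}h(v_j(u)) + e^{f_m^{(a)}(v_{3 - j}(u))}h(v_{3 - j}(u))$. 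The left side is exactly the numerator of $\chi_j[\xi, H, h](u)$ (writing $\mathtt{c}_q^m(v_i(u)) = \pi_q(\mathtt{c}_{l, i}(b))$ and noting that flipping the sign of $b$ merely conjugates the summands, hence preserves the $L^2$-norm), and the right side is exactly its denominator, so this is precisely the inequality $\chi_j[\xi, H, h](u) \le 1$ from \cref{lem:chiLessThan1} applied on $\mathtt{D}_{k(l)}(b)$. Combining the two cases yields $\|M_{\xi, q}^m(H)(u)\|_2 \le \mathcal{N}_{a, J}(h)(u)$ for all $u \in \Lambda$.

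The analytic substance — the branch-to-branch cancellation coming from the oscillation of $e^{ib\tau_m}$, ultimately from LNIC — is entirely packaged in \cref{lem:chiLessThan1}; what remains here is combinatorial bookkeeping. The step I would watch most carefully is the claim that the single pair of $J$ attached to each $\mathtt{C}_l(b)$ is ``seen'' by exactly one inverse branch of $T^m$ over points of $\mathtt{D}_{k(l)}(b)$ and by no branch over the other points of $\mathtt{C}_l(b)$; this is what makes the $\mu$-discount in $\beta_J$ line up term-by-term with the gain furnished by $\chi_j \le 1$.
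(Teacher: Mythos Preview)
Your proof is correct and follows essentially the same route as the paper's: build $J$ by picking one pair $(j_l,k_l)$ per maximal cylinder via \cref{lem:chiLessThan1}, then compare $M_{\xi,q}^m(H)(u)$ and $L_a^m(\beta_J h)(u)$ branch by branch, the only nontrivial case being $u\in\mathtt{D}_{k_l}(b)$ where the pair $v_1,v_2$ is handled by $\chi_{j_l}\le 1$. One small caveat: your parenthetical that ``flipping the sign of $b$ merely conjugates the summands, hence preserves the $L^2$-norm'' is not literally correct (conjugating only the scalar phase does not conjugate the vector $\mathtt{c}_q^m(v_i(u))H(v_i(u))$); however this is harmless, since the sign discrepancy between $+ib\tau_m$ in the definition of $\chi_j$ and $-ib\tau_m$ in $M_{\xi,q}$ is already present in the paper, and the proof of \cref{lem:chiLessThan1} depends only on $|b|$ (via \cref{lem:LNIC_Output}), so the inequality $\chi_j\le 1$ holds with either sign.
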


\begin{proof}
Fix $a_0 = a_0'$ and recall that we already fixed $b_0 = 1$. Let $\xi \in \mathbb C$ with $|a| < a_0$ and $|b| > b_0$, and $q \in \mathcal{O}$ be nonzero. Suppose $H \in C(\Lambda, L^2(\tilde{\mathbf{G}}_q))$ and $h \in \mathcal{C}_{E|b|}(\Lambda)$ satisfy \cref{itm:DominatedByh,itm:LogLipschitzh} in \cref{thm:Dolgopyat}. For all integers $1 \leq l \leq c_b$, we can choose a $(j_l, k_l) \in \Xi(b)$ as guaranteed by \cref{lem:chiLessThan1}. Let $J = \{(j_l, k_l) \in \Xi(b): 1 \leq l \leq c_b\} \subset \Xi(b)$ which is then dense by construction and so $J \in \mathcal{J}(b)$. Now we show that $\big\|M_{\xi, q}^m(H)\big\| \leq \mathcal{N}_{a, J}(h)$ for this choice of $J \in \mathcal{J}(b)$. Let $u \in \Lambda$. If $u \notin \mathtt{D}_k(b)$ for all $(j, k) \in J$, then $\beta_J(v) = 1$ for all $v \in \sigma^{-m}(u)$ and hence the bound follows trivially by definitions. Otherwise, there is an integer $1 \leq l \leq c_b$ such that $u \in \mathtt{D}_{k_l}(b)$ corresponding to $(j_l, k_l) \in J$. Note that by construction, $(j, k_l) \notin J$ if $j \neq j_l$. Let $(j_l, j_l') \in \{(1, 2), (2, 1)\}$. Then, we have $\chi_{j_l}^{[\xi, H, h]}(u) \leq 1$, $\beta_J(v_{j_l}(u)) = 1 - \mu$, and $\beta_J(v_{j_l'}(u)) = 1$. Hence, we compute that
\begin{align*}
&\left\|M_{\xi, q}^m(H)(u)\right\|_2 \\
\leq{}&\sum_{v \in \sigma^{-m}(u), v \neq v_1(u), v \neq v_2(u)} \left\|e^{(f_m^{(a)} + ib\tau_m)(v)} \mathtt{c}^m(v) H(v)\right\|_2 \\
&{}+ \Big\|e^{(f_m^{(a)} + ib\tau_m)(v_{j_l}(u))} \mathtt{c}_{l, j_l}(b) H(v_{j_l}(u)) + e^{(f_m^{(a)} + ib\tau_m)(v_{j_l'}(u))} \mathtt{c}_{l, j_l'}(b) H(v_{j_l'}(u))\Big\|_2 \\
\leq{}&\sum_{\substack{v \in \sigma^{-m}(u)\\ v \neq v_1(u)\\ v \neq v_2(u)}} e^{f_m^{(a)}(v)} h(v) + \Bigl((1 - \mu)e^{f_m^{(a)}(v_{j_l}(u))}h(v_{j_l}(u)) + e^{f_m^{(a)}(v_{j_l'}(u))}h(v_{j_l'}(u))\Bigr) \\
\leq{}&\mathcal{N}_{a, J}(h)(u).
\end{align*}
\end{proof}

\section{Converting uniform spectral bounds to a uniform count}
\label{sec:ConvertingUniformSpectralBoundsToUniformCounting}
Here we follow \cite[Section 3]{MOW19} and outline the derivation of \cref{thm:MainTheorem} from the uniform spectral bounds in \cref{thm:CongruenceOperatorSpectralBounds} using the congruence renewal equation. We omit all the proofs as they can be found in \cite[Section 3]{MOW19}.

We define $L^\star(\mathbb H^n \cup \mathbb R^{n - 1}, \mathbb R) \subset L(\mathbb H^n \cup \mathbb R^{n - 1}, \mathbb R)$ to be the subspace of bounded functions in $L(\mathbb H^n \cup \mathbb R^{n - 1}, \mathbb R)$ which are locally constant on some neighborhood of $\Lambda \subset \mathbb H^n \cup \mathbb R^{n - 1}$. We fix $F \in L^\star(\mathbb H^n \cup \mathbb R^{n - 1}, \mathbb R_{\geq 0})$ and $f = F|_{\Lambda}$ in this section.

Let $q \in \mathcal{O}$ be nonzero. Define the function $N_q^*:  \mathbb R \times \Gamma \times L^2(\tilde{\mathbf{G}}_q) \to L^2(\tilde{\mathbf{G}}_q)$ by
\begin{align*}
N_q^*(r, \gamma_0, \phi) = \sum_{\substack{\gamma \in \Gamma \text{ such that}\\ d(o, \gamma \gamma_0 o) - d(o, \gamma_0 o) \leq r}} F(\gamma \gamma_0 o) \cdot \pi_q(\tilde{\gamma}) \phi
\end{align*}
for all $(r, \gamma_0, \phi) \in  \mathbb R \times \Gamma \times L^2(\tilde{\mathbf{G}}_q)$.

\begin{remark}
We need to compare the hyperbolic distance and the Frobenius norm. Recalling the choice $o = e_n$, we have the formula $\|\gamma\|^2 = 2\cosh(d(o, \gamma o))$ for all $\gamma \in G$ where $\|\cdot\|$ is the Frobenius norm on the space of $(n + 1) \times (n + 1)$ matrices. Writing $R = e^{\frac{r}{2}}$, since $\sqrt{2\cosh(r)} = e^{\frac{r}{2}} + O\bigl(e^{-\frac{3}{2}r}\bigr)$ as $r \to +\infty$, we can use the above formula to deduce that if $\frac{\|\gamma \gamma_0\|}{\|\gamma_0\|} \leq R$ for some $\gamma, \gamma_0 \in \Gamma$, then we have
\begin{align}
\begin{aligned}
\label{eqn:FrobeniusNormToHyperbolicDistance}
d(o, \gamma \gamma_0 o) - d(o, \gamma_0 o) \leq{}& 2\log(R) + \log\bigl(1 + e^{-2d(o, \gamma_0 o)}\bigr) \\
&{}+ \log\left(1 - \frac{e^{-d(o, \gamma \gamma_0 o)}}{2R^2\cosh(d(o, \gamma_0 o))}\right) \\
\leq{}& r + C +  O(e^{-r})
\end{aligned}
\end{align}
as $r \to +\infty$ where $C$ depends on $\gamma_0$ and the implied constant depends on $\Gamma$ and $\gamma_0$. Consequently, this estimate allows us to replace the condition $\frac{\|\gamma \gamma_0\|}{\|\gamma_0\|} \leq R$ with the condition $d(o, \gamma \gamma_0 o) - d(o, \gamma_0 o) \leq r$ only at the expense of having to modify the constant factor in the main term in \cref{thm:MainTheorem}.
\end{remark}

The above function is relevant to the uniform counting result because if $F = \chi_{\mathbb H^n \cup \mathbb R^{n - 1}} \in L^\star(\mathbb H^n \cup \mathbb R^{n - 1},  \mathbb R)$, then taking $\gamma_0 = e$ and $\phi = \delta_e$, we have
\begin{align*}
N_q^*(r', e, \delta_e)(e) = \sum_{\substack{\gamma \in \Gamma \text{ such that}\\ d(o, \gamma o) \leq r'}} (\pi_q(\tilde{\gamma}) \delta_e)(e) \geq \#(\Gamma_q \cap B_R(e))
\end{align*}
for all $R > 0$, where $r'$ is the right hand side from \cref{eqn:FrobeniusNormToHyperbolicDistance} and $B_R(e) \subset G$ denotes the ball of radius $R > 0$ centered at $e \in G$ with respect to the Frobenius norm.

\subsection{Pushing the uniform count to the boundary}
Let $q \in \mathcal{O}$ be nonzero. Define the function $N_q:  \mathbb R \times \Lambda \times L^2(\tilde{\mathbf{G}}_q) \to L^2(\tilde{\mathbf{G}}_q)$ by
\begin{align*}
N_q(r, u, \phi) = \sum_{j = 0}^\infty \sum_{u' \in T^{-j}(u)} f(u') \cdot \mathtt{c}_q^j(u') \phi \cdot \chi_{\{r' \in \mathbb R: r' \leq r\}}(\tau_j(u'))
\end{align*}
which satisfies the recursive formula
\begin{align*}
N_q(r, u, \phi) = \sum_{j = 0}^\infty \sum_{u' \in T^{-j}(u)} \mathtt{c}_q(u') N_q(r - \tau(u'), u', \phi) + f(u) \cdot \phi \cdot \chi_{\{r' \in \mathbb R: r' \leq r\}}(0)
\end{align*}
called the \emph{congruence renewal equation}, for all $(r, \gamma_0, \phi) \in  \mathbb R \times \Lambda \times L^2(\tilde{\mathbf{G}}_q)$.

Let $q \in \mathcal{O}$ be nonzero. We now relate $N_q^*$ to its boundary version $N_q$. Define $\tau^*(\gamma) = d(o, \gamma o) - d(o, T(\gamma o))$ for all $\gamma \in \Gamma$. Taking sums as in \cref{eqn:BirkhoffSums}, we also use the notation $\tau_k^*(\gamma) = d(o, \gamma o) - d(o, T^k(\gamma o))$ for all $k \in \mathbb N$ and $\tau_0^*(\gamma) = 0$, for all $\gamma \in \Gamma$. For any $\gamma_0 \in \Gamma$, let $\lambda_0 \in \Lambda$ denote a corresponding admissible point henceforth. Observe that we can rewrite the formula for $N_q^*$ as
\begin{align*}
N_q^*(r, \gamma_0, \phi) = \sum_{j = 0}^\infty \sum_{\substack{\gamma \in \Gamma \text{ such that}\\ \gamma \lambda_0 \in T^{-j}(\gamma_0 \lambda_0)}} F(\gamma o) \cdot \pi_q(\tilde{\gamma} \tilde{\gamma}_0^{-1}) \phi \cdot \chi_{\{r' \in \mathbb R: r' \leq r\}}(\tau_j^*(\gamma))
\end{align*}
which satisfies the recursive formula
\begin{align*}
N_q^*(r, \gamma_0, \phi) ={}&\sum_{\substack{\gamma \in \Gamma \text{ such that}\\ \gamma \lambda_0 \in T^{-1}(\gamma_0 \lambda_0)}} N_q^*(r - \tau^*(\gamma), \gamma, \pi_q(\tilde{\gamma} \tilde{\gamma}_0^{-1}) \phi) \\
&{}+F(\gamma_0 o) \cdot \phi \cdot \chi_{\{r' \in \mathbb R: r' \leq r\}}(0)
\end{align*}
also called the \emph{congruence renewal equation}, for all $(r, \gamma_0, \phi) \in  \mathbb R \times \Gamma \times L^2(\tilde{\mathbf{G}}_q)$.

\begin{lemma}
\label{lem:EstimateFor_tau}
There exists $\kappa \in (0, 1)$ such that for all $\gamma, \gamma_0 \in \Gamma$, if $\len(\gamma) = k + K + 1$ for some $k, K \in \mathbb N$ and $\gamma \cdot \gamma_0$ is admissible, then we have
\begin{align*}
\tau_k^*(\gamma \gamma_0) = \tau_k(\gamma \gamma_0 \lambda_0) + O\bigl(\kappa^K\bigr).
\end{align*}
\end{lemma}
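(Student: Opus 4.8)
The plan is to compare the ``geodesic'' Birkhoff sum $\tau_k^*(\gamma\gamma_0)$, which records hyperbolic displacement along the orbit, with the ``symbolic'' Birkhoff sum $\tau_k(\gamma\gamma_0\lambda_0)$, which records the conformal distortion of $T$, and to show that the two differ only by an error that decays geometrically in the length $K$ of the tail of the word. First I would unravel the definitions: writing $\gamma = g_{\alpha_1}g_{\alpha_2}\dotsb g_{\alpha_{k+K+1}}$ as a reduced word, the point $\gamma\gamma_0\lambda_0$ lies in the cylinder $\mathtt{C}_D[\alpha_1,\dotsc,\alpha_{k+K+1},\dotsc]$, and $T^j(\gamma\gamma_0\lambda_0) = g_{\alpha_{j+1}}g_{\alpha_{j+2}}\dotsb \lambda_0'$ for an appropriate point; meanwhile $\tau_k^*(\gamma\gamma_0) = d(o,\gamma\gamma_0 o) - d(o, T^k(\gamma\gamma_0 o))$ telescopes into a sum of $k$ terms of the form $d(o, g_{\alpha_{j+1}}\dotsb g_{\alpha_{k+K+1}}\dotsb o) - d(o, g_{\alpha_{j+2}}\dotsb o)$. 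The standard identity $\tau(u) = \log\|d(J\circ T\circ J^{-1})_{J(u)}\|_{\mathrm{op}}$ relates the single-step conformal derivative to the Busemann cocycle / hyperbolic displacement, so each summand of $\tau_k^*$ equals the corresponding summand of a $\tau_k$ evaluated at a point whose forward orbit agrees with that of $\gamma\gamma_0\lambda_0$ for at least $K$ more symbols.

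The key step is then a contraction estimate: two points lying in a common cylinder of length $k+K+1-j$ are, by \cref{lem:Hyperbolicity} together with \cref{lem:CylinderDiameterBound}, within Euclidean distance $O(c_0^{-1}\kappa_2^{-(k+K+1-j)})$ (or $O(\rho^{(k+K-j)/p_0})$) of each other; since $\tau$ is Lipschitz on cylinders with constant at most $T_0$, the discrepancy between the $j$-th terms of $\tau_k^*(\gamma\gamma_0)$ and $\tau_k(\gamma\gamma_0\lambda_0)$ is $O(\kappa_2^{-(k+K-j)})$. Summing over $0 \le j \le k-1$ gives a geometric series
\begin{align*}
\bigl|\tau_k^*(\gamma\gamma_0) - \tau_k(\gamma\gamma_0\lambda_0)\bigr| \le \sum_{j=0}^{k-1} O\bigl(\kappa_2^{-(k+K-j)}\bigr) = O\bigl(\kappa_2^{-K}\bigr),
\end{align*}
which is exactly the claimed bound with $\kappa = \kappa_2^{-1} \in (0,1)$. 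The bounds here are entirely analogous to the estimate \cref{eqn:f^(a)LipschitzBounds} already carried out in the paper, just with $\tau$ in place of $f^{(a)}$ and with the exponent shifted by $K$ because the two orbits stay together for $K$ extra symbols.

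The one genuine subtlety — and the part I expect to be the main obstacle — is bookkeeping at the ``seam'' between $\gamma$ and $\gamma_0$: one must check that $\gamma\cdot\gamma_0$ being admissible guarantees that the word for $\gamma\gamma_0$ really is the concatenation of the reduced words (so that the first $k+K+1$ symbols are those of $\gamma$), that the admissible point $\lambda_0$ attached to $\gamma_0$ is consistent with the point $T^{k+K+1}$ of the orbit lands on, and that the finitely many terms near $j = k-1$ (where the common-cylinder length is only $K+2$ rather than growing) are still controlled — this is why the error is $O(\kappa^K)$ and not smaller. Once the indexing is pinned down, the estimate itself is routine. I would also remark that the hypothesis $\len(\gamma) = k+K+1$ is what forces $\gamma\gamma_0\lambda_0$ and the relevant geodesic-orbit point to share $K$ symbols beyond the $k$ being summed, which is the source of the decay.
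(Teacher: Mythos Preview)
The paper does not give its own proof of this lemma; \cref{sec:ConvertingUniformSpectralBoundsToUniformCounting} explicitly defers all proofs to \cite[Section 3]{MOW19}. Your outline follows the standard route taken there: telescope both $\tau_k^*$ and $\tau_k$ into $k$ single-step terms, compare term by term, and sum a geometric series whose worst term is of order $\kappa^K$.

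There is one genuine gap in your argument. You assert that ``each summand of $\tau_k^*$ equals the corresponding summand of a $\tau_k$ evaluated at a point,'' and thereafter invoke only the Lipschitz property of $\tau$ on $\Lambda$. That equality is false as stated: the $j$-th summand of $\tau_k^*(\gamma\gamma_0)$ is the hyperbolic displacement $d(o, T^j(\gamma\gamma_0)\,o) - d(o, T^{j+1}(\gamma\gamma_0)\,o)$, computed from points of $\mathbb H^n$, whereas $\tau$ is a log-derivative on $\partial_\infty\mathbb H^n$; these do not coincide at any finite stage, so a Lipschitz bound for $\tau$ on the boundary alone does not compare them. What you need --- and what your parenthetical ``Busemann cocycle'' is gesturing at --- is that for each generator $g$ the function
\[
x \longmapsto d(o,x) - d(o,g^{-1}x)
\]
on $\mathbb H^n$ extends Lipschitz-continuously (in the Euclidean metric on the closed ball model, via $J$) to $\overline{\mathbb H^n}$, and its boundary values coincide with $\tau$. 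Concretely, in the ball model with center $0$ one has $d(0,x)-d(0,\tilde g^{-1}x)=\log\lvert(\tilde g^{-1})'(x)\rvert - 2\log\frac{1+\lvert\tilde g^{-1}x\rvert}{1+\lvert x\rvert}$, and the second term is smooth on the closed ball and vanishes on the sphere. Once this extension is in hand, you may apply the Lipschitz bound to the pair consisting of the \emph{interior} point $T^j(\gamma\gamma_0)\,o$ and the \emph{boundary} point $T^j(\gamma\gamma_0\lambda_0)$; both are images under a word of length at least $K+1+\len(\gamma_0)$ of points in a fixed bounded region, so by \cref{lem:Hyperbolicity} their Euclidean distance is $O(\kappa_2^{-(k+K-j)})$, and your geometric-series computation then finishes the proof exactly as written.
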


The following lemma is proved using \cref{lem:EstimateFor_tau} and the hypothesis on $F$.

\begin{lemma}
\label{lem:BoundN_q^*UsingN_q}
There exist $K_0 \in \mathbb N$, $\kappa \in (0, 1)$, and $C > 0$ depending on $F$ such that for all $\gamma_0, \gamma_1 \in \Gamma$, if $K > K_0$ and the reduced word $\gamma_1$ ends in the reduced word $\gamma_0$ with $\len(\gamma_1) = K + \len(\gamma_0)$, then we have
\begin{align*}
N_q(r - C\kappa^K, \gamma_1 \lambda_0, \phi) \leq N_q^*(r, \gamma_1, \phi) \leq N_q(r + C\kappa^K, \gamma_1 \lambda_0, \phi)
\end{align*}
for all nonzero $q \in \mathcal{O}$, $r \in \mathbb R$, and $\phi \in L^2(\tilde{\mathbf{G}}_q, \mathbb R_{\geq 0})$.
\end{lemma}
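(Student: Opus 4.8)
The plan is to match $N_q^*(r,\gamma_1,\phi)$ and $N_q(r,\gamma_1\lambda_0,\phi)$ summand by summand through the natural correspondence between admissible preimages and reduced words, so that the comparison reduces to a single source of discrepancy — the difference between $\tau_j^*$ and $\tau_j$ — which is controlled by \cref{lem:EstimateFor_tau}, while the $F$-weights are made to agree \emph{exactly} once $K$ is large by local constancy of $F$ and contraction of long words. Concretely, since $\gamma_1$ ends in $\gamma_0$, the point $\gamma_1\lambda_0$ is an admissible base point as well, and a preimage $u'\in T^{-j}(\gamma_1\lambda_0)$, recorded through its itinerary $(\beta_1,\dots,\beta_j)$, can be written $u'=g_{\beta_1}g_{\beta_2}\cdots g_{\beta_j}(\gamma_1\lambda_0)$. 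The assignment $u'\mapsto\gamma:=g_{\beta_1}\cdots g_{\beta_j}\gamma_1$ is then a bijection from $T^{-j}(\gamma_1\lambda_0)$ onto the set $\{\gamma\in\Gamma:\gamma\lambda_0\in T^{-j}(\gamma_1\lambda_0)\}$ indexing the $j$-th summand of the rewritten formula for $N_q^*(r,\gamma_1,\phi)$, with $\gamma\lambda_0=u'$ and $\len(\gamma)=j+\len(\gamma_1)$. Comparing the congruence renewal equations defining $N_q$ and $N_q^*$, the congruence factors in corresponding summands agree under this bijection (the cocycle $\mathtt{c}_q^j(u')$ encodes exactly the $j$ generators of $\gamma$ beyond $\gamma_1$, which is the datum appearing as $\pi_q(\tilde\gamma\tilde\gamma_1^{-1})$), and applying $\pi_q(\cdot)$ to a vector in $L^2(\tilde{\mathbf{G}}_q,\mathbb{R}_{\geq0})$ is a coordinate permutation, hence preserves nonnegativity.

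Next I would compare the two scalar weights. For the $F$-weight: by the contraction estimate for long words (a standard consequence of \cref{lem:Hyperbolicity}), $d_{\mathrm{E}}(\gamma o,\gamma^+)=O(\kappa^{\len(\gamma)})$ and $d_{\mathrm{E}}(\gamma\lambda_0,\gamma^+)=O(\kappa^{\len(\gamma)})$ with $\gamma^+\in\Lambda$; since $F$ is bounded and locally constant on a neighborhood of the compact set $\Lambda$, there is $K_0=K_0(F)\in\mathbb{N}$ such that whenever $\len(\gamma)\geq\len(\gamma_1)\geq K>K_0$ both $\gamma o$ and $\gamma\lambda_0$ lie in a common subset of that neighborhood on which $F$ is constant, so $F(\gamma o)=F(\gamma\lambda_0)=f(u')$; the binding case is $j=0$, $\gamma=\gamma_1$, which is why $\len(\gamma_1)>K_0$ is needed. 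For the $\tau$-weight: decompose $\gamma_1=\gamma'\gamma_0$ with $\len(\gamma')=K$, so that $\gamma=\delta\gamma'\gamma_0$ with $\delta=g_{\beta_1}\cdots g_{\beta_j}$ and $\len(\delta\gamma')=j+K$. For $j\geq1$, \cref{lem:EstimateFor_tau} applied with the length-$(j+K)$ word $\delta\gamma'$ in the role of its $\gamma$, with $\gamma_0$ in the role of its $\gamma_0$, and with $k=j$, gives $\tau_j^*(\gamma)=\tau_j(\gamma\lambda_0)+O(\kappa^{K})$ uniformly in $j$, $\delta$, and the words; for $j=0$ both sides are $0$. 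Writing $u'=\gamma\lambda_0$, this reads $|\tau_j^*(\gamma)-\tau_j(u')|\leq C\kappa^K$ for a constant $C=C(F)$ absorbing the implied constant (enlarging $K_0$ so that $K>K_0$ meets the index constraint of \cref{lem:EstimateFor_tau}).

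Finally I would assemble the bound. For each $j$ and each $\gamma$ in the $j$-th index set, $F(\gamma o)\,\pi_q(\tilde\gamma\tilde\gamma_1^{-1})\phi=f(u')\,\mathtt{c}_q^j(u')\phi$ is a nonnegative element of $L^2(\tilde{\mathbf{G}}_q)$, while $|\tau_j^*(\gamma)-\tau_j(u')|\leq C\kappa^K$ gives the nesting of indicators $\chi_{\{r'\leq r-C\kappa^K\}}(\tau_j(u'))\leq\chi_{\{r'\leq r\}}(\tau_j^*(\gamma))\leq\chi_{\{r'\leq r+C\kappa^K\}}(\tau_j(u'))$. Summing over $u'\in T^{-j}(\gamma_1\lambda_0)$ and over $j\geq0$ — legitimate since all terms are nonnegative and the series converge — yields $N_q(r-C\kappa^K,\gamma_1\lambda_0,\phi)\leq N_q^*(r,\gamma_1,\phi)\leq N_q(r+C\kappa^K,\gamma_1\lambda_0,\phi)$, which is the claim.

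I expect the main obstacle to be purely bookkeeping rather than anything substantive: making the bijection $u'\leftrightarrow\gamma$ and the index identities fed into \cref{lem:EstimateFor_tau} literally correct under the paper's reduced-word, itinerary, and cocycle-ordering conventions (in particular in the Schottky-with-inverses case, where one must check that no cancellation occurs and that $\lambda_0$ remains admissible for $\gamma_1$), and pinning down the threshold $K_0(F)$ from the scale of local constancy of $F$. No genuinely new geometric input beyond \cref{lem:EstimateFor_tau} and the standard contraction estimate is required.
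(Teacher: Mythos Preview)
Your proposal is correct and follows essentially the same approach the paper indicates: the paper merely says the lemma ``is proved using \cref{lem:EstimateFor_tau} and the hypothesis on $F$,'' and your argument fills in precisely those two ingredients---matching summands via the natural bijection, using local constancy of $F$ near $\Lambda$ to equate the $F$- and $f$-weights for $K>K_0$, and invoking \cref{lem:EstimateFor_tau} to control $|\tau_j^*(\gamma)-\tau_j(u')|$ uniformly in $j$. The only cosmetic point is the off-by-one in feeding $\delta\gamma'$ (of length $j+K$) into \cref{lem:EstimateFor_tau} with $k=j$, which yields $O(\kappa^{K-1})$ rather than $O(\kappa^K)$; this is harmless after absorbing into $C$, as you implicitly do.
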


We can derive the following lemma from \cref{lem:BoundN_q^*UsingN_q} which allows us to convert expressions with $N_q^*$ to expressions with $N_q$. Recall that $N = \#\mathcal{A}$.

\begin{lemma}
\label{lem:ConvertN_qToN_q^*}
There exists $C > 0$ such that for all nonzero $q \in \mathcal{O}$, $c > 0$, and $(r, \gamma_0, \phi) \in  \mathbb R \times \Gamma \times L^2(\tilde{\mathbf{G}}_q, \mathbb R_{\geq 0})$, we have
\begin{align*}
X_{-} - C\|F\|_\infty \cdot \|\phi\|e^{\log(N)cr} \leq N_q^*(r, \gamma_0, \phi) \leq X_{+} + C\|F\|_\infty \cdot \|\phi\|e^{\log(N)cr}
\end{align*}
where $X_{\pm} = \sum_{\substack{\gamma \in \Gamma \text{ }\mathrm{such}\text{ }\mathrm{that}\\ \gamma \lambda_0 \in T^{-k}(\gamma_0 \lambda_0)}} N_q(r - \tau_k^*(\gamma) \pm C\kappa^k, \gamma \lambda_0, \pi_q(\tilde{\gamma} \tilde{\gamma}_0^{-1}) \phi)$ and $k = \lfloor cr\rfloor$.
\end{lemma}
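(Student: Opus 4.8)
The plan is to iterate the congruence renewal equation for $N_q^*$ exactly $k:=\lfloor cr\rfloor$ times and then match the resulting tail against $X_\pm$ by means of \cref{lem:BoundN_q^*UsingN_q}. Fix $\kappa\in(0,1)$, the constant (call it $C_\ast$), and the integer $K_0$ provided by \cref{lem:BoundN_q^*UsingN_q}, and set $C=\max(C_\ast,1)$; this is the constant with which $X_\pm$ is defined in the statement. We may assume $r\ge 0$ and $k=\lfloor cr\rfloor>K_0$: for $r<0$, as well as in the bounded range $0\le r\le(K_0+1)/c$, the function $N_q^*(r,\gamma_0,\phi)$ and the sums $X_\pm$ involve only finitely many words and the inequalities follow directly after enlarging $C$.

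Iterating the congruence renewal equation for $N_q^*$ a total of $k$ times, using the cocycle multiplicativity $\pi_q(\tilde\gamma'\tilde\gamma^{-1})\pi_q(\tilde\gamma\tilde\gamma_0^{-1})=\pi_q(\tilde\gamma'\tilde\gamma_0^{-1})$ and the telescoping of the partial $\tau^*$-sums along an admissible word (so that the cumulative shift after $k$ steps ending at $\gamma$ equals $\tau_k^*(\gamma)$), one obtains the exact decomposition $N_q^*(r,\gamma_0,\phi)=A+B$ of the defining series for $N_q^*$ into its first $k$ layers and its remainder, namely
\begin{align*}
A&=\sum_{j=0}^{k-1}\ \sum_{\gamma\lambda_0\in T^{-j}(\gamma_0\lambda_0)}F(\gamma o)\cdot\pi_q(\tilde\gamma\tilde\gamma_0^{-1})\phi\cdot\chi_{\{r'\in\mathbb R:\,r'\le r\}}(\tau_j^*(\gamma)),\\
B&=\sum_{\gamma\lambda_0\in T^{-k}(\gamma_0\lambda_0)}N_q^*\bigl(r-\tau_k^*(\gamma),\gamma,\pi_q(\tilde\gamma\tilde\gamma_0^{-1})\phi\bigr).
\end{align*}
Since $F\ge 0$, $\phi\ge 0$, and the right regular representation permutes coordinates, every summand above is nonnegative, so $A,B\ge 0$ pointwise on $\tilde{\mathbf{G}}_q$.

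To bound $A$: at most $\sum_{j=0}^{k-1}N^j\le N^k$ admissible words occur, each summand is bounded pointwise by $\|F\|_\infty\|\phi\|$ by unitarity of the cocycle, and $N^k=N^{\lfloor cr\rfloor}\le e^{\log(N)cr}$; hence $0\le A\le C\|F\|_\infty\cdot\|\phi\|e^{\log(N)cr}$. To treat $B$: each $\gamma$ with $\gamma\lambda_0\in T^{-k}(\gamma_0\lambda_0)$ is a reduced word ending in $\gamma_0$ with $\len(\gamma)=k+\len(\gamma_0)$, so, since $\pi_q(\tilde\gamma\tilde\gamma_0^{-1})\phi\in L^2(\tilde{\mathbf{G}}_q,\mathbb R_{\ge 0})$, \cref{lem:BoundN_q^*UsingN_q} applies with $K=k>K_0$ and gives, at the real value $r-\tau_k^*(\gamma)$,
\begin{align*}
N_q\bigl(r-\tau_k^*(\gamma)-C\kappa^k,\gamma\lambda_0,\pi_q(\tilde\gamma\tilde\gamma_0^{-1})\phi\bigr)&\le N_q^*\bigl(r-\tau_k^*(\gamma),\gamma,\pi_q(\tilde\gamma\tilde\gamma_0^{-1})\phi\bigr)\\
&\le N_q\bigl(r-\tau_k^*(\gamma)+C\kappa^k,\gamma\lambda_0,\pi_q(\tilde\gamma\tilde\gamma_0^{-1})\phi\bigr).
\end{align*}
Summing over such $\gamma$ yields $X_-\le B\le X_+$. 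Combining the two estimates, $N_q^*(r,\gamma_0,\phi)=A+B\le X_++C\|F\|_\infty\|\phi\|e^{\log(N)cr}$ and $N_q^*(r,\gamma_0,\phi)=A+B\ge B\ge X_-\ge X_--C\|F\|_\infty\|\phi\|e^{\log(N)cr}$, which is the assertion. There is no serious obstacle: the only points requiring attention are the bookkeeping in the $k$-fold iteration (tracking the cocycle and $\tau^*$ compositions) and checking that the constant $C$ obtained this way is independent of $q$, $c$, $\gamma_0$, $\phi$ and $r$, both of which are transparent from the construction once \cref{lem:BoundN_q^*UsingN_q} and \cref{lem:EstimateFor_tau} are in hand.
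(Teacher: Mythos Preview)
Your proof is correct and follows exactly the approach indicated by the paper (which omits the argument, citing \cite[Section 3]{MOW19}): iterate the congruence renewal equation for $N_q^*$ a total of $k$ times, bound the accumulated boundary terms $A$ trivially by a word count, and then replace each $N_q^*$ in the tail $B$ by $N_q$ via \cref{lem:BoundN_q^*UsingN_q}. One small point: your dismissal of the regime $k\le K_0$ by ``enlarging $C$'' is slightly glib since the range $0\le r\le(K_0+1)/c$ depends on $c$, but this is easily repaired---for such $k$ the shift $C\kappa^k\ge C\kappa^{K_0}$ is already a fixed positive constant, and the at most $N^{K_0}$ short-word discrepancies between $N_q^*$ and $N_q$ (where $F(\gamma o)$ need not equal $f(\gamma\lambda_0)$) contribute an additive error bounded by a fixed multiple of $\|F\|_\infty\|\phi\|$, which the error term absorbs uniformly.
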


\subsection{Proof that \texorpdfstring{\cref{thm:CongruenceOperatorSpectralBounds}}{\autoref{thm:CongruenceOperatorSpectralBounds}} implies \texorpdfstring{\cref{thm:MainTheorem}}{\autoref{thm:MainTheorem}}}
Let $q \in \mathcal{O}$ be nonzero. Define the function $n_q: \mathbb C \times \Lambda \times L^2(\tilde{\mathbf{G}}_q) \to L^2(\tilde{\mathbf{G}}_q)$
\begin{align*}
n_q(\xi, u, \phi) = \int_{-\infty}^\infty e^{-\xi r} N_q(r, u, \phi) \, dr \qquad \text{for all $(\xi, u, \phi) \in \mathbb C \times \Lambda \times L^2(\tilde{\mathbf{G}}_q)$}.
\end{align*}
Then using the congruence renewal equation, we obtain the formula
\begin{align*}
(\delta_\Gamma + \xi) \cdot n_q(\delta_\Gamma + \xi, u, \phi) = \bigl(\Id_{L(\Lambda, L^2(\tilde{\mathbf{G}}_q))} - \mathcal{M}_{\xi, q}\bigr)^{-1}(f \otimes \phi)(u)
\end{align*}
for all $(\xi, u, \phi) \in \mathbb C \times \Lambda \times L^2(\tilde{\mathbf{G}}_q)$, where we define $f \otimes \phi \in L(\Lambda, L^2(\tilde{\mathbf{G}}_q))$ by $(f \otimes \phi)(u) = f(u) \cdot \phi$ for all $u \in \Lambda$. We now focus on $\phi$ orthogonal to constants. For any $\sigma > 0$, using \cref{thm:CongruenceOperatorSpectralBounds} with the above formula gives the bounds
\begin{align*}
|\delta_\Gamma + \xi| \cdot \|n_q(\delta_\Gamma + \xi, \cdot, \phi)\|_{\Lip} &\leq
\begin{cases}
CN(q)^C(1 - e^{-\eta})^{-1} \|f \otimes \phi\|_{\Lip}, & |b| \leq b_0 \\
C_\sigma |b|^{1 + \sigma} (1 - e^{-\eta_\sigma})^{-1} \|f \otimes \phi\|_{\Lip}, &  |b| > b_0
\end{cases}
\\
&\leq C'\max\bigl(N(q)^C, |b|^{1 + \sigma}\bigr) \|f \otimes \phi\|_{\Lip}
\end{align*}
for all $\xi \in \mathbb C$ with $|a| < a_0$ and square-free $q \in \mathcal{O}$ coprime to $q_0q_0'$, where $q_0'$, $\eta$, $C$, $a_0$, $\eta_\sigma$, and $C_\sigma$ are from the same theorem, and $C' > 0$ is a constant only depending on the choice of $\sigma$. It suffices to choose $\sigma = 1$. The bounds imply that $\xi \mapsto \bigl(\Id_{L(\Lambda, L^2(\tilde{\mathbf{G}}_q))} - \mathcal{M}_{\xi, q}\bigr)^{-1}$ and hence $\xi \mapsto n_q(\delta_\Gamma + \xi, \cdot, \phi)$ is holomorphic on $\{\xi \in \mathbb C: |\Re(\xi)| < a_0\}$. For all $\lambda \in (0, 1)$, define $k_\lambda \in C_{\mathrm{c}}^\infty(\mathbb R, \mathbb R_{\geq 0})$ by $k_\lambda(t) = \lambda^{-1}k(\lambda^{-1}t)$ for all $t \in \mathbb R$ where $k \in C_{\mathrm{c}}^\infty(\mathbb R, \mathbb R_{\geq 0})$ is some bump function with $\int_\mathbb R k = 1$ and $\supp(k) \subset [-1, 1]$. Repeating the analysis in \cite[Subsection 3.4]{MOW19} (cf. \cite[Section 10]{BGS11}) gives the following lemma.

\begin{lemma}
There exist nonzero $q_0' \in \mathcal{O}$, $\epsilon \in (0, \delta_\Gamma)$, $C > 0$, and $\kappa > 0$ such that for all square-free $q \in \mathcal{O}$ coprime to $q_0q_0'$, $f \in L(\Lambda, \mathbb R)$, and $(r, u, \phi) \in \mathbb R \times \Lambda \times L_0^2(\tilde{\mathbf{G}}_q)$, we have
\begin{align*}
\left\|\int_{-\lambda}^\lambda k_\lambda(t)N_q(r + t, u, \phi) \, dt\right\|_2 \leq \kappa N(q)^Ce^{r(\delta_\Gamma - \epsilon)} \lambda^{-2} \|f \otimes \phi\|_{\Lip}.
\end{align*}
\end{lemma}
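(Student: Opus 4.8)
The plan is to proceed exactly along the lines of \cite[Subsection 3.4]{MOW19} and \cite[Section 10]{BGS11}, using the explicit inverse-Laplace/Paley--Wiener setup that is already in place in this section. The function $n_q(\delta_\Gamma + \xi, u, \phi)$ is the Laplace transform of $r \mapsto N_q(r, u, \phi)$, and the formula $(\delta_\Gamma + \xi) n_q(\delta_\Gamma + \xi, u, \phi) = (\Id - \mathcal{M}_{\xi, q})^{-1}(f \otimes \phi)(u)$ together with \cref{thm:CongruenceOperatorSpectralBounds} shows that, for $\phi \in L_0^2(\tilde{\mathbf{G}}_q)$, this transform extends holomorphically to the strip $\{|\Re(\xi)| < a_0\}$ with the polynomial bound in $N(q)$ and $|b|$ displayed just above. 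The smoothing by $k_\lambda$ is there to tame the $|b|^{1 + \sigma}$ growth along vertical lines: mollifying $N_q(\cdot, u, \phi)$ with $k_\lambda$ multiplies its Laplace transform by $\hat{k}_\lambda(\xi)$, which decays faster than any polynomial in $|b|$ on the relevant strip, with the decay rate controlling an extra factor $\lambda^{-2}$ (one inverse power of $\lambda$ from each of the two derivatives one can integrate by parts against $\hat{k}$, or equivalently from the Schwartz bound $|\hat{k}_\lambda(a + ib)| \lesssim e^{|a|\lambda}(1 + \lambda|b|)^{-N}$).

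The key steps, in order: First, write the mollified quantity via Laplace inversion, shifting the contour of integration from $\Re(\xi) = \delta_\Gamma + \epsilon_0$ (for small $\epsilon_0 > 0$, where absolute convergence of the Laplace transform holds by crude counting — $N_q$ grows at most like $e^{\delta_\Gamma r}$ up to polynomial factors, and even the trivial bound $N(q)^{O(1)} e^{\log(N) r}$ suffices to start) down to $\Re(\xi) = \delta_\Gamma - \epsilon$ for a suitable $\epsilon \in (0, \delta_\Gamma)$ with $\epsilon < a_0$. Crucially, there is no pole to cross in this process precisely because $\phi \perp$ constants: the operator $\mathcal{M}_{\xi, q}$ restricted to the orthogonal complement of constants has no eigenvalue $1$ on the closed strip, by \cref{thm:CongruenceOperatorSpectralBounds}, so $(\Id - \mathcal{M}_{\xi, q})^{-1}$ is holomorphic there — this is exactly where the constants-orthogonality hypothesis is used and why the $q_0'$ appears. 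Second, on the shifted line $\Re(\xi) = \delta_\Gamma - \epsilon$, bound the integrand: the $e^{-\xi r}$ factor contributes $e^{r(\delta_\Gamma - \epsilon)}$, the resolvent bound contributes $\max(N(q)^C, |b|^2) \|f \otimes \phi\|_{\Lip}$ after dividing by $|\delta_\Gamma + \xi|$, and the mollifier $|\hat{k}_\lambda(\xi)|$ contributes a factor decaying like $(1 + \lambda|b|)^{-N}$ for large $N$. Third, integrate over $b \in \mathbb R$: split into $|b| \le b_0$ and $|b| > b_0$; on the bounded part one gets a clean $N(q)^C$, on the unbounded part the $|b|^2$ growth is beaten by choosing $N$ in the mollifier bound large enough (say $N = 5$), and the $b$-integral of $|b|^2 (1 + \lambda|b|)^{-5}$ is $O(\lambda^{-3})$; combined with the overall $\lambda^{-1}$ prefactor from the $k_\lambda$ normalization this would naively give $\lambda^{-4}$, but a more careful bookkeeping (as in \cite{MOW19,BGS11}, using that two integrations by parts in $r$ against $k_\lambda''$ cost only $\lambda^{-2}$) yields the stated $\lambda^{-2}$. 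Finally, collect: the constant $\kappa$ absorbs $C', C_\sigma, (1 - e^{-\eta_\sigma})^{-1}$, the numerical constants from the $b$-integral, and $\sup|\hat k|$-type quantities; $\epsilon$ is chosen less than both $a_0$ and $\delta_\Gamma$; and $C$ is the exponent from \cref{thm:CongruenceOperatorSpectralBounds}. Since this is a near-verbatim repetition of \cite[Subsection 3.4]{MOW19}, the intent is to state it as such and refer there for the routine contour-shift and integration-by-parts details.

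The main obstacle — or rather, the only genuinely nontrivial input, which has already been supplied — is the uniform resolvent bound coming from \cref{thm:CongruenceOperatorSpectralBounds}: one needs the resolvent $(\Id - \mathcal{M}_{\xi, q})^{-1}$ to be holomorphic and polynomially bounded in $N(q)$ on a strip of \emph{fixed} width $2a_0$ independent of $q$, with the growth in $|b|$ no worse than $|b|^{1 + \sigma}$. Everything else in the lemma is soft analysis. A secondary point requiring a little care is justifying the initial absolute convergence and the legality of the contour shift: one should note that $N_q(r, u, \phi)$ is supported on $r \ge 0$ up to exponentially small tails (it is a sum over preimages with $\tau_j(u') \le r$, and $\tau \ge \tau_{\min} > 0$), so $N_q(r, u, \phi) = 0$ for $r$ sufficiently negative, making the Laplace transform entire in the half-plane $\Re(\xi) > \delta_\Gamma$ and the inversion formula valid. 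The mollification is essential because the raw resolvent estimate only gives $|b|^{1+\sigma}$ growth, which is not integrable in $b$; $k_\lambda$ trades this for the harmless $\lambda^{-2}$ factor that is later optimized over $\lambda$ in the passage to the final counting statement.
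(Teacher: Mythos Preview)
Your proposal is correct and takes essentially the same approach as the paper: the paper does not give a self-contained proof but simply states that ``repeating the analysis in \cite[Subsection 3.4]{MOW19} (cf.\ \cite[Section 10]{BGS11}) gives the following lemma,'' and your outline is precisely that analysis --- Laplace inversion, contour shift across the strip $\{|\Re(\xi)| < a_0\}$ with no pole because $\phi \in L_0^2(\tilde{\mathbf{G}}_q)$, the resolvent bound from \cref{thm:CongruenceOperatorSpectralBounds}, and the mollifier $\hat{k}_\lambda$ to beat the $|b|^{1+\sigma}$ growth at the cost of $\lambda^{-2}$. Your bookkeeping of the $\lambda$-power is slightly muddled in the middle (the clean way is that the $|\delta_\Gamma + \xi|^{-1}$ from the formula cancels one power of $|b|$, leaving $|b|^\sigma$ against $\hat{k}_\lambda$, whence $\lambda^{-(1+\sigma)} = \lambda^{-2}$ for $\sigma = 1$), but you land on the right answer and correctly defer the routine details to the cited references.
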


The following lemma is a higher dimensional effectivized version of Lalley's theorem \cite[Theorem 1]{Lal89}. For all $f \in L(\Lambda, \mathbb R)$, define $C_f = \frac{\nu_0(f)}{\delta_\Gamma \nu_\Lambda(\tau)} \cdot h_0 \in L(\Lambda, \mathbb R)$.

\begin{lemma}
\label{lem:Non-congruenceN_qEstimateWithMainTerm}
There exist $\epsilon \in (0, \delta_\Gamma)$ and $C > 0$ such that for all nonzero $q \in \mathcal{O}$, $f \in L(\Lambda, \mathbb R)$, and $(r, u, \phi) \in \mathbb R \times \Lambda \times \mathbb C \chi_{\tilde{\mathbf{G}}_q}$, we have as $r \to +\infty$
\begin{align*}
\int_{-\lambda}^\lambda k_\lambda(t)N_q(r + t, u, \phi) \, dt = C_f(u) e^{r\delta_\Gamma}\phi + O\bigl(N(q)^C e^{r(\delta_\Gamma - \epsilon)} \lambda^{-2} \|f\|_{\Lip}\|\phi\|_2\bigr).
\end{align*}
\end{lemma}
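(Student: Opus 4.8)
The plan is to follow the effective renewal-theoretic argument of Lalley \cite{Lal89}, in the quantitative form used by Bourgain--Gamburd--Sarnak \cite{BGS11} and Magee--Oh--Winter \cite[Section 3]{MOW19}, feeding in the spectral data from \cref{thm:RPF}, analytic perturbation theory, and the non-congruence case of \cref{thm:CongruenceOperatorSpectralBounds}. The first point is a reduction: since $\phi \in \mathbb C\chi_{\tilde{\mathbf{G}}_q}$ is a multiple of the constant function, the right regular representation fixes $\phi$, so $\mathtt{c}_q^j(u')\phi = \phi$ for all $j$ and $u'$; hence $N_q(r, u, \phi) = N(r, u)\,\phi$ for the scalar function $N(r, u) = \sum_{j \ge 0}\sum_{u' \in T^{-j}(u)} f(u')\,\chi_{\{r' \le r\}}(\tau_j(u'))$, and on the line $\mathbb C\chi_{\tilde{\mathbf{G}}_q}$ the congruence transfer operator acts exactly as the scalar operator $\mathcal{L}_\xi$. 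So the assertion is really the non-congruence scalar statement, the factor $N(q)^C$ being retained only so that the conclusion slots uniformly into \cref{sec:ConvertingUniformSpectralBoundsToUniformCounting}; concretely, by the Laplace-transform identity stated just above, $n_q(\delta_\Gamma + \xi, u, \phi) = \frac{1}{\delta_\Gamma + \xi}\bigl((\Id - \mathcal{L}_\xi)^{-1}f\bigr)(u)\,\phi$, and it suffices to analyse the right-hand side.

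Next I would carry out the spectral analysis of $\xi \mapsto (\Id - \mathcal{L}_\xi)^{-1}f$. Near $\xi = 0$: by \cref{thm:RPF} and perturbation theory as in \cite[Chapter 7]{Kat95} and \cite[Chapter 4]{PP90}, $\mathcal{L}_\xi$ has, for $\xi$ in a small disk about $0$, a simple isolated leading eigenvalue $\lambda_\xi$ depending holomorphically on $\xi$ with $\lambda_0 = 1$ and, since $\frac{d}{da}\Pr_T(-(\delta_\Gamma + a)\tau)\big|_{a=0} = -\nu_\Lambda(\tau)$, with $\lambda_\xi = 1 - \nu_\Lambda(\tau)\,\xi + O(\xi^2)$. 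Writing the spectral decomposition $(\Id - \mathcal{L}_\xi)^{-1}f = \frac{1}{1 - \lambda_\xi}P_\xi(f) + R_\xi(f)$ with $P_\xi \to P_0$, $P_0(f) = \nu_0(f)h_0$ (using $\nu_0(h_0) = 1$), and $R_\xi$ holomorphic near $0$, one finds that $\xi \mapsto n_q(\delta_\Gamma + \xi, u, \phi)$ extends meromorphically across $\xi = 0$ with a simple pole of residue $\frac{1}{\delta_\Gamma}\cdot\frac{\nu_0(f)h_0(u)}{\nu_\Lambda(\tau)}\,\phi = C_f(u)\,\phi$. Away from $\xi = 0$ in the strip $\{|\Re\xi| < a_0\}$: for $\Re\xi > 0$ invertibility of $\Id - \mathcal{L}_\xi$ is automatic since the spectral radius is then $< 1$; for $|\Im\xi| = |b| > b_0$ one uses the $q = 1$ case of \cref{itm:CongruenceOperatorSpectralBoundsLarge|b|} in \cref{thm:CongruenceOperatorSpectralBounds} together with the Neumann series $(\Id - \mathcal{L}_\xi)^{-1} = \sum_{k \ge 0}\mathcal{L}_\xi^k$ to obtain holomorphy along with the operator bound $\|(\Id - \mathcal{L}_\xi)^{-1}\|_{\mathrm{op}} \ll_\sigma |b|^{1 + \sigma}$ on $L(\Lambda)$; and for $0 < |b| \le b_0$ one uses that, by \cref{pro:tauNotCohomologousToLocallyConstantFunction} (the analogue of \cite[Lemma 4.3]{Nau05}), $\mathcal{L}_{ib}$ has spectral radius strictly less than $1$ for every real $b \ne 0$, so by continuity and compactness $\Id - \mathcal{L}_\xi$ is invertible with locally bounded inverse. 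Dividing by $\delta_\Gamma + \xi$ then shows $n_q(\delta_\Gamma + \cdot\,, u, \phi)$ is holomorphic on $\{|\Re\xi| < a_0\} \setminus \{0\}$ with a simple pole of residue $C_f(u)\phi$ at $0$ and, taking $\sigma = 1$, with $\|n_q(\delta_\Gamma - \epsilon + ib, u, \phi)\|_2 \ll (1 + |b|)\,\|f\|_{\Lip}\|\phi\|_2$.

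The final step is the smoothed Laplace inversion. The smoothed average $\int_{-\lambda}^{\lambda} k_\lambda(t)\,N_q(r + t, u, \phi)\,dt$ has Laplace transform $\widehat{k_\lambda}(s)\,n_q(s, u, \phi)$, where $\widehat{k_\lambda}(s) = \int e^{-st}k_\lambda(t)\,dt$ satisfies $|\widehat{k_\lambda}(s)| \ll_M (1 + \lambda|\Im s|)^{-M}$ uniformly for $\Re s$ in a compact set, and $\widehat{k_\lambda}(\delta_\Gamma) = 1 + O(\lambda)$. Starting from the inversion contour $\Re s = \delta_\Gamma + \epsilon_0$ and shifting it to $\Re s = \delta_\Gamma - \epsilon$ — legitimate by the holomorphy and polynomial-growth estimates just established, the only enclosed singularity being the simple pole at $s = \delta_\Gamma$ — the residue produces $\widehat{k_\lambda}(\delta_\Gamma)\,C_f(u)e^{\delta_\Gamma r}\phi = (1 + O(\lambda))\,C_f(u)e^{\delta_\Gamma r}\phi$, while the shifted integral is bounded by
\[
e^{(\delta_\Gamma - \epsilon)r}\int_{\mathbb R}(1 + \lambda|b|)^{-M}(1 + |b|)\,db\;\|f\|_{\Lip}\|\phi\|_2 \;\ll\; e^{(\delta_\Gamma - \epsilon)r}\,\lambda^{-2}\,\|f\|_{\Lip}\|\phi\|_2,
\]
which is exactly the claimed error term (the $N(q)^C$ factor being trivially absorbed, the bound being $q$-independent); choosing $\epsilon > 0$ and $a_0 > 0$ small enough finishes the proof.

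The main obstacle is not any individual estimate — once \cref{thm:CongruenceOperatorSpectralBounds} and \cref{pro:tauNotCohomologousToLocallyConstantFunction} are in hand, everything above lies in the now-standard toolkit — but rather the careful bookkeeping needed to justify the contour shift: one must confirm that the only singularity of $\xi \mapsto n_q(\delta_\Gamma + \xi, u, \cdot\,)$ in the strip is the simple pole at $\xi = 0$ (this is precisely where both the large-$|b|$ Dolgopyat bound and the small-$|b|$ non-arithmeticity input are needed), pin down the residue as $C_f(u)\phi$ through the perturbation expansions of $\lambda_\xi$ and $P_\xi$, and track the Lipschitz dependence on $f$ through the perturbation theory so that the implied constants are uniform in $f$ as asserted.
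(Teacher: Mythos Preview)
Your proposal is correct and follows precisely the approach the paper indicates: the paper does not give a detailed proof but states that the lemma is ``derived again by the analysis as in the aforementioned papers using the non-congruence case of \cref{itm:CongruenceOperatorSpectralBoundsLarge|b|} in \cref{thm:CongruenceOperatorSpectralBounds} (i.e.\ for $q = 1$) together with the complex RPF theorem and perturbation theory,'' and the preceding remark identifies the main term as coming from the simple pole of $\xi \mapsto (\Id_{L(\Lambda)} - \mathcal{L}_\xi)^{-1}$ at $\xi = 0$. Your reduction to the scalar case, residue computation yielding $C_f(u)\phi$, use of the $q=1$ Dolgopyat bound for large $|b|$, the non-lattice input (which indeed follows from \cref{pro:tauNotCohomologousToLocallyConstantFunction}) for small nonzero $|b|$, and the smoothed contour-shift argument are exactly the ingredients the paper names.
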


\Cref{lem:Non-congruenceN_qEstimateWithMainTerm} is derived again by the analysis as in the aforementioned papers using the non-congruence case of \cref{itm:CongruenceOperatorSpectralBoundsLarge|b|} in \cref{thm:CongruenceOperatorSpectralBounds} (i.e., for $q = 1$) together with the complex RPF theorem and perturbation theory.

\begin{remark}
The main term in \cref{lem:Non-congruenceN_qEstimateWithMainTerm} (cf. \cite{Lal89}) comes from the simple pole of $\xi \mapsto (\Id_{L(\Lambda)} - \mathcal{L}_\xi)^{-1}$ at $\xi = 0$. For this, \cite{BGS11} is more helpful but there is a slight error in carrying through $\frac{1}{z}$ in the first equation of page 282. Although the main term is not dealt with directly, see \cite{MOW19} for the corrected estimates.
\end{remark}

Combining the previous two lemmas gives the following lemma.

\begin{lemma}
\label{lem:N_qUniformCounting}
There exist nonzero $q_0' \in \mathcal{O}$, $\epsilon \in (0, \delta_\Gamma)$, and $C > 0$ such that for all square-free $q \in \mathcal{O}$ coprime to $q_0q_0'$, $f \in L(\Lambda, \mathbb R)$, and $(r, u, \phi) \in \mathbb R \times \Lambda \times L^2(\tilde{\mathbf{G}}_q, \mathbb R_{\geq 0})$, we have as $r \to +\infty$
\begin{align*}
N_q(r, u, \phi) = \frac{C_f(u)e^{r\delta_\Gamma} \bigl\langle \phi, \chi_{\tilde{\mathbf{G}}_q}\bigr\rangle \chi_{\tilde{\mathbf{G}}_q}}{\#\tilde{\mathbf{G}}_q} + O\bigl(N(q)^C e^{r(\delta_\Gamma - \epsilon)} \|f\|_{\Lip} \|\phi\|_2\bigr).
\end{align*}
\end{lemma}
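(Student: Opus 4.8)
The target is \cref{lem:N_qUniformCounting}, which combines two estimates: the decay bound for $\phi \in L_0^2(\tilde{\mathbf{G}}_q)$ and the main-term asymptotic for $\phi$ constant. The plan is to split $\phi \in L^2(\tilde{\mathbf{G}}_q, \mathbb R_{\geq 0})$ via the orthogonal decomposition $L^2(\tilde{\mathbf{G}}_q) = \mathbb C\chi_{\tilde{\mathbf{G}}_q} \oplus L_0^2(\tilde{\mathbf{G}}_q)$, write $\phi = \phi_0 + \phi_1$ with $\phi_0 = \frac{\langle \phi, \chi_{\tilde{\mathbf{G}}_q}\rangle}{\#\tilde{\mathbf{G}}_q}\chi_{\tilde{\mathbf{G}}_q}$ the projection onto constants and $\phi_1 \in L_0^2(\tilde{\mathbf{G}}_q)$. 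By linearity of $N_q$ in the third argument, $N_q(r, u, \phi) = N_q(r, u, \phi_0) + N_q(r, u, \phi_1)$. For the constant part, apply \cref{lem:Non-congruenceN_qEstimateWithMainTerm} with the mollified integral to $\phi_0$; for the orthogonal part, apply the preceding mollified-bound lemma (the one asserting $\|\int k_\lambda N_q\|_2 \leq \kappa N(q)^C e^{r(\delta_\Gamma - \epsilon)}\lambda^{-2}\|f\otimes\phi\|_{\Lip}$) to $\phi_1$. Since $\|\phi_0\|_2, \|\phi_1\|_2 \leq \|\phi\|_2$, adding the two gives the claimed asymptotic for the \emph{mollified} quantity $\int_{-\lambda}^\lambda k_\lambda(t) N_q(r+t, u, \phi)\,dt$, with main term $C_f(u) e^{r\delta_\Gamma}\phi_0 = \frac{C_f(u)e^{r\delta_\Gamma}\langle\phi, \chi_{\tilde{\mathbf{G}}_q}\rangle\chi_{\tilde{\mathbf{G}}_q}}{\#\tilde{\mathbf{G}}_q}$.

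Next I would remove the mollification by a standard monotonicity/sandwiching argument. The point is that $r \mapsto N_q(r, u, \phi)$ is nondecreasing when $\phi \geq 0$ (each summand $f(u')\,\mathtt{c}_q^j(u')\phi \cdot \chi_{\{r' \leq r\}}(\tau_j(u'))$ is nondecreasing in $r$ because $f = F|_\Lambda \geq 0$ and $\chi_{\{r' \leq r\}}$ is). Hence for $\lambda \in (0,1)$,
\begin{align*}
N_q(r - \lambda, u, \phi) \leq \int_{-\lambda}^\lambda k_\lambda(t) N_q(r + t, u, \phi)\,dt \leq N_q(r + \lambda, u, \phi)
\end{align*}
in the sense that these inequalities hold after pairing against any nonnegative test function, or more directly coordinatewise since $N_q$ takes values in a function space on the finite group where the random-walk weights are nonnegative. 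Wait — one must be slightly careful: $\mathtt{c}_q^j(u')\phi$ need not be coordinatewise monotone, but the \emph{sum} $N_q(r,u,\phi)$ still satisfies $N_q(r-\lambda,u,\phi) \leq N_q(r+t,u,\phi)$ coordinatewise for $t \geq -\lambda$ because increasing $r$ only adds more nonnegative terms (each term $f(u')\mathtt{c}_q^j(u')\phi$ has nonnegative coordinates, being a nonnegative scalar times a permutation applied to a nonnegative vector $\phi$). So the sandwich is valid coordinatewise, hence in $L^2$-norm after subtracting the main term. Choosing $\lambda = e^{-r\epsilon/4}$ (or any polynomial-in-$e^{-r}$ scale balancing $\lambda^{-2}$ against the error savings, and absorbing the main-term wobble $C_f(u)(e^{(r\pm\lambda)\delta_\Gamma} - e^{r\delta_\Gamma}) = O(\lambda e^{r\delta_\Gamma})$ into the error) yields the unmollified estimate of \cref{lem:N_qUniformCounting} with a suitably adjusted $\epsilon$.

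Finally I would collect the constants: $q_0'$ and $\epsilon$ are the ones produced by the two input lemmas (take the intersection/minimum), $C$ is the larger of the two exponents $C$, and the implied constant absorbs $\kappa$, the bump-function constants, and the choice of $\lambda$. The output main term is rewritten using $f \otimes \phi_0 = \frac{\langle\phi,\chi_{\tilde{\mathbf{G}}_q}\rangle}{\#\tilde{\mathbf{G}}_q} (f \otimes \chi_{\tilde{\mathbf{G}}_q})$ and $C_{f \cdot c} = c\, C_f$ for scalars, so that the constant-part asymptotic reads exactly as stated.

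The main obstacle I anticipate is the de-mollification step, specifically verifying the coordinatewise monotonicity of $N_q(\cdot, u, \phi)$ carefully enough to justify the sandwich (one needs $F \geq 0$, which is why the section fixes $F \in L^\star(\mathbb H^n \cup \mathbb R^{n-1}, \mathbb R_{\geq 0})$, and $\phi \geq 0$, which is the hypothesis $\phi \in L^2(\tilde{\mathbf{G}}_q, \mathbb R_{\geq 0})$), and then optimizing $\lambda$ so that both the $\lambda^{-2}$ loss and the main-term fluctuation $O(\lambda e^{r\delta_\Gamma})$ stay below $O(N(q)^C e^{r(\delta_\Gamma - \epsilon)})$ after shrinking $\epsilon$. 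Everything else is bookkeeping with the two cited lemmas and the linearity of $N_q$ in its last slot. As the excerpt notes, all of this parallels \cite[Section 3]{MOW19}, so I would keep the write-up brief and defer routine details there.
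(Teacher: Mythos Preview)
Your proposal is correct and matches the paper's approach exactly: the paper simply says ``Combining the previous two lemmas gives the following lemma'' and defers to \cite[Section 3]{MOW19}, and your orthogonal splitting $\phi = \phi_0 + \phi_1$, application of the two mollified estimates, and de-mollification via coordinatewise monotonicity of $r \mapsto N_q(r,u,\phi)$ (using $f \geq 0$, $\phi \geq 0$) with an optimized $\lambda$ is precisely that standard argument. Your care about why the sandwich holds coordinatewise---each summand being a nonnegative scalar times a permutation of a nonnegative vector---is the right justification.
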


\Cref{thm:MainTheorem} can now be derived using \cref{lem:N_qUniformCounting} and \cref{lem:ConvertN_qToN_q^*} exactly as in \cite[Subsection 3.4]{MOW19}. We describe it briefly. First, the sum in \cref{thm:MainTheorem} is converted to $N_q^*(r, \gamma_0, \delta_{\pi_q(x)})(e)$ using techniques from the beginning of this section. Next, $N_q^*(r, \gamma_0, \delta_{\pi_q(x)})$ is converted to a sum of terms involving $N_q$ using \cref{lem:ConvertN_qToN_q^*}. Next, \cref{lem:N_qUniformCounting} is applied to each term in the sum. This produces an expression which is nearly the main result except for two remaining sums. These sums have quick estimates as in \cite[Subsection 3.4]{MOW19} and finishes the derivation.

\nocite{*}
\bibliographystyle{alpha_name-year-title}
\bibliography{References}
\end{document}